\documentclass[10pt]{article}

\usepackage{latexsym,amsfonts,amsmath,amsthm,amssymb, epsfig}
\usepackage[a4paper,total={170mm,250mm},vcentering,dvips]{geometry}
\usepackage{verbatim}

\usepackage{dsfont}
\usepackage{enumitem}
\usepackage{csquotes}
\usepackage{bm}
\usepackage[pdftex,dvipsnames]{xcolor}
\usepackage{float}
\usepackage{caption}
\usepackage{parskip} 

\usepackage{algpseudocode}
\usepackage{algorithm}
\usepackage{subcaption}
\usepackage{accents}

\usepackage[hidelinks]{hyperref}
\usepackage[noabbrev,capitalize]{cleveref}

\newtheorem{thm}{Theorem}[section]
\newtheorem{cor}[thm]{Corollary}
\newtheorem{lem}[thm]{Lemma}

\newtheorem{defi}[thm]{Definition}

\newtheorem{rem}[thm]{Remark}
\newtheorem{dis}[thm]{Discussion}
\newtheorem{assumptions}[thm]{Assumption}
\newtheorem{main_result}{Main Result}
\DeclareMathOperator{\supp}{supp}
\DeclareMathOperator{\Lip}{\textup{Lip}}
\DeclareMathOperator*{\esssup}{ess\,sup}

\DeclareMathOperator*{\argmin}{arg\,min}

\newcommand{\R}{\mathbb{R}}
\newcommand{\N}{\mathbb{N}}
\newcommand{\V}{\mathbb{V}}

\newcommand{\diam}{\textup{diam}}
\newcommand{\calH}{\mathcal{H}}
\newcommand{\calF}{\mathcal{F}}
\newcommand{\calT}{\mathcal{T}}
\newcommand{\calM}{\mathcal{M}}
\newcommand{\calN}{\mathcal{N}}
\newcommand{\calL}{\mathcal{L}}
\newcommand{\calP}{\mathcal{P}}
\newcommand{\calI}{\mathcal{I}}
\newcommand{\calB}{\mathcal{B}}
\newcommand{\curlybinom}[2]{\genfrac{\{}{\}}{0pt}{0}{#1}{#2}}
\newcommand{\LipProp}{\textup{LipProp}}

\usepackage[normalem]{ulem}
\newcommand{\thesis}[1]{{\color{black}{#1}}}

\begin{document}

\title{Approximation classes for the anisotropic space-time finite element method. An almost characterization.}
\author{Pedro Morin\footnote{Universidad Nacional del Litoral and CONICET, Departamento de Matem\'a{}tica, Facultad de Ingenier\'i{}a Qu\'i{}mica, S3000AOM Santa Fe, Argentina. Email: \href{mailto:pmorin@fiq.unl.edu.ar}{pmorin@fiq.unl.edu.ar}}, Cornelia Schneider\footnote{Friedrich-Alexander-Universit\"at Erlangen-N\"urnberg, Angewandte Mathematik III, Cauerstr. 11, 91058 Erlangen, Germany. Email: \href{mailto:cornelia.schneider@fau.de}{cornelia.schneider@fau.de}}
, and Nick Schneider\footnote{Friedrich-Alexander-Universit\"at Erlangen-N\"urnberg, Angewandte Mathematik III, Cauerstr. 11, 91058 Erlangen, Germany. Email: \href{mailto:nick.schneider@fau.de}{nick.schneider@fau.de}} \footnote{corresponding author}}
\maketitle
\begin{abstract}
We study the approximation of $L_p$-functions, $p\in  (0,\infty]$, on cylindrical space-time domains $\Omega_T:=[0,T]\times \Omega$ 
with respect to continuous anisotropic space-time finite elements on prismatic meshes. In particular, we propose a suitable refinement technique which creates (locally refined) prismatic meshes with sufficient smoothness and the desired anisotropy, and prove complexity estimates. Furthermore, we define a (quasi-)interpolation operator on this type of meshes and use it to characterize the corresponding approximation classes by showing direct and inverse estimates in terms of anisotropic Besov norms. \\ \\
\\
\noindent{\em Key Words: anisotropic Besov spaces; approximation classes; mesh refinement; space-time finite elements; (quasi-)interpolation}. \\
{\em MSC2020 Math Subject Classifications: Primary 41A25, 65D05, 65M50; Secondary 65M60.} 
\end{abstract}

\tableofcontents

\section{Introduction}\label{sect:Introduction}

The main goal of this article is to characterize functions that can be approximated at a given convergence rate by adaptive space-time anisotropic discretizations with continuous finite elements. 
This study provides an insight into the approximation classes for optimal adaptive space-time finite element methods for instationary problems; it is in the spirit of the results presented in~\cite{BDDP02,GM14} for the stationary case, and those from~\cite{AMS23, AGMSS25} for time-stepping discretizations of instationary problems.
We show that a characterization is possible with respect to anisotropic Besov spaces, in particular, with the variant of those spaces introduced in \cite[Sect.~3]{MSS26}. The characterization is only \enquote*{almost} possible in terms of these spaces, i.e., requires a notion of generalized anisotropic Besov spaces to derive inverse estimates, as is the case in previous works for stationary problems~\cite{BDDP02,GM14}. This is mainly due to the fact that the approximants themselves naturally lack higher order Besov regularity.

Finite element methods based on simultaneous space-time discretizations have gained much attention mainly due to their capacity for parallel implementation and efficiency, and also due to the simplicity for handling singularities via local mesh refinement; numerical experiments also show an excellent performance~\cite{VW21,LS22,DVHB23}. 
A finite element method for the heat equation that computes quasi-optimal approximations with respect to natural norms has been presented in~\cite{DSS25}. Similarly to what is done in~\cite{DS22}, they also incorporate local mesh refinements in space-time using a refinement strategy that guarantees an anisotropy tailored to parabolic problems. Such local refinement involves prismatic meshes where the elements are cartesian products of simplicial elements in space and time intervals. They allow for a simple implementation of anisotropy without producing small or large angles when the space elements are non-degenerate.
In some sense, our work complements the results from~\cite{DS22,DSS25} because it characterizes the regularity of the functions that can be optimally approximated with given rates and different polynomial degrees for the finite elements.
Wavelet-based space-time methods have also been used for solving instationary problems (see~\cite{SS09} and the references therein), but here we focus on finite element methods and cite only the references strictly related to our work.

In order to achieve our results, which are suitable for a variety of different time and space scalings we propose a local refinement method for space-time prismatic anisotropic partitions and prove complexity estimates. A similar refinement method is slightly hinted in~\cite{DS22}, and briefly described in~\cite[Sect.~4.1]{DSS25} for a parabolic anisotropy.
Our refinement algorithm is thus not only a tool for performing the analysis of space-time finite element methods for instationary problems, but also an essential building block for the implementation of such methods for different problems.

The first approximation theoretic investigation of the stationary, i.e., time-independent, adaptive (isotropic) finite element method was given by Binev, Dahmen, DeVore, and Petrushev \cite{BDDP02}, where they considered linear FEM, triangular coverings of bounded Lipschitz domains in $\R^2$, and an  algorithm based on the newest-vertex bisection as refinement technique. The latter, in particular, preserves conformity and fulfills a complexity estimate as shown by the first three aforementioned authors in \cite{BDD04}. 
The generalization of these investigations to adaptive stationary FEM of arbitrary polynomial order and space dimension was published by Gaspoz and Morin \cite{GM14}. Here, a generalization of the refinement algorithm from \cite{BDD04}, based on work by Stevenson \cite{Ste08}, was used. The latter itself relies on the generalized Maubach-Traxler \cite{Mau95,Tra97} bisection routine for simplices. In particular, this high-dimensional bisection routine avoided potential degeneration of the minimal head angle of the refined simplices. Furthermore, the approach to inverse estimates using generalized (isotropic) Besov spaces was  introduced in \cite{GM14}.

The nonstationary angle of this topic remains largely open due to its considerable complexity. A first step toward its analysis was taken by the authors of this article and the collaborators Actis and Gaspoz \cite{AMS23, AGMSS25} with respect to adaptive time-stepping FEM. A key tool in these investigations was the consideration of Banach space-valued Besov spaces. However, no inverse estimates are provided there. Instead, in \cite{MSS26}, we turned towards anisotropic space-time FEM and the corresponding anisotropic Besov spaces. There, a prismatic tensor product mesh was used as an initial covering together with an atomic refinement method for prisms  which consists of a Maubach-Traxler bisection in the spatial and an appropriate number of classical bisections in the temporal direction. The number of temporal bisections was chosen such that the refinement correctly accounted for the anisotropy of the function, i.e., different smoothness in time and space. Since this method preserves the mesh geometry on the individual level of the prisms, direct estimates for discontinuous anisotropic space-time FEM could be obtained in the end of this article.

The main results of our work can be roughly summarized as follows:

\begin{description}
\item[Direct result]
If a function belongs to the anisotropic Besov space $B^{s_1, s_2}_{q,q}(\Omega_T)$, then it can be approximated in $L_p(\Omega_T)$ by continuous space-time finite elements of order $(r_1, r_2)$ on (anisotropic) prismatic meshes $\calP$ with an approximation error of order $(\# \calP)^{-\frac{1}{\frac{1}{s_1}+\frac{d}{s_2}}}$.
If $\alpha_1>0$ and $\alpha_2>0$, the same result holds for functions in $B^{\alpha_1+s_1, \alpha_2+s_2}_{q,q}(\Omega_T)$, when measuring the error in $B_{p,p}^{\alpha_1, \alpha_2}(\Omega_T)$.

\item[Inverse result.]
If a function can be approximated in $L_p(\Omega_T)$ by continuous space-time finite elements of order $(r_1, r_2)$ on prismatic meshes $\calP$ with an error decay of $(\# \calP)^{-\frac{1}{\frac{1}{s_1}+\frac{d}{s_2}}}$, then it belongs to the \emph{generalized Besov space} $\widehat{B}^{s_1(1-\epsilon), s_2(1-\epsilon)}_{q,q}(\Omega_T)$ (for all $0< \epsilon < 1$).
If $\alpha_1>0$ and $\alpha_2>0$, and the error is measured in $\hat B_{p,p}^{\alpha_1, \alpha_2}(\Omega_T)$, then such an error decay is observed only if the function belongs to $\widehat B^{\alpha_1+s_1, \alpha_2+s_2}_{q,q}(\Omega_T)$.

\end{description}

A more precise description of our results can be found in \cref{subsect:main_results}, where all the assumptions on the coefficients are clearly stated, after some necessary definitions have been established.

In some sense, what we present in this paper is an extension of the results from \cite{MSS26} to the case of continuous anisotropic space-time FEM. This is not just a simple extension though, several important difficulties arise due to the fact that the basis functions have larger support, and can be very complicated if the meshes are allowed to have arbitrary hanging nodes.
Hence the refinement algorithm is not straightforward, since we will enforce the space meshes to be conforming in space and allow only for 1-irregularities in time (see \cref{subsect:Patch_refine} below); a careful analysis of its complexity is thus necessary. 

Finally, we point out that the characterization proposed here carries over Leisner’s cube-based result~\cite{Lei00} in the setting of wavelet approximation to finite elements and general domains. Despite the conceptual similarity, the methods employed in this paper are entirely different.

The paper is organized as follows. In  \cref{sect:preliminaries}, in order to be self-contained, we first discuss some preliminary results, mainly from \cite{MSS26}, that we require throughout the paper. Moreover, we state our main results regarding the almost characterization of approximation classes with respect to adaptive, anisotropic space-time finite elements. Afterwards, in \cref{sect:space_time_partition_and_refinement}, we present an algorithm based on the above mentioned atomic refinement technique in order to generate locally refined prismatic meshes that are regular enough and suitable for approximation with our continuous anisotropic space-time FEM while simultaneously allowing to prove a complexity result. In \cref{sect:investigation of mesh geometry and Quasi-interpolation} we analyze in more detail the lattice geometry of the space-time partitions generated in \cref{sect:space_time_partition_and_refinement} in order to construct a quasi-interpolation operator from the Lebesgue spaces $L_p(\Omega_T)$, $p\in (0,\infty]$, to the space of space-time finite elements. In particular, we show that the operator satisfies a local stability property and globally fulfills a near-best approximation condition. In \cref{sect:Almost_characterization}, we first introduce the aforementioned generalized anisotropic Besov spaces and study when they coincide with the classical ones. Further, we use this knowledge to show a Sobolev-type embedding theorem in the classical anisotropic Besov scale. Finally, we apply the previously derived results to prove our main results, i.e., direct and inverse estimate results for the approximation of (generalized) Besov functions with continuous anisotropic space-time finite elements. 

An interesting finding is that a comparison of the direct estimate result for continuous finite elements and the one for discontinuous ones previously shown in \cite[Thm.~1.5]{MSS26} yields that the involved constants will only differ quantitatively, not qualitatively, i.e., the constants depend on the same parameters, see \cref{disc:comparison_continuous_discontinuous}.

\section{Preliminaries and main results}\label{sect:preliminaries}

In this section, we first briefly introduce the notation and function spaces that will be used throughout the article and are necessary to state our main results, which we do immediately afterwards.

\subsection{Preliminaries}

\subsubsection{General setting}\label{subsubsect:Prelim_General_Setting_FEM_Spaces}

We will always consider $T\in (0,\infty)$, $\Omega\subset \R^d$, $d\in\N$, a bounded polyhedral Lipschitz domain, and \mbox{$\Omega_T:=[0,T]\times \Omega$}. Furthermore, we will call a non-overlapping covering of $\Omega_T$ by \textbf{prisms} of the form $I\times S$, $I$ an interval and $S$ a $d$-dimensional simplex, a \textbf{space-time partition}. For those and $s_1, s_2 \in (0,\infty)$, we define the quantity $a(\calP)$ which describes the \textbf{(maximal) anisotropy of the space-time partition} as in \cite[Sect.~1]{MSS26}, i.e.,
\begin{align*}
    a(\calP, s_1, s_2) :=  a(\calP) := \max\limits_{I\times S\in \calP}\left(\max\left(\frac{|I|}{|S|^{\frac{s_2}{s_1d}}},\frac{|S|^{\frac{s_2}{s_1d}}}{|I|}\right)\right).
\end{align*}
Here and in the remainder of this article, $|\cdot|$ applied to sets always represents the Lebesgue measure. Whether the $1$-, $d$-, or $(d+1)$-dimensional version is meant, will always be clear from the context. Correspondingly, we adopt the notation
\begin{align*}
    \kappa_\calP:=\sup\limits_{J\times S\in \calP} \kappa_S :=\sup\limits_{J\times S\in \calP} \frac{\diam(S)}{\rho_S} :=\sup\limits_{J\times S\in \calP}\frac{\diam(S)}{\sup\{\rho\in(0,\infty)\mid B_{\rho}(x_0)\subset S\text{ for some } x_0\in S\}},
\end{align*}
\begin{figure}[H]
	\begin{center}
		\includegraphics[height=4cm]{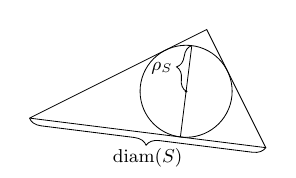}
	\end{center}
\end{figure}
which is the \textbf{shape-regularity constant of the simplices} in $\calP$, in the sense that $\kappa_\calP\rightarrow \infty$ if it contains simplices $S$ whose minimum head angle goes to zero. Of course, for any bounded set $E$ in any Euclidean space, $\diam(E)$ denotes the diameter of $E$, i.e., $\diam(E)=\sup_{x,y\in E}|x-y|$. 

On these space-time partitions, we will study approximation with respect to \textbf{(dis)continuous anisotropic finite elements} on $\calP$, i.e., 
\begin{align*}
    \mathbb{V}^{r_1, r_2}_{\calP, \textup{DC}}:=\left\{F:\Omega_T\rightarrow \R\mid F_{|I\times S}\in \Pi^{r_1, r_2}_{t,\bm{x}}(I\times S), \, I\times S \in \calP\right\}\quad\text{and}\quad\mathbb{V}^{r_1, r_2}_{\calP}:=\left\{F\in \mathbb{V}^{r_1, r_2}_{\calP, \textup{DC}}\mid F \text{ continuous}\right\},
\end{align*}
where $r_1, r_2\in \N_{\ge 2}$ and $\Pi^{r_1, r_2}_{t,\bm{x}}$ corresponds to the space of \textbf{anisotropic polynomials} on $I\times S\in \calP$ of temporal order $r_1$ and spatial order $r_2$,
\begin{align}\label{aniso-poly-tensor}
    \Pi_{t,\bm{x}}^{r_1,r_2}(I\times S):=\Pi^{r_1}(I) \otimes \Pi^{r_2}(S):= \left\{P:I\times S\rightarrow \R\:\bigg|\: P(t,\bm{x}):= \sum\limits_{i=0}^{r_1-1} \sum\limits_{|\alpha|<r_2} c_{i, \alpha} t^i \bm{x}^\alpha,\,\, c_{i, \alpha}\in\mathbb{R} \right\}.
\end{align}
The sum over $\alpha$ is understood over the set of multi-indices $\alpha \in \N_0^d$ with $|\alpha|:=\alpha_1+\alpha_2+\dots+\alpha_d < r_2$ and $\bm{x}^\alpha = x_1^{\alpha_1}x_2^{\alpha_2}\dots x_d^{\alpha_d}$.
Of course, this definition can be easily extended to $\Pi^{r_1, r_2}_{t,\bm{x}}(D)$ for non-prismatic space-time domains.

\subsubsection{Temporal and spatial moduli of smoothness for anisotropic Besov spaces}\label{subsubsect:Besov spaces}

The version of anisotropic Besov spaces used in this article and the forerunner \cite{MSS26} is suited to space-time finite element analysis and does not necessarily coincide with previous approaches to those spaces (see \cite[Sect.~1]{MSS26} for more information on their distinctions). It is based on temporal and spatial difference operators
    \begin{align*}
		&\Delta_{h,t}f(t,\bm{x}):=f(t+h, \bm{x}) - f(t,\bm{x}), \quad\,\, \Delta_{h,t}^{r_1}=\Delta_{h,t}^{r_1-1}\circ \Delta_{h,t}\,\:\text{ for }r_1\ge 2,
		\\\text{and}\quad &\Delta_{\bm{h},\bm{x}}f(t,\bm{x}):=f(t, x+\bm{h}) - f(t,\bm{x}), \quad \Delta_{\bm{h},\bm{x}}^{r_2}=\Delta_{\bm{h},\bm{x}}^{r_2-1}\circ \Delta_{\bm{h},\bm{x}}\text{ for }r_2\ge 2,
	\end{align*}
with $h\in \R$ and $\bm{h}\in \R^d$. Those can be used to define temporal and spatial moduli of smoothness \mbox{$\omega_{r_1, t}(f, J\times R, \cdot)_p$}, \mbox{$\omega_{r_2, \bm{x}}(f, J\times R, \cdot)_p:L_p(J\times R)\rightarrow [0,\infty)$} for given $p\in (0,\infty]$, $r_1, r_2\in \N$, intervals $J\subset [0,T]$, and Lipschitz domains $R\subset \Omega$, as in the isotropic case, i.e.,
\begin{align*}
    \omega_{r_1, t}(f, J\times R, \delta)_p:=\sup\limits_{|h|\le \delta } \|\Delta_{h,t}^{r_1}f \|_{L_p(J_{r_1,h}\times \R)}\quad \text{and}\quad \omega_{r_2, \bm{x}}(f, J\times R, \delta)_p:=\sup\limits_{|\bm{h}|\le \delta } \|\Delta_{\bm{h},\bm{x}}^{r_2}f \|_{L_p(J\times R_{r_2,\bm{h}})},
\end{align*}
where $J_{r_1,h}:=\{t\in J\mid t+ih\in J \ \ \text{for all}\ \ i=1,\dots, r_1 \}$, \mbox{$R_{r_2,\bm{h}}:=\{\bm{x}\in R\mid x+i\bm{h}\in R \ \ \text{for all}\ \ i=1,\dots, r_2 \} $}, and $L_p(J\times R)$ is of course the Lebesgue space of real-valued $p$-integrable (for $p<\infty$) and essentially bounded (for $p=\infty$) functions on $J\times R$. Further, there are averaged versions of the above moduli, i.e.,
\begin{align*}
        &\mathrm{w}_{r_1, t}(f,J\times R,\delta)_p:= \left(\frac{1}{2\delta}\int\limits_{|h|\le \delta}\|\Delta_{h,t}^{r_1}f\|^p_{L_p\left(J_{r_1, h}\times R\right)}\,dh\right)^\frac{1}{p}
        \\\text{and}\quad &\mathrm{w}_{r_2, \bm{x}}(f,J\times R,\delta)_p:= \left(\frac{1}{(2\delta)^d}\int\limits_{|h|\le \delta}\|\Delta_{\bm{h},\bm{x}}^{r_2}f\|^p_{L_p\left(J\times R_{r_2, \bm{h}}\right)}\,d\bm{h}\right)^\frac{1}{p},
    \end{align*}
for $\delta, p\in (0,\infty)$. For $p=\infty$, they coincide with their corresponding supremum versions. Additionally, one sets \mbox{$\mathrm{w}_{r_i, \rm{dir}}(f,J\times R,0)_p:=0$} for any $(r_i, \rm{dir})\in \{(r_1, t),(r_2, \bm{x})\}$ and $p\in (0,\infty]$. In particular, these moduli admit the following properties.
\begin{rem}\label{Rem:Moduli of smoothness}
    Let $f\in L_p(J \times R)$, $(r_i, \rm{dir})\in \{(r_1, t),(r_2, \bm{x})\}$, $k_i\in \N_0$ with $k_i\le r_i$, $\delta\in[0,\infty)$, $m\in \N$, and $l\in [0,\infty)$. Then it holds
    \begin{enumerate}[label=(\roman*)]
        \item $\omega_{r_i, \rm{dir}}(f, J\times R, \cdot)_p$ is monotonically increasing, \label{Rem:Moduli of smoothness - monotonicity}
        \item $\omega_{r_i, \rm{dir}}(f, J\times R, \delta)_p^{\min(p,1)}\le 2^{r_i-k_i}\omega_{k_i, \rm{dir}}(f, J\times R, \delta)_p^{\min(p,1)}$, \label{Rem:Moduli of smoothness - higher order lesser order}
        \item $\omega_{r_i, \rm{dir}}(f, J\times R, l\delta)_p^{\min(p,1)} \le (l+1)^{r_i}\omega_{r_i, \rm{dir}}(f, J\times R, \delta)_p^{\min(p,1)}$, and \label{Rem:Moduli of smoothness - scaling item}
        \item $\mathrm{w}_{r_i, \rm{dir}}(f, \cdot, \delta)_p^p$ is sub-additive.\label{Rem:Moduli of smoothness - additivity}
    \end{enumerate}
\end{rem}
\begin{proof}
    See \cite[Rem.~2.4]{MSS26}.
\end{proof}
If readers are interested in learning more about the properties of these moduli, such as the equivalence between the supremum and averaged versions, we refer them to \cite[Sect.~2]{MSS26}. As mentioned before, the moduli allow for the definition of \textbf{anisotropic Besov spaces}\footnote{This approach to anisotropic Besov spaces is different to the one used in \cite{Lei00}. The latter extends the one given in \cite[Ch.~4]{BIN79} to parameters $p,q\in (0,\infty]$. If and for which choice of parameters and domains these different approaches coincide, is a relevant question for further research.}, i.e.,
\begin{align*}
    B^{s_1, s_2}_{p,q}(J\times R):=\{f\in L_p(J\times R)\mid \|f\|_{B^{s_1, s_2}_{p,q}(J\times R)}:=\|f\|_{L_p(J\times R)}+|f|_{B^{s_1, s_2}_{p,q}(J\times R)}<\infty\},
\end{align*}
where $p,q\in (0,\infty]$, $s_1, s_2\in (0,\infty)$, $r_i:=\lfloor s_i \rfloor +1 $, $i=1,2$, and
\begin{align}\label{Eq:def-quasi-seminorm}
    \begin{split}
    |f|^{}_{B^{s_1, s_2}_{p,q}(J\times R)}
    &:=\left(\int\limits_0^\infty \left[\delta^{-s_1}\omega_{r_1, t}(f, J\times R, \delta)_p\right]^q\frac{d\delta}{\delta} 
    + \int\limits_0^\infty \left[\delta^{-s_2}\omega_{r_2, \bm{x}}(f, J\times R, \delta)_p\right]^q\frac{d\delta}{\delta}\right)^\frac1q,
    \quad\text{if }q<\infty,
    \\
    \text{and}\quad |f|_{B^{s_1, s_2}_{p,\infty}(J\times R)}
    &:=\esssup_{\delta \in (0,\infty)}  \delta^{-s_1}\omega_{r_1, t}(f, J\times R, \delta)_p
    + \delta^{-s_2}\omega_{r_2, \bm{x}}(f, J\times R, \delta)_p, \quad \text{if }q=\infty,
    \end{split}
\end{align}
correspondingly, represent the \textbf{anisotropic Besov (quasi-)seminorm}. In particular, as stated in \cite[Rem.~3.2]{MSS26}, it has an equivalent discrete representation, given by
\begin{align}\label{Eq:Equivalency quasi-seminorms}
    \begin{split}
    |f|^*_{B^{s_1, s_2}_{p,q}(J\times R)}&:=\left(\sum\limits_{n=0}^\infty 2^{n\frac{s_2}{d}q}\left[\omega_{r_1,t}\left(f,J\times R, 2^{-\frac{ns_2}{s_1 d}} \right)_{p}^q+ \omega_{r_2, \bm{x}}\left(f, J\times R, 2^{-\frac{n}{d}}\right)_{p}^q\right]\right)^\frac{1}{q}, 
    \quad\text{if }q<\infty,
    \\
    \text{and}\quad |f|_{B^{s_1, s_2}_{p,\infty}(J\times R)}
    &:=\sup_{n\in \N_0} 2^{n\frac{s_2}{d}}\left[\omega_{r_1,t}\left(f,J\times R, 2^{-\frac{ns_2}{s_1 d}} \right)_{p}+ \omega_{r_2, \bm{x}}\left(f, J\times R, 2^{-\frac{n}{d}}\right)_{p}\right] ,  
    \quad \text{if }q=\infty,
    \end{split}
\end{align}
with $|f|_{B^{s_1, s_2}_{p,q}(J\times R)}^*\sim_{\,d,p,q,s_1, s_2,|J|, \diam(R)}|f|_{B^{s_1, s_2}_{p,q}(J\times R)}$.
In particular, $\|f\|^*_{B^{s_1, s_2}_{p,q}(J\times R)}:=|f|^*_{B^{s_1, s_2}_{p,q}(J\times R)}+ \|f\|_{L_p(J\times R)}\sim_{\,p,q,s_1, s_2}\|f\|_{B^{s_1, s_2}_{p,q}(J\times R)}$, which can be shown as in the isotropic case in \cite[Thm.~10.1(i) together with Eq.~(10.5) of Ch.~2.10]{DL93}. Furthermore, if we replace the moduli of order $r_1$ and $r_2$, by arbitrary $\tilde{r}_i\in \N_{\ge 2}$ with $\tilde{r}_i>r_i$ in \eqref{Eq:def-quasi-seminorm} or \eqref{Eq:Equivalency quasi-seminorms}, then the analogue of $\|\cdot\|_{B^{s_1, s_2}_{p,q}(\Omega_T)}$ or $\|\cdot\|_{B^{s_1, s_2}_{p,q}(\Omega_T)}^*$ is an equivalent (quasi-)norm\footnote{We will indicate these (quasi-)(semi)norms via superscript, i.e., $|\cdot|_{B^{s_1, s_2}_{p,q}(\Omega_T)}^{(\tilde{r}_1, \tilde{r}_2)}$, $\|\cdot\|_{B^{s_1, s_2}_{p,q}(\Omega_T)}^{(\tilde{r}_1, \tilde{r}_2)}$, $|\cdot|_{B^{s_1, s_2}_{p,q}(\Omega_T)}^{*,(\tilde{r}_1, \tilde{r}_2)}$, or $\|\cdot\|_{B^{s_1, s_2}_{p,q}(\Omega_T)}^{*, (\tilde{r}_1, \tilde{r}_2)}$.} on $B^{s_1, s_2}_{p,q}(\Omega_T)$, where the equivalence constants only depend on $d,p,q,s_1, s_2, \tilde{r}_1, \tilde{r}_2$, and the Lipschitz properties $\LipProp(R)$ of $R$ (in the sense of \cite[Def.~1.6]{MSS26})\footnote{With a little abuse of notation, we will indicate that the Lipschitz properties of $R$ only depend on a set of $parameters$ via $\LipProp(R)\sim_{\, parameters} 1$.}. This can be obtained using the technique given in \cite[Thm.~10.1(ii) of Ch.~2.10]{DL93} together with the Marchaud inequalities from \cite[Thm.~2.9 and Thm.~2.12]{MSS26}. 

In particular, these spaces have proven useful in the context of approximation with anisotropic polynomials and (discontinuous) finite elements, a setting that we further investigate in this article. Again, we refer the interested reader to {\cite[Sect.~3-4]{MSS26}}. 

\subsubsection{Approximation classes}

The main results of this article can be expressed in terms of approximation classes. Since our refinement methods will differ for different (anisotropy) parameters, we will require a more general definition of those classes than in the isotropic case  \cite[Sect.~3]{BDDP02} as well as in \cite[Sect.~2]{GM14} and the time-stepping setting case \cite[Sect.~4.1]{AMS23}, respectively.

Let $X\subset \{f \mid f:D\rightarrow \R\}$ be a (quasi-)Banach space of real-valued functions on $D\subset \Omega_T$ and $Y\subset X$ be a linear subspace. Then, the \textbf{minimal approximation error} of $f$ in $Y$ with respect to the (quasi-)norm $\|\cdot\|_X$ of $X$, is defined as $E(f, Y, X):=\inf\limits_{F\in Y}\|f-F\|_X$. In particular, if $X=L_p(D)$, we write $E(f, Y, D)_p:=E(f, Y, L_p(D))$. 

Additionally, let $\calP_0$ be an initial space-time partition and let $\textup{REF}(\calP_0)$ denote the set of sub-space-time partitions created from $\calP_0$ with an arbitrary but fixed refinement technique.
Further, let $Y_{\calP}\subset X$ be a linear subspace for every $\calP\in\textup{REF}(\calP_0)$. Then, the \textbf{$N$-term approximation error of $f$}, $N\in \N_0$, with respect to $X$ and $Y_\bullet$ is given by
\begin{align*}
    \sigma_N(f, X, Y_\bullet, \textup{REF}):=\inf\limits_{\substack{\calP\in \textup{REF}(\calP_0),\\ \#\calP-\#\calP_0\le N}} E(f, Y_\calP, X) = \inf\limits_{\substack{\calP\in \textup{REF}(\calP_0),\\ \#\calP-\#\calP_0\le N}}\inf\limits_{F\in Y_\calP}\|f-F\|_X.
\end{align*}
Now, $f$ lies in the \textbf{approximation class} $\mathbb{A}_{s,q}(X, Y_\bullet, \textup{REF})$ with respect to $s\in (0,\infty)$ and $q \in (0,\infty]$, if and only if 
\begin{align}\label{eq:approximation_class_quasi-_norm}
    \|f\|_{\mathbb{A}_{s,q}(X, Y_\bullet, \textup{REF})} := \|f\|_{X} + |f|_{\mathbb{A}_{s,q}(X, Y_\bullet, \textup{REF})}:=\|f\|_{X} + \|(N^s\sigma_N(f, X, Y_\bullet, \textup{REF}))_{N\in \N_0}\|_{\ell^q(\N_0)}<\infty,
\end{align}
i.e., $f\in X$ and $(N^s\sigma_N(f, X, Y_\bullet, \textup{REF}))_{N\in \N_0}\in \ell^{q}(\N_0)$.\footnote{For given $q\in (0,\infty)$, the sequence space $\ell^q(\N_0)$ consists of all the real-valued sequences $a:\N_0\rightarrow\R$ for which the (quasi-)norm $\|(a_n)_{n\in \N_0}\|_{\ell^q(\N_0)}:=\left(\sum\limits_{n=0}^\infty |a_{n}|^q\right)^\frac{1}{q}$ is finite. The case $q=\infty$ works with the usual modification. It is well known, that $\ell^{q_1}(\N_0)\hookrightarrow\ell^{q_2}(\N_0)$, if $q_1\le q_2$.} $\mathbb{A}_{s,q}(X, Y_\bullet, \textup{REF})$ is a (quasi-)Banach space with respect to the (quasi-)norm from \eqref{eq:approximation_class_quasi-_norm}.

\subsection{Main results}\label{subsect:main_results}

We assume that the initial space-time partition $\calP_0$ has tensor product structure and fulfills some labeling conditions explained in more detail in the beginning of \cref{sect:space_time_partition_and_refinement}. For its refinement, we employ the atomic refinement method $\textup{ATOMIC}\_\textup{SPLIT}$ from \cite[Sect.~4]{MSS26} which, for the sake of completeness, we present in \cref{subsect:Atomic_ref}. Our first main result is the following embedding of anisotropic Besov spaces into the corresponding approximation classes.

Here and in the sequel, we employ $\R^+:=(0,\infty)$ as well as $\R^+_0:=[0,\infty)$. Further, for $(\alpha_1, \alpha_2), (s_1, s_2)\in \R^2$ and $R\in \{\R^+, \R_0^+, \R\}$, we will denote $(\alpha_1, \alpha_2)\in R(s_1, s_2)$ if there is a $\lambda \in R$ such that $(\alpha_1, \alpha_2) = \lambda(s_1, s_2) $.

\begin{main_result}[Direct estimates]\label{Main_res:direct}
        Let $p,q\in (0,\infty]$, $s_1, s_2\in (0,\infty)$ with $\frac{1}{\frac{1}{s_1}+\frac{d}{s_2}}-\frac{1}{q}+\frac{1}{p}>0$, $(\alpha_1, \alpha_2)\in \R^+_0(s_1, s_2)$ with $\alpha_i<1+\frac1p$, $i=1,2$, and $r_1, r_2\in \N_{\ge 2}$ with $r_i>\alpha_i+s_i$, $i=1,2$. Then the embeddings
    \begin{align*}
        &B^{s_1, s_2}_{q,q}(\Omega_T)\hookrightarrow\mathbb{A}_{\frac{1}{\frac{1}{s_1}+\frac{d}{s_2}}, \infty}\Big(L_p(\Omega_T), \V^{r_1, r_2}_{\bullet}, \textup{PATCH}\_\textup{REFINE}(\cdot, \cdot, d, s_1, s_2)\Big), \quad\text{if}\quad (\alpha_1, \alpha_2)=(0,0), \quad \text{and}
        \\ & B^{\alpha_1+s_1, \alpha_2+s_2}_{q,q}(\Omega_T)\hookrightarrow\mathbb{A}_{\frac{1}{\frac{1}{s_1}+\frac{d}{s_2}}, \infty}\Big(B_{p,p}^{\alpha_1, \alpha_2}(\Omega_T), \V^{r_1, r_2}_{\bullet}, \textup{PATCH}\_\textup{REFINE}(\cdot, \cdot, d, s_1, s_2)\Big), \quad\text{if}\quad (\alpha_1, \alpha_2)\neq(0,0), 
    \end{align*}
    are continuous, where the embedding constant only depends on the parameters, the dimension and measure of $\Omega_T$, the Lipschitz properties of $\Omega$, as well as the size and mesh geometry of $\calP_0$.
\end{main_result}

\begin{rem}
For $(\alpha_1, \alpha_2)=(0,0)$, this essentially means that, for the above choice of parameters, it is possible to approximate a function in $B^{s_1, s_2}_{q,q}(\Omega_T)$ in $L_p(\Omega_T)$ by space-time finite elements of order $(r_1, r_2)$ with a decay rate of the approximation error of $(\# \calP)^{-\frac{1}{\frac{1}{s_1}+\frac{d}{s_2}}}$ on a prismatic mesh $\calP$ if the refinement algorithm $\textup{PATCH}\_\textup{REFINE}(\cdot, \cdot, d, s_1, s_2)$ is used. 
If $\alpha_1>0$ and $\alpha_2>0$, the same result holds for functions in $B^{\alpha_1+s_1, \alpha_2+s_2}_{q,q}(\Omega_T)$, when measuring the error in $B_{p,p}^{\alpha_1, \alpha_2}(\Omega_T)$.
\end{rem}

In order to prove a corresponding result in the opposite direction, we require the notion of \textbf{generalized anisotropic Besov spaces} $\widehat{B}^{s_1, s_2}_{p,q}(\Omega_T)$, $\alpha_1, \alpha_2\in \R$, which will be introduced in \cref{subsect:Multiscale_decomp} and are defined with respect to a multiscale decomposition. These spaces partially coincide with the classical anisotropic spaces from \cref{subsubsect:Besov spaces} which is proven in \cref{cor:to_thm:Embedding_generalized_into_classical_aniso_spaces}.

\begin{main_result}[Inverse estimates]\label{Main_res:invers}
    Let $p \in (0,\infty]$, $q\in (0,\infty)$, $s_1, s_2\in (0,\infty)$, $(\alpha_1, \alpha_2), (s_1', s_2')\in \R_0^+(s_1, s_2)$ with $0<s_i'<s_i$, $i=1,2$,  $r_1, r_2\in \N_{\ge 2}$, and $\frac{1}{\frac{1}{s_1}+\frac{d}{s_2}}+\frac{1}{p}-\frac{1}{q}=0$. Then,
    \begin{align*}
        &\mathbb{A}_{\frac{1}{\frac{1}{s_1}+\frac{d}{s_2}}, q}\Big(L_p(\Omega_T), \V^{r_1, r_2}_{\bullet}, \textup{PATCH}\_\textup{REFINE}(\cdot, \cdot, d, s_1, s_2)\Big)\hookrightarrow \widehat{B}^{s_1', s_2'}_{q,q}(\Omega_T),
         \quad\text{if}\quad (\alpha_1, \alpha_2)=(0,0), \quad \text{and}
        \\
        &\mathbb{A}_{\frac{1}{\frac{1}{s_1}+\frac{d}{s_2}}, q}\Big(\widehat{B}^{\alpha_1, \alpha_2}_{p,p}(\Omega_T), \V^{r_1, r_2}_{\bullet}, \textup{PATCH}\_\textup{REFINE}(\cdot, \cdot, d, s_1, s_2)\Big)\hookrightarrow \widehat{B}^{\alpha_1+s_1, \alpha_2+s_2}_{q, q}(\Omega_T) ,
        \quad\text{if}\quad (\alpha_1, \alpha_2)\neq(0,0),
    \end{align*}
    hold continuously with an embedding constant than only depends on the involved parameters, the dimension, as well as the initial mesh geometry and size.
\end{main_result}

\begin{rem}
This result is the opposite to \cref{Main_res:direct}. If $(\alpha_1, \alpha_2)=(0,0)$ is considered, it shows that a function, which can be approximated in the $L_p(\Omega_T)$-(quasi-)norm on prismatic meshes $\calP$ created by iterative applications of the method $\textup{PATCH}\_\textup{REFINE}(\cdot, \cdot, d, s_1, s_2)$, with an error decay of $(\# \calP)^{-\frac{1}{\frac{1}{s_1}+\frac{d}{s_2}}}$, has to posses a corresponding (generalized) anisotropic Besov regularity, i.e., lie in the space $\widehat{B}^{s_1', s_2'}_{q,q}(\Omega_T)$;  similarly for the case $\alpha_1>0$ and $\alpha_2>0$.
\end{rem}

The corresponding theorems which prove \cref{Main_res:direct} as well as \cref{Main_res:invers} reveal more detail and can be found in \cref{thm:direct_estimates} and \cref{thm:Inverse_estimates}, respectively.

\section{Space-time partition and mesh refinement}\label{sect:space_time_partition_and_refinement}

Our initial covering will be given by a space-time partition $\calP_0$ of $\Omega_T$, which has tensor product structure, i.e., $\calP_0:=\calI_0\otimes \calT_0:=\{I_0\times S_0\mid I_0\in \calI_0, \ S_0 \in \calT_0\}$. Hereby, $\calI_0$ is a disjoint partition of $[0,T]$ given by $\calI_0:=\{[t_i, t_{i+1}) \mid i=0,\dots, N-2 \} \cup \{[t_{N-1}, t_N]\}$, 
qwith $0=t_0<t_1 \dots < t_N=T$, and $\calT_0$ is a non-overlapping, simplicial, and conforming\footnote{A simplicial triangulation $\calT$ of $\Omega$ is called \textit{conforming}, if for any $S, S'\in \calT$, either $S=S'$, $S\cap S' = \emptyset$, or $S\cap S'$ is an $r$-dimensional common face of $S$ \textbf{and} $S'$ with $r\in \{0,\dots, d-1\}$.} triangulation of $\Omega$, where we assume its elements to be closed.

Since we want to iteratively refine prisms $I\times S$, i.e., $I\subset [0,T]$ an interval, and $S\subset\Omega$ a $d$-dimensional simplex, by the aforementioned method $\textup{ATOMIC}\_\textup{SPLIT}$ (see \cref{subsect:Atomic_ref} below), we need to introduce the $d$-dimensional bisection routine, initially proposed by Maubach \cite{Mau95} and Traxler \cite{Tra97}, $\textup{BISECT}(d, \cdot)$ for simplices which extends the classical, one-dimensional bisection routine $\textup{BISECT}(1, \cdot)$ for intervals. The interested reader may find a good summary of the method in \cite[Sect.~2 and the beginning of Sect.~4]{Ste08}. For the method to work, the simplices have to be well-labeled, which is why we additionally assume $\calT_0$ to be a \textbf{well-labeled} triangulation in the sense of \cite{DGS25} or \cite[cond.~b) in Sect.~4]{Ste08}. 

For $(\dim, R) \in \{(1, I), (d, S)\}$, we will denote the set of created intervals or simplices (via bisection) by $\textup{BISECT}(\dim, R)$, correspondingly. Iteratively, for any $n\in \N$, we will denote $R'' \in \textup{BISECT}(\dim, R)^{\otimes n}$, if $R''\in \textup{BISECT}(\dim, R') $ for some $R'\in \textup{BISECT}(\dim, R)^{\otimes(n-1)}$, whereas $\textup{BISECT}(\dim, R)^{\otimes 0}:=\{R\}$. Further, we will call $R$ of \textbf{level} $n\in \N_0$ and write $\ell(R)=n$, if there exists some $R_0\in \calI_0$ or $R_0\in \calT_0$, respectively, such that $R_0\in \textup{BISECT}(\dim, R)^{\otimes n}$.

\subsection{Atomic refinement of prisms}\label{subsect:Atomic_ref}

We will use the refinement technique $\textup{ATOMIC}\_\textup{SPLIT}(\cdot , d, s_1, s_2)$ from \cite[Sect.~4]{MSS26} with respect to $d$ and given parameters $s_1, s_2\in (0,\infty)$ iteratively in order to create refined space-time partitions $\calP$, starting with the tensor product mesh $\calP_0$. Moreover, we apply the notation for $\textit{levels}$ of elements of a space-time partition given there, which extends the one given above for $\textup{BISECT}$ to $\textup{ATOMIC}\_\textup{SPLIT}(\cdot , d, s_1, s_2)$. In order to be self-contained, we include this algorithm for the atomic refinement of prisms $I\times S$ here.

\begin{algorithm}[h]
    \caption{$\textup{ATOMIC}\_\textup{SPLIT}(I\times S, d, s_1, s_2)$}
    \label{Algorithm_ANISOTROPIC_BISECT}
    \begin{algorithmic}
    	\State $n\gets \ell(S)+1$ 
    	\State $m\gets \left\lceil \frac{ns_2}{s_1d}\right\rceil - \left\lceil\frac{(n-1)s_2}{s_1d}\right\rceil$
    	\State $S_{children} \gets \textup{BISECT}(d, S)$
    	\State $I_{children}\gets \textup{BISECT}(1, I)\phantom{}^{\otimes m }$
    	\State \Return $I_{children}\times S_{children}$
    \end{algorithmic}
\end{algorithm}

 \begin{center}
     \begin{figure}[H]
         \begin{center}
             \includegraphics[height = 6cm]{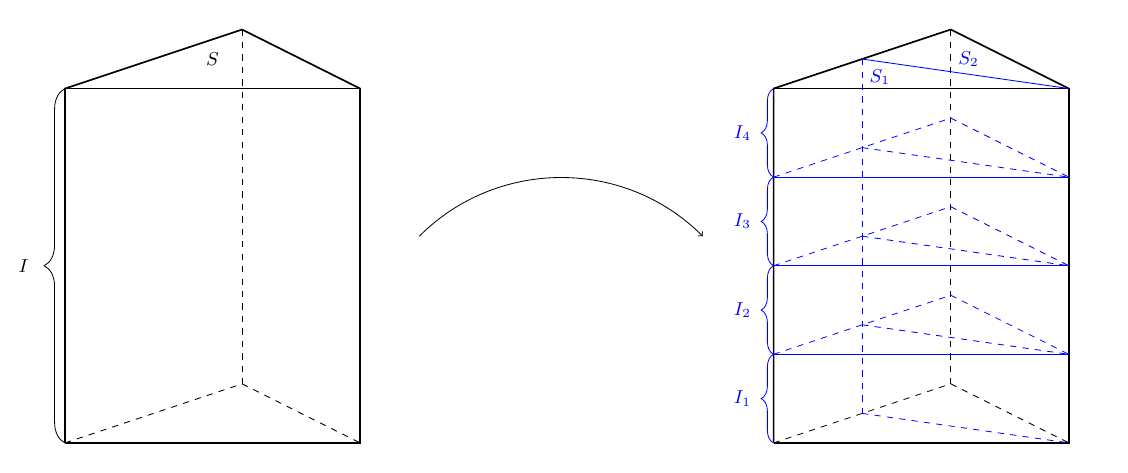}
             \captionof*{figure}{$\textup{ATOMIC}\_\textup{SPLIT}$ for $d=2$ and $s_2=4s_1=2s_1d$.}
         \end{center}
     \end{figure}
 \end{center}

 \begin{rem}\label{Remark_Temporal_Bisection_Active_Triangulation}

    \begin{enumerate}[label=(\roman*)]
    
        \item This technique has some nice properties, namely that for such $\calP$, it holds $\kappa_{\calP}\lesssim_{\,d}\kappa_{\calP_0}$ (due to the fact that the spatial $\textup{BISECT}(d,\cdot)$ method used bounds the minimum head angle of the spatial simplices from below\footnote{In \cite[App.~A]{DGS25}, an explicit bound has been established for a parameter related to $\kappa_{\calP}$. }) and $a(\calP)\lesssim a(\calP_0)$ as shown in \cite[Lem.~4.1]{MSS26}. This is relevant, since $\kappa_{\calP}$ and $a(\calP)$ are incorporated into the inequality constants of Whitney's inequality for approximation with discontinuous anisotropic finite elements and the corresponding direct estimate result, see \cite[Thm.~1.2 and Thm.~1.5]{MSS26}. Generalizations of this result will be proven and employed in this article, which also require a control of  mentioned constants.\label{Remark_Temporal_Bisection_Active_Triangulation_1}
        
        \item Further, we assume that $\textup{BISECT}(1, \cdot)$ splits half-open intervals in two half-open intervals and closed intervals in one half-open interval and one closed interval. This is to guarantee that any temporal partition of $[0,T]$ created from $\calI_0$ by iterative applications of $\textup{BISECT}(1, \cdot)$ again only contains pairwise disjoint elements. This guarantees that for $t\in [0,T]$, the \textbf{active spatial triangulation at time $t$}, given by 
        \begin{align*}
            \calT(t, \calP):=\{S\mid I\times S\in \calP, \ t\in I\} 
        \end{align*}
        is well-defined and indeed a non-overlapping triangulation of $\Omega$. Similarly, $\textup{BISECT}(d, S)$ shall denote a set of two closed $d$-dimensional simplices if $S$ is a $d$-dimensional closed simplex.\label{Remark_Temporal_Bisection_Active_Triangulation_2}

    \item We finally observe that always, $0 \le m \le \lceil \frac{s_2}{s_1 d}\rceil+2$.
    \end{enumerate}
\end{rem}

In the next lemma we see that the size of an element can be estimated by its level and vice-versa, which corresponds to a similar result for the $\textup{BISECT}$ method given in the beginning of \cite[Sect.~4]{Ste08}.

\begin{lem}\label{lemma relationship diameter level - BISECT}
    Let $\curlybinom{I}{S}$ be a $\curlybinom{1}{d}$-dimensional simplex derived from an element $\curlybinom{I_0}{S_0}\in\curlybinom{\calI_0}{\calT_0}$ by finitely many applications of the method $\curlybinom{\textup{BISECT}(1,\cdot)}{\textup{BISECT}(d,\cdot)}$. Then
    \begin{align*}
        \diam(I)\sim_{\, \mu_1(\calP_0), \mu_2(\calP_0)} 2^{-\ell(I)}\quad\text{and}\quad \diam(S)\sim_{\,d, \kappa_{\calP_0}, \mu_3(\calP_0), \mu_4(\calP_0)} 2^{-\frac{\ell(S)}{d}},
    \end{align*}
    where we have used 
    \begin{align*}
        \mu_1(\calP_0):=\max\limits_{I\times S\in \calP_0}|I|,\quad
        \mu_2(\calP_0):=\min\limits_{I\times S\in \calP_0}|I|,\quad
        \mu_3(\calP_0):=\max\limits_{I\times S\in \calP_0}|S|,\quad \text{and}\quad \mu_4(\calP_0):=\min\limits_{I\times S\in \calP_0}|S|.
    \end{align*}
\end{lem}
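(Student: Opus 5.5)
The lemma has a temporal and a spatial part, and both come from one principle: every bisection halves the measure exactly, while diameter and measure are comparable with constants that do not degenerate under refinement.

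\emph{Temporal case.} $\textup{BISECT}(1,\cdot)$ splits an interval at its midpoint. Hence an interval $I$ of level $\ell(I)$ descending from $I_0\in\calI_0$ satisfies
\begin{align*}
\diam(I)=|I|=2^{-\ell(I)}|I_0|.
\end{align*}
Since $\mu_2(\calP_0)\le |I_0|\le \mu_1(\calP_0)$, the equivalence follows with constants $\mu_2(\calP_0)$ and $\mu_1(\calP_0)$. This is immediate.

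\emph{Spatial case, measure.} First I would note that $\textup{BISECT}(d,\cdot)$ cuts a simplex through the midpoint of its refinement edge and the $d-1$ remaining vertices. The two children therefore have equal volume, namely half of the parent's, because they share the base facet opposite the bisected edge direction and have equal heights. By induction,
\begin{align*}
|S|=2^{-\ell(S)}|S_0|\in\big[\mu_4(\calP_0)2^{-\ell(S)},\,\mu_3(\calP_0)2^{-\ell(S)}\big].
\end{align*}

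\emph{Spatial case, diameter.} Next I relate diameter and volume for a shape-regular simplex. From $B_{\rho_S}\subset S$ and $S\subset B_{\diam(S)}$ we get
\begin{align*}
c_d\,\rho_S^d\le |S|\le C_d\,\diam(S)^d.
\end{align*}
Combined with $\rho_S\ge \diam(S)/\kappa_S$, this gives $|S|\sim_{d,\kappa_S}\diam(S)^d$. The only nontrivial input is a uniform bound on $\kappa_S$ for all descendants of $\calT_0$. This is exactly the property of the Maubach--Traxler bisection quoted in \cref{Remark_Temporal_Bisection_Active_Triangulation}\ref{Remark_Temporal_Bisection_Active_Triangulation_1}: $\kappa_S\lesssim_d\kappa_{\calP_0}$, since descendants of a well-labeled simplex fall into finitely many similarity classes (cf. \cite{Ste08,DGS25}). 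Taking $d$-th roots then yields
\begin{align*}
\diam(S)\sim_{d,\kappa_{\calP_0}}|S|^{1/d}\sim_{\mu_3(\calP_0),\mu_4(\calP_0)}2^{-\ell(S)/d}.
\end{align*}

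The only step needing care is this uniform shape-regularity. It is not proved here but cited from the bisection literature, so the main obstacle is merely making sure the constants depend only on $d$ and $\kappa_{\calP_0}$, as the remark asserts.
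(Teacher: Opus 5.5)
Your proposal is correct and essentially coincides with the paper's proof. The temporal part is identical, and for the spatial part the paper likewise combines $\diam(S)\sim_{d,\kappa_{\calP_0}}|S|^{1/d}$ (from the shape regularity preserved by $\textup{BISECT}(d,\cdot)$) with the volume--level relation $|S|=2^{-\ell(S)}|S_0|$; the only difference is that the paper takes this relation from \cite[Eq.~(4.1)]{Ste08}, whereas you verify the exact volume halving directly via the common cut facet and equal heights.
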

\begin{rem}
    From now on, we will further use the abbreviations
    \begin{align*}
        \mu_t(\calP_0):=\{\mu_1(\calP_0), \mu_2(\calP_0)\},\quad \mu_{\bm{x}}(\calP_0):=\{\mu_3(\calP_0), \mu_4(\calP_0)\},\quad\text{and}\quad \mu(\calP_0):=\{\mu_1(\calP_0), \mu_2(\calP_0),\mu_3(\calP_0), \mu_4(\calP_0)\}.
    \end{align*}
\end{rem}
\begin{proof}
    For the temporal part, this can directly be seen by $\diam(I)=|I|=2^{-\ell(I)}|I_0|\sim_{\,\mu_t(\calP_0)} 2^{-\ell(I)}$. Since, the $\textup{BISECT}(d,\cdot)$ method controls the minimum head angle of the simplices, we first have $\diam(S)\sim_{\,d ,\kappa_{\calP_0}}|S|^\frac{1}{d}$. Together with \cite[Eq.~(4.1)]{Ste08} and its surrounding text, we can derive the existence of two positive constants $C_1$ and $C_2$ such that 
    \begin{align*}
        C_1 \, 2^{-\frac{\ell(S)}{d}}\le |S|^\frac{1}{d}\sim_{\,d, \kappa_{\calP_0}} \diam(S)\le C_2 \, 2^{-\frac{\ell(S)}{d}}.
    \end{align*}
    This paragraph at the beginning of \cite[Sect.~4]{Ste08}, together with \cite[Thm.~2.1 and the explanations given below]{Ste08}, make it clear that $C_1$ and $C_2$ only depend on $d, \kappa_{\calP_0}$, and $\mu_{\bm{x}}(\calP_0)$.
\end{proof}

Now we can show the corresponding result for the $\textup{ATOMIC\_SPLIT}$ method. 

\begin{cor}\label{corollary relationship diameter level - ATOMIC_SPLIT}
    Let $I\times S$ be created from an element $I_0\times S_0\in \calP_0$ by finitely many applications of the method $\textup{ATOMIC\_SPLIT}(\cdot, d, s_1, s_2)$. Then
    \begin{align*}
        2^{-\max\left(\frac{s_2}{s_1},1\right)\frac{\ell(I\times S)}{d}} \lesssim_{\, d, s_1, s_2, \kappa_{\calP_0}, \mu(\calP_0)} \diam(I\times S)\lesssim_{\, d, s_1, s_2 ,\kappa_{\calP_0}, \mu(\calP_0)} 2^{-\min\left(\frac{s_2}{s_1},1\right)\frac{\ell(I\times S)}{d}}.
    \end{align*}
\end{cor}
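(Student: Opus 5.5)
The plan is to reduce the statement to \cref{lemma relationship diameter level - BISECT}, applied separately to the temporal factor $I$ and the spatial factor $S$. First I will pin down what $\ell(I\times S)$ means for prisms produced by $\textup{ATOMIC}\_\textup{SPLIT}$. Following \cite[Sect.~4]{MSS26}, the level of a prism is the number of atomic splits applied to reach it from $I_0\times S_0\in\calP_0$. Each atomic split bisects $S$ exactly once, so $\ell(S)=\ell(I\times S)=:n$.

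For the time factor, the number of temporal bisections telescopes. Summing the quantities $m$ from \cref{Algorithm_ANISOTROPIC_BISECT} over the levels $1,\dots,n$ gives
\begin{align*}
\ell(I)=\sum_{k=1}^{n}\left(\left\lceil \tfrac{k s_2}{s_1 d}\right\rceil-\left\lceil \tfrac{(k-1)s_2}{s_1 d}\right\rceil\right)=\left\lceil \tfrac{n s_2}{s_1 d}\right\rceil ,
\end{align*}
so that $\ell(I)\in\bigl[\tfrac{ns_2}{s_1d},\tfrac{ns_2}{s_1d}+1\bigr)$. Applying \cref{lemma relationship diameter level - BISECT} to each factor then yields
\begin{align*}
\diam(I)\sim_{\,\mu_t(\calP_0)}2^{-\ell(I)}\sim 2^{-\frac{s_2}{s_1}\frac{n}{d}}
\quad\text{and}\quad
\diam(S)\sim_{\,d,\kappa_{\calP_0},\mu_{\bm{x}}(\calP_0)}2^{-\frac{n}{d}} .
\end{align*}
The ceiling costs at most a factor $2$, which is absorbed into the constant.

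To combine the two factors, I use that the diameter of a product set satisfies
\begin{align*}
\max(\diam I,\diam S)\le\diam(I\times S)=\sqrt{\diam(I)^2+\diam(S)^2}\le\sqrt2\max(\diam I,\diam S).
\end{align*}
Hence $\diam(I\times S)\sim\max\bigl(2^{-\frac{s_2}{s_1}\frac nd},2^{-\frac nd}\bigr)=2^{-\min\left(\frac{s_2}{s_1},1\right)\frac nd}$. This gives the upper bound directly. For the lower bound, the elementary inequality $\max(a,b)\ge\min(a,b)$ gives $\diam(I\times S)\gtrsim 2^{-\max\left(\frac{s_2}{s_1},1\right)\frac nd}$. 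The statement is therefore weaker than what the argument provides, and it follows immediately.

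The only step needing care is the bookkeeping of levels. I must check that the level convention for $\textup{ATOMIC}\_\textup{SPLIT}$ in \cite{MSS26} really gives $\ell(S)=\ell(I\times S)$ and the telescoped formula for $\ell(I)$, in particular that $\calP_0$ starts at level zero for both factors. Beyond that, I only need the constants to depend on the stated parameters. They do: the factor $\sqrt2$ and the factor $2$ from the ceiling are absolute, and the dependence on $s_1$ and $s_2$ enters only through this conversion between levels.
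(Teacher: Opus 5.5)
Your proof is correct and follows the same route as the paper. You combine $\ell(S)=\ell(I\times S)$ and the level count for $I$ with \cref{lemma relationship diameter level - BISECT} and $\diam(I\times S)=(\diam(I)^2+\diam(S)^2)^{1/2}$. Your telescoped identity $\ell(I)=\lceil \frac{ns_2}{s_1d}\rceil$ is the precise form of the relation the paper states only as $\ell(I)\sim_{\,d,s_1,s_2}\ell(I\times S)$. Your observation that the lower bound even holds with exponent $\min\left(\frac{s_2}{s_1},1\right)$ is a harmless sharpening of the stated estimate.
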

\begin{proof}
    Due to the last comment in \cref{Remark_Temporal_Bisection_Active_Triangulation}, the method generates simplices with $\ell(I)\sim_{\,d,s_1, s_2 } \ell(I\times S)$ and $\ell(S)=\ell(I\times S)$. Therefore, together with \cref{lemma relationship diameter level - BISECT}, we obtain
    \begin{align*}
        \diam(I\times S)&= \left(\diam(I)^2+\diam(S)^2\right)^\frac{1}{2}
        \\&\sim_{\, d, \kappa_{\calP_0}, \mu(\calP_0)} \left(2^{-2\ell(I)}+2^{-\frac{2\ell(S)}{d}}\right)^\frac{1}{2}\lesssim 2^{-\min\left(\ell(I), \frac{\ell(S)}{d}\right)}\sim_{\,d, s_1, s_2} 2^{-\min\left(\frac{s_2}{s_1},1\right)\frac{\ell(I\times S)}{d}}.
    \end{align*}
    The derivation of the lower bound works similarly.
\end{proof}

\subsection{Refinement of patches preserving mesh regularity}\label{subsect:Patch_refine}

As mentioned above, our atomic refinement method preserves the shape regularity of the prisms on the level of an individual prism. Nonetheless, arbitrary atomic refinements of prisms in a space-time partition could lead to other undesirable results. Therefore, we have to construct an algorithm that refines a given element and a certain number of surrounding elements in order to maintain desired properties. Our goal is to create partitions that guarantee

\begin{enumerate}[label=(\roman*)]
    \item \label{Enumeration desired properties of the mesh 1} \textit{conformity in space}: $\calT(t,\calP)$ is conforming for every $t\in [0,T]$, \label{Enumeration desired properties of the mesh - 1}
    \item\label{Enumeration desired properties of the mesh 2} the validity of a \textit{$1$-irregular rule} in time. This means that if two elements of $\calP$, let us call them $I\times S$ and $I'\times S'$, touch in time, i.e., their closures share a $d$-dimensional hyperface in $\R^{d+1}$ and $\# (\overline{I}\cap \overline{I'}) = 1$, then their levels differ by at most one. 
\end{enumerate}
\begin{rem}
    If $d=1$, the conformity condition in space is always fulfilled. To avoid arbitrary differences of levels of elements that touch in space, we instead require a $1$-irregular rule in space in this case as well.
\end{rem}

In order to create such partitions, we adapt the idea of the refinement strategy for spatial triangulations presented in \cite[Sect.~4-5]{Ste08} that preserves initial conformity to our needs. 

If we want to refine an element of a space-time partition, we have to consider the elements surrounding the element that have to be refined as well in order to maintain conformity in space and the 1-irregular rule in time (and space if $d=1$). This leads to the observation that whenever we want to refine an element of $\calP$ we have to consider all the elements that have to be refined as well in order  to not violate our desired properties.

Therefore, in the case $d\ge 2$, let $\textup{RE}(S)$ be the \textbf{refinement edge} of $S$, i.e., the edge of $S$ that gets bisected in case of an application of $\textup{BISECT}(d, S)$, for $I\times S \in \calP$. For the sake of notation, we will put $\textup{RE}(S):=\emptyset$ in case of $d=1$.

\begin{defi}\label{Def - Necessary Refinement of neighbors}
    We define the set of \textbf{necessary refinements of neighbors} of $I\times S\in \calP$ as follows
    \begin{align*}
        \calN(I\times S, \calP):=\calN_t(I\times S, \calP)\cup \calN_{\bm{x}}(I\times S, \calP).
    \end{align*}
    Here, 
    \begin{align*}
        \calN_t(I\times S, \calP):=\left\{I'\times S'\in \calP\setminus\{I\times S\}\mid \# \left(\overline{I}\cap \overline{I'}\right)=1, \  \textup{dim}(S\cap S')=d,\  \text{and}\  \ell(I'\times S')=\ell(I\times S)-1\right\}
    \end{align*}
    and
    \begin{align*}
        \calN_{\bm{x}}(I\times S, \calP)&:=
            \left\{I'\times S'\in \calP\setminus\{I\times S\}\mid  |I\cap I'|>0, \  1\le\textup{dim}(S\cap S')\le d-1, \  \text{and} \  \textup{RE}(S)\subset S'\right\}, \text{ if }d\ge 2,
            \\
        \calN_{\bm{x}}(I\times S, \calP)&:=
            \left\{I'\times S'\in \calP\setminus\{I\times S\}\mid  |I\cap I'|>0, \  \# (S\cap S')=1, \  \text{and} \  \ell(I'\times S')=\ell(I\times S)-1 \right\}, \text{ if $d=1$}.
    \end{align*}
\end{defi}

\begin{rem}
    Note that $\calN_t(I\times S, \calP)$ and $\calN_{\bm{x}}(I\times S, \calP)$ are always disjoint sets.
\end{rem}

If the space-time partition fulfills the mentioned regularity conditions, the set of necessary refinements can be characterized even further. The following lemma extends the result from \cite[Cor.~4.6]{Ste08} to a time-dependent setting.

\begin{lem}\label{Lemma_Properties_of_Neighboring_Space_Time_Elements}
    Let $\calP_0$ satisfy the assumptions stated at the beginning of \cref{sect:space_time_partition_and_refinement} and assume that 
    $\calP$ is a partition obtained from $\calP_0$ by a finite number of applications of $\textup{ATOMIC}\_\textup{SPLIT}$ and it is conforming in space, additionally $1$-irregular in space if $d=1$, and $1$-irregular in time. Then for \mbox{$I\times S\in\calP$}, the prism $I'\times S'\in \calN_{\bm{x}}(I\times S, \calP)$ fulfills exactly one of the following properties:
	\begin{itemize}
		\item $\ell(I'\times S')=\ell(I\times S)$, $I=I'$, and $\textup{RE}(S)=\textup{RE}(S')$.
		\item $\ell(I'\times S')=\ell(I\times S)-1$, $I\subset I'$, and there exists
		$I''\times S'' \in \textup{ATOMIC}\_\textup{SPLIT}(I'\times S', d, s_1, s_2)$
		with $\textup{RE}(S)=\textup{RE}(S'')$ and $I=I''$.
	\end{itemize}
    Further, if $I'\times S'\in \calN_{t}(I\times S, \calP)$, then additionally $S\in \textup{BISECT}(d, S')$ is fulfilled.
\end{lem}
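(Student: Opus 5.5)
The plan is to reduce the statement to the purely spatial result \cite[Cor.~4.6]{Ste08} by freezing time, and then use the structure of $\textup{ATOMIC}\_\textup{SPLIT}$ to compare the temporal intervals.

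\textbf{Step 1: the spatial case.} Let $I'\times S'\in\calN_{\bm{x}}(I\times S,\calP)$. Since $|I\cap I'|>0$, there is a time $t\in I\cap I'$. Both $S$ and $S'$ belong to $\calT(t,\calP)$, which is conforming by assumption. Moreover, $\calT(t,\calP)$ is obtained from the well-labeled $\calT_0$ by finitely many bisections, since each atomic split performs exactly one spatial bisection. We therefore apply \cite[Cor.~4.6]{Ste08} to $\calT(t,\calP)$. Because $\textup{RE}(S)\subset S'$ and $S\neq S'$ share a face of dimension between $1$ and $d-1$, it yields exactly one of two alternatives. Either $\ell(S')=\ell(S)$ and $\textup{RE}(S)=\textup{RE}(S')$, or $\ell(S')=\ell(S)-1$ and $\textup{RE}(S)$ is the refinement edge of a child $S''\in\textup{BISECT}(d,S')$.

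For $d=1$ we replace this step by the $1$-irregularity in space: the definition of $\calN_{\bm{x}}$ directly gives level $\ell-1$, and $\textup{RE}=\emptyset$.

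\textbf{Step 2: the temporal intervals.} By the remark after \cref{corollary relationship diameter level - ATOMIC_SPLIT}, $\ell(I\times S)=\ell(S)$. Moreover, the temporal level $\ell(I)=\lceil \ell s_2/(s_1 d)\rceil$ plus a constant is a deterministic function of the prism level. This holds because every prism of a given level was produced through the same sequence of temporal bisection counts from the tensor-product mesh $\calP_0$.

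The temporal intervals arising at all levels are dyadic subintervals of elements of $\calI_0$, so they are nested. In the first alternative, $\ell(I)=\ell(I')$ and $I\cap I'\ne\emptyset$ (positive measure), so the nested dyadic structure forces $I=I'$. In the second alternative, $\ell(I')\le\ell(I)$, so nestedness gives $I\subset I'$. Since $\textup{ATOMIC}\_\textup{SPLIT}(I'\times S')=\textup{BISECT}(1,I')^{\otimes m}\times\textup{BISECT}(d,S')$ with $m$ equal to exactly the level increment $\ell(I)-\ell(I')$, the child intervals of $I'$ at level $\ell(I)$ include $I$. Choosing $I''=I$ and $S''$ from Step 1 gives the required child. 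The two alternatives are mutually exclusive because the levels differ.

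\textbf{Step 3: temporal neighbours.} Let $I'\times S'\in\calN_t(I\times S,\calP)$, so $\ell(S')=\ell(S)-1$ and $\dim(S\cap S')=d$. All simplices generated by bisection from $\calT_0$ form a binary forest, and any two of them are either nested or have disjoint interiors. Since $S$ and $S'$ overlap in a $d$-dimensional set, they are nested, and the level comparison forces $S\subset S'$. Because $S$ is exactly one level deeper, $S$ is a child of $S'$, i.e.\ $S\in\textup{BISECT}(d,S')$.

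\textbf{Main obstacle.} The delicate point is Step 1: applying Stevenson's corollary to the time-slice $\calT(t,\calP)$. We must check that this slice is genuinely a conforming triangulation in Stevenson's refinement class, meaning reachable from $\calT_0$ by bisections of well-labeled simplices. Any bisection-generated conforming refinement of a well-labeled mesh should suffice, and we would verify this via the characterization in \cite[Sect.~4]{Ste08}. We also need the half-open interval convention of \cref{Remark_Temporal_Bisection_Active_Triangulation}\ref{Remark_Temporal_Bisection_Active_Triangulation_2} to make the slice well defined.
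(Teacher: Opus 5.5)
Your proposal is correct and takes essentially the same route as the paper. You fix $t_0\in I\cap I'$ and apply \cite[Cor.~4.6]{Ste08} to the conforming slice $\calT(t_0,\calP)$, handling $d=1$ through the level condition. You use that the temporal level is determined by the prism level, and that the intervals are dyadic subintervals of a common $I_0\in\calI_0$, to get $I=I'$ or $I\subset I'$ with $I$ among the $(\ell(I)-\ell(I'))$-fold temporal children of $I'$. For the $\calN_t$ case you use nestedness of bisection-generated simplices. The paper applies Stevenson's corollary to the time slice without further comment, so the check you flag as the main obstacle is exactly the implicit justification the paper relies on.
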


\begin{figure}[H]
    \begin{subfigure}{0.49\linewidth} 
        \includegraphics[width=\linewidth]{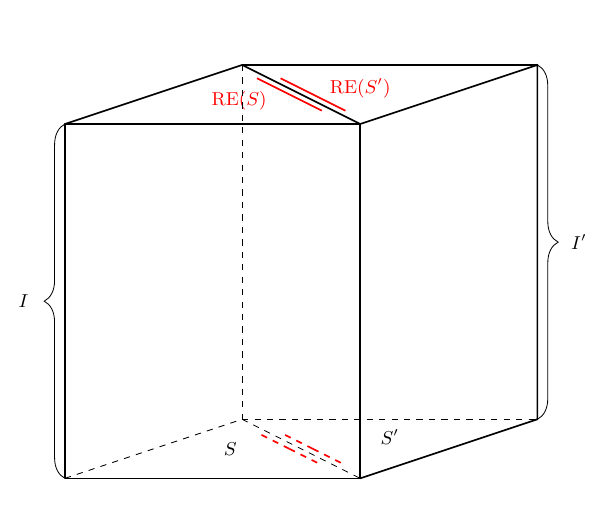}
    \end{subfigure}
    \hfill
    \begin{subfigure}{0.49\linewidth} 
        \includegraphics[width=\linewidth]{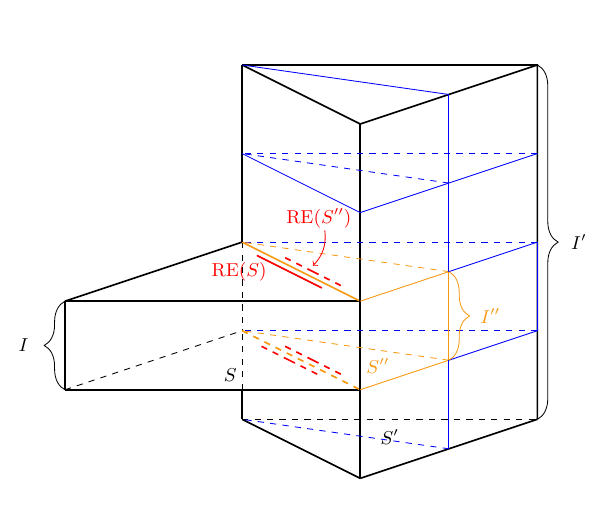}
    \end{subfigure}
    \captionof*{figure}{Illustration of the two possible cases for $I'\times S'\in \calN_{\bm{x}}(I\times S, \calP)$ for $d = 2$ and $s_2 = 4s_1 = 2s_1d$.}
\end{figure}

\begin{figure}[H]
	    \begin{center}
		    \includegraphics[height=0.42\linewidth]{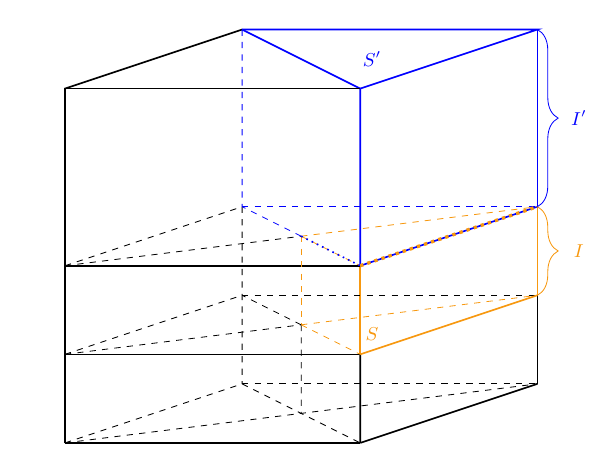}
	    \end{center}
        \captionof*{figure}{Illustration of $I'\times S'\in \calN_{t}(I\times S, \calP)$ for $d = 2$ and $s_2 = 2s_1 = s_1d$.}
    \end{figure}

\begin{proof}
    First, we consider the case of $I'\times S'\in \calN_{\bm{x}}(I\times S, \calP)$. Let $t_0\in I\cap I'$. Then $S,S'\in \calT(t_0,\calP)$ with RE$(S)\subset S'$. Thus \cite[Cor.~4.6]{Ste08} for $d\ge 2$ and the $1$-irregularity in space for $d=1$, respectively, imply that either 
	\begin{align*}
		\ell(I'\times S')=\ell(S')=\ell(S)=\ell(I\times S) \quad \text{or} \quad \ell(I'\times S')=\ell(S')=\ell(S)-1=\ell(I\times S)-1.
	\end{align*}
	Since $|I\cap I'|>0$, and $\calP$ has been derived from $\calP_0$, which has tensor product structure, by finitely many applications of $\textup{ATOMIC}\_\textup{SPLIT}(\cdot, d, s_1, s_2)$, we know that there is $I_0\in \calI_0$ such that \mbox{$I \in \textup{BISECT}(1, I_0)^{\otimes \ell(I)}$} and \mbox{$I' \in \textup{BISECT}(1, I_0)^{\otimes \ell(I')}$}. On the one hand, assume $\ell(I'\times S')=\ell(I\times S)$, which yields $\ell(I') = \ell(I)$ and, in particular, implies $I=I'$. Furthermore, in this case $\ell(S)=\ell(S')$ and thus \cite[Cor.~4.6]{Ste08} shows $\textup{RE}(S)=\textup{RE}(S')$. Whereas on the other hand consider $\ell(I'\times S')=\ell(I\times S)-1$, which gives $\ell(I') \le \ell(I)$, and thus leads to $I\subset I'$. Further, for any $I''\times S''\in \textup{ATOMIC}\_\textup{SPLIT}( I'\times S', d, s_1, s_2)$, $\ell(I''\times S'')=\ell(I\times S)$, i.e., $\ell(I'') = \ell(I)$ and $I, I''\in\textup{BISECT}(1, I')^{\otimes \ell(I) - \ell(I')}$. Among those $I''$, there must be one with $I=I''$. Further, in this case $\ell(S')=\ell(S)-1$ holds true, which implies the existence of $S''\in\textup{BISECT}(d, S')$ with $\textup{RE}(S)=\textup{RE}(S'')$ according to \cite[Cor.~4.6]{Ste08}, since $S, S'\in \calT(t_0,\calP)$.
    
    Lastly, we will consider $I'\times S'\in \calN_{t}(I\times S, \calP)$. The initial tensor product structure and $\ell(S')=\ell(I'\times S')= \ell(I\times S)-1= \ell(S)-1$ imply the asserted $S\in \textup{BISECT}(d, S')$. Thus, the proof is complete.

\end{proof}

\begin{cor}\label{Corollary_to_Lemma_Properties_of_Neighboring_Space_Time_Elements}
    Under the assumptions of the above lemma, it holds that, $\#\calN(I\times S, \calP)\lesssim_{\, d, s_1, s_2, \kappa_{\calP_0}}1$.
\end{cor}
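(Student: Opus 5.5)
We bound $\#\calN_t(I\times S,\calP)$ and $\#\calN_{\bm{x}}(I\times S,\calP)$ separately. For both we use the structural information of \cref{Lemma_Properties_of_Neighboring_Space_Time_Elements}, together with the fact (\cref{Remark_Temporal_Bisection_Active_Triangulation}) that $\kappa_\calP\lesssim_{\,d}\kappa_{\calP_0}$.

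\emph{Temporal neighbours.} Let $I'\times S'\in\calN_t(I\times S,\calP)$. Then $\overline{I}\cap\overline{I'}$ is a single point, which must be the left or the right endpoint of $I$; hence there are at most two choices for the side on which $I'$ lies. By \cref{Lemma_Properties_of_Neighboring_Space_Time_Elements}, $S\in\textup{BISECT}(d,S')$, so $S'$ is the unique parent of $S$ (or, for $d=1$, the unique parent interval). Since $\calT(t,\calP)$ is a non-overlapping triangulation for each $t$, fixing a time $t$ in $I'$ close to the common endpoint shows that at most one prism of $\calP$ has spatial component $S'$ and time interval containing $t$. Consequently $\#\calN_t(I\times S,\calP)\le 2$.

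\emph{Spatial neighbours.} Let $I'\times S'\in\calN_{\bm{x}}(I\times S,\calP)$. By the lemma, either $I'=I$, or $I\subset I'$ with $\ell(I')\ge \ell(I)-(\lceil s_2/(s_1d)\rceil+2)$. In both cases $I'\supseteq I$, so any fixed $t_0\in I$ lies in $I'$. Since the elements of $\calP$ are pairwise disjoint, the map $I'\times S'\mapsto S'$ is then injective into $\calT(t_0,\calP)$. For $d\ge2$ the image consists of simplices of $\calT(t_0,\calP)$ containing the edge $\textup{RE}(S)$, and by conformity every such simplex contains a fixed vertex of $S$. For $d=1$ the image consists of intervals sharing an endpoint with $S$, of which there are at most two. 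It remains to bound, for $d\ge 2$, the number of simplices in a conforming triangulation that share a vertex. Each such simplex has level $\ell(S)$ or $\ell(S)-1$, hence by \cref{lemma relationship diameter level - BISECT} and shape regularity has diameter and inradius comparable to $2^{-\ell(S)/d}$, with constants depending on $d,\kappa_{\calP_0}$. Volume comparison inside a ball of radius $\sim 2^{-\ell(S)/d}$ around the vertex then bounds their number by a constant depending only on $d$ and $\kappa_{\calP_0}$.

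We expect the main obstacle to be exactly this volume comparison, because \cref{lemma relationship diameter level - BISECT} also carries a dependence on $\mu_{\bm{x}}(\calP_0)$, which the claim does not allow. To avoid it, we will argue purely through shape regularity: simplices sharing a vertex in a conforming, shape-regular triangulation subtend solid angles at that vertex bounded below in terms of $\kappa_{\calP_0}$ and $d$. Their number is therefore bounded by the total solid angle divided by this minimum, which needs no size comparison at all. Adding the two bounds gives $\#\calN(I\times S,\calP)\lesssim_{\,d,s_1,s_2,\kappa_{\calP_0}}1$.
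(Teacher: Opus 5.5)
Your proof is correct and takes the same route as the paper. The paper's proof is the one line that the bound is a direct consequence of \cref{Lemma_Properties_of_Neighboring_Space_Time_Elements} together with $\kappa_\calP\lesssim_{\,d}\kappa_{\calP_0}$; you supply the details: at most two temporal neighbours, injection of the spatial neighbours into a single active triangulation $\calT(t_0,\calP)$ via $I'\supseteq I$, and a count of simplices sharing a vertex. Your solid-angle count is the right choice, since it gives a constant depending only on $d$ and $\kappa_{\calP_0}$ (not even on $s_1,s_2$) and avoids the $\mu_{\bm{x}}(\calP_0)$-dependence that a volume comparison would bring in.
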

\begin{proof}
    This is a direct consequence of the above lemma combined with $\kappa_\calP\sim_{\,d}\kappa_{\calP_0}$ due to \cite[Lem.~4.1]{MSS26}.
\end{proof}

Now we propose a recursive refinement strategy, that exploits the results of \cref{Lemma_Properties_of_Neighboring_Space_Time_Elements}. This algorithm follows the idea of the \textbf{refine}-algorithm proposed in the beginning of \cite[Sect.~5]{Ste08}. 

\begin{algorithm}[H]
\caption{$\textup{PATCH}\_\textup{REFINE}(\calP, I\times S, d, s_1, s_2)$}
\begin{algorithmic}
	\State $K\gets \emptyset$ \ \% elements of $\calP$ that should be refined in the end
	\State $F\gets \{I\times S\}$ \ \% elements that have to be considered for refinement to ensure desired properties
    
	\While{$F\ne \emptyset$}
		\State $F_{new} \gets \emptyset$
		\For{$I'\times S'\in F$}
			\For{$I'' \times S'' \in \calN(I'\times S',\calP)\setminus(K\cup F)$}
				\If{$\ell(I'\times S')=\ell(I''\times S'')$}	
					\State $F_{new}\gets F_{new}\cup \{I''\times S''\}$
				\Else 
                    \State $\calP_{old}\gets \calP$ 
					\State $\calP\gets\textup{PATCH}\_\textup{REFINE}(\calP, I''\times S'', d, s_1, s_2)$
                    \If{$I'' \times S'' \in \calN_{\bm{x}}(I'\times S',\calP_{old})$}
    					\For{$I'''\times S'''\in \textup{ATOMIC}\_\textup{SPLIT}( I''\times S'', d, s_1, s_2$)
                        }
    						\If{$I'''\times S'''\in \calN(I'\times S',\calP)$}
    							\State $F_{new}\gets F_{new}\cup \{I'''\times S'''\}$
    						\EndIf
    					\EndFor			
                    \EndIf
				\EndIf
			\EndFor
		\EndFor
		\State $K\gets K\cup F$
		\State $F\gets F_{new}$
	\EndWhile
	\State $\calP' \gets \calP\setminus\{I\times S\}$
	\For{$\tilde{I}\times \tilde{S} \in K$}
		\State $\calP'\gets \calP' \cup\textup{ATOMIC}\_\textup{SPLIT}(\tilde{I}\times \tilde{S}, d, s_1, s_2)$ 
	\EndFor
	\State \Return $\calP'$
\end{algorithmic}
\end{algorithm}

\begin{thm}\label{Theorem - conformity 1-reg Patch refine}
    Let $\calP$ fulfill the requirements from \cref{Lemma_Properties_of_Neighboring_Space_Time_Elements} and $I\times S\in \calP$. Then, the space-time partition $\calP':=\textup{PATCH}\_\textup{REFINE}(\calP, I\times S, d, s_1, s_2)$ does as well and the minimum amount of applications of $ \textup{ATOMIC}\_\textup{SPLIT}( \cdot, d, s_1, s_2)$ has been used in order to atomically refine $I\times S$ and simultaneously maintain these properties. In particular, the algorithm terminates. Further, any newly created element has a level less or equal to $\ell(I\times S)+1$.
\end{thm}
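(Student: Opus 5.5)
The proof goes by strong induction on the level $\ell(I\times S)$, mirroring the argument for Stevenson's \textbf{refine} in \cite[Sect.~5]{Ste08}.

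\emph{Termination.} The only recursive calls are made on elements $I''\times S''\in\calN(I'\times S',\calP)$ with $\ell(I''\times S'')\neq\ell(I'\times S')$. By \cref{Lemma_Properties_of_Neighboring_Space_Time_Elements} and the definition of $\calN_t$, such an element has level $\ell(I'\times S')-1$. Inside the while loop all elements in $F$ keep the level $\ell(I\times S)$: elements added directly have equal level, and the children added after a recursive call also have level $\ell(I\times S)$. Hence every recursive call is on a strictly smaller level, and levels are bounded below by $0$. So, by the induction hypothesis (the base case at level $0$ has no lower-level neighbours), each recursive call terminates and returns a partition satisfying the requirements of \cref{Lemma_Properties_of_Neighboring_Space_Time_Elements}. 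At a fixed level the while loop only adds elements not yet in $K\cup F$. Since $\calP$ is finite and each element has boundedly many neighbours (\cref{Corollary_to_Lemma_Properties_of_Neighboring_Space_Time_Elements}), the loop stops.

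\emph{Level bound and minimality.} Every element of $K$ has level $\ell(I\times S)$, so the final atomic splits create elements of level $\ell(I\times S)+1$. Recursively created elements have level at most $(\ell(I\times S)-1)+1$. Minimality holds because each element placed in $K$, or recursively refined, is forced. Suppose an element of $\calN_{\bm{x}}$ (containing $\textup{RE}(S)$) were left unrefined after $S$ is bisected. Then at any time $t\in I\cap I'$ the midpoint of $\textup{RE}(S)$ would be a hanging node, and $\calT(t,\calP)$ would not be conforming. Suppose instead an element of $\calN_t$ of level $\ell-1$ were left unrefined. Then it would touch in time a child of level $\ell+1$, which violates $1$-irregularity. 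Forcedness propagates transitively along the chains built by the algorithm, so any admissible refinement of $I\times S$ must contain all atomic splits performed. Conversely, the algorithm performs no split outside these chains.

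\emph{Preservation of the properties.} This is the main step and needs a case analysis at the final split.
\begin{itemize}
\item[(a)] \emph{Conformity in space.} Fix $t$ and consider the spatial triangulation $\calT(t,\calP')$. The simplices of $K$ that are active at $t$ form exactly the closure under ``refinement-edge sharing'' used by Stevenson. This uses \cref{Lemma_Properties_of_Neighboring_Space_Time_Elements}: same-level neighbours share time interval and refinement edge, while lower-level ones are first refined recursively and their relevant children are then added to $F$. So \cite[Sect.~5]{Ste08} applied at time $t$ gives conformity. The point to check carefully is that the temporal splits inside \textup{ATOMIC}\_\textup{SPLIT} do not break this. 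Since all elements in $K$ have the same level, they use the same $m$, so at every $t$ the active simplices are bisected consistently.
\item[(b)] \emph{$1$-irregularity in time.} New children have level $\ell+1$. Any time-neighbour of such a child that has level $\ell-1$ is either a time-neighbour of its parent, hence in $\calN_t$ and refined beforehand, or it arises inside the same prism, which is impossible.
\item[(c)] \emph{$1$-irregularity in space for $d=1$.} This is analogous to (b).
\end{itemize}
Recursive calls are handled by the induction hypothesis, so $\calP$ is admissible before the final splits.

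I expect the hardest part to be (a). There one must argue that after the recursive refinement the set $\calN(I'\times S',\calP)$, computed in the updated $\calP$, correctly captures the children that share the refinement edge. The recursive calls may change $\calP$ in the middle of the loop, and the invariant has to survive these changes.
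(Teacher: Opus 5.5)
Your skeleton matches the paper's: induction on $\ell(I\times S)$, every recursive call is on a neighbour of level $\ell(I\times S)-1$, and \cref{Lemma_Properties_of_Neighboring_Space_Time_Elements} controls the neighbours. Your level bound and your forcedness argument for minimality are fine. But the decisive step is missing, exactly where you place it yourself.

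In (a) you assert that the elements of $K$ active at time $t$ form the full patch of $\textup{RE}(S)$ in the current triangulation. That is, you need $\calN(\tilde I\times\tilde S,\calP)\subset K$ for every $\tilde I\times\tilde S\in K$, in the partition reached after all recursive calls. You leave this as ``the invariant has to survive these changes''. Without it, applying \cite[Sect.~5]{Ste08} at fixed $t$ is unjustified: elements moved into $K$ are never re-examined, and later recursive calls could a priori create new neighbours of them. Part (b) silently uses the same fact when it says the level-$(\ell-1)$ time-neighbours were ``refined beforehand''. Two smaller points:
\begin{itemize}
\item ``Same level, hence same $m$'' synchronizes the temporal splits only because all elements of $K$ also share the interval $I$. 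This follows transitively from the lemma and should be stated.
\item Your termination argument (``$\calP$ is finite'') ignores that $\calP$ grows during the loop.
\end{itemize}

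The paper supplies the missing structure by following the loop explicitly.
\begin{itemize}
\item \emph{Round one.} Every lower-level neighbour of $I\times S$ has level $\ell-1$ and is refined recursively. This yields $K_1=\{I\times S\}$ and $F_1=\calN(I\times S,\calP_1)=\calN_{\bm{x}}(I\times S,\calP_1)$, all of level $\ell$, hence all with interval $I$ and refinement edge $\textup{RE}(S)$.
\item \emph{Round two.} For $I_2'\times S_2'\in F_1$ one gets $\calN_{\bm{x}}(I_2'\times S_2',\calP_1)\subset\{I\times S\}\cup\calN_{\bm{x}}(I\times S,\calP_1)=K_1\cup F_1$. So only $\calN_t$-neighbours (of level $\ell-1$) trigger recursion, nothing new enters $F$, and the loop ends with $K=\{I\times S\}\cup\calN(I\times S,\calP_1)$. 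This also gives termination directly.
\end{itemize}

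What remains is that the round-two recursions create no new neighbours of $K$; the paper records this as $\calN(I_2'\times S_2',\calP_2)\subset K_1\cup F_1$. A newly created element is a proper descendant of an element of level at most $\ell-1$.
\begin{itemize}
\item If it overlapped $I$ in time and contained $\textup{RE}(S)$, so would that ancestor. The ancestor would then be a lower-level element of $\calN_{\bm{x}}(I\times S,\calP_1)$, and there are none after round one.
\item If it had level $\ell-1$ and touched an element of $K$ in time, its ancestor would have level at most $\ell-2$. That ancestor would either touch the same element in time, contradicting $1$-irregularity, or overlap it, which is impossible.
\end{itemize}
With this closure property in hand, your reduction of (a) to Stevenson's argument at each fixed $t$, and your check (b), go through.
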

\begin{rem}
    This result is the analogous to \cite[Thm.~5.1]{Ste08}, but for space-time prismatic meshes. 
\end{rem}

\begin{proof}
    We will prove this by induction over $\ell(I\times S)$. In general, we will denote all the sets in the algorithm with index $k\in \N$ to indicate that they are created in the $k$-th loop of the \textbf{while}-procedure. (In fact, only $k=1,2$ will be necessary.)
    
    First, consider the case $\ell(I\times S)=0$. Then $I_1'\times S_1'=I\times S$ and $I_1''\times S_1''\in \calN(I\times S,\calP)$. \cref{Lemma_Properties_of_Neighboring_Space_Time_Elements} now tells us that $\ell(I''_1\times S''_1)=\ell(I\times S)=0$, since the other case $\ell(I''_1\times S''_1)=\ell(I\times S)-1 = -1$ is impossible. Thus, the \textbf{else}-clause is never triggered in this loop and it terminates due to the boundedness of $\#\calN(I\times S, \calP)$, as stated in \cref{Corollary_to_Lemma_Properties_of_Neighboring_Space_Time_Elements}, with $K_1=\{I\times S\}$ and $F_1=\calN(I\times S, \calP)=\calN_{\bm{x}}(I\times S, \calP)$. 
    \\For $d=1$, this set is already empty, for $d=2$ it might be if $S$ is at the boundary of $\Omega$. Nevertheless, we will proceed as if a second loop is always entered, without update of $K$ and $F$, so we can avoid to distinguish cases. In the second loop (we may now assume $d\ge 2$ temporarily), $I_2'\times S_2'\in F_1=\calN(I\times S, \calP)$ which in particular implies $\ell(I_2'\times S_2')=0$ as above. Furthermore, \cref{Lemma_Properties_of_Neighboring_Space_Time_Elements} now even guarantees that $I_2'=I$ and $\textup{RE}(S_2')=\textup{RE}(S)$. We will show that no $I_2''\times S_2''$ can be chosen, since $\calN(I_2'\times S_2',\calP)\setminus (K_1\cup F_1)=\emptyset $. Suppose $I_2''\times S_2''\in \calN(I_2'\times S_2',\calP)\setminus (K_1\cup F_1) $ exists. Similar to before, \cref{Lemma_Properties_of_Neighboring_Space_Time_Elements} implies $I_2''=I_2'=I$ and $\textup{RE}(S_2'')=\textup{RE}(S_2')=\textup{RE}(S)$, therefore $I_2''\times S_2''\in \calN(I\times S, \calP)\subset K_1 \cup F_1$, which is a contradiction. Thus, the loop terminates, again due to \cref{Corollary_to_Lemma_Properties_of_Neighboring_Space_Time_Elements}, with $K_2=\calN(I\times S, \calP)\cup\{I\times S\}$ and $F_2=0$, therefore the whole \textbf{while}-clause, and thus the whole algorithm, terminates. Finally for any $d\in \N$, $K=K_2=\calN(I\times S, \calP)\cup \{I\times S\}$ and for all $\tilde{I}\times \tilde{S}\in K$, $\tilde{I}=I$ and $\textup{RE}(\tilde{S})=\textup{RE}\left(S\right)$ hold true and $\calN(\tilde{I}\times \tilde{S}, \calP)\subset K $. Therefore, conformity in space as well as 1-regularity in time (and space if $d=1$), are fulfilled by $\calP'$. During the procedure, every newly created prism has been created from an element of $K$ by $\textup{ATOMIC}\_\textup{SPLIT}(\cdot, d, s_1, s_2)$, and all the elements of $K$ are of level $\ell(I\times S)=0$, thus newly created elements are all of level $\ell(I\times S)+1=1$ as claimed.

    In fact, the situation $\ell(I\times S)=n\in \N$ works very similar, where we assume that the assertion has already been shown for $n-1$. Then again, $I_1'\times S_1'=I\times S$ and $I_1''\times S_1''\in \calN(I\times S,\calP)$. Without loss of generality, we first consider all the cases, where $\ell(I_1''\times S_1'')\ne \ell(I_1'\times S_1') = \ell(I\times S)$, i.e, the situation where the \textbf{else}-case is entered. Then, in particular $\ell(I_1''\times S_1'') = \ell(I\times S)-1=n-1$ due to \cref{Lemma_Properties_of_Neighboring_Space_Time_Elements}, therefore by the induction assumption, $\textup{PATCH}\_\textup{REFINE}(\calP, \cdot, d, s_1, s_2)$ terminates with a space-time partition that fulfills the required grid properties. The space-time partition that has been created after all these calls of the content of the \textbf{else}-clause, will be called $\calP_1$. According to \cref{Lemma_Properties_of_Neighboring_Space_Time_Elements}, some newly created elements in $\calP_1$ now share the level, time-interval, and refinement edge of $I\times S$ (if $d\ge 2$), therefore they are added to $F_{new,1}$ to avoid non-conformity in space in the final space-time partition $\calP'$. Now, the elements $I_1''\times S_1''$ with $\ell(I_1''\times S_1'')= \ell(I\times S)$, which are in $\calN_{\bm{x}}(I\times S, \calP)\cap\calN_{\bm{x}}(I\times S, \calP_1)$, in particular, are considered. According to \cref{Lemma_Properties_of_Neighboring_Space_Time_Elements}, they also share interval and refinement edge with $I\times S$, such that the first \textbf{while}-loop terminates with $K_1=\{I\times S\}$ and $F_1=\calN(I\times S, \calP_1)=\calN_{\bm{x}}(I\times S, \calP_1)$. In particular, note that a call of the atomic refinement routine in the \textbf{else}-case only refines elements of order strictly less than $n=\ell(I\times S)$. Therefore, none of those elements that have been added to $F_{new,1}$ by the \textbf{if}-case have already been refined by the possibly recursive calls of the atomic refinement routine in the \textbf{else}-case.
    \\Again, there are cases, where the algorithm now directly terminates, for example if $d=1$. For technical reasons, we will again always assume that a second loop is entered (where applicable without update of $K$ and $F$). Nevertheless, we can now temporarily assume $d\ge 2$ without loss of generality. Now the second loop requires the choice of $I_2'\times S_2'\in \calN(I\times S, \calP_1)$ which implies $\ell(I_2'\times S_2')=\ell(I\times S)$ by the construction of $\calP_1$ and therefore $I_2' = I$ and $\textup{RE}(S_2')=\textup{RE}(S)$ due to \cref{Lemma_Properties_of_Neighboring_Space_Time_Elements}. Further, $I_2''\times S_2''\in \calN(I_2'\times S_2', \calP_1)\setminus(F_1\cup K_1)$ has to be chosen. Since, $I_2''\times S_2''\in \calN_{\bm{x}}(I_2'\times S_2', \calP_1)$ implies $0<|I_2''\cap I_2'|=|I_2''\cap I|$ and $S_2''\supset \textup{RE}(S_2')=\textup{RE}(S)$, which yields $I_2''\times S_2''\in \calN_{\bm{x}}(I\times S, \calP_1)\subset K_1\cup F_1$, we know that necessarily $I_2''\times S_2''\in \calN_t(I_2'\times S_2', \calP_1)$. As above, these elements have $\ell(I_2''\times S_2'')=\ell(I_2'\times S_2')-1=\ell(I\times S)-1= n-1$, therefore $\textup{PATCH}\_\textup{REFINE}(\calP, \cdot, d, s_1, s_2)$ preserves the required conditions by induction assumption. After all these calls of the mentioned method in the \textbf{else}-case, which is always triggered, we obtain a space-time partition $\calP_2$ fulfilling the required properties and $\calN(I_2'\times S_2', \calP_2)\subset K_1\cup F_1$ for all $I_2'\times S_2'\in F_1\cup K_1$. Since the second loop now also terminates with $K_2=F_1 \cup K_1$ and $F_2=\emptyset$, the \textbf{while}-clause terminates, and thus the algorithm. Due to the above mentioned property, the last update step for $\calP'$ guarantees exactly the required properties. In this case, every new element has been either created by a call of $\textup{PATCH}\_\textup{REFINE}(\cdot, I''\times S'', d, s_1, s_2)$ to some $I''\times S''$ with $\ell(I''\times S'')=\ell(I\times S)-1$, which inductively only creates elements of level at most $\ell(I''\times S'')+1=\ell(I\times S)$, or from an element of $K$ by $\textup{ATOMIC}\_\textup{SPLIT}(\cdot, d, s_1, s_2)$ which, as in the case $n=0$, implies that they have a level of $\ell(I\times S)+1$.
    
    The minimality property is clearly fulfilled due to the construction of the algorithm, which concludes the proof.
\end{proof}

\begin{rem}
    During the course of this work, we found that illustrations play an important role in developing an intuitive understanding of how the algorithm $\textup{PATCH}\_\textup{REFINE}(\mathcal{P}, I \times S, d, s_1, s_2)$ operates in different situations. We therefore present several corresponding sketches, which we hope will also be helpful to the reader. In particular, we distinguish between the cases $d \ge 2$ and $d = 1$.

    \underline{\textup{Case $d\ge 2$}:}
    \begin{center}
        \begin{figure}[H]
            \begin{center}
                \includegraphics[width=\linewidth]{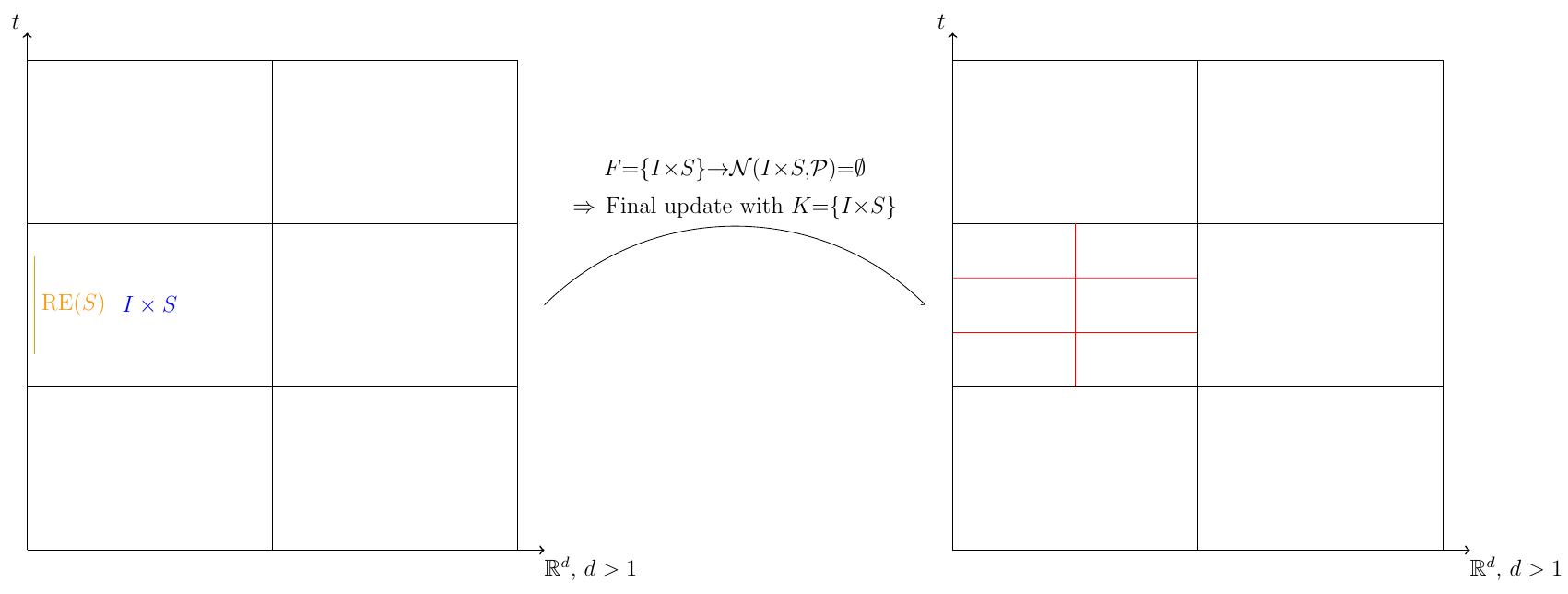}
            \end{center}
        \end{figure}
    \end{center}
    \begin{center}
        \begin{figure}[H]
            \begin{center}
                \includegraphics[width=\linewidth]{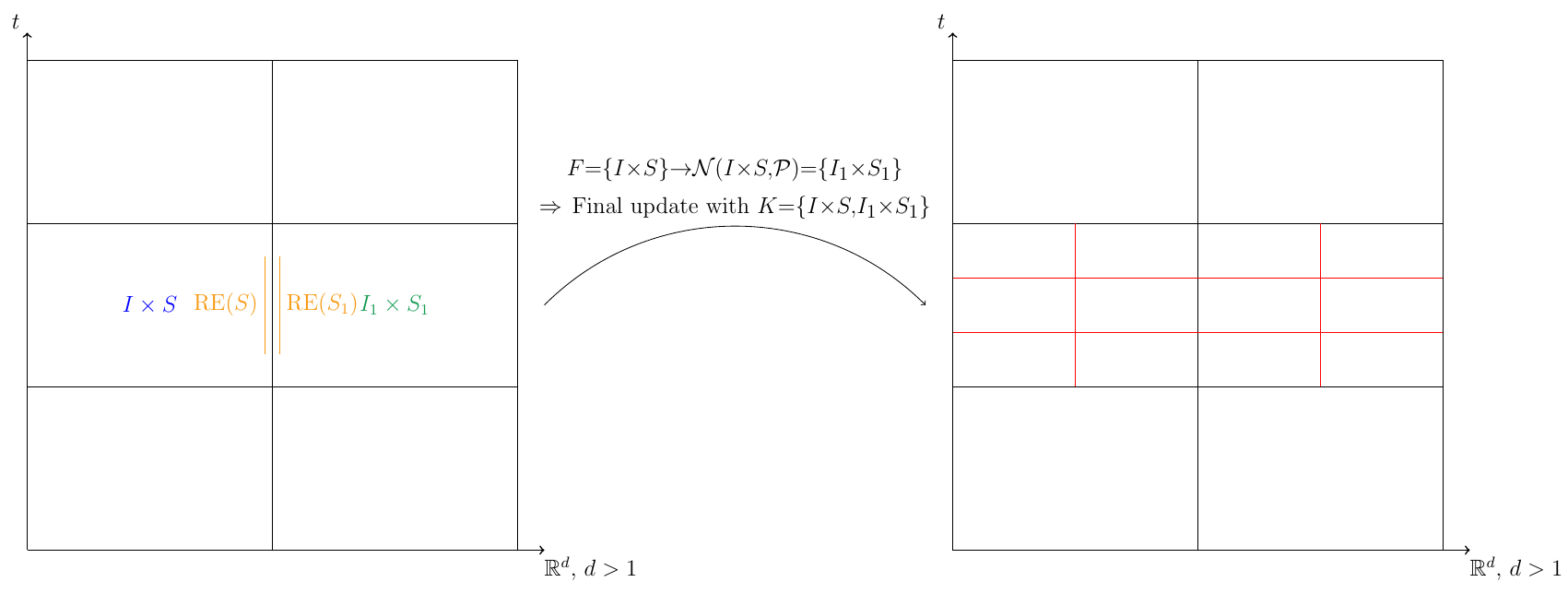}
            \end{center}
        \end{figure}
    \end{center}
    \begin{center}
        \begin{figure}[H]
            \begin{center}
                \includegraphics[width=\linewidth]{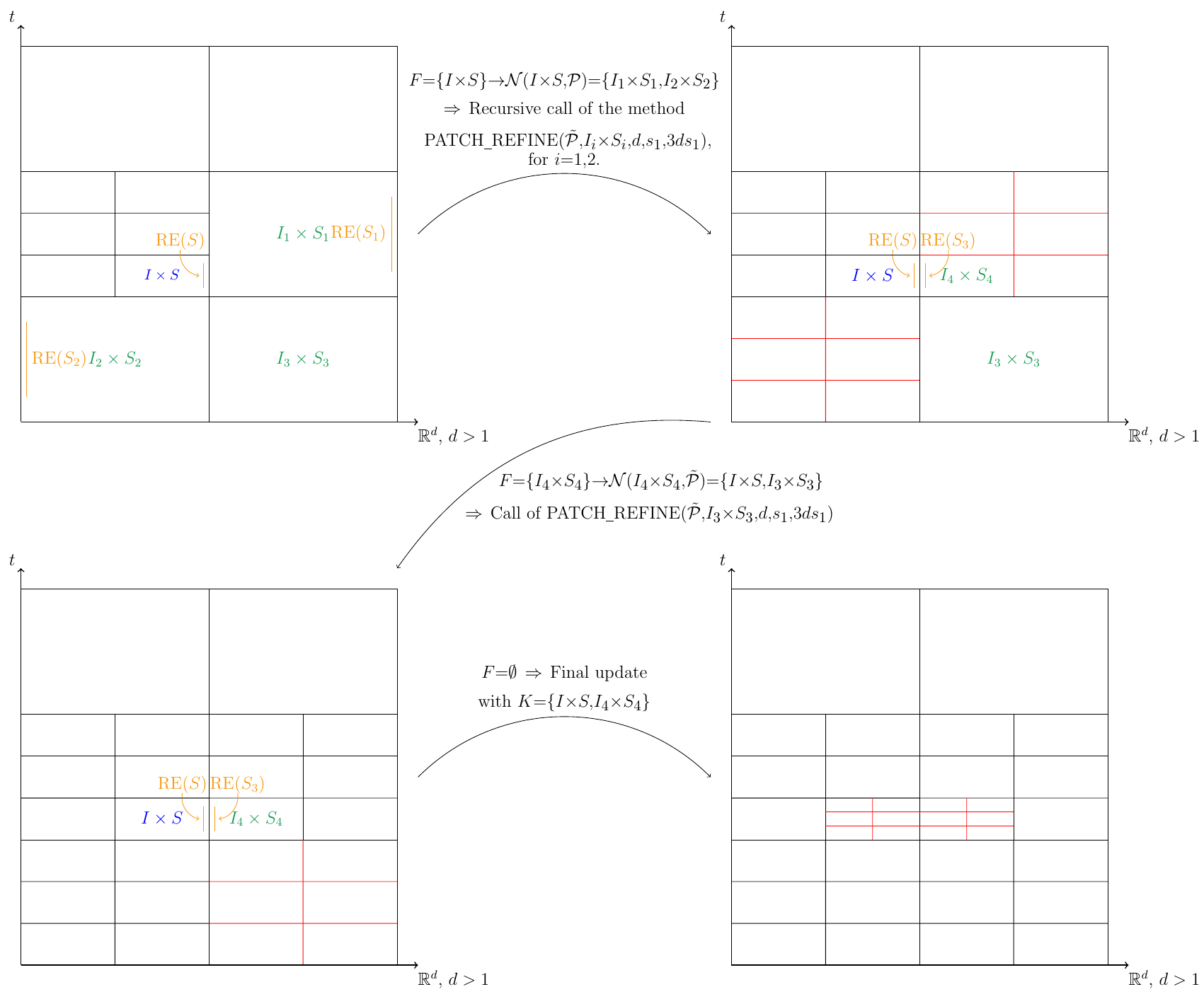}
            \end{center}
        \end{figure}
    \end{center}
   
    \underline{\textup{Case $d=1$:}}
    \begin{center}
        \begin{figure}[H]
            \begin{center}
                \includegraphics[width=\linewidth]{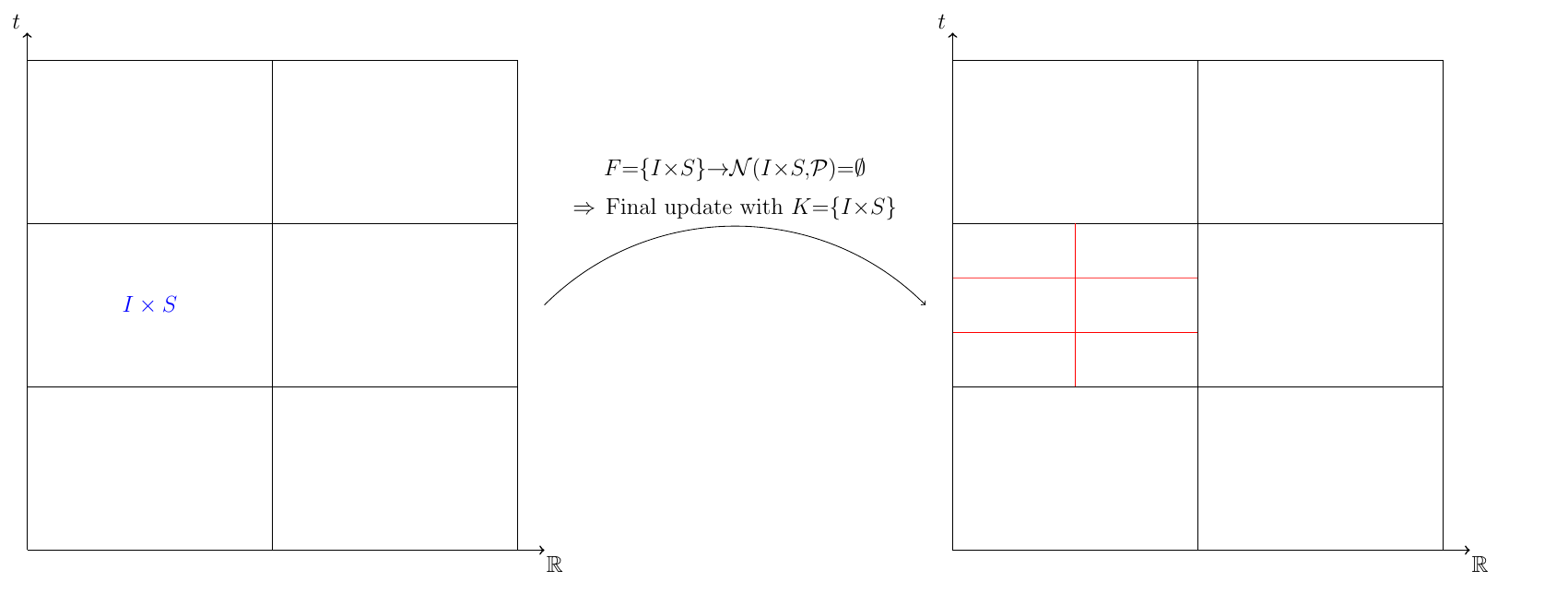}
            \end{center}
        \end{figure}
        \begin{figure}[H]
            \begin{center}
                \includegraphics[width=\linewidth]{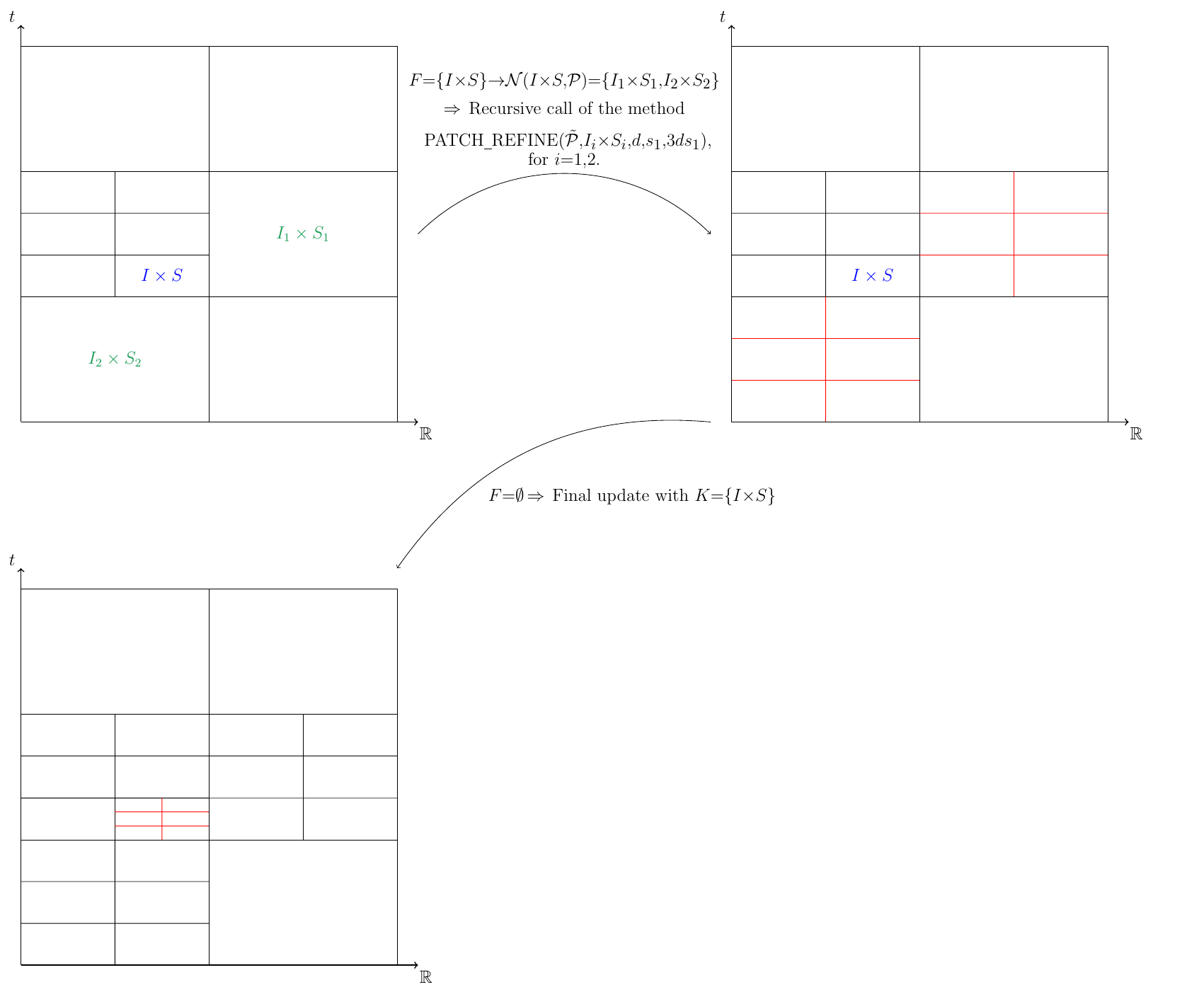}
            \end{center}
        \end{figure}
    \end{center}
    
\end{rem}

\begin{lem}\label{Enumeration further properties of the mesh}

Meshes $\calP$ created from $\calP_0$ by iterative applications of the patch refinement method presented above will also fulfill the following properties:

\begin{enumerate}[label=(\roman*)]

    \item \label{Enumeration further properties of the mesh - enum 1}\textit{Hierarchical structure: }For any $I\times S, I'\times S'\in \calP$, either $I\times S=I'\times S'$, $\left(\overline{I\times S}\right)\cap\left(\overline{I'\times S'}\right)= \emptyset$, or $\left(\overline{I\times S}\right)\cap\left(\overline{I'\times S'}\right)$ is an $r$-dimensional face of $I\times S$ \textbf{or} $I'\times S'$ with $r\in \{0,\dots, d\}$.\footnote{This is (only!) semantically very similar to conformity, where the $\textbf{or}$ would become a \textbf{and}.}
    
    \item \label{Enumeration further properties of the mesh - enum 2} \textit{Local grading}: For $I\times S\in \calP$, we define the \textbf{neighborhood of $I\times S$}, via 
    \begin{align*}
        \omega^{1}_\calP(I\times S):=\omega_\calP(I\times S):=\{I'\times S'\in \calP\mid \left(\overline{I\times S}\right)\cap\left(\overline{I'\times S'}\right)\neq \emptyset \}.
    \end{align*}
    Further, we define the \textbf{extended neighborhood of degree $j$ of $I\times S$} inductively via 
    \begin{align*}
        \omega^{j}_\calP(I\times S):=\{I'\times S'\in \calP\mid I'\times S' \in \omega_\calP(I''\times S''), \: I''\times S''\in \omega^{j-1}_\calP(I\times S)\}, \quad j\in \N_{\ge 2}.
    \end{align*}

    Then $|I\times S|\sim_{\,j, d, s_1, s_2, \kappa_{\calP_0}, \mu(\calP_0)} |\omega^{j}_\calP(I\times S)|:=\Big|\bigcup\limits_{I'\times S'\in \omega^{j}_\calP(I\times S)} I'\times S'\Big|$ for $j\in \N$ and $I\times S\in \calP$. Additionally, for such $I\times S$ and $j$, $|I'\times S'|\sim_{\, j,d,s_1, s_2, \kappa_{\calP_0}, \mu(\calP_0)}|I\times S|$ for any $I'\times S'\in \omega^{j}_\calP(I\times S)$. Further, for such $j$ and $I\times S$, we define the $\widetilde{\omega^{j}_\calP}(I\times S)$, the \textbf{cylindric closure of the (extended) neighborhood of $I\times S$}, via
    \begin{align*}
        \widetilde{\omega^{j}_\calP}(I\times S):=\bigg\{I'\times S'\in \calP \: \Bigg| \: I'\times S' \subset \Big(\bigcup\limits_{I''\times S''\in \omega^{j}_\calP(I\times S)}I''\Big) \times \Big(\bigcup\limits_{I''\times S''\in \omega^{j}_\calP(I\times S)}S''\Big)\bigg\}, \quad j\in \N.
    \end{align*}
    Similarly, we observe $|I\times S|\sim_{\,j, d, s_1, s_2, \kappa_{\calP_0}, \mu(\calP_0)} |\widetilde{\omega_\calP^{j}}(I\times S)|$, where the latter expression is defined correspondingly to $|\omega^{j}_\calP(I\times S)|$. In particular, we employ the notation 
    $\widetilde{\omega^{}_\calP}(I\times S):=\widetilde{\omega^{1}_\calP}(I\times S)$.\footnote{Note that we use $\omega$ for both the smoothness moduli and the (extended) neighborhood domains, as this corresponds to the usual notation. However, the subscript and the context always clearly indicate which object is meant.}
    
    \item \label{Enumeration further properties of the mesh - enum 3} 
    For every $j\in \N$, it holds
    \begin{align*}
        \#\omega^{j}_\calP(I\times S)\lesssim_{\, j,d,s_1, s_2, \kappa_{\calP_0}} 1.
    \end{align*} 
    For $j=1$, this in particular implies that the closure of every simplex in $\calP$ only intersects a uniformly bounded number of other elements of $\calP$ with its boundary.
    
\end{enumerate}

\end{lem}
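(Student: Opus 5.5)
The plan is to derive all three properties from two invariants established in \cref{Theorem - conformity 1-reg Patch refine}: conformity in space of every active triangulation $\calT(t,\calP)$, and the $1$-irregular rule in time (and in space if $d=1$). To these we add the level--size relations of \cref{lemma relationship diameter level - BISECT} and \cref{corollary relationship diameter level - ATOMIC_SPLIT}. The central intermediate claim is a \emph{level-difference bound}: if $I\times S$ and $I'\times S'$ in $\calP$ have intersecting closures, then $|\ell(I\times S)-\ell(I'\times S')|\le C(d)$. Once this holds, everything else follows by counting and volume comparison.

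For \ref{Enumeration further properties of the mesh - enum 1}, I would argue by induction over the number of applications of $\textup{PATCH}\_\textup{REFINE}$, starting from the tensor-product mesh $\calP_0$. Since $\calT_0$ is conforming, $\calP_0$ is even conforming. Each $\textup{ATOMIC}\_\textup{SPLIT}$ replaces a prism by a product of bisected intervals and bisected simplices. Every new prism is therefore a subprism of its parent, and its faces lie inside faces of the parent or on the new internal bisection hyperplanes. Take two prisms whose closures meet. Their temporal intervals are dyadic descendants of the same $I_0$, hence either nested or touching at an endpoint. In the spatial direction, conformity of $\calT(t,\calP)$ for $t$ in the common interior shows that the spatial intersection is a common face of both simplices. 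If the intervals only touch, the intersection is $\{t^*\}\times(S\cap S')$. Here $S$ and $S'$ are spatially nested, because they arise by bisection from the same $S_0$ along a hierarchical tree, so the intersection is a face of the prism with the finer spatial part. Combining the two directions gives a face of one of the prisms, though not necessarily of both.

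For the level-difference bound, I would first treat the two cases where the intersection has dimension $d$. If the prisms touch in time, the $1$-irregular rule gives a level difference of at most $1$. If they touch in space with $|I\cap I'|>0$, then $S$ and $S'$ are neighbours in a conforming Maubach--Traxler triangulation. Stevenson's result \cite[Cor.~4.6]{Ste08}, together with the bounded number of simplices around a vertex (shape regularity, $\kappa_\calP\lesssim_d\kappa_{\calP_0}$), gives $|\ell(S)-\ell(S')|\lesssim_{d,\kappa_{\calP_0}}1$; for $d=1$ the $1$-irregular rule in space gives the same. Lower-dimensional contacts are handled by chaining $d$-dimensional contacts. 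Around any point of the mesh, the prisms containing it can be connected by a chain of hyperface-adjacent prisms whose length is bounded in terms of $d$ and $\kappa_{\calP_0}$. I expect this chaining to be the \textbf{main obstacle}, because mixed space-time corners must be controlled. My approach is to move first in time at fixed spatial position, using the $1$-irregularity, and then in space within a single $\calT(t,\calP)$, using conformity.

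Given the level bound, \ref{Enumeration further properties of the mesh - enum 2} follows from \cref{corollary relationship diameter level - ATOMIC_SPLIT}. The volume of a prism is $|I||S|\sim 2^{-\ell(I)}2^{-\ell(S)}$, with $\ell(I)\sim\ell(I\times S)+O(1)$ as in its proof, so neighbours have comparable volume. Iterating $j$ times yields comparable volumes in $\omega^j_\calP$. Counting then gives \ref{Enumeration further properties of the mesh - enum 3}: all prisms in $\omega^j_\calP(I\times S)$ lie in a ball of radius $\lesssim\diam(I\times S)$ and have volume $\gtrsim|I\times S|$. Since they are disjoint, a packing argument bounds their number; anisotropy is harmless here because it is bounded by $a(\calP)\lesssim a(\calP_0)$. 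Finally, $|\omega^j_\calP(I\times S)|\sim|I\times S|$ follows from the bounded count and the comparable volumes. For the cylindric closure, the union of the intervals has length $\lesssim|I|$ and the union of the simplices has measure $\lesssim|S|$, because each is a bounded union of comparable pieces, so $|\widetilde{\omega^j_\calP}(I\times S)|\lesssim|I||S|$.
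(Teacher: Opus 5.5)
Your architecture matches the paper's. Both derive a bounded level jump between touching prisms from $1$-irregularity in time, conformity of $\calT(t,\calP)$ and shape regularity, then comparable volumes, then a bound on $\#\omega^j_\calP(I\times S)$. Both finish with the product estimate for $|\widetilde{\omega^j_\calP}(I\times S)|$, which is exactly your last sentence. Your chaining for corner contacts is sound: first across the time slice to a simplex of $\calT(t^*,\calP)$ overlapping $S$, then inside the conforming $\calT(t^*,\calP)$. Two steps, however, do not work as written.

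\emph{The packing step for (iii) fails.} At level $n$ one has $|I|\sim 2^{-ns_2/(s_1d)}$ and $\diam(S)\sim 2^{-n/d}$. So the geometric aspect ratio $|I|/\diam(S)\sim 2^{-n(s_2/s_1-1)/d}$ degenerates with $n$ whenever $s_1\neq s_2$. A Euclidean ball of radius $\sim\max(|I|,\diam(S))$ then exceeds $|I\times S|$ in volume by a power of this ratio, so your count is not uniform in the level. The bound $a(\calP)\lesssim a(\calP_0)$ does not help. It controls $|I|/|S|^{s_2/(s_1d)}$, i.e.\ it says the prisms are uniformly anisotropic in the $(s_1,s_2)$-scaling, not that they are nearly isotropic.

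The fix is to pack in a cylinder instead. Chaining $j$ contacts, with the level bound applied separately in time and in space, places all of $\omega^j_\calP(I\times S)$ inside some $J\times B$ with $|J|\lesssim|I|$ and $B$ a ball of radius $\lesssim\diam(S)$. Then $|J\times B|\lesssim|I|\,\diam(S)^d\lesssim_{d,\kappa_{\calP_0}}|I\times S|$, and $a(\calP)$ is never needed. This is the anisotropy-aware splitting the paper's product estimate uses. The resulting constant still depends on $\mu(\calP_0)$, which (iii) does not list. A combinatorial count avoids that: boundedly many simplices of the conforming, shape-regular $\calT(t,\calP)$ meet $S$, and by the level bound $\calT(t,\calP)$ changes near $S$ only boundedly often for $t\in\overline{I}$.

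\emph{The time-interface case of (i) rests on a false claim.} If $\overline{I}\cap\overline{I'}=\{t^*\}$, then $S$ and $S'$ are in general not nested. Already in $\calP_0$, take $[t_0,t_1)\times S_a$ and $[t_1,t_2)\times S_b$ with $S_a,S_b\in\calT_0$ sharing a facet: they meet in $\{t_1\}\times(S_a\cap S_b)$. Nestedness follows only when $\dim(S\cap S')=d$. Otherwise $S\in\calT(t^*-\varepsilon,\calP)$ and $S'\in\calT(t^*,\calP)$ lie in two different triangulations, and conformity of either one alone does not make $S\cap S'$ a face.

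A clean replacement uses a dichotomy in the bisection forest:
\begin{itemize}
\item Either $S'\subseteq P$ for some $P\in\calT(t^*-\varepsilon,\calP)$. Then $S\cap S'=(S\cap P)\cap S'$, and $S\cap P$ is a face of $P$ by conformity. A face of $P$ meets any bisection descendant of $P$ in a face of that descendant, so $S\cap S'$ is a face of $S'$.
\item Or $S'$ is a union of elements of $\calT(t^*-\varepsilon,\calP)$. Then $S\cap S'$ is a union of faces of $S$, again by conformity, and it is convex. A convex union of faces of a simplex is a face, so $S\cap S'$ is a face of $S$.
\end{itemize}
In both cases $\{t^*\}\times(S\cap S')$ is a face of the corresponding prism. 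The paper itself only attributes (i) to conformity and $1$-irregularity in one line, so this is precisely the case that needs spelling out.
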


\begin{proof}
    \ref{Enumeration further properties of the mesh - enum 1} and \ref{Enumeration further properties of the mesh - enum 3} are consequences of conformity in space and the $1$-irregular rule in time together with the conservation of the shape regularity of the partition by the atomic refinement as stated in \cite[Lem.~4.1]{MSS26}. Now we go over to prove \ref{Enumeration further properties of the mesh - enum 2} for given $j\in \N$. The $1$-irregularity in time (and space if $d=1$) and conformity in space of $\calP$ together with the preservation of the minimum head angle, i.e., $\kappa_\calP\lesssim_{\,d} \kappa_{\calP_0}$ according to \cref{Remark_Temporal_Bisection_Active_Triangulation}\ref{Remark_Temporal_Bisection_Active_Triangulation_1}, imply that there is $l=l(j,d,\kappa_{\calP_0})$ such that $\ell(I'\times S')\ge \ell(I\times S)-l$ for any $I\times S\in \calP$ and $I'\times S'\in \omega^{j}_\calP(I\times S)$. In particular, this is a consequence of the structure of $\calN_{\bm{x}}(I\times S)$ shown in \cref{Lemma_Properties_of_Neighboring_Space_Time_Elements}, and the fact that after $d$ spatial bisections, all the edges of a $d$-dimensional simplex have been bisected once as stated in \cite[Problem~20]{NSV09}. This directly yields $|I'\times S'|\sim_{\, j,d,s_1, s_2, \kappa_{\calP_0}, \mu(\calP_0)}|I\times S|$ and now allows us to calculate 
    \begin{align*}
        |\omega^{j}_\calP(I\times S)|\le\left|\widetilde{\omega^{j}_\calP}(I\times S)\right|&\le \left[\left(\sup\limits_{I'\times S'\in \omega^{j}_\calP(I\times S)}|I'| \right)\cdot \#\omega^{j}_\calP(I\times S)\right]\cdot \left[\left(\sup\limits_{I'\times S'\in \omega^{j}_\calP(I\times S)}|S'| \right)\cdot \#\omega^{j}_\calP(I\times S)\right]
        \\ &\lesssim_{\, j,d, s_1, s_2, \kappa_{\calP_0}} \left(\sup\limits_{I'\times S'\in \omega^{j}_\calP(I\times S)}|I'| \right)\cdot \left(\sup\limits_{I'\times S'\in \omega^{j}_\calP(I\times S)}|S'| \right)
        \\ &\le \frac{\mu_1(\calP_0)}{\mu_2(\calP_0)} \cdot 2^{\left\lceil l\cdot \frac{s_2}{s_1d}\right\rceil} |I|\cdot\frac{\mu_3(\calP_0)}{\mu_4(\calP_0)}\cdot 2^{l} |S|\lesssim_{\,j, d, s_1, s_2, \kappa_{\calP_0}, \mu(\calP_0)} |I\times S|,
    \end{align*}
    with the help of \ref{Enumeration further properties of the mesh - enum 3}. The opposite direction follows easily from $\left|\widetilde{\omega^{j}_\calP}(I\times S)\right|\ge |\omega^{j}_\calP(I\times S)| \ge |I\times S|$.
\end{proof}

In particular, similar properties have shown to be useful to define projection operators on finite element spaces in \cite[Sect.~6]{SS23}. 

\begin{rem}
        It is worth observing the following facts:\begin{itemize}
            \item For the finite element analysis on appropriately refined meshes in the following sections, we would like to employ the Whitney-type estimate from \cite[Thm.~1.2]{MSS26} to domains of the type $\omega^{j}_\calP(I\times S)$, $I\times S\in \calP$, $j\in \N$. But this is in general not possible, since it might not be a cylindrical space-time domain. Therefore, we use the above defined $\widetilde{\omega^{j}_\calP}(I\times S)$ to work around this problem. 

            \item In the temporal direction, non-conformity, i.e., hanging-nodes, cannot be avoided unless one refines the whole space-time partition uniformly.
            \item A bound on the cardinality of $\# \widetilde{\omega_\calP}(I\times S)$, $I\times S\in \calP$, cannot be given for $1$-irregular meshes in time (and space if $d=1$) that are conforming in time. As an example look at $d=1$, $T=1$, $\Omega=[0,1]$, $\calP_0:=\left\{[0,1]^2\right\}$, and $\calP_{k}:=\textup{PATCH}\_\textup{REFINE}\left(\calP_{k-1}, \left[1-2^{-k}, 1\right]^2, 1,1,1\right)$ for $k\in \N.$ Then $\#\widetilde{\omega_{\calP_k}}\left([0, \frac{1}{2}]^2\right)= \# \calP_k = 3k+1$ for $k\in\N$, and therefore $\#\widetilde{\omega_{\calP_k}}\left([0, \frac{1}{2}]^2\right)\xrightarrow{k\rightarrow\infty}\infty$. Below is an illustration of this particular mesh refinement and its consequences.
            \begin{center}
                \begin{figure}[H]
                    \begin{center}
                        \includegraphics[width=\linewidth]{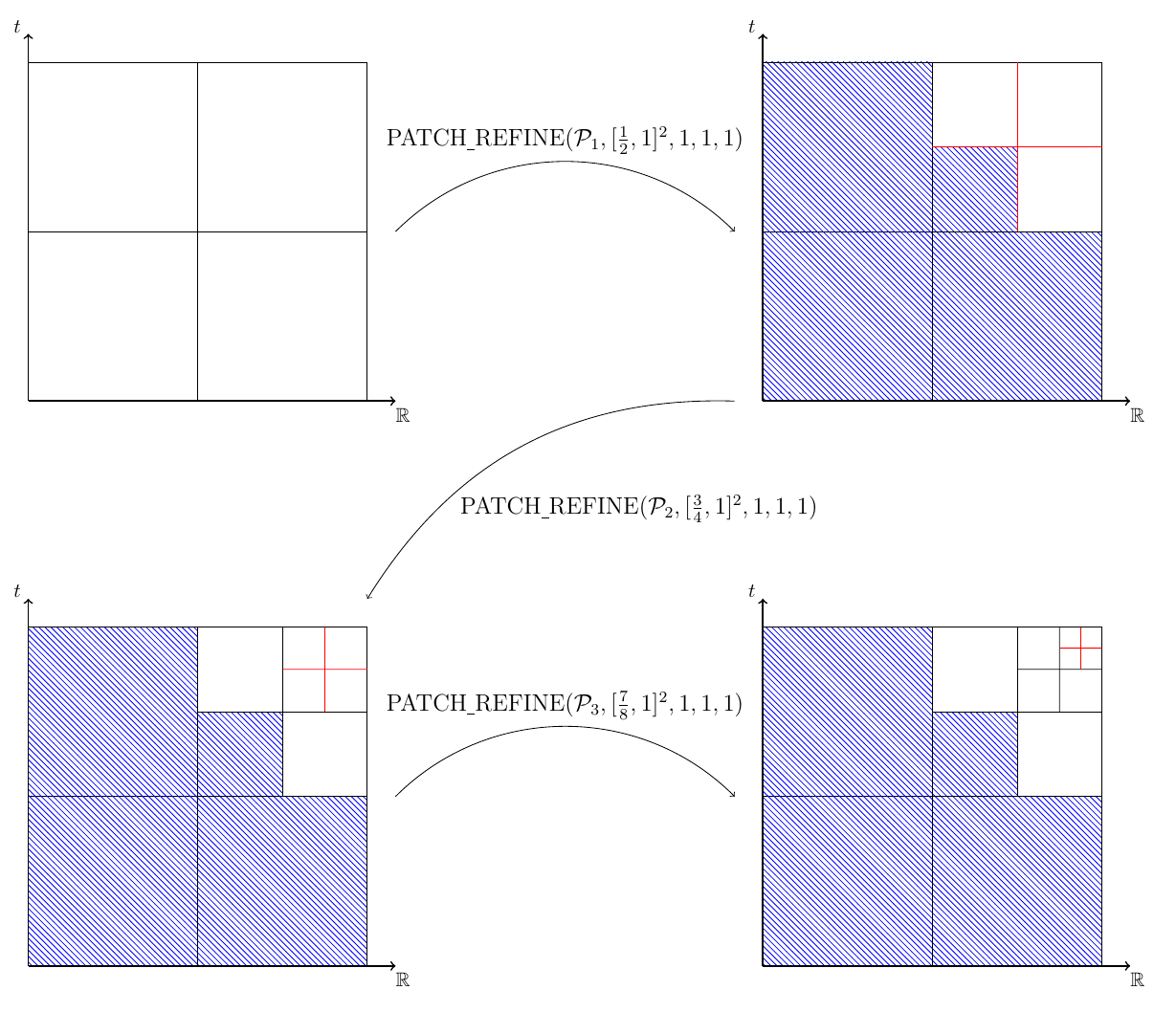}
                        \captionof*{figure}{The shaded part corresponds to $\omega_{\calP_k}([0,\frac12]^2)$. Its cardinality is bounded in contrast to $\#\widetilde{\omega_{\calP_k}}([0,\frac12]^2)$.}
                    \end{center}
                \end{figure}
            \end{center}
        \end{itemize}
\end{rem}

\subsection{Algorithmic complexity}\label{subsect:Algorithmic_complexity}

We will proceed similar to \cite[Thm.~5.2~and~Sect.~6]{Ste08}. In this section we want to answer how large a space-time partition can get, if we apply the following (potentially non-terminating) algorithm that recursively refines certain marked elements of a partition with $\textup{PATCH}\_\textup{REFINE}(\cdot, \cdot, d, s_1, s_2)$ until no elements are marked anymore.

\begin{algorithm}[H]
\caption{$\textup{MARKED}\_\textup{REFINE}(\calP_0, d, s_1, s_2)$}
\begin{algorithmic}
    \While{True}
        \State $\calM\gets \textup{MARK}(\calP)$
        \If{$\calM=\emptyset$}
            \State \textbf{break}
        \EndIf
        \For{$I\times S\in \calM$}
            \If{$I\times S\in \calP$} \% if not, it has already been refined in a previous loop of this \textbf{for}-clause
                \State $\calP\gets \textup{PATCH}\_\textup{REFINE}(\calP, I\times S, d, s_1, s_2) $
            \EndIf
        \EndFor
    \EndWhile
\end{algorithmic}
\end{algorithm}

We will denote the space-time partition at the end of the $k$-th loop of the \textbf{while}-clause by $\calP_k$ and the set of marked elements in $\calP_k$ as $\calM_k$. The goal of this section is to show

\begin{thm}\label{Thm:complexity results}
    Let $k\in \N_0$, then it holds
    \begin{align*}
        \#\calP_k -  \#\calP_0 \lesssim_{\, d, s_1, s_2, \kappa_{\calP_0}, \mu(\calP_0)} \sum\limits_{i=1}^k \#\calM_{i-1}.
    \end{align*}
\end{thm}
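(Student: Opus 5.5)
We follow the Binev--Dahmen--DeVore / Stevenson strategy from \cite[Thm.~5.2 and Sect.~6]{Ste08}: an abstract counting argument in which every created element $I'\times S'$ is charged to the marked elements that caused its creation, with an exponentially decaying weight. Let $\calM:=\bigcup_{i=1}^k \calM_{i-1}$, where only marked elements actually refined by $\textup{PATCH}\_\textup{REFINE}$ count. Let $\calP_\ast:=\bigcup_{i\le k}\calP_i$ be the set of all prisms ever created. By \cref{Theorem - conformity 1-reg Patch refine}, each call $\textup{PATCH}\_\textup{REFINE}(\calP, I\times S, \dots)$ only creates elements of level at most $\ell(I\times S)+1$. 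Since every refinement by $\textup{ATOMIC}\_\textup{SPLIT}$ replaces one prism by at most $2\cdot 2^{\lceil s_2/(s_1d)\rceil+2}$ children (\cref{Remark_Temporal_Bisection_Active_Triangulation}), it suffices to bound the number of elements of $\calP_\ast\setminus\calP_0$ by $C\,\#\calM$.

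The weight function is
\begin{align*}
\lambda(I'\times S', I\times S):=\begin{cases} 2^{\ell(I\times S)-\ell(I'\times S')}, & \text{if } \ell(I'\times S')\le \ell(I\times S)+1 \text{ and } \operatorname{dist}(I'\times S', I\times S)\le C_\ast\, 2^{-\min(\frac{s_2}{s_1},1)\frac{\ell(I'\times S')}{d}},\\ 0,&\text{otherwise.}\end{cases}
\end{align*}
Here the distance is measured anisotropically: the temporal distance is compared with $2^{-\ell(I')}$ and the spatial distance with $2^{-\ell(S')/d}$. This is more natural than using the Euclidean distance, so I would actually use the product-type condition $\operatorname{dist}_t\le C_\ast 2^{-\ell(I')}$ and $\operatorname{dist}_{\bm{x}}\le C_\ast 2^{-\ell(S')/d}$. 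The proof then has two steps.

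\textbf{Step 1 (upper bound).} For a fixed $I\times S\in\calM$, we have $\sum_{I'\times S'\in\calP_\ast}\lambda(I'\times S', I\times S)\lesssim 1$. For a given level $\ell'\le \ell(I\times S)+1$, the elements of level $\ell'$ are non-overlapping. By \cref{lemma relationship diameter level - BISECT}, they have temporal length $\sim 2^{-\ell(I')}$ with $\ell(I')\sim \lceil \ell' s_2/(s_1d)\rceil$, and spatial size $\sim 2^{-\ell'/d}$, with shape regularity controlled by $\kappa_{\calP_0}$. Hence only a bounded number of them lie in the anisotropic box of radius $C_\ast$ times their own size around $I\times S$. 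Summing $2^{\ell(I\times S)-\ell'}$ over $\ell'\le \ell(I\times S)+1$ gives a geometric series bounded by $4$ times that constant.

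\textbf{Step 2 (lower bound).} For every $I'\times S'\in\calP_\ast\setminus\calP_0$, we have $\sum_{I\times S\in \calM}\lambda(I'\times S', I\times S)\ge c>0$. This is the main obstacle. It needs a chain argument: if $I'\times S'$ was created during the refinement of a marked $I\times S=:I_1\times S_1$, then, tracing the recursion in $\textup{PATCH}\_\textup{REFINE}$, there is a chain $I_1\times S_1, I_2\times S_2,\dots, I_m\times S_m$ with $I_{j+1}\times S_{j+1}\in\calN(I_j\times S_j,\calP)$ (or a same-level element added to $F$), with $I_m\times S_m$ the parent of $I'\times S'$, and with levels non-increasing, dropping by exactly one at each recursive call. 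The key geometric estimate is
\[
\operatorname{dist}_{\bm{x}}(S_j,S_1)\lesssim \sum_{i<j}2^{-\ell(S_i)/d}
\quad\text{and}\quad
\operatorname{dist}_t(I_j,I_1)\lesssim \sum_{i<j} 2^{-\ell(I_i)}.
\]
By \cref{Lemma_Properties_of_Neighboring_Space_Time_Elements}, same-level steps happen only within a single loop of bounded length: at most $\#\calN\lesssim 1$ steps by \cref{Corollary_to_Lemma_Properties_of_Neighboring_Space_Time_Elements}, and same-level neighbors share the time interval, so the time distance does not grow. Hence each level is visited a bounded number of times, and both sums are geometric. They are thus dominated by the terms of the smallest level, which is comparable to $\ell(I'\times S')-1$. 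With $C_\ast$ chosen large enough, this gives $\lambda(I'\times S', I_1\times S_1)>0$.

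If $\ell(I'\times S')\le \ell(I_1\times S_1)+1$ with $\lambda=2^{\ell(I_1\times S_1)-\ell(I'\times S')}$ is small, we iterate as in Stevenson. Take the marked element $I_1\times S_1$ of maximal level among those with positive weight. If its weight is below $1$, then $\ell(I_1\times S_1)<\ell(I'\times S')-1$, and one finds an earlier-created ancestor region, refined by another marked element, that lies near and is of larger level. Summing the weights over this sequence of marked elements gives a total $\ge 1$. This iteration, together with checking that temporal $1$-irregularity does not spoil the geometric series (the time-step ratio $s_2/(s_1d)$ enters only through constants), is where I expect most of the technical work.

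Combining the two steps gives
\[
\#(\calP_\ast\setminus\calP_0)\le c^{-1}\sum_{I'\times S'}\sum_{I\times S\in\calM}\lambda\lesssim \#\calM\le \sum_{i=1}^k\#\calM_{i-1},
\]
which proves the claim.
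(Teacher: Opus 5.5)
\textbf{There is a genuine gap.} Your weight points the wrong way for Step~1. With $\lambda(I'\times S',I\times S)=2^{\ell(I\times S)-\ell(I'\times S')}$, it is the \emph{coarse} prisms near a marked $I\times S$ of level $L$ that carry the large weights. Already the ancestors of $I\times S$ (created at some point, at distance $0$) give $\sum_{I'\times S'}\lambda(I'\times S',I\times S)\ge\sum_{\ell'=1}^{L}2^{L-\ell'}=2^{L}-1$, so the series in Step~1 is unbounded. Your bound by $4$ times the count is the bound for the reversed weight $2^{\ell(I'\times S')-\ell(I\times S)}$. With the weight as you wrote it, Step~2 would actually be trivial: the marked prism whose $\textup{PATCH}\_\textup{REFINE}$ call created $I'\times S'$ has level at least $\ell(I'\times S')-1$ and lies in range by \cref{Lemma distance newly created element}, so its weight is already at least $\frac12$. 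So the difficulty has only been moved into Step~1. Your last paragraph, which treats this weight as possibly small, is inconsistent with your own formula.

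Once the weight decays in the level of the marked prism, Step~2 carries the whole proof, and your sketch does not supply it.
\begin{itemize}
\item The recursion inside a single call of $\textup{PATCH}\_\textup{REFINE}$ only yields \cref{Lemma distance newly created element}. What must be controlled is the chain of \emph{marked} prisms $I_0\times S_0:=I'\times S'$, $I_1\times S_1,\dots$, where $I_{i-1}\times S_{i-1}$ was created during the call for $I_i\times S_i$. The chain runs up to the first index $\nu$ with $\ell(I_\nu\times S_\nu)=\ell(I'\times S')-1$.
\item On this chain a level may recur arbitrarily often; your claim that each level is visited a bounded number of times holds only inside one call. Hence the accumulated distance $\lesssim\sum_{i<\nu}2^{-\gamma\ell(I_i\times S_i)}$, with $\gamma:=\frac1d\min\bigl(\frac{s_2}{s_1},1\bigr)$, need not stay of order $2^{-\gamma\ell(I'\times S')}$. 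So $I_\nu\times S_\nu$ may be out of range.
\item The weight must then be collected from the intermediate marked prisms. A weight $2^{-p}$ per prism of level $\ell(I'\times S')+p$ cannot be balanced against a relative distance contribution of size $2^{-\gamma p}$ when $\gamma<1$.
\end{itemize}
The missing device is the paper's dichotomy with $a(p)=(p+2)^{-2}$ (summable but only polynomially decaying) and $b(p)=2^{\frac{\gamma d}{d+1}p}$, chosen so that $\sum_p b(p)2^{-\gamma p}<\infty$ and $\inf_p a(p)b(p)>0$. Either each level $\ell(I'\times S')+p$ occurs at most $b(p)$ times before $\nu$, and then $I_\nu\times S_\nu$ is in range with weight $a(-1)=1$. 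Or, at the first index where some level $\ell(I'\times S')+p^\ast$ has occurred more than $b(p^\ast)$ times, all earlier chain prisms are still in range and contribute more than $a(p^\ast)b(p^\ast)$. Alternatively, a weight $2^{\beta(\ell(I'\times S')-\ell(I\times S))}$ with $0<\beta\le\gamma$ also works, if you stop the chain at the first index where the accumulated weight reaches $1$. Your exponent $\beta=1$ is too large unless $\gamma=1$.

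Do keep your anisotropic, product-type closeness condition; it is genuinely needed. With the Euclidean radius $A\,2^{-\gamma\ell'}$ used in the paper, the number of level-$\ell'$ prisms in range is not uniformly bounded when $s_1\neq s_2$. For example, after uniform refinement, a window as wide as the longer side of such a prism contains exponentially many (in $\ell'$) prisms stacked in the shorter direction. So the uniform count in the first step of the paper's proof is only valid for $s_1=s_2$, whereas with your box it holds. \cref{Lemma distance newly created element} and the chain estimate then just have to be run separately in $t$ and in $\bm{x}$. This works because both directional decay exponents are at least $\gamma$.
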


In order to prove this theorem, we will need a lemma that bounds the distance of an element $I\times S\in \calP$ to an element $I'\times S'\in \calP'$ that has been newly created by a call of $\calP':=\textup{PATCH}\_\textup{REFINE}(\calP, I\times S, d, s_1, s_2)$. Here, we assume $\calP$ to fulfill the usual properties from 
\cref{Lemma_Properties_of_Neighboring_Space_Time_Elements}, i.e., conformity in space as well as 1-irregularity in time (and space, if $d=1$). In the stationary setting this corresponds to \cite[Thm.~5.2]{Ste08}, the proof of which we have modified to suit our setting.
\begin{lem}\label{Lemma distance newly created element}
    Let $I\times S$, $I'\times S'$, $\calP$, and $\calP'$ be as above. Then there exists a constant $C$ only depending on $d, s_1, s_2, \kappa_{\calP_0}$, and $\mu(\calP_0)$ such that
    \begin{align*}
        d(I'\times S', I\times S)\le C\, 2^{\frac{1}{d}\min\left(\frac{s_2}{s_1},1\right)}\sum\limits_{k=\ell(I'\times S')}^{\ell(I\times S)} 2^{-\frac{k}{d}\min\left(\frac{s_2}{s_1},1\right)}.
    \end{align*}
    Here, $C$ is the constant from \cref{corollary relationship diameter level - ATOMIC_SPLIT} corresponding to the upper bound.
\end{lem}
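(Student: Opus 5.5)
We argue by induction on $n:=\ell(I\times S)$, following the structure of the proof of \cref{Theorem - conformity 1-reg Patch refine} and of \cite[Thm.~5.2]{Ste08}. Abbreviate $\gamma:=\frac1d\min\left(\frac{s_2}{s_1},1\right)$, so that by \cref{corollary relationship diameter level - ATOMIC_SPLIT} every prism $J\times R$ obtained by atomic splits satisfies $\diam(J\times R)\le C\,2^{-\gamma\ell(J\times R)}$. Every newly created element $I'\times S'\in\calP'$ arises in one of two ways. Either it comes from $\textup{ATOMIC}\_\textup{SPLIT}$ applied to an element $\tilde I\times\tilde S\in K$ of the outermost call. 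Or it is created inside a recursive call $\textup{PATCH}\_\textup{REFINE}(\calP,I''\times S'',d,s_1,s_2)$, where $I''\times S''\in\calN(I'_j\times S'_j,\calP_{j-1})$ for some $I'_j\times S'_j\in F_{j-1}\cup K_{j-1}$ and $\ell(I''\times S'')=n-1$.

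First case. From the proof of \cref{Theorem - conformity 1-reg Patch refine}, every element of $K$ has level $n$ and touches $I\times S$; we expect to check $\overline{\tilde I\times\tilde S}\cap\overline{I\times S}\neq\emptyset$ from $\tilde I=I$ and $\textup{RE}(\tilde S)=\textup{RE}(S)$. The child $I'\times S'$ is contained in $\tilde I\times\tilde S$ and has level $n+1$. Hence
\[
d(I'\times S',I\times S)\le\diam(\tilde I\times\tilde S)\le C2^{-\gamma n}=C2^{\gamma}2^{-\gamma(n+1)}.
\]
This is the sum in the statement with the single index $k=n+1=\ell(I'\times S')$, so the claim holds (an empty-range convention would be handled the same way). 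This argument also covers the base case $n=0$, where no recursion occurs.

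Second case. Here $I'\times S'$ was produced by the recursive call on $I''\times S''$. By the level bound in \cref{Theorem - conformity 1-reg Patch refine}, $\ell(I'\times S')\le n$. By the induction hypothesis,
\[
d(I'\times S',I''\times S'')\le C2^{\gamma}\sum_{k=\ell(I'\times S')}^{n-1}2^{-\gamma k}.
\]
Next, $I''\times S''$ touches an element $I'_j\times S'_j$ of level $n$ that itself touches $I\times S$. This holds because $F_1\subset\calN_{\bm x}$ of $I\times S$ shares $I$ and the refinement edge, by \cref{Lemma_Properties_of_Neighboring_Space_Time_Elements}. Therefore
\[
d(I''\times S'',I\times S)\le\diam(I'_j\times S'_j)\le C2^{-\gamma n}\le C2^{\gamma}2^{-\gamma n}.
\]
The triangle inequality for the distance between sets, via a point of the intermediate set, then gives
\[
d(I'\times S',I\times S)\le d(I'\times S',I''\times S'')+\diam(I''\times S'')+d(I''\times S'',I\times S).
\]

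The main obstacle is that this introduces the extra term $\diam(I''\times S'')\le C2^{-\gamma(n-1)}$. To absorb it, we will not route through $I''\times S''$ as an intermediate set. Instead, we use that $I''\times S''$ and $I'_j\times S'_j$ share a point, and that $I'_j\times S'_j$ and $I\times S$ share a point. Measuring from the touching point $z\in\overline{I''\times S''}$, we need the stronger induction hypothesis
\[
\sup_{x\in I'\times S'}\ \inf_{y\in\overline{I''\times S''}}|x-y|\le C2^{\gamma}\sum_{k=\ell(I'\times S')}^{n-1}2^{-\gamma k}.
\]
This is a Hausdorff-type, one-sided bound, which is exactly what Stevenson proves.

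With this strengthening, the endpoint bounds from both cases combine. The term $C2^{-\gamma n}$ from the single element $I'_j\times S'_j$, which shares a boundary point with both $I''\times S''$ and $I\times S$, is dominated by $C2^{\gamma}2^{-\gamma n}$ and supplies the $k=n$ summand. The sum then telescopes to the claimed range $k=\ell(I'\times S'),\dots,n$. The base case is again the first case, now stated with a supremum over $I'\times S'$, which holds because $I'\times S'\subset\tilde I\times\tilde S$.
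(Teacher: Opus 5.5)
Your proposal has a genuine gap in the inductive step. It also misreads the statement in the first case.

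\textbf{Inductive step.} You miss that the element $I''\times S''$ on which $\textup{PATCH}\_\textup{REFINE}$ is called recursively always touches $I\times S$ itself, i.e.\ $\overline{I''\times S''}\cap\overline{I\times S}\neq\emptyset$, not merely through an intermediate $I'_j\times S'_j$.
\begin{itemize}
\item In the first pass of the while-loop, $I''\times S''\in\calN(I\times S,\calP)$.
\item In the second pass, $I''\times S''$ is a temporal neighbor of some $I'_2\times S'_2\in F_1$ with $I'_2=I$ and $\textup{RE}(S'_2)=\textup{RE}(S)$. \cref{Lemma_Properties_of_Neighboring_Space_Time_Elements} gives $S'_2\in\textup{BISECT}(d,S'')$, so $\textup{RE}(S)\subset S''$. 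Hence the two closures share $\{t_0\}\times\textup{RE}(S)$, where $t_0$ is the common endpoint of $\overline{I}$ and $\overline{I''}$.
\end{itemize}
With the elementary bound $d(A,B)\le d(A,M)+\diam(M)$ whenever $\overline{M}\cap\overline{B}\neq\emptyset$, the induction closes immediately. Indeed, $\diam(I''\times S'')\le C2^{-\gamma(n-1)}=C2^{\gamma}2^{-\gamma n}$ is exactly the $k=n$ summand; that is what the prefactor $2^{\gamma}$ is for.

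So $\diam(I''\times S'')$ is not the obstacle. What breaks your estimate is the extra term $d(I''\times S'',I\times S)\le C2^{-\gamma n}$ from routing through $I'_j\times S'_j$, which leaves you above the target by $C2^{-\gamma n}$. The one-sided Hausdorff strengthening does not remove anything. It only gives some $y\in\overline{I''\times S''}$ close to a given $x\in I'\times S'$. Getting from $y$ to the touching point $z$ still costs up to $\diam(I''\times S'')$, and your final accounting silently drops this term.

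\textbf{First case.} Here $\ell(I'\times S')=n+1>n=\ell(I\times S)$, so the sum in the statement is empty and the lemma asserts $d(I'\times S',I\times S)=0$. It is not the single term $k=n+1$, and the bound $\diam(\tilde I\times\tilde S)$ does not yield it. You need the actual touching:
\begin{itemize}
\item $\overline{I'}\subset\overline{I}$, and $S'$ contains the midpoint of $\textup{RE}(\tilde S)=\textup{RE}(S)\subset S$;
\item or simply $I'\times S'\subset I\times S$ when $\tilde I\times\tilde S=I\times S$.
\end{itemize}

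This is not cosmetic. Inside the recursive call, the children of its set $K$ have level $n$ and must be at distance $0$ from $I''\times S''$. With only $C2^{\gamma}2^{-\gamma n}$ available there, adding $\diam(I''\times S'')$ already gives $2C2^{\gamma}2^{-\gamma n}$, twice the allowed bound. For the same reason, your supremum-type hypothesis fails already in the base case: the right-hand side there is $0$, but the supremum is strictly positive whenever $I'\times S'\not\subset\overline{I\times S}$.
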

\begin{proof}
    We will prove the assertion by induction over $\ell(I\times S)$.\footnote{This proof relies heavily on the one of \cref{Theorem - conformity 1-reg Patch refine}, therefore, we will use the results found there without explicit mentioning it.} If $\ell(I\times S)=0$, then $\ell(I'\times S')=1$ and $I\times S$ touches $I'\times S'$ in space, i.e., $d(I\times S,I'\times S')=0$, which corresponds to the assertion. In particular, if $d=1$, then $\ell(I\times S)=0$ even implies that $I'\times S'\in \textup{ATOMIC}\_\textup{SPLIT}(I\times S, d, s_1, s_2)$. Now assume the lemma holds true for levels strictly less than $l\in \N$ and $\ell(I\times S)=l$. Then there are two possibilities. If $I'\times S'$ has been created by the application of $\textup{ATOMIC}\_\textup{SPLIT}(\cdot, d, s_1, s_2)$ to an element of the set $K$ from the algorithm, then again $d(I\times S,I'\times S')=0$ and nothing is left to be shown. Otherwise, $I'\times S'$ is created by a recursive call of $\textup{PATCH}\_\textup{REFINE}(\calP, I''\times S'', d, s_1, s_2)$ with $\ell(I''\times S'')=\ell(I\times S)-1=l-1$. Then the fact that $(\overline{I\times S})\cap(\overline{I''\times S''})\ne \emptyset$, together with the induction assumption and \cref{corollary relationship diameter level - ATOMIC_SPLIT} now shows
    \begin{align*}
        d(I'\times S', I\times S)&\le d(I'\times S', I''\times S'') + \diam(I''\times S'')
        \\&\le C\, 2^{\frac{1}{d}\min\left(\frac{s_2}{s_1},1\right)}\sum\limits_{k=\ell(I'\times S')}^{\ell(I''\times S'')} 2^{-\frac{k}{d}\min\left(\frac{s_2}{s_1},1\right)} + C \, 2^{-\frac{\ell(I''\times S'')}{d}\min\left(\frac{s_2}{s_1},1\right)}
        \\&= C\, 2^{\frac{1}{d}\min\left(\frac{s_2}{s_1},1\right)}\sum\limits_{k=\ell(I'\times S')}^{\ell(I\times S)} 2^{-\frac{k}{d}\min\left(\frac{s_2}{s_1},1\right)}.
        \qedhere
    \end{align*}
\end{proof}

Now we can go over to the proof of \cref{Thm:complexity results} which is very similar to the one of the time-independent analogue from \cite[Thm.~6.1]{Ste08}. The latter itself is based on the proof of \cite[Thm.~2.4]{BDD04}.

\begin{proof}[Proof of \cref{Thm:complexity results}]
    First, we define the positive sequences $a:\N_0\cup \{-1\}\rightarrow \R^+$ with $a(p):=(p+2)^{-2}$ and $b:\N_0\rightarrow \R^+$ with $b(p):=2^{\frac{p}{d+1}\cdot \min\left(\frac{s_2}{s_1},1\right)}$. Those have the following properties, which can be shown with elementary techniques from calculus:
    \begin{enumerate}[label=(\roman*)]
        \item $\sum\limits_{p=-1}^\infty a(p):=C_1<\infty$, \label{Proof of Thm:complexity results - prop initial sequences - item 1}
        \item $\sum\limits_{p=0}^\infty b(p)2^{-\frac{p}{d}\cdot \min\left(\frac{s_2}{s_1},1\right)} :=C_2(d, s_1, s_2)<\infty$, and \label{Proof of Thm:complexity results - prop initial sequences - item 2}
        \item $\inf\limits_{p\in \N_0} a(p)b(p):=C_3(d, s_1, s_2) >0$. \label{Proof of Thm:complexity results - prop initial sequences - item 3}
    \end{enumerate}
    We will need the constant $A:=C\left(\frac{2^{\frac{1}{d}\min\left(\frac{s_2}{s_1}, 1\right)}}{1-2^{-\frac{1}{d}\min\left(\frac{s_2}{s_1}, 1\right)}}+1\right)C_2$, where $C$ is the constant from the upper bound of \cref{corollary relationship diameter level - ATOMIC_SPLIT} and \cref{Lemma distance newly created element}. Moreover we shall need the properties \ref{Proof of Thm:complexity results - prop initial sequences - item 1}-\ref{Proof of Thm:complexity results - prop initial sequences - item 3} throughout the proof. Now let $k\in \N$ (for $k=0$ there is nothing to be shown), $\tilde{\calP}$ any intermediate partition between $\calP_0$ and $\calP:=\calP_k$ during the call of $\textup{MARKED}\_\textup{REFINE}(\calP_0, d, s_1, s_2)$, and $\calM\subset\bigcup\limits_{i=1}^{k}\calM_{i-1}$ its subset of marked elements for which the subroutine $\textup{PATCH}\_\textup{REFINE}(\tilde{\calP}, \cdot, d, s_1, s_2)$ has been called. Further, we define the following function $\lambda:\calP\times \calM\rightarrow \R$ via
    \begin{align*}
        \lambda(I'&\times S', I\times S)
        \\&:=\begin{cases}
            a(\ell(I\times S) - \ell (I'\times S')), &\text{if } d(I'\times S', I\times S)<A\,2^{-\frac{\min\left(\frac{s_2}{s_1}, 1\right)}{d}\ell(I'\times S')}\text{ and }\ell(I'\times S')\le \ell(I\times S)+1,
            \\ 0, &\text{otherwise.} 
        \end{cases}
    \end{align*}
    Now let $I\times S\in \calM$ and $l\in \N_0$ with $l\le\ell(I\times S)+1$. Due to \cref{corollary relationship diameter level - ATOMIC_SPLIT}, there are only finitely many $I'\times S'\in \calP$ with $\ell(I'\times S')=l$ such that $d(I'\times S', I\times S)\le A\,2^{-\frac{\min\left(\frac{s_2}{s_1}, 1\right)}{d}l}$, whereas the number is uniformly bounded by $d, s_1, s_2, \kappa_{\calP_0}$, and $\mu(\calP_0)$. Therefore,
    \begin{align*}
        \sum\limits_{I'\times S'\in \calP}^{}\lambda(I'&\times S', I\times S)= \sum\limits_{l=0}^{\ell(I\times S)+1}\sum\limits_{\substack{I'\times S'\in \calP,\\ \ell(I'\times S')=l}}^{}\lambda(I'\times S', I\times S)\lesssim_{\, d, s_1, s_2, \kappa_{\calP_0}, \mu(\calP_0)} \sum\limits_{l=-1}^{\ell(I\times S)} a(l)\le \sum\limits_{l=-1}^{\infty} a(l) \sim 1,
    \end{align*}
    where we have applied \ref{Proof of Thm:complexity results - prop initial sequences - item 1} from the beginning of the proof in the final step. This yields
    \begin{align}\label{Proof of Thm:complexity results - step 1}
       \sum\limits_{I\times S\in \calM}\sum\limits_{I'\times S'\in \calP}^{}\lambda(I'&\times S', I\times S)\lesssim_{\, d, s_1, s_2, \kappa_{\calP_0}, \mu(\calP_0)} \sum\limits_{I\times S\in \calM} 1 = \#\calM.
    \end{align}
    Next, we will consider $I'\times S'\in \calP\setminus(\calP\cap \calP_0)$ and the chain of prisms $(I_0\times S_0, I_1\times S_1, \dots , I_e\times S_e)$, $e\in \N$, $I_0\times S_0:=I'\times S'$, such that $I_e\times S_e\in \calP_0$, and $I_{i-1}\times S_{i-1}$ has been created during a call of the method \mbox{$\textup{PATCH}\_\textup{REFINE}(\tilde{\calP}, I_{i}\times S_{i}, d, s_1, s_2)$}, $i=1,\dots, e$. We know that \mbox{$\ell(I_{i-1}\times S_{i-1})\le \ell(I_{i}\times S_{i})+1$}, $i=1,\dots, e$, due to \cref{Theorem - conformity 1-reg Patch refine}. Therefore, it exists $\nu\in \{1, \dots, e\}$ minimal with $\ell(I_\nu\times S_\nu)= \ell(I_0\times S_0)-1$. In particular, this implies that $\ell(I_i\times S_i)>\ell(I_\nu\times S_\nu)=\ell(I_0\times S_0)-1$ for $i\in\{0,\dots, \nu - 1\}$.
    
    Now let $j\in \{1, \dots, \nu\}$. Due to \cref{corollary relationship diameter level - ATOMIC_SPLIT} and \cref{Lemma distance newly created element}, we obtain
    \begin{align*}
        d(I_0\times S_0, I_j \times S_j)&\le \sum\limits^{j}_{i=1} d(I_{i-1}\times S_{i-1}, I_i\times S_i) + \sum\limits^{j-1}_{i=1}\diam(I_i\times S_i)
        \\&\le C\left(\sum\limits_{i=1}^{j} 2^{\frac{1}{d}\min\left(\frac{s_2}{s_1},1\right)}\sum\limits_{k=\ell(I_{i-1}\times S_{i-1})}^{\ell(I_{i}\times S_{i})}2^{-\frac{k}{d}\min\left(\frac{s_2}{s_1}, 1\right)}  + \sum\limits_{i=1}^{j-1} 2^{-\frac{1}{d}\min\left(\frac{s_2}{s_1},1\right)\ell(I_i\times S_i)}  \right).
    \end{align*}
    Since for all $i\in \{1,...,j\}$,
    \begin{align*}
        \sum\limits_{k=\ell(I_{i-1}\times S_{i-1})}^{\ell(I_{i}\times S_{i})}2^{-\frac{k}{d}\min\left(\frac{s_2}{s_1}, 1\right)}  < 2^{-\frac{1}{d}\min\left(\frac{s_2}{s_1},1\right)\ell(I_{i-1}\times S_{i-1})}\sum\limits_{k=0}^{\infty} 2^{-\frac{k}{d}\min\left(\frac{s_2}{s_1},1\right)} = \frac{2^{-\frac{1}{d}\min\left(\frac{s_2}{s_1},1\right)\ell(I_{i-1}\times S_{i-1})}}{1-2^{-\frac{1}{d}\min\left(\frac{s_2}{s_1},1\right)}},
    \end{align*}
    due to the well-known formula for a geometric sum with respect to $2^{-\frac{1}{d}\min\left(\frac{s_2}{s_1},1\right)}<1$, we can further estimate
    \begin{align*}
        d(I_0\times S_0, I_j \times S_j) &< C\left(\frac{2^{\frac{1}{d}\min\left(\frac{s_2}{s_1}, 1\right)}}{1-2^{-\frac{1}{d}\min\left(\frac{s_2}{s_1}, 1\right)}}+1\right) \sum\limits_{i=0}^{j-1} 2^{-\frac{1}{d}\min\left(\frac{s_2}{s_1},1\right)\ell(I_i\times S_i)} 
        \\&= C\left(\frac{2^{\frac{1}{d}\min\left(\frac{s_2}{s_1}, 1\right)}}{1-2^{-\frac{1}{d}\min\left(\frac{s_2}{s_1}, 1\right)}}+1\right) \sum\limits_{p=0}^{\infty} m(p,j) 2^{-\frac{1}{d}\min\left(\frac{s_2}{s_1},1\right)\left(\ell(I_0\times S_0)+p\right)} ,
    \end{align*}
    where $m(p,j):=\#\{I_i\times S_i \mid \ell(I_i\times S_i) = \ell(I_0\times S_0) + p, \ i=0,\dots, j-1 \}$ for any $p\in \N_0$. Now we have to distinguish two cases. If $m(p,\nu)\le b(p)$ for all $p\in \N_0$, then using \ref{Proof of Thm:complexity results - prop initial sequences - item 2} and the definition of the constant $A$ yields
    \begin{equation}\label{Proof of Thm:complexity results - intermediate step for step 3} 
    \begin{split}
        d(I_0\times S_0, I_\nu \times S_\nu) &< C\left(\frac{2^{\frac{1}{d}\min\left(\frac{s_2}{s_1}, 1\right)}}{1-2^{-\frac{1}{d}\min\left(\frac{s_2}{s_1}, 1\right)}}+1\right) \left(\sum\limits_{p=0}^{\infty} b(p) 2^{-\frac{1}{d}\min\left(\frac{s_2}{s_1},1\right)p}\right) 2^{-\frac{1}{d}\min\left(\frac{s_2}{s_1},1\right)\ell(I_0\times S_0)} 
        \\&= C\left(\frac{2^{\frac{1}{d}\min\left(\frac{s_2}{s_1}, 1\right)}}{1-2^{-\frac{1}{d}\min\left(\frac{s_2}{s_1}, 1\right)}}+1\right)C_2 \, 2^{-\frac{1}{d}\min\left(\frac{s_2}{s_1},1\right)\ell(I_0\times S_0)} = A\,2^{-\frac{1}{d}\min\left(\frac{s_2}{s_1},1\right)\ell(I_0\times S_0)}. 
    \end{split}
    \end{equation}
    and therefore, it holds
    \begin{align}\label{Proof of Thm:complexity results - step 2}
        \sum\limits_{I\times S\in \calM}\lambda(I'\times S', I\times S) \ge \lambda(I_0\times S_0, I_\nu\times S_\nu)= a( \ell(I_\nu\times S_\nu) - \ell(I_0\times S_0) ) = a(-1)=1\gtrsim 1.
    \end{align}
    Otherwise, there exists one $p\in \N_0$ with $m(p,\nu)>b(p)$. Since the definition of $m(p, \cdot)$ implies that it is monotonously increasing in the second component, there is $j=j(p)\in \{1,\dots, \nu\}$ minimal such that \mbox{$m(p, j(p))> b(p)$}. Now let $p^*\in \N_0$ be chosen such that $j^*:=j(p^*)$ is the minimal value of the sequence $(j(p))_{p\in \N_0}$. In fact, it must hold $j^*-1\ge 1$, i.e., $j^*\ge 2$, since $j^*=1$ would lead to the contradiction $1 = m(p^*, 1) > b(p^*) \ge 1$. Therefore, for any $p\in \N_0$, $m(p, j^*-1)\le m(p, j(p)-1)\le b(p)$, due to the monotonicity of $m(p,\cdot)$ and the minimality condition imposed on $j(p)$. 
    
    This implies $d(I_0\times S_0, I_k\times S_k)< A\,2^{-\frac{1}{d}\min\left(\frac{s_2}{s_1},1\right)\ell(I_0\times S_0)}$ for any $k\in \{0,\dots, j^*-1\}$, as above in \eqref{Proof of Thm:complexity results - intermediate step for step 3}. In particular, for all such $k$ with $\ell(I_k\times S_k)= \ell(I_0\times S_0) +p^*$, we can conclude that $\lambda(I_0\times S_0, I_k\times S_k)=a(p^*)$. Then, similar to before, we can estimate
    \begin{align}\label{Proof of Thm:complexity results - step 3}
        \sum\limits_{I\times S\in \calM}\lambda(I'\times S', I\times S) &\ge \sum\limits_{\substack{k\in \{0,\dots, j^*-1\}\text{ with} \\\ell(I_k\times S_k)=\ell(I_0\times S_0)+p^*}}\lambda(I_0\times S_0, I_k\times S_k) = m(p^*, j^*) a(p^*) 
        \\&> b(p^*)a(p^*) \ge \inf\limits_{p\in \N_0} a(p)b(p)=C_3\gtrsim_{\,d, s_1, s_2} 1 ,\notag
    \end{align}
    where we have employed \ref{Proof of Thm:complexity results - prop initial sequences - item 3} in the last step. Now we can finally prove the assertion, using first \eqref{Proof of Thm:complexity results - step 2} and \eqref{Proof of Thm:complexity results - step 3}, respectively, and then \eqref{Proof of Thm:complexity results - step 1}:
    \begin{align*}
        \# \calP - \#\calP_0 &\le \#(\calP\setminus(\calP_0\cap\calP))\lesssim_{\, d,s_1, s_2}\sum\limits_{I'\times S'\in (\calP\setminus(\calP_0\cap\calP))} \sum\limits_{I\times S\in \calM}\lambda(I'\times S', I\times S) 
        \\&\le \sum\limits_{I\times S\in \calM} \sum\limits_{I'\times S'\in \calP} \lambda(I'\times S', I\times S)  \lesssim_{\, d, s_1, s_2, \kappa_{\calP_0}, \mu(\calP_0)} \#\calM \le \sum\limits_{i=1}^k \#\calM_{i-1}.
        \qedhere
    \end{align*}
\end{proof}

\section{Support of basis functions and quasi-interpolation}\label{sect:investigation of mesh geometry and Quasi-interpolation}

\subsection{An investigation of the mesh geometry of hanging nodes}

Let $\hat{I}:=[0,1]$, $\hat{S}\subset \R^d$ be the $d$-dimensional standard simplex, and $r_1, r_2 \in \N_{\ge 2}$. The \textbf{Lagrangian basis nodes} with respect to $\Pi^{r_1}(\hat{I})$ and $\Pi^{r_2}(\hat{S})$, i.e., polynomials on $\hat{I}$ and $\hat{S}$ of order $r_1$ and $r_2$, respectively, are given by
\begin{align*}
    \calL_{r_1}(\hat{I}):=\left\{\frac{n}{r_1-1}\bigg|\  n=0,\dots,r_1-1\right\}\qquad\text{and}\qquad \calL_{r_2}(\hat{S}):=\left\{\frac{\alpha}{r_2-1}\bigg|\  \alpha\in \N_0^d, \, |\alpha|\le r_2-1\right\}.
\end{align*}

Further, let $I$ be an interval and $S$ a $d$-dimensional simplex. Now consider the corresponding bijective, affine transformations $\Phi_I:\hat{I}\rightarrow \overline{I}$ and $\Phi_S:\hat{S}\rightarrow S$, that allow us to define the (isotropic) Lagrangian basis nodes with respect to $I$ and $S$ as follows:
\begin{align*}
    \calL(I):=\calL_{r_1}(I):=\Phi_I\left(\calL_{r_1}(\hat{I})\right) \qquad\text{and}\qquad \calL(S):=\calL_{r_2}(S):=\Phi_S\left(\calL_{r_2}(\hat{S})\right).
\end{align*}
Clearly, $\dim \Pi^r(I) = \# \calL_{r}(I)$ and $\dim \Pi^r(S) = \# \calL_{r}(S)$ which yields $\dim \Pi_{t,\bm{x}}^{r_1,r_2}(I\times S) = \# \calL(I\times S)$ for the \textbf{anisotropic Lagrangian nodes} $\calL(I\times S):=\calL(I)\times \calL(S) $. In order to recall the definition of these anisotropic polynomial spaces, see \cref{subsubsect:Prelim_General_Setting_FEM_Spaces}.  In particular:

\begin{lem}\label{lem:Conformity_Lagrange_nodes}
    Let $\calT$ be a conforming simplicial triangulation of $\Omega$ and $S, S'\in \calT$. Then 
    \begin{align*}
        S\cap \calL(S') = S'\cap\calL(S).
    \end{align*}
\end{lem}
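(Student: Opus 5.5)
The plan is to use conformity to reduce everything to the common face $F:=S\cap S'$, and then to show that both Lagrange node sets restrict on $F$ to the same intrinsically defined set. If $S=S'$ there is nothing to prove. If $S\cap S'=\emptyset$, both sides are empty, because $\calL(S')\subset S'$ and $\calL(S)\subset S$. Otherwise, by the definition of a conforming triangulation, $F$ is an $r$-dimensional common face of $S$ and $S'$ with $r\in\{0,\dots,d-1\}$. Since $\calL(S')\subset S'$, we have $S\cap\calL(S')=F\cap\calL(S')$, and similarly $S'\cap\calL(S)=F\cap\calL(S)$. It therefore suffices to prove $F\cap\calL(S')=F\cap\calL(S)$.

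The key step is a barycentric description of the nodes. Let $\lambda_0,\dots,\lambda_d$ be the barycentric coordinates with respect to the vertices of $S$. The affine map $\Phi_S$ maps the standard simplex onto $S$, and the reference nodes $\alpha/(r_2-1)$ with $|\alpha|\le r_2-1$ are exactly the points of $\hat S$ whose barycentric coordinates all lie in $\frac{1}{r_2-1}\N_0$. Because affine maps preserve barycentric coordinates, we get
\[
\calL(S)=\Big\{x\in S \;\Big|\; \lambda_i(x)\in \tfrac{1}{r_2-1}\N_0,\ i=0,\dots,d\Big\}.
\]
This set is independent of the vertex ordering chosen in $\Phi_S$. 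Now take a point $x\in F$. Its barycentric coordinates with respect to the vertices of $S$ not lying in $F$ vanish, and its remaining coordinates coincide with its barycentric coordinates relative to the vertices of $F$. Hence
\[
F\cap\calL(S)=\Big\{x\in F \;\Big|\; \text{the barycentric coordinates of } x \text{ w.r.t.\ the vertices of } F \text{ lie in } \tfrac{1}{r_2-1}\N_0\Big\}.
\]
The right-hand side depends only on $F$, that is, only on its vertex set. Conformity ensures that $F$ is a face of both $S$ and $S'$ with the same vertices, so the identical argument applied to $S'$ gives the same set. This proves the claim.

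The main obstacle is merely bookkeeping. One must check that the description via barycentric coordinates is independent of the particular affine map $\Phi_S$ (any vertex permutation gives the same set). One must also note that we use a closed face, so points on lower-dimensional subfaces of $F$ are handled by zero coordinates. Both follow directly from the symmetric form of the condition above.
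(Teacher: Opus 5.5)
Your proof is correct and follows the same route as the paper, which argues in one line that conformity makes $S\cap S'$ a common face of $S$ and $S'$ and that the claim then follows from the definition of the Lagrange nodes. Your barycentric description $\calL(S)=\{x\in S \mid \lambda_i(x)\in \frac{1}{r_2-1}\N_0,\ i=0,\dots,d\}$, together with its restriction to a face, simply spells out the step the paper leaves as ``by definition''.
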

\begin{proof}
    This is a direct consequence of fact that $S\cap S'$ is a common hyperface of $S$ and $S'$, due to the conformity of $\calT$, and the definition of the lagrange nodes.
\end{proof}

Throughout the rest of this section we consider a space-time partition $\calP$ derived by finitely many applications of $\textup{PATCH}\_\textup{REFINE}(\cdot, \cdot, d, s_1, s_2)$, for fixed $s_1, s_2\in (0, \infty)$, starting with $\calP_0$ from \cref{sect:space_time_partition_and_refinement}. Thus, $\calP$ is a non-overlapping partition of $\Omega_T$ which fulfills the properties \ref{Enumeration desired properties of the mesh 1} and \ref{Enumeration desired properties of the mesh 2} from the beginning of \cref{subsect:Patch_refine}. Recall that on $\calP$  the space of \textbf{continuous anisotropic finite elements} is given by 
\begin{align*}
    \mathbb{V}^{r_1, r_2}_\calP=\left\{F\in C(\Omega_T, \R)\mid F_{|I\times S}\in \Pi^{r_1, r_2}_{t,\bm{x}}(I\times S), \, I\times S \in \calP\right\}.
\end{align*}
The set of all local degrees of freedom is given by $\calL(\calP):=\bigcup\limits_{I\times S\in \calP}\calL(I\times S)$. In particular, for the (possibly) \textbf{discontinuous} version of the above finite element space, i.e.,
\begin{align*}
    \mathbb{V}^{r_1, r_2}_{\calP, \textup{DC}}=\left\{F:\Omega_T\rightarrow \R\mid F_{|I\times S}\in \Pi^{r_1, r_2}_{t,\bm{x}}(I\times S), \, I\times S \in \calP\right\},
\end{align*}
$\dim \mathbb{V}^{r_1, r_2}_{\calP, \textup{DC}}= \sum\limits_{I\times S\in \calP} \#\calL(I\times S)$ holds true.

However, due to the continuity condition for the finite elements, the local degrees are not all global degrees of freedom. In the following theorem we will see that the set of \textbf{free nodes} $\calF(\calP):=\calL(\calP)\setminus \calH(\calP)$ indeed corresponds to the global degrees of freedom. Here, $\calH(\calP)$ is the set of \textbf{hanging nodes} in $\calP$, i.e.,
\begin{align*}
    \calH(\calP):=\{\nu \in \calL(\calP) \mid \exists \,I\times S\in \calP: \nu\in \overline{I\times S}\ \text{and}\ \nu \notin \calL(I\times S) \}.
\end{align*}

\begin{thm}\label{thm:basis free nodes}
    For $\nu \in \calF(\calP)$, $\phi_\nu\in \mathbb{V}^{r_1, r_2}_{\calP}$ is well-defined by $\phi_\nu(\nu')=\delta_{\nu,\nu'}$ for all $\nu'\in\calF(\calP)$. Further, $(\phi_\nu)_{\nu\in \calF(\calP)}$ forms a basis of $\mathbb{V}^{r_1, r_2}_{\calP}$.
\end{thm}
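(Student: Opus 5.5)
The plan is to show that the restriction map
\[
R:\mathbb{V}^{r_1,r_2}_\calP\to\R^{\calF(\calP)},\qquad F\mapsto (F(\nu))_{\nu\in\calF(\calP)},
\]
is a bijection. Then $\phi_\nu:=R^{-1}(e_\nu)$ is well-defined, and the $\phi_\nu$ form a basis. Injectivity and surjectivity are handled separately. The key structural input is that hanging nodes only occur across temporal faces, by conformity in space. In time the mesh is $1$-irregular, and a hanging node always lies on a face of a \emph{coarser} neighbor.

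\textbf{Injectivity.} Let $F\in\mathbb{V}^{r_1,r_2}_\calP$ vanish on $\calF(\calP)$; we show $F\equiv0$ by induction over the level, from coarse to fine.

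Take an element $I\times S$ of minimal level among those on which $F$ has not yet been shown to vanish. Consider a node $\nu\in\calL(I\times S)$. Either $\nu$ is free, or $\nu$ is hanging, i.e.\ $\nu\in\overline{I'\times S'}\setminus\calL(I'\times S')$ for some $I'\times S'\in\calP$.

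In the hanging case, Lemma~\ref{lem:Conformity_Lagrange_nodes} applied to the active triangulations $\calT(t,\calP)$ rules out a purely spatial mismatch. Hence $\nu$ lies on a common temporal hyperface $\{t^*\}\times(S\cap S')$ with $\ell(I'\times S')\neq\ell(I\times S)$. By Lemma~\ref{Lemma_Properties_of_Neighboring_Space_Time_Elements}, the element on which $\nu$ is \emph{not} a Lagrange node is the finer one. So $\nu$ is a hanging node for an element finer than $I\times S$, not for $I\times S$ itself.

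The real case to handle is therefore: $\nu\in\calL(I\times S)$ is hanging because a \emph{coarser} neighbor $I'\times S'$ at time $t^*$ does not contain $\nu$ among its nodes. Then $F(\nu)$ is determined by continuity from $F|_{I'\times S'}$, which vanishes by the induction hypothesis. Hence $F$ vanishes on all of $\calL(I\times S)$, and unisolvence of $\calL(I\times S)$ for $\Pi^{r_1,r_2}_{t,\bm{x}}$ (tensor product of unisolvent sets) gives $F|_{I\times S}=0$.

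To make the induction run, I would order by level and note that a node that is hanging with respect to some element always lies on a face of a strictly coarser element. This follows from $1$-irregularity in time together with property~\ref{Enumeration further properties of the mesh - enum 1} of Lemma~\ref{Enumeration further properties of the mesh}.

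\textbf{Surjectivity.} Given values $(c_\nu)_{\nu\in\calF(\calP)}$, we construct $F$ element by element in increasing level.

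On each $I\times S$, prescribe $F(\nu)=c_\nu$ for free $\nu\in\calL(I\times S)$. For hanging $\nu\in\calL(I\times S)$, set $F(\nu)$ equal to the value at $\nu$ of the already constructed polynomial on the coarser neighbor (or, by induction, on a chain of coarser neighbors). Interpolate on $\calL(I\times S)$.

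Continuity across faces is then checked by type:
\begin{itemize}
\item \emph{Spatial faces.} Here the two prisms share the same time interval, or one is nested in the other. Lemma~\ref{lem:Conformity_Lagrange_nodes} gives identical node sets on the common face. A polynomial of the face trace space is determined by those nodes, since the face is $I\times F$ with $F$ a common $(d-1)$-face, and the trace space is again a tensor product with unisolvent node set.
\item \emph{Temporal faces.} Here the finer side $\{t^*\}\times S$ has $S\in\textup{BISECT}(d,S')$. The trace of the coarse polynomial to $\{t^*\}\times S$ lies in $\Pi^{r_2}(S)$, so it is reproduced exactly by interpolation on $\calL(S)$. This is precisely what our definition of the hanging values enforces.
\end{itemize}

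\textbf{Main obstacle.} The hard step is the well-definedness of hanging values: a node might be hanging with respect to several coarser elements, e.g.\ at corners where several temporal faces meet a spatial edge. One must check that the values inherited from different coarse neighbors coincide. Those coarse neighbors are themselves continuous with each other by induction, so the inherited values agree as long as $\nu$ lies in their common closure.

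I would first try to prove the structural claim that every hanging node lies in the closure of the union of the coarse neighbors and that their traces there agree. This uses Lemma~\ref{Lemma_Properties_of_Neighboring_Space_Time_Elements} (equal refinement edges and time intervals for spatial neighbors) and conformity of each $\calT(t,\calP)$.
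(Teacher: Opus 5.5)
Your overall plan, which shows that $F\mapsto(F(\nu))_{\nu\in\calF(\calP)}$ is bijective by inducting over the level (with values at hanging nodes inherited from coarser elements), is the natural one. It is essentially the argument the paper imports from \cite[Thm.~3.1]{Grae11} via the hierarchical structure property. As written, however, it has a genuine gap, because the picture of hanging nodes it rests on is wrong. Conformity of $\calT(t,\calP)$ forces only the \emph{spatial} coordinates of nodes to match across lateral faces, not the temporal ones. In the second case of \cref{Lemma_Properties_of_Neighboring_Space_Time_Elements} one has $I\subsetneq I'$ as soon as $m\ge 1$. The nodes of $I\times S$ on $\overline{I}\times E$, $E:=S\cap S'$, whose time coordinate lies in $\calL(I)\setminus\calL(I')$ are then hanging; they are exactly the nodes hanging in time of \cref{lem:Hanging_in_time_part_1}. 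So hanging nodes do not occur only across temporal faces, and the \emph{spatial faces} bullet of your surjectivity step fails. On $\overline{I}\times E$ the coarse element carries only the nodes $(\calL(I')\cap\overline{I})\times(\calL(S')\cap E)$, which are not unisolvent for $\Pi^{r_1}(I)\otimes\Pi^{r_2}(E)$, so continuity there rests entirely on the inherited values. Your sentence saying that the element on which $\nu$ is not a Lagrange node is the finer one also has the direction reversed. Finally, the consistency of inherited values, which you correctly identify as the crux, is left open.

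The missing ingredient is a nestedness lemma: if $\ell(J)\ge\ell(K)$ and $\overline{K}\cap\overline{J}\neq\emptyset$, then $\overline{K}\cap\overline{J}$ is a face of $J$ and $\calL(K)\cap\overline{J}\subset\calL(J)$. In time this is dyadic nesting of the intervals, since the temporal level is monotone in $\ell$. In space, pass to the ancestor $S_J^*$ of $S_J$ of level $\ell(S_K)$ and combine three facts. The uniform refinements of $\calT_0$ are conforming (\cite[Thm.~4.3]{Ste08}), so $S_K\cap S_J^*$ is a common face carrying the same Lagrange lattice from both sides. A descendant meets a face of its ancestor in one of its own faces. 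And $\calL(S^*)\cap S\subset\calL(S)$ for $S\in\textup{BISECT}(d,S^*)$. This lemma gives your structural claim directly: a node of $K$ can hang only with respect to elements of strictly lower level, and $1$-irregularity plays no role. So your injectivity induction goes through once this is in place. For surjectivity, take as induction hypothesis that $P_{K_1}=P_{K_2}$ on $\overline{K_1}\cap\overline{K_2}$ for \emph{all} pairs of elements of level $<L$, not only face neighbours. Then the inherited value at a hanging node of a level-$L$ element does not depend on the chosen coarse element. For every pair $K,J$ with $\ell(K)\le\ell(J)=L$, including lower-dimensional contacts, agreement of the traces follows by checking the unisolvent set $\calL(J)\cap\overline{K}$ of the face $\overline{K}\cap\overline{J}$ of $J$. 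Common nodes carry $c_\nu$ or values inherited from levels $<L$. A node of $J$ that is not a node of $K$ forces $\ell(K)<L$ and is hanging, so the hypothesis applies there too.
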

\begin{proof}
    In \cite[Thm.~3.1]{Grae11}, the corresponding result has been shown for simplicial meshes covering a polyhedral Lipschitz domain and fulfilling the analogue to the \enquote{hierarchical structure}-property from \cref{Enumeration further properties of the mesh}\ref{Enumeration further properties of the mesh - enum 1}. Since our meshes also fulfill this property, one can obtain the above corresponding result following the lines of the proof (and its prerequisites) there.
\end{proof}

\begin{center}
    \begin{figure}[H]
        \begin{center}
            \includegraphics[width=9cm]{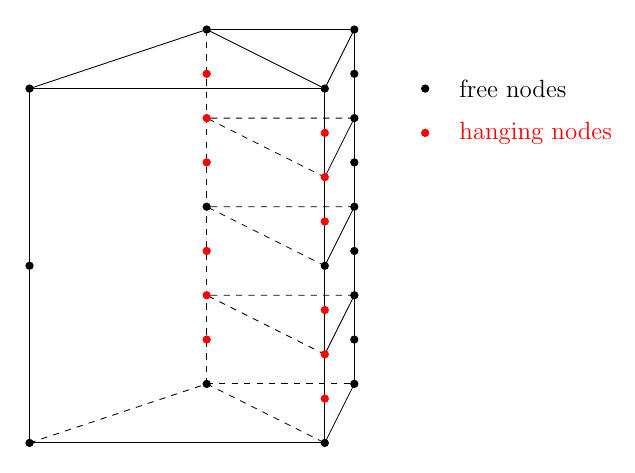}
            \captionof*{figure}{Illustrative example for $r_1=3$ and $r_2=2$}
        \end{center}
    \end{figure}
\end{center}

The main result of this section is the following characterization of $\supp \phi_\nu$, for $\nu\in \calF(\calP)$, which will be essential for the construction of an appropriate interpolation operator.

\begin{thm}\label{thm:support_in_power_omega_domain}
    Let $\nu \in \calF(\calP)$. If $I\times S\in \calP$ and $\nu\in \calL(I\times S)$, then $\supp \phi_\nu \subset \omega^{j(d)}_\calP(I\times S)$, with $j(1):=2$ and $j(d):=3$ for $d\in \N_{\ge 2}$. Furthermore, 
    \[
    |I'\times S'| \sim_{d,s_1, s_2, \kappa_{\calP_0},\mu(\calP_0)} |\supp \phi_\nu|,
    \]
    for all $I'\times S' \in \calP$ such that $I'\times S' \subset \supp \phi_\nu$.
\end{thm}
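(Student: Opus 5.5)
The plan is to describe $\phi_\nu$ element by element, using the construction behind \cref{thm:basis free nodes} (following \cite[Thm.~3.1]{Grae11}). On each prism $I'\times S'\in\calP$, the restriction ${\phi_\nu}_{|I'\times S'}\in\Pi^{r_1,r_2}_{t,\bm{x}}(I'\times S')$ is determined by its values at the local nodes $\calL(I'\times S')$.
\begin{itemize}
\item If such a local node is free, the value is $\delta_{\nu,\cdot}$.
\item If it is hanging, it lies on the face of a coarser prism $I''\times S''$ (hierarchical structure, \cref{Enumeration further properties of the mesh}\ref{Enumeration further properties of the mesh - enum 1}). Its value is then obtained by evaluating ${\phi_\nu}_{|I''\times S''}$, which in turn is fixed by the values at $\calL(I''\times S'')$.
\end{itemize}
Hence $\phi_\nu\not\equiv 0$ on $I'\times S'$ only if there is a dependency chain of prisms $I'\times S' = K_m, K_{m-1},\dots,K_0$ with the following properties: $\nu\in\calL(K_0)$, and for each $i$ some node of $\calL(K_{i+1})$ is hanging and lies on a face of $K_i$. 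Consecutive prisms in such a chain touch. Moreover, every prism having $\nu$ as a local node contains $\nu$, so it touches $I\times S$, i.e.\ $K_0\in\omega_\calP(I\times S)$. The first claim therefore reduces to bounding the length $m$ of such chains: I aim for $m\le 1$ if $d=1$ and $m\le 2$ if $d\ge 2$. Indeed, $K_m\in\omega^{m+1}_\calP(I\times S)$, which gives $j(1)=2$ and $j(d)=3$.

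To bound $m$, I would classify the hanging nodes using the mesh properties \ref{Enumeration desired properties of the mesh 1} and \ref{Enumeration desired properties of the mesh 2}.

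For $d\ge2$, conformity of every active triangulation $\calT(t,\calP)$ and \cref{lem:Conformity_Lagrange_nodes} show that a node can only hang across a temporal interface. There a prism of level $n$ meets a prism of level $n-1$, and by \cref{Lemma_Properties_of_Neighboring_Space_Time_Elements} the spatial simplex of the finer prism is a bisection child of the coarser one. So a hanging node is a node of a child simplex that is not a node of the parent, sitting at the temporal endpoint of the coarse interval. The value there is interpolated from the parent's nodes on that temporal face.

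Those parent nodes may themselves be hanging, but only with respect to a prism on the other side of a spatial interface at the same time. This is the situation of \cref{Lemma_Properties_of_Neighboring_Space_Time_Elements}, where levels differ by one and the time intervals are nested. I would then argue via the $1$-irregular rule that the third link in such a chain would force a level difference of two across a single temporal face, a contradiction. This gives $m\le2$.

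For $d=1$, spatial faces are single points. With $1$-irregularity in both directions, the coarse face nodes of a hanging configuration are vertices or interior face nodes of the coarse prism. These are free, because the nodes of any finer neighbour at a coarse vertex coincide with the coarse nodes. This gives $m\le1$.

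For the second claim, $\supp\phi_\nu$ is a union of prisms of $\omega^{j(d)}_\calP(I\times S)$ and contains $I\times S$. By \cref{Enumeration further properties of the mesh}\ref{Enumeration further properties of the mesh - enum 2}, every $I'\times S'\in\omega^{j(d)}_\calP(I\times S)$ satisfies $|I'\times S'|\sim|I\times S|\sim|\omega^{j(d)}_\calP(I\times S)|$. Combining this with $|I\times S|\le|\supp\phi_\nu|\le|\omega^{j(d)}_\calP(I\times S)|$ gives $|I'\times S'|\sim|\supp\phi_\nu|$ with the stated dependence of the constants.

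The main obstacle is the chain-length bound for $d\ge2$, i.e.\ excluding longer dependency chains that alternate between temporal and spatial interfaces. I would handle it by tracking levels along the chain with \cref{Lemma_Properties_of_Neighboring_Space_Time_Elements}, treating separately the hanging nodes located on refinement edges.
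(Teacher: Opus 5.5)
Your reduction is fine and matches the paper: hanging-node values are interpolated from a coarser prism, so $\phi_\nu\not\equiv0$ on a prism forces a dependency chain, and a chain of length $m$ ends in $\omega^{m+1}_\calP(I\times S)$. Your proof of the measure comparison also agrees with the paper, as does the intended picture for $d\ge2$ (``hanging in space, then hanging in time, then free''). However, the chain-length bound is the entire content of the theorem, and it is not established. Worse, the classification you base it on is false.

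Conformity of $\calT(t,\calP)$ together with \cref{lem:Conformity_Lagrange_nodes} only matches \emph{spatial} Lagrange nodes at a common time. Across a lateral interface the time intervals may be strictly nested, $I\subsetneq I'$. This happens in the second case of \cref{Lemma_Properties_of_Neighboring_Space_Time_Elements} as soon as an atomic split performs a temporal bisection, and more generally for neighbours sharing only a lower-dimensional face. Then a node $(\nu_t,\nu_{\bm{x}})\in\calL(I\times S)$ with $\nu_t\in\mathring{I'}\setminus\calL(I')$ and $\nu_{\bm{x}}$ a common spatial node hangs, with no temporal interface involved. These nodes hanging in time (\cref{lem:Hanging_in_time_part_1}) can be the \emph{first} link of a chain. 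You use them yourself as the second link, so your case analysis is inconsistent and misses this case. Also, ``levels differ by one'' from \cref{Lemma_Properties_of_Neighboring_Space_Time_Elements} only holds for neighbours containing the refinement edge, not for general lateral neighbours.

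Second, your chain definition fixes the value on $K_i$ by all of $\calL(K_i)$ and lets the next link be any hanging node of $K_i$. With that notion, chains have unbounded length on graded meshes (e.g.\ the staircase mesh in the remark after \cref{Enumeration further properties of the mesh}), so $m\le2$ cannot hold. You must restrict the next link to the nodes of the lower-dimensional face carrying the interpolation:
\begin{itemize}
\item the facet $\{\nu'_t\}\times S''$ (a polynomial in $\Pi^{r_2}$) for a node hanging in space;
\item the segment $\overline{I''}\times\{\nu'_{\bm{x}}\}$ (a polynomial in $\Pi^{r_1}$) for a node hanging in time.
\end{itemize}
Then four facts must actually be proved, and you only gesture at the last one:
\begin{enumerate}[label=(\roman*)]
\item Interior nodes of the facet are free, since the only prisms containing them are $I\times S$, $I\times\tilde S$ and $I''\times S''$, where $\textup{BISECT}(d,S'')=\{S,\tilde S\}$ (\cref{lem:Hanging_in_space_part_2}).
\item Boundary nodes of the facet are free or hang in time, never in space. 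A further parent $S''\in\textup{BISECT}(d,S''')$ would overlap $S$ or $S''$ in one of the active triangulations at times near $\nu'_t$ (\cref{lem:Hanging_in_space_part_3}).
\item For a node hanging in time, one must take $I''\times S''\in\calP_\calH(\nu')$ of \emph{minimal level}. Otherwise interior nodes of the segment may hang on a still coarser prism and the chain continues (\cref{lem:Hanging_in_time_part_2}).
\item The endpoints of that segment are free. This follows from $\nu_{\bm{x}}\in\textup{RE}(S'')$, \cref{Lemma_Properties_of_Neighboring_Space_Time_Elements} and $1$-irregularity in time (\cref{lem:Hanging_in_time_part_3}); only this step corresponds to your ``level difference of two'' hint.
\end{enumerate}
Likewise, for $d=1$ you only check finer neighbours at a coarse vertex. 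What has to be excluded is a still coarser prism containing that vertex in the relative interior of an edge, and that is where the $1$-irregular rule enters.
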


In particular, it follows from Theorem~\ref{thm:support_in_power_omega_domain} that the support of $\phi_{\nu}$ is contained in the extended neighborhood of degree 3 of the prism  $I\times S$ if $\dim(S)\geq 2$, and of degree 2 if $\dim(S)=1$. Moreover, the total measure of $\supp \phi_{\nu}$ is comparable to the measure of any prism contained in the support. The rest of this section is dedicated to the proof of this theorem, which we develop through a series of rather technical lemmas.

First, we consider the set of elements of $\calP$ that cause $\nu\in \calH(\calP)$ to \enquote{hang}, i.e.,
\begin{align*}
    \calP_\calH(\nu):=\{I\times S\in \calP\mid \nu\in \overline{I\times S}\ \text{and}\ \nu \notin \calL(I\times S) \}
\end{align*}

We observe the following:

\begin{rem}\label{Rem:Hanging_nodes}
    \begin{enumerate}[label=(\roman*)]
        \item If $I\times S\in \calP_\calH(\nu)$, clearly $\nu \in \partial(I\times S) = (\partial I \times S )\cup (\mathring{I}\times \partial S)$, since $\calP$ is a non-overlapping partition of $\Omega_T$. In particular, the above union representation is disjoint. \label{Rem:Hanging_nodes_i}
        \item Decomposing $\nu = (\nu_t, \nu_{\bm{x}})$ with $\nu_t\in [0,T]\subset \R$ and $\nu_{\bm{x}}\in \Omega\subset \R^d$ yields
        \begin{align*}
            \nu= (\nu_t, \nu_{\bm{x}})\in \calL(I\times S)\quad\iff \quad \nu_t\in \calL(I) \text{ and } \nu_{\bm{x}}\in \calL(S).
        \end{align*}\label{Rem:Hanging_nodes_ii}
    \end{enumerate}
\end{rem}

These observations can be used to investigate the geometry of hanging nodes more precisely. First, the following lemmata will shed some light on the mesh geometry around hanging nodes, if the nodes \enquote{hang in space}.

\begin{lem}\label{lem:Hanging_in_space_part_1}
    Let $\nu \in \calH(\calP)\cap \calL(I\times S)$ for some $I\times S\in \calP$ and $I'\times S'\in \calP_\calH(\nu)$ with $\nu\in \partial I'\times S'$. Then, 
    \begin{align*}
        \nu\in (\partial I\times S)\cap (\partial I'\times S') \quad\text{and}\quad \overline{I}\cap \overline{I'}=\partial I\cap \partial I' = \{\nu_t\}
    \end{align*}
\end{lem}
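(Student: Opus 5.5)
We argue by excluding the alternative position of $\nu$ on $\partial(I\times S)$ and then using the tensor-product/dyadic structure of the time intervals. Write $\nu=(\nu_t,\nu_{\bm{x}})$. By assumption $\nu\in\partial I'\times S'$, so $\nu_t\in\partial I'$ and $\nu_{\bm{x}}\in S'$. Since $\nu\in\calL(I\times S)$, \cref{Rem:Hanging_nodes}\ref{Rem:Hanging_nodes_ii} gives $\nu_t\in\calL(I)\subset\overline{I}$ and $\nu_{\bm{x}}\in\calL(S)\subset S$.

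\textbf{Step 1: $\nu_t\in\partial I$.} Suppose instead $\nu_t\in\mathring{I}$. Then the prism $I\times S$ and the closure of $I'\times S'$ both contain $\nu$. Moreover $\nu_t\in\partial I'$, so points $(t,\bm{x})$ with $t$ close to $\nu_t$ on the side of $I'$ and $\bm{x}$ near $\nu_{\bm{x}}$ in $S\cap S'$ lie in both prisms. This requires $\dim(S\cap S')=d$, which is where the care lies. Conformity of $\calT(\nu_t\pm\epsilon,\calP)$ together with $\nu_{\bm{x}}\in S\cap S'$ gives that $S\cap S'$ is a common face. One then argues on $\calT(t,\calP)$ for $t\in\mathring{I}\cap I'$ near $\nu_t$; such $t$ exist because $\nu_t\in\mathring{I}$ and $\nu_t\in\overline{I'}$, so $|I\cap I'|>0$. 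In this triangulation both $S$ and $S'$ are active, hence $S\cap S'$ is a common face of both. If $\nu_{\bm{x}}\in S\cap S'$ lies on that common face, \cref{lem:Conformity_Lagrange_nodes} gives $\nu_{\bm{x}}\in\calL(S')$. Since $I\supset$ a neighbourhood of $\nu_t$ and $I'$ is a dyadic descendant sharing a time point, the hierarchical structure (\cref{Enumeration further properties of the mesh}\ref{Enumeration further properties of the mesh - enum 1}) forces $I'\subset I$ or the reverse. With $\nu_t\in\mathring I\cap\partial I'$ this gives $I'\subsetneq I$. Hence $\ell(I'\times S')>\ell(I\times S)$, and by \cref{Lemma_Properties_of_Neighboring_Space_Time_Elements} with conformity $S'\subset S$ strictly, i.e. $S'$ is a bisection descendant of $S$. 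But $S$ and $S'$ are simultaneously active and non-overlapping, so $\dim(S\cap S')=d$ together with $S'\subset S$ contradicts non-overlap. The main obstacle is making this case analysis airtight, in particular excluding $\dim(S\cap S')<d$ when $\nu_t\in\mathring I$. In that case $S$ and $S'$ are both active at such $t$, so $S\cap S'$ is a common face. The node $\nu_{\bm{x}}\in\calL(S)\cap S'=\calL(S')\cap S$ by \cref{lem:Conformity_Lagrange_nodes}, and $\nu_t\in\partial I'$ is an endpoint, hence in $\calL(I')$. Then $\nu\in\calL(I'\times S')$, contradicting $I'\times S'\in\calP_\calH(\nu)$. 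This is the route I would actually write, handling both dimensions of $S\cap S'$ uniformly, which makes the previous sub-argument unnecessary.

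\textbf{Step 2: the time intersection.} Thus $\nu_t\in\partial I$, and so $\nu\in(\partial I\times S)\cap(\partial I'\times S')$. For $\overline I\cap\overline{I'}$, both intervals arise from the same dyadic bisection tree over $\calI_0$ (or adjacent trees), so their closures are either nested or meet in at most one point. If they were nested, say $\overline{I'}\subset\overline I$, then $I\cap I'\neq\emptyset$ with positive measure. Near $\nu_t$ both $S$ and $S'$ would be active at a common time, and the Step 1 argument (common face, \cref{lem:Conformity_Lagrange_nodes}, and endpoints belonging to $\calL(I')$) again yields $\nu\in\calL(I'\times S')$, a contradiction. Hence $|I\cap I'|=0$, so $\overline I\cap\overline{I'}$ is the single point $\nu_t$, which lies in $\partial I\cap\partial I'$. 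This gives the asserted identity.
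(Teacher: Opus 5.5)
Your committed argument is correct and is exactly the paper's proof. If $|I\cap I'|>0$, then $S$ and $S'$ lie in a common conforming $\calT(t,\calP)$, so \cref{lem:Conformity_Lagrange_nodes} gives $\nu_{\bm{x}}\in\calL(S)\cap S'=\calL(S')\cap S$; together with $\nu_t\in\partial I'\subset\calL(I')$ this contradicts $I'\times S'\in\calP_\calH(\nu)$, and otherwise $\overline{I}\cap\overline{I'}=\{\nu_t\}$.

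Step 2 alone already gives $\nu_t\in\partial I$, since two nondegenerate intervals meeting in one point meet at a common endpoint, so Step 1 is redundant. You should delete the discarded first sub-argument, because its claims are unjustified: that $\dim(S\cap S')=d$ is forced, and that $S'\subsetneq S$ follows from \cref{Lemma_Properties_of_Neighboring_Space_Time_Elements}.
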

\begin{proof}
    Since $\nu_t\in \partial I' \subset \calL(I')$, $\nu_{\bm{x}}\notin \calL(S')$ follows due to \cref{Rem:Hanging_nodes}\ref{Rem:Hanging_nodes_ii} and the assumption $I'\times S'\in \calP_\calH(\nu)$. Therefore, $\nu_t \in \calL(I)\cap \calL(I')\subset \overline{I}\cap \overline{I'}$ , since $\nu\in \calL(I\times S)$. $I$ and $I'$ are both results of classical bisections of elements in $\calI_0$, which is a disjoint partition of $[0,T]$. Therefore, either $\#(\overline{I}\cap \overline{I'})=1$, which would already imply the assertion, or $|I\cap I'|>0$. If the latter was true, there would be an $\varepsilon\in \R$ (with \enquote*{small} modulus), such that $\nu_t+\varepsilon\in I\cap I'$, i.e., $S, S'\in \calT(\nu_t+\varepsilon, \calP)$. Therefore, 
    \begin{align*}
        \nu_{\bm{x}} \in S' \cap \calL(S) = S\cap \calL(S') \subset \calL(S') ,
    \end{align*}
    due to \cref{lem:Conformity_Lagrange_nodes} and the conformity of $\calT(\nu_t+\varepsilon, \calP)$, which is a contradiction to $\nu_{\bm{x}}\notin \calL(S')$.
\end{proof}
This leads us directly to the following result.
\begin{lem}\label{lem:Hanging_in_space_part_2}
    We stay in the setting of \cref{lem:Hanging_in_space_part_1}. Then there exists $I''\times S'' \in \calP_\calH(\nu)$ such that $\nu \in \partial I'' \times S''$, $S\in \textup{BISECT}(S'', d)$, and the Lagrange nodes of $I''\times S''$ in the interior of the facet $\{\nu_t\}\times S''$ of $I'' \times S''$ are free, i.e.,
    \begin{align*}
        (\{\nu_t\}\times \mathring{S''}) \cap \calL(I''\times S'') \subset \calF(\calP).
    \end{align*}
    If $d=1$, also 
    \begin{align*}
        (\{\nu_t\}\times \partial S'') \cap \calL(I''\times S'') \subset \calF(\calP).
    \end{align*}
\end{lem}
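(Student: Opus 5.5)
Building on \cref{lem:Hanging_in_space_part_1}, we already know that $\nu\in(\partial I\times S)\cap(\partial I'\times S')$, that $\overline I\cap\overline{I'}=\{\nu_t\}$, and that $\nu_{\bm x}\in\calL(S)\setminus\calL(S')$. So $I\times S$ and $I'\times S'$ touch in time at the hyperplane $t=\nu_t$. The natural candidate for $I''\times S''$ is the element on the opposite side of $\{\nu_t\}$ from $I\times S$ that contains $\nu$, chosen to be the coarsest one. The plan is therefore to compare levels across the time interface and then invoke the $1$-irregular rule in time together with \cref{Lemma_Properties_of_Neighboring_Space_Time_Elements}.

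\textbf{Step 1: level comparison.} Since $\nu_{\bm x}\in\calL(S)\setminus\calL(S')$ and $\nu_{\bm x}\in S\cap S'$, the simplex $S'$ cannot be equal to $S$. It also cannot be a descendant of $S$: Lagrange nodes of a parent restricted to a child are still Lagrange nodes of the child, because bisection hits midpoints and the nodes are nested under refinement of order $r_2-1$ lattices along bisection. I would verify this nestedness via the barycentric description of $\calL_{r_2}$ and the fact that bisection of an edge maps the lattice $\frac{1}{r_2-1}\N_0^{d+1}$ into $\frac{1}{r_2-1}\cdot\frac12\N_0^{d+1}$, which contains the parent's lattice. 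Conformity in space at each fixed time, together with the tensor structure of $\calP_0$, makes $S$ and $S'$ elements of bisection trees from the same $\calT_0$, and both contain $\nu_{\bm x}$. Hence $S'$ must be strictly coarser than some ancestor relation, i.e.\ $\ell(S')<\ell(S)$. The $1$-irregular rule in time then forces $\ell(I'\times S')=\ell(I\times S)-1$, wherever $S'$ and $S$ share a $d$-dimensional piece. Using that the active triangulations just before and after $\nu_t$ are both conforming refinements, I would pick $I''\times S''$ on the side of $I'$ with $\nu\in\overline{I''\times S''}$, $\dim(S\cap S'')=d$, and $S\subset S''$. Then $I''\times S''\in\calN_t(I\times S,\calP)$, so \cref{Lemma_Properties_of_Neighboring_Space_Time_Elements} gives $S\in\textup{BISECT}(d,S'')$. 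Since $\nu_{\bm x}\in\calL(S)$ is a new node of the child, it is not in $\calL(S'')$, so $I''\times S''\in\calP_\calH(\nu)$ and $\nu\in\partial I''\times S''$.

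\textbf{Step 2: freeness of the interior facet nodes.} Take $\mu=(\nu_t,\mu_{\bm x})$ with $\mu_{\bm x}\in\mathring{S''}\cap\calL(S'')$. If $\mu$ were hanging, some $J\times R\in\calP$ with $\mu\in\overline{J\times R}$ and $\mu\notin\calL(J\times R)$ would exist. Every element containing $\mu$ lies either on the $I''$ side, where it is $I''\times S''$ itself because $\mu_{\bm x}$ is interior to $S''$ (spatial conformity), or on the other side at $t=\nu_t$. On the other side, the simplices covering $S''$ have level at most $\ell(S'')+1$ by $1$-irregularity in time, and by conformity the covering is either $S''$ itself or exactly its two children from $\textup{BISECT}(d,S'')$. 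In either case $\mu_{\bm x}$ is a Lagrange node of the covering simplex, by the same nestedness, and $\nu_t\in\partial J\subset\calL(J)$. Hence $\mu\in\calL(J\times R)$, which is a contradiction.

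\textbf{Step 3: the case $d=1$.} The endpoints of $S''$ need additional care, since they are shared with spatial neighbours. Here I would use the $1$-irregularity in space to see that every prism containing $(\nu_t,\partial S'')$ has an interval with $\nu_t$ at its boundary and that endpoint among its vertices.

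\textbf{Main obstacle.} The main obstacle is Step 1: establishing rigorously that a coarsest element $S''$ with $S\in\textup{BISECT}(d,S'')$ exists across the time interface, rather than only $S'$ with some partial overlap. This is where spatial conformity on both sides, combined with the time $1$-irregular rule, has to be used carefully.
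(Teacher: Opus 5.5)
Steps~2 and~3 follow the same overall plan as the paper. However, Step~1 is not proved, and it is the heart of the lemma: the existence of $I''\times S''\in\calP_\calH(\nu)$ with $S\in\textup{BISECT}(d,S'')$. You simply \emph{choose} $S''$ on the side of $I'$ with $S\subset S''$. That presupposes the conclusion, as you note yourself, and the surrounding arguments do not support it.

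(i) The case distinction ``$S'=S$, $S'$ a descendant of $S$, or $S'$ coarser'' is unavailable, because $S'$ need not overlap $S$ at all. For example, take $d=2$, $r_2=2$, let $\nu_{\bm{x}}$ be the midpoint of the refinement edge of the parent of $S$, and let $S'$ be the neighbour of that parent in $\calT(\nu_t,\calP)$ sharing this edge. So $\ell(S')<\ell(S)$ is not established by your reasoning, and it is not needed.

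(ii) ``$\nu_{\bm{x}}$ is a new node of the child, hence $\nu_{\bm{x}}\notin\calL(S'')$'' is not a valid principle. Many nodes of a child, e.g.\ its vertices other than the new one, are nodes of the parent. The fact $\nu_{\bm{x}}\notin\calL(S'')$ has to come from $\nu$ hanging at $S'$.

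(iii) Placing $I''\times S''$ in $\calN_t(I\times S,\calP)$ requires $\ell(I''\times S'')=\ell(I\times S)-1$. So you must first exclude $S''=S$, i.e.\ a prism $I''\times S$ sitting directly on top of $I\times S$, which you never do.

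\textbf{The missing idea.} Assume $I$ lies below $I'$ and work in $\calT_{up}:=\calT(\nu_t,\calP)$, which contains $S'$ and is conforming.

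By \cref{lem:Conformity_Lagrange_nodes}, \emph{no} $R\in\calT_{up}$ with $\nu_{\bm{x}}\in R$ can have $\nu_{\bm{x}}\in\calL(R)$. Otherwise $\nu_{\bm{x}}\in S'\cap\calL(R)=R\cap\calL(S')\subset\calL(S')$.

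Now take $S''\in\calT_{up}$ with $\nu_{\bm{x}}\in S''$ and $|S\cap S''|>0$, together with its interval $I''\ni\nu_t$. Such $S''$ exists because the elements of $\calT_{up}$ containing $\nu_{\bm{x}}$ cover a neighbourhood of $\nu_{\bm{x}}$ in $\Omega$. Then:
\begin{itemize}
\item $S''\neq S$, so $I''\times S''\neq I\times S$, and $|S\cap S''|>0$ forces $|I\cap I''|=0$.
\item Hence the two prisms touch in time across $\{\nu_t\}$, and the $1$-irregular rule gives $|\ell(S)-\ell(S'')|\le 1$.
\item Overlapping simplices of the bisection forest are nested, so $S''$ is $S$, a child of $S$, or its parent.
\item A child is excluded, since nestedness would give $\nu_{\bm{x}}\in\calL(S'')$.
\end{itemize}
Thus $S\in\textup{BISECT}(d,S'')$, and $\nu_{\bm{x}}\notin\calL(S'')$ gives $I''\times S''\in\calP_\calH(\nu)$ with $\nu\in\partial I''\times S''$. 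This is exactly the paper's route.

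Your lattice argument for the nestedness $\calL(R)\cap R'\subset\calL(R')$, with $R'$ a child of $R$, does not prove it. A barycentric change of coordinates does: if the child replaces $v_d$ by $\frac12(v_0+v_d)$, then $\mu_0=\lambda_0-\lambda_d$, $\mu_d=2\lambda_d$, and $\mu_i=\lambda_i$ otherwise.

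\textbf{Steps~2 and~3.} Step~2 is fine. On the lower side the covering of $S''$ must be $\{S,\tilde S\}$ because $S$ is already there, so neither $S''$ nor its parent can occur.

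Step~3 is only a sketch. What must be shown is that a spatial neighbour $I'''\times S'''$ at which $(\nu_t,\nu'_{\bm{x}})$ hangs would need $\nu_t\in\mathring{I'''}$. Then $I'''\supsetneq I''$, so $\ell(I'''\times S''')\le\ell(I\times S)-2$. This contradicts the $1$-irregular rule in space against $I\times S$ or its sibling.
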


    \begin{figure}[H]
	    \begin{center}
		    \includegraphics[height=7cm]{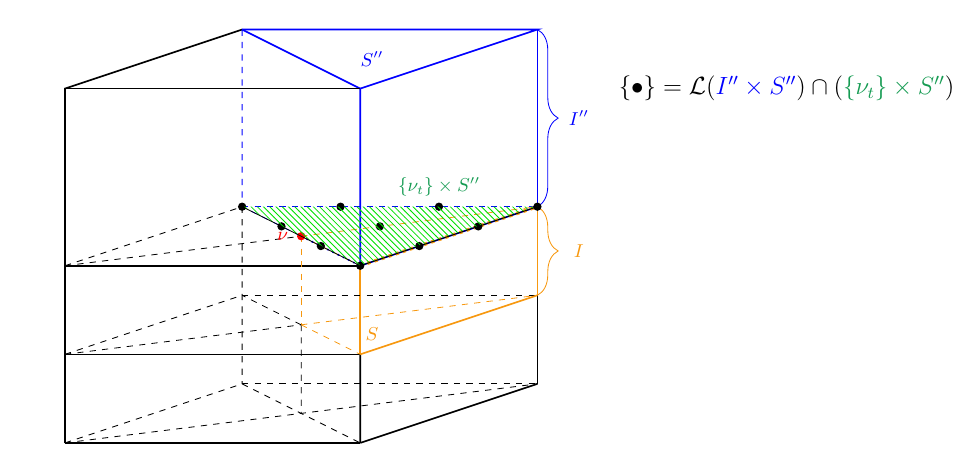}
	    \end{center}
        \captionof*{figure}{Illustration in the case of $r_2 = 4$, i.e., elements that are cubic in space}
    \end{figure}

\begin{proof}
    From \cref{lem:Hanging_in_space_part_1}, we obtain that $\nu \in (\partial I \times S)\cap (\partial I' \times S')$ and $\overline{I}\cap \overline{I'}=\partial I\cap \partial I' = \{\nu_t\}$. Without loss of generality we can therefore assume that $t<t'$ for all $(t,t')\in I \times I'$. Then $S'\in \calT(\nu_t, \calP)$ and setting $\varepsilon_0:=\min\limits_{J\times R\in \calP} \diam(J)$, yields $S\in \calT(\nu_t-\varepsilon, \calP)$ for all $\varepsilon\in (0,\varepsilon_0)$. Therefore, we define
    \begin{align*}
        \calT_{down} := \calT\left(\nu_t-\frac{\varepsilon_0}{2}, \calP\right) \quad\text{and}\quad \calT_{up} :=\calT\left(\nu_t, \calP\right).
    \end{align*}
    Now we consider the set $M$ of elements of $\calT_{up}$ that overlap with $S$ (and contain $\nu_x$), i.e.,
    \begin{align*}
        M:= \{S''\in \calT_{up}\mid |S\cap S''|>0\quad\text{and}\quad \nu_{\bm{x}}\in S''\}
    \end{align*}
    
    Let us now choose an arbitrary $S''\in M$. Due to the construction of $\calT_{up}$, there is a unique interval $I''$ such that $I''\times S''\in \calP$ and $\nu \in \overline{I''\times S''}$. 
    
    First, we will show that $I$ and $I''$ do not overlap. Due to $|S\cap S''|>0$, $|I\cap I''|>0$ would imply that \mbox{$|(I\times S)\cap (I''\times S'')|>0$}. Since $\calP$ is an non-overlapping covering of $\Omega_T$, this would yield $I\times S = I''\times S''$, in particular, $S=S''\in \calT_{up}$. Therefore, since $S', S''\in \calT_{up}$, \cref{lem:Conformity_Lagrange_nodes} implies that
    \begin{align}\label{lem:Hanging_in_space_part_2 - equation}
        \nu_{\bm{x}} \in S' \cap \calL(S) = S' \cap \calL(S'') = S'' \cap \calL(S') \subset \calL(S'),
    \end{align}
    due to the conformity of $\calT_{up}$. Now this would be a contradiction to our assumption $I'\times S'\in \calP_\calH(\nu)$, according to \cref{Rem:Hanging_nodes}\ref{Rem:Hanging_nodes_ii}, because $\nu_t\in \partial I'\subset \calL(I')$. 

    Therefore, we know that $\overline{I}\cap\overline{I''}= \partial I\cap \partial I'' = \{\nu_t\}$ (which yields $\nu\in \partial I'' \times S''$, since $S''\in M$) and $\dim(S\cap S'')=d$. So the $1$-irregular rule in time implies  
    \begin{align*}
        |\ell(I\times S) - \ell(I''\times S'')|\le 1\quad\iff \quad |\ell( S) - \ell( S'')|\le 1.
    \end{align*}
    Now we distinguish the three cases that this inequality allows for, regarding the relationship between $S$ and $S''$. Proceeding as in \eqref{lem:Hanging_in_space_part_2 - equation}, we obtain that $S=S''$ is not possible, since it would again imply $\nu_{\bm{x}}\in \calL(S') $. If $S''\in \textup{BISECT}(S,d)$ was true, this would yield $\calL(S)\subset \calL(S'') $, which would in turn lead to the same contradiction as for $S=S''$, if we replace the first \enquote*{$=$} in \eqref{lem:Hanging_in_space_part_2 - equation} by \enquote*{$\subset$}. Therefore, $S\in \textup{BISECT}(S'',d)$ is true.
    
    Furthermore,  $I''\times S''\in \calP_\calH(\nu)$ because $\nu_{\bm{x}}\notin \calL(S'')$ has to be true, as one can see as follows: If $\nu_{\bm{x}}\in \calL(S'')$ was true, then $\nu_{\bm{x}}\in \calL(S')$, since $S', S''\in \calT_{up}$, which is conforming. But this would be a contradiction to $I'\times S'\in \calP_\calH(\nu)$ as it has been used multiple times in the argumentation above.

    The assertion(s) about the facet $\{\nu_t\}\times S''$ of $I''\times S''$ remain(s) to be shown. In fact, this really is a facet, since $\nu_t\in \partial I''$ as shown above. Now assume $\nu'\in \calH(\calP)\cap (\{\nu_t\}\times \mathring{S''})$ and denote $\nu'= (\nu_t, \nu'_{\bm{x}})$ with $\nu'_{\bm{x}}\in \mathring{S''} \subset \Omega$. Since $S\in \textup{BISECT}(S'',d)$, $\{R\in \calT_{up}\cup \calT_{down}\mid \nu'_{\bm{x}}\in R \}=\{S, \tilde{S}, S''\}$, with $\textup{BISECT}(S'',d)=\{S, \tilde{S}\}$. Therefore, 
    \begin{align*}
    \{J\times R\in \calP\mid \nu' \in J\times R\}\subset\{I\times S, I\times\tilde{S} , I''\times S''\}.    
    \end{align*}
    Since, $\nu_t\in \partial I \cap \partial I'' \subset \calL(I) \cap \calL(I'')$ and $\calL(S), \calL(\tilde{S})\supset \calL(S'')$, we conclude that $\nu'\in \calF(\calP)$.
    
    Lastly, consider the case $d=1$ and $\nu' \in \{\nu_t\}\times \partial S''$. If $\nu'\in \calH(\calP)$, there would have to exist $I'''\times S'''\in \calP_\calH(\nu')$. Thus, $S'''\in \calT_{up}\cup \calT_{down}$ with $\nu'_{\bm{x}}\in R$. Then either, as before $S'''\in \{S, \tilde{S}, S''\}$ and therefore $\calL(S'')\subset \calL(S''')$, or $\#(S''\cap S''')=1$, which also yields $\nu'_{\bm{x}}\in \partial S''\cap \partial S''' \subset \calL(S'')\cap \calL(S''')$. So $\nu_t \notin \partial I'''$ would have to hold in order to justify $I'''\times S'''\in \calP_\calH(\nu')$. But this in particular implies that $|I'''\cap I|>0$ and $|I''' \cap I''|>0$, since $I'''\times S'''$ then has to touch $I''\times S''$ and $I\times S$ or $I''\times S''$ and $I\times \tilde{S}$. In turn, this yields $\ell(I''')<\ell(I'')$, i.e., $\ell(I'''\times S''') = \ell(I''\times S'')-1 = \ell(I\times S)-2$, where we have used the 1-irregular rule in space in the first step. However, the latter conclusion is a contradiction to this very rule, since $I'''\times S'''$ and $I\times S$ touch in time. This ends the proof.
\end{proof}

Now, we will shed some further light regarding the nodes on the edges of the facet from \cref{lem:Hanging_in_space_part_2}.

\begin{lem}\label{lem:Hanging_in_space_part_3}
    Let $\nu \in \calH(\calP)\cap \calL(I\times S)$ and $I''\times S''\in \calP_\calH(\nu)$ be the element from \cref{lem:Hanging_in_space_part_2}. Further, consider a node $\nu'\in\{\nu_t\}\times \partial S''$. Then either, $\nu'\in \calF(\calP)$ or $\nu'$ \enquote{hangs in time}, i.e., it exists $I'''\times S'''\in \calP_\calH(\nu')$ with $\nu'\in \mathring{I'''} \times \partial S'''$.
\end{lem}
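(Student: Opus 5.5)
We argue by contraposition: assume $\nu'=(\nu_t,\nu'_{\bm{x}})$ with $\nu'_{\bm{x}}\in\partial S''$ is a hanging node, $\nu'\in\calH(\calP)$, and pick any $J\times R\in\calP_\calH(\nu')$. By \cref{Rem:Hanging_nodes}\ref{Rem:Hanging_nodes_i} either $\nu'\in\mathring{J}\times\partial R$, which is exactly the claimed \enquote{hanging in time}, or $\nu'\in\partial J\times R$. So it suffices to rule out the second case. If $d=1$, \cref{lem:Hanging_in_space_part_2} already gives $\nu'\in\calF(\calP)$, so we may assume $d\ge 2$.

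Assume then $\nu_t\in\partial J$ and $\nu'\in\partial J\times R$. Then $\nu_t\in\calL(J)$, so by \cref{Rem:Hanging_nodes}\ref{Rem:Hanging_nodes_ii} we must have $\nu'_{\bm{x}}\notin\calL(R)$. We recall the setting of the proof of \cref{lem:Hanging_in_space_part_2}: $I''\times S''$ lies above $\nu_t$ with $S''\in\calT_{up}=\calT(\nu_t,\calP)$, and $I\times S,\,I\times\tilde S$ lie below, with $\{S,\tilde S\}=\textup{BISECT}(S'',d)$ in $\calT_{down}$. Since $\nu_t\in\partial J$, the interval $J$ lies either above or below $\nu_t$, so $R\in\calT_{up}$ or $R\in\calT_{down}$.

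If $R\in\calT_{up}$, then $R,S''$ belong to the conforming triangulation $\calT_{up}$ and both contain $\nu'_{\bm{x}}$. Since $\nu'_{\bm{x}}\in S''\cap\calL(S'')$ (it is a Lagrange node of $I''\times S''$), \cref{lem:Conformity_Lagrange_nodes} gives $\nu'_{\bm{x}}\in R\cap\calL(S'')=S''\cap\calL(R)\subset\calL(R)$, a contradiction. If $R\in\calT_{down}$, we use that $\calL(S'')\subset\calL(S)\cup\calL(\tilde S)$, because bisection preserves the Lagrange lattice, the same fact already used in \cref{lem:Hanging_in_space_part_2}. Hence $\nu'_{\bm{x}}\in\calL(\hat S)\cap R$ for some $\hat S\in\{S,\tilde S\}\subset\calT_{down}$ with $\nu'_{\bm{x}}\in\hat S$. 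Conformity of $\calT_{down}$ and \cref{lem:Conformity_Lagrange_nodes} again yield $\nu'_{\bm{x}}\in\calL(R)$, a contradiction.

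The main obstacle is the inclusion $\calL(S'')\subset\calL(S)\cup\calL(\tilde S)$ restricted to $\partial S''$. For nodes on the face opposite to the refinement edge, or on the refinement edge itself, one has to check that each such node is a Lagrange node of a child that contains it. This holds because the bisection point is the midpoint of $\textup{RE}(S'')$, so each child is an affine image whose lattice of step $1/(r_2-1)$ refines the parent lattice on the shared faces. This is the natural-lattice argument, and we would cite it as in \cref{lem:Hanging_in_space_part_2}.

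A second subtlety is that $\calT_{down}$ was defined at $\nu_t-\varepsilon_0/2$. Any $J$ below $\nu_t$ has $\nu_t\in\partial J$ as its right endpoint, so $R\in\calT(\nu_t-\varepsilon,\calP)$ for every small $\varepsilon$. This justifies $R\in\calT_{down}$.
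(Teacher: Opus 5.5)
Your proof is correct, but it takes a different route from the paper's.

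\textbf{The paper's route.} The paper assumes $\nu'$ hangs in space and applies \cref{lem:Hanging_in_space_part_2} again, now to $\nu'$ viewed as a Lagrange node of $I''\times S''$. This yields an element of $\calP_\calH(\nu')$ whose simplex is the bisection parent of $S''$. That parent overlaps both $S\in\calT_{down}$ and $S''\in\calT_{up}$, yet it must itself lie in $\calT_{up}$ or $\calT_{down}$. This contradicts the non-overlap of the active triangulations. In other words, the paper's contradiction is one of nesting: three generations $S\subset S''\subset$ parent cannot coexist across one time interface.

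\textbf{Your route.} You never use the existence part of \cref{lem:Hanging_in_space_part_2}. You take an arbitrary $J\times R$ causing a spatial hang and show directly that $\nu'_{\bm x}\in\calL(R)$. For this you use conformity of $\calT_{up}$ together with $S''$, or conformity of $\calT_{down}$ together with a child $\hat S\in\{S,\tilde S\}$.

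\textbf{What each buys.} Your argument is more elementary and self-contained. It also states precisely the lattice fact that the paper uses loosely in \cref{lem:Hanging_in_space_part_2}. There it is written as $\calL(S),\calL(\tilde S)\supset\calL(S'')$, which is only true after intersecting with the respective child. Your formulation $\calL(S'')\subset\calL(S)\cup\calL(\tilde S)$, with each node a Lagrange node of every child containing it, is the correct statement, and your barycentric justification of it is right.

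\textbf{What you should add.} Your route needs $\tilde S\in\calT_{down}$, and this is only asserted inside the proof of \cref{lem:Hanging_in_space_part_2}, not in its statement. State explicitly that it follows from the 1-irregular rule in time. Any simplex active just below $\nu_t$ that overlaps $\tilde S$ is nested with $\tilde S$ and cannot be an ancestor, since an ancestor would overlap $S\in\calT_{down}$. A strict descendant would touch $I''\times S''$ in time with a level difference of at least $2$, which the 1-irregular rule forbids.

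The paper's argument, by contrast, is shorter once \cref{lem:Hanging_in_space_part_2} is available, and it does not need the sibling $\tilde S$ at all.
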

\begin{proof}
    If $\calP_\calH(\nu')=\emptyset$, then $\nu'\notin \calH(\calP)$ and thus $\nu'\in \calF(\calP)$. Otherwise, consider $I'''\times S'''\in \calP_\calH(\nu')$. In contrast to the assertion, assume $\nu'\in \partial I'''\times S'''$. Then, \cref{lem:Hanging_in_space_part_1} yields $\partial I \cap \partial I'' \cap \partial I''' = \{\nu_t\}$, $S\in \textup{BISECT}(S'',d) $, and \mbox{$S''\in \textup{BISECT}(S''',d) $} without loss of generality. Further, $S'''\in \calT_{down}$ or $S'''\in \calT_{up}$, where we use the terminology from the proof of \cref{lem:Hanging_in_space_part_2}. In any case, this is a contradiction to the fact that $\calT(t,\calP)$ is a non-overlapping triangulation of $\Omega$ for any $t\in [0,T]$ (see \cref{Remark_Temporal_Bisection_Active_Triangulation}\ref{Remark_Temporal_Bisection_Active_Triangulation_2}), since $|S\cap S'''|, |S''\cap S'''|>0$, $S\in \calT_{down}$, and $S''\in \calT_{up}$. 
\end{proof}

Now we consider nodes which \enquote{hang in time}.

\begin{lem}\label{lem:Hanging_in_time_part_1}
    Let $\nu \in \calH(\calP)\cap \calL(I\times S)$ for some $I\times S\in \calP$ and $I'\times S'\in \calP_\calH(\nu)$ with $\nu \in \mathring{I'}\times \partial S'$. Then, 
    \begin{align*}
        \nu\in (\overline{I}\times \partial S)\cap (\mathring{I'}\times \partial S'),\quad 0\le\dim(S\cap S')\le d-1, \quad\text{and}\quad \nu_{\bm{x}}\in \calL(S')\cap (\partial S\cap \partial S').
    \end{align*}
\end{lem}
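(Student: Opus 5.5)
We argue in close analogy to the proof of \cref{lem:Hanging_in_space_part_1}, with the roles of time and space interchanged. Since $I'\times S'\in \calP_\calH(\nu)$ and $\nu\in \mathring{I'}\times\partial S'$ by assumption, and since by \cref{Rem:Hanging_nodes}\ref{Rem:Hanging_nodes_i} this set is disjoint from $\partial I'\times S'$, the condition $\nu\notin\calL(I'\times S')$ together with \cref{Rem:Hanging_nodes}\ref{Rem:Hanging_nodes_ii} yields that $\nu_t\notin\calL(I')$ or $\nu_{\bm{x}}\notin\calL(S')$. The first step is to show that $\nu_{\bm{x}}\in\calL(S')$, so that the hanging must come from the time component, i.e.\ $\nu_t\notin\calL(I')$.

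For this, we use that $\nu_t\in\mathring{I'}$ and $\nu\in\overline{I\times S}$. Hence $\nu_t\in\overline{I}\cap\mathring{I'}$, and since both intervals stem from bisections of the disjoint partition $\calI_0$, this forces $|I\cap I'|>0$. Choose $t_0\in I\cap I'$ arbitrarily close to $\nu_t$, so that $S,S'\in\calT(t_0,\calP)$, which is conforming by property \ref{Enumeration desired properties of the mesh 1}. Since $\nu_{\bm{x}}\in\calL(S)$ (because $\nu\in\calL(I\times S)$) and $\nu_{\bm{x}}\in\partial S'\subset S'$, \cref{lem:Conformity_Lagrange_nodes} gives
\begin{align*}
    \nu_{\bm{x}}\in S'\cap\calL(S)=S\cap\calL(S')\subset\calL(S').
\end{align*}
This shows $\nu_{\bm{x}}\in\calL(S')$ as well as $\nu_{\bm{x}}\in S\cap S'$.

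Next we establish the dimensional statement. As $|I\cap I'|>0$ and $\calP$ is non-overlapping, $|S\cap S'|>0$ would imply $I\times S=I'\times S'$. But then $\nu\in\calL(I'\times S')$, contradicting $I'\times S'\in\calP_\calH(\nu)$. Hence $S\ne S'$ are distinct elements of the conforming triangulation $\calT(t_0,\calP)$ sharing the point $\nu_{\bm{x}}$, so $S\cap S'$ is a common face of dimension between $0$ and $d-1$. Consequently $\nu_{\bm{x}}\in S\cap S'\subset\partial S\cap\partial S'$, which together with $\nu_t\in\overline{I}$ gives $\nu\in\overline{I}\times\partial S$. Combined with the hypothesis $\nu\in\mathring{I'}\times\partial S'$, this yields all three claims.

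The only delicate point is the choice of $t_0$ and the case distinction for the time intervals, in particular when $\nu_t$ is an endpoint of $I$. Here $t_0$ should be taken on the side of $\nu_t$ lying in $I$, which is possible because $\mathring{I'}$ is an open neighbourhood of $\nu_t$. One must also check that the half-open interval convention of \cref{Remark_Temporal_Bisection_Active_Triangulation}\ref{Remark_Temporal_Bisection_Active_Triangulation_2} does not interfere with this choice; we expect it does not.
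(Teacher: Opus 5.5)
Your proof is correct and is essentially the paper's argument: $\nu_t\in\overline{I}\cap\mathring{I'}$ gives $|I\cap I'|>0$, non-overlap of $\calP$ excludes $\dim(S\cap S')=d$, and \cref{lem:Conformity_Lagrange_nodes} in the conforming triangulation $\calT(t,\calP)$ gives $\nu_{\bm{x}}\in\calL(S')$; you only take these steps in a different order. The ``delicate point'' you flag is not one: the paper takes any $t\in\mathring{I}\cap\mathring{I'}$, which is nonempty since $|I\cap I'|>0$, and this sidesteps endpoint and half-open-interval issues entirely.
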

\begin{proof}
    $\nu_t\in \overline{I}\cap \mathring{I'} $ implies $|I\cap I'|>0$. Due to $\nu_{\bm{x}}\in S\cap S'$, $\dim(S\cap S')\ge 0$. If $\dim(S\cap S')=d$ held, $|S\cap S'|>0$ would hold, too, which would imply $|(I\times S)\cap (I'\times S')|>0$. Since $\calP$ is a non-overlapping partition of $\Omega_T$ this would imply that $I\times S= I'\times S'$, in contradiction to $I'\times S'\in \calP_\calH(\nu)$. Therefore, we obtain the asserted $\dim(S\cap S')\in \{0,\dots, d-1\}$. This yields $\nu_{\bm{x}}\in \partial S\cap \partial S'$. Lastly, choose $t\in \mathring{I}\cap \mathring{I'}$. Since $S, S'\in \calT(t, \calP)$, $S'\cap \calL(S)=S\cap \calL(S')\subset \calL(S')$ due to \cref{lem:Conformity_Lagrange_nodes} and the conformity of $\calT(t, \calP)$. Since, $\nu_{\bm{x}}\in S'\cap \calL(S)$, this concludes the proof.
\end{proof}

Together, \cref{lem:Hanging_in_space_part_1} and \cref{lem:Hanging_in_time_part_1} show that a hanging node can either \enquote{hang} in time or in space, but not in both.

\begin{cor}\label{Cor:Characterization_hanging_nodes}
    Let $\nu \in \calH(\calP)\cap \calL(I\times S)$ for some $I\times S\in \calP$. Then either $\nu \in \partial I'\times S'$ for all $I'\times S'\in \calP_\calH(\nu)$ or $\nu \in \mathring{I'}\times \partial S'$ for all $I'\times S'\in \calP_\calH(\nu)$.
\end{cor}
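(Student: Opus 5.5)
The plan is to obtain the dichotomy by combining the two preceding lemmas, \cref{lem:Hanging_in_space_part_1} and \cref{lem:Hanging_in_time_part_1}, and deriving a contradiction from a mixed configuration. By \cref{Rem:Hanging_nodes}\ref{Rem:Hanging_nodes_i}, every $I'\times S'\in\calP_\calH(\nu)$ satisfies exactly one of $\nu\in\partial I'\times S'$ or $\nu\in\mathring{I'}\times\partial S'$, since this union is disjoint. It therefore suffices to show that there cannot exist two elements $I'\times S',\,I''\times S''\in\calP_\calH(\nu)$ with $\nu\in\partial I'\times S'$ and $\nu\in\mathring{I''}\times\partial S''$.

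Assume such a pair exists. Applying \cref{lem:Hanging_in_space_part_1} to $I'\times S'$ (with the fixed element $I\times S$ for which $\nu\in\calL(I\times S)$) gives $\nu\in\partial I\times S$, so $\nu_t\in\partial I$. Applying \cref{lem:Hanging_in_time_part_1} to $I''\times S''$ gives $\nu_{\bm{x}}\in\partial S$, and the proof of that lemma shows $|I\cap I''|>0$, because $\nu_t\in\overline{I}\cap\mathring{I''}$. Since $I$ and $I''$ both come from bisections of the same disjoint family $\calI_0$, they are nested, and $\nu_t\in\partial I\cap\mathring{I''}$ forces $I\subsetneq I''$. Consequently there is a small $\varepsilon$ (of sign chosen according to the side of $I$ on which $\nu_t$ lies) such that $\nu_t\pm\varepsilon$ lies in both $I$ and $I''$, whereas $\nu_t\mp\varepsilon\in I''$ but $\notin I$. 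In the other time direction, the active triangulation near $\nu_{\bm{x}}$ must contain $S'$. We want to use this to locate $S'$ relative to $S''$.

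The key step is to use $\mathring{I''}\ni\nu_t$. We take $t$ slightly on the side of $\nu_t$ where $I'$ lies (which, by \cref{lem:Hanging_in_space_part_1}, is the side opposite to $I$). Then $t\in I''\cap I'$, so both $S'$ and $S''$ belong to the conforming triangulation $\calT(t,\calP)$. At a time $t$ on the side of $I$, both $S$ and $S''$ belong to $\calT(t,\calP)$, so \cref{lem:Conformity_Lagrange_nodes} gives $\nu_{\bm{x}}\in S''\cap\calL(S)\subset\calL(S'')$, which is consistent with \cref{lem:Hanging_in_time_part_1}. On the $I'$ side, conformity of $\calT(t,\calP)$ together with $\nu_{\bm{x}}\in S''\cap S'$ gives $\nu_{\bm{x}}\in S'\cap\calL(S'')=S''\cap\calL(S')\subset\calL(S')$. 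Since $\nu_t\in\partial I'\subset\calL(I')$, this yields $\nu\in\calL(I'\times S')$ by \cref{Rem:Hanging_nodes}\ref{Rem:Hanging_nodes_ii}, contradicting $I'\times S'\in\calP_\calH(\nu)$.

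The main obstacle is the justification that $\nu_{\bm{x}}\in S''\cap S'$ and that $I'$ really overlaps $I''$. The former holds because $\nu\in\overline{I'\times S'}\cap\overline{I''\times S''}$. For the latter, the plan is to argue as follows: $\nu_t\in\mathring{I''}$ and $\nu_t\in\partial I'$, with $I'$ lying on the far side from $I$. Any element containing times just beyond $\nu_t$ near $\nu_{\bm{x}}$ with positive overlap with $I''$ must satisfy $|I'\cap I''|>0$, since these intervals are nested dyadic pieces and $\nu_t$ is interior to $I''$. Because $I''\supsetneq I$ extends past $\nu_t$ on both sides, $I''$ does extend to the side of $I'$, and nestedness then gives $|I'\cap I''|>0$.
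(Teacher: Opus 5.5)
Your argument is correct and follows the same route as the paper's proof. You assume a mixed pair, use \cref{lem:Hanging_in_space_part_1} to get $\nu_t\in\partial I\cap\partial I'$, and use the conformity argument of \cref{lem:Hanging_in_time_part_1} to get $\nu_{\bm{x}}\in\calL(S'')$; then \cref{lem:Conformity_Lagrange_nodes} in $\calT(t,\calP)$ for some $t\in\mathring{I'}\cap\mathring{I''}$ forces $\nu\in\calL(I'\times S')$, a contradiction.

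The step you flag as the main obstacle is immediate. Since $\nu_t$ is an endpoint of the nondegenerate interval $I'$ and an interior point of $I''$, you get $|I'\cap I''|>0$ directly, with no appeal to nestedness or to $I\subsetneq I''$.
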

\begin{proof}
    Assume the existence of $I'\times S', I''\times S''\in \calP_\calH(\nu)$ with $\nu \in (\partial I'\times S')\cap(\mathring{I''}\times \partial S'')$. We can assume $\{\nu_t\}= \partial I\cap \partial I'\cap \mathring{I''}$ due to \cref{lem:Hanging_in_space_part_1} and $\nu_{\bm{x}}\in \calL(S'')$ due to \cref{lem:Hanging_in_time_part_1}, correspondingly, without loss of generality. The first property implies that $|I'\cap I''|>0$, thus one can choose $t\in\mathring{I'}\cap\mathring{I''}$. Since $\calT(t, \calP)$ is conforming and $S', S''\in \calT(t, \calP)$, we obtain
    \begin{align*}
        \nu_{\bm{x}}\in S'\cap\calL(S'')=S''\cap \calL(S')\subset \calL(S')
    \end{align*}
    due to \cref{lem:Conformity_Lagrange_nodes} and to the second property above. This is a contradiction to $I'\times S' \in \calP_\calH(\nu)$.
\end{proof}

Now we show the analogue to \cref{lem:Hanging_in_space_part_2} for nodes that \enquote{hang in time}.

\begin{lem}\label{lem:Hanging_in_time_part_2}
    We stay in the setting of \cref{lem:Hanging_in_time_part_1}. Then there exists $I''\times S'' \in \calP_\calH(\nu)$ such that $\nu \in \mathring{I''} \times \partial S''$, $I\in \textup{BISECT}(I'', 1)^{\otimes k}$, with $1\le k\le k_0=k_0(d,s_1, s_2, \kappa_{\calP})\in \N_0$, and the Lagrange nodes of $I''\times S''$ in the interior of the line segment $I''\times \{\nu_{\bm{x}}\}$ of $I'' \times S''$ are free, i.e.,
    \begin{align*}
        \mathring{I''}\times \{\nu_{\bm{x}}\} \cap \calL(I''\times S'') \subset \calF(\calP).
    \end{align*}
    If $d=1$, also 
    \begin{align*}
        \partial I''\times \{\nu_{\bm{x}}\} \cap \calL(I''\times S'') \subset \calF(\calP).
    \end{align*}
\end{lem}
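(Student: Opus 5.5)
We mirror the proof of \cref{lem:Hanging_in_space_part_2}, with the roles of time and space interchanged. By \cref{lem:Hanging_in_time_part_1} we know $\nu\in(\overline{I}\times\partial S)\cap(\mathring{I'}\times\partial S')$, $|I\cap I'|>0$, $0\le\dim(S\cap S')\le d-1$ and $\nu_{\bm{x}}\in\calL(S')\cap\partial S\cap\partial S'$. Since $\nu$ hangs in $I'\times S'$ but $\nu_{\bm{x}}\in\calL(S')$, \cref{Rem:Hanging_nodes}\ref{Rem:Hanging_nodes_ii} forces $\nu_t\notin\calL(I')$.

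\textbf{Step 1: comparing $I$ and $I'$.} Both intervals arise by bisection from the same $I_0\in\calI_0$ and overlap, so one contains the other. If $I'\subset I$, then $\nu_t\in\calL(I)$ together with the nestedness of dyadic Lagrange grids would give $\nu_t\in\calL(I')$ whenever $\nu_t\in\overline{I'}$. More precisely, the grids $\calL_{r_1}$ of dyadic subintervals are not nested, so this step needs care: $\nu_t\in\mathring{I'}$ together with $\nu_t\in\calL(I)$ and $I'\subsetneq I$ is still possible. I would therefore select $I''\times S''$ among the elements of $\calP_\calH(\nu)$ with $\nu\in\mathring{I''}\times\partial S''$ whose interval is \emph{coarsest}, i.e.\ of minimal level, and show that $I\subsetneq I''$. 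This rules out the case $I''\subset I$: there, $\nu_t\in\calL(I)\cap\mathring{I''}$ must fail to lie in $\calL(I'')$, and I would exclude it by conformity of $\calT(t,\calP)$ for $t\in\mathring{I}\cap\mathring{I''}$. This gives $I\in\textup{BISECT}(I'',1)^{\otimes k}$ with $k\ge1$.

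\textbf{Step 2: bounding $k$.} The elements $I\times S$ and $I''\times S''$ touch in space and overlap in time. \cref{Enumeration further properties of the mesh}\ref{Enumeration further properties of the mesh - enum 2} (or directly \cref{Lemma_Properties_of_Neighboring_Space_Time_Elements} applied in $\calT(t,\calP)$) yields $|\ell(S)-\ell(S'')|\le l(d,\kappa_{\calP_0})$. Combined with the bound $m\le\lceil\frac{s_2}{s_1d}\rceil+2$ from \cref{Remark_Temporal_Bisection_Active_Triangulation}, this gives $k=\ell(I)-\ell(I'')\le k_0(d,s_1,s_2,\kappa_\calP)$.

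\textbf{Step 3: freeness of the interior nodes on the segment $I''\times\{\nu_{\bm{x}}\}$.} Take $\nu'=(\nu'_t,\nu_{\bm{x}})$ with $\nu'_t\in\mathring{I''}\cap\calL(I'')$, and suppose $J\times R\in\calP_\calH(\nu')$. If $\nu'$ hung in space, \cref{lem:Hanging_in_space_part_1} would give $\nu'_t\in\partial J$ with $J$ touching $I''$ only at an endpoint, which is impossible because $\nu'_t\in\mathring{I''}$ and $J$ must overlap $I''$. So $\nu'$ hangs in time and $J\times R$ is as in \cref{lem:Hanging_in_time_part_1}, with $|J\cap I''|>0$ and $\nu_{\bm{x}}\in\calL(R)$. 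Maximality (coarsest choice) of $I''$ then gives $J\subset I''$. Since $\calL(I'')\cap\overline{J}$ is contained in $\calL(J)$ only when $J=I''$, I would argue instead that the node is a Lagrange node of $J$ by conformity in space at times in $\mathring{J}$. This leads to a contradiction with the choice of $I''$, so $\nu'\in\calF(\calP)$.

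For $d=1$ the endpoints $\partial I''\times\{\nu_{\bm{x}}\}$ are handled by the 1-irregular rule in time and space, exactly as in the last paragraph of the proof of \cref{lem:Hanging_in_space_part_2}: an element making such a node hang would have level at most $\ell(I''\times S'')-1$ while touching a neighbour of level $\ell(I''\times S'')+1$ or more, which contradicts 1-irregularity.

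\textbf{Main obstacle.} I expect Steps 1 and 3 to be hardest. The temporal Lagrange grids of nested dyadic intervals are not nested, so the "coarsest overlapping interval" choice must be made carefully, and the contradictions have to be derived from spatial conformity and the level structure rather than from grid inclusion.
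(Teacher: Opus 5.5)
Your architecture matches the paper's: you pick the coarsest element of $\calP_\calH(\nu)$, bound $k$ via level comparison in $\omega_\calP(I\times S)$, and split Step 3 into space-hanging and time-hanging cases (the space-hanging half you handle correctly). There is, however, a genuine gap. You assert that the temporal Lagrange grids of dyadic subintervals are not nested. This is false, and it is precisely the fact the proof rests on.

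For $I=[a,b]$ and $h=(b-a)/(r_1-1)$, the nodes of the two children are $a+mh/2$ with $0\le m\le r_1-1$ and $r_1-1\le m\le 2r_1-2$, respectively. Every parent node $a+nh=a+2n\cdot h/2$ lying in a child is among them. By induction, $\calL(I)\cap\overline{I'}\subset\calL(I')$ for all $I'\in\textup{BISECT}(I,1)^{\otimes k}$.

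Having discarded this, you try to extract the temporal information from conformity of $\calT(t,\calP)$. But \cref{lem:Conformity_Lagrange_nodes} only relates $\calL(S)$ and $\calL(S'')$ and can never decide whether $\nu_t\in\calL(I'')$. So your Step 1 does not exclude $I''\subseteq I$. The paper does this in one line: $\nu_t\in\calL(I)\cap\overline{I''}\subset\calL(I'')$, which together with $\nu_{\bm{x}}\in\calL(S'')$ contradicts $I''\times S''\in\calP_\calH(\nu)$.

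The same missing inclusion breaks the time-hanging half of Step 3, in both of its subcases.

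\textbf{Case $J\supsetneq I''$.} Your claim that the coarsest choice of $I''$ forces $J\subset I''$ is not automatic, because the minimality is over $\calP_\calH(\nu)$, while $J\times R$ is only known to lie in $\calP_\calH(\nu')$. To get a contradiction you must show that $J\supsetneq I''$ would put $J\times R$ into $\calP_\calH(\nu)$, i.e.\ that $\nu_t\notin\calL(J)$. This is the contrapositive of nestedness: $\nu_t\in\calL(J)\cap\overline{I''}$ would give $\nu_t\in\calL(I'')$.

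\textbf{Case $J\subseteq I''$.} Your statement that $\calL(I'')\cap\overline{J}\subset\calL(J)$ holds only for $J=I''$ is again false; the inclusion always holds. It gives $\nu'_t\in\calL(J)$. Combined with $\nu_{\bm{x}}\in\calL(R)$ (from \cref{lem:Hanging_in_time_part_1} applied to $\nu'$), this yields $\nu'\in\calL(J\times R)$, which is the desired contradiction. Spatial conformity cannot produce $\nu'_t\in\calL(J)$.

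Once you use the grid inclusion, Steps 1 and 3 close exactly as in the paper. Step 2 and your $d=1$ remark are fine.
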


\begin{figure}[H]
	\begin{center}
		\includegraphics[height=9cm]{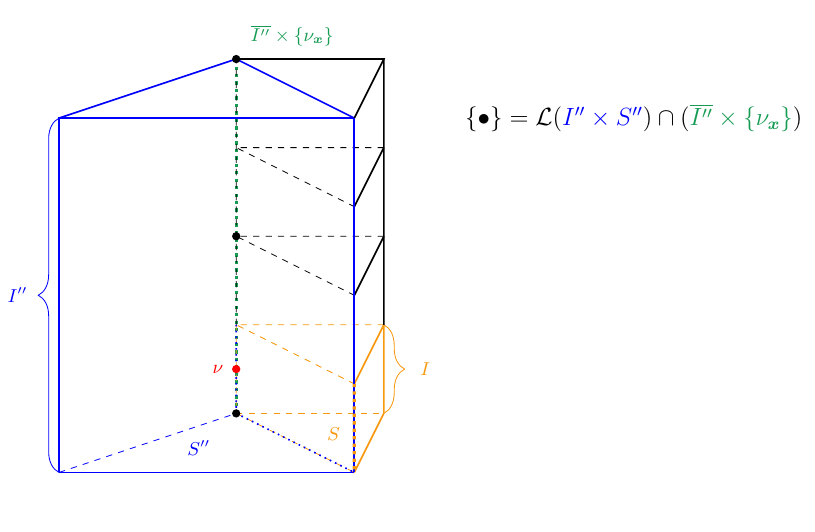}
	\end{center}
    \captionof*{figure}{Illustration in the case of $r_1 = 3$, i.e., elements that are quadratic in time}
\end{figure}

\begin{proof}
    In the case of $d=1$, the temporal and spatial directions are interchangeable, and therefore the second assertion already follows from \cref{lem:Hanging_in_time_part_2}.

    Now let us consider the case of $d\ge 2$. Due to \cref{Cor:Characterization_hanging_nodes}, we know that $\nu_t\in \mathring{I'}\setminus\calL(I')$ and $\nu_{\bm{x}}\in \calL(S')$ for any $I'\times S'\in \calP_\calH(\nu)$. In particular, $\nu_t\in \overline{I}\cap\mathring{I'}$ implies $|I\cap I'|>0$. Therefore, either $I\subset I'$ or $I'\subset I$ due to the construction of $\calP$. If the latter was true, $\nu_t \in \calL(I)\cap \overline{I'}\cap \overline{I}\subset \calL(I')$, which cannot hold, since $I'\times S'\notin \calP_\calH(\nu)$ and $\nu_{\bm{x}}\in \calL(S')$. Thus, $I\subsetneq I'$ and therefore $I\in \textup{BISECT}(I',1)^{\otimes k(I')}$ for some $k(I')\in \N$. This $k(I')$ is bounded by a constant $k_0= k_0( d, s_1, s_2, \kappa_\calP)\in \N$, since $I'\times S'\in \omega_\calP(I\times S)$ and 
    \begin{align*}
        |\ell(I_1\times S_1) - \ell(I_2\times S_2) |\lesssim_{\, d, \kappa_\calP} 1 \quad \text{for any}\quad  I_1\times S_1, I_2\times S_2\in \omega_\calP(I\times S).
    \end{align*}
    Now consider, $I''\times S''\in \calP_\calH(\nu)$ with $\ell(I''\times S'') = \argmin\limits_{I'\times S'\in \calP_\calH(\nu)} \ell(I'\times S')$, then $k(I')\le k(I'')$ and $I'\subset I''$. Now let $\nu':=(\nu'_t, \nu_{\bm{x}})\in (\mathring{I''}\times \{\nu_{\bm{x}}\})\cap \calL(I''\times S'')$ and assume that $\nu'\in \calH(\calP)$. Then there is $I'''\times S'''\in \calP_\calH(\nu')$. First, assume that $\nu'\in \mathring{I'''}\times \partial S'''$ for $I'''\times S'''\in \calP_\calH(\nu')$. Since $\nu_t'\in \mathring{I''}\cap \mathring{I'''}$, $|I''\cap I'''|>0$. As before, we can therefore conclude that $I''\subsetneq I'''$ and $\ell(I''\times S'')>\ell(I'''\times S''')$. But this would imply $I'''\times S'''\in \calP_\calH(\nu)$, in contrast of the minimality condition used in the choice of $I''\times S''$. Second, consider the case $\nu'\in \partial I''' \times S'''$. According to \cref{lem:Hanging_in_space_part_2}, $\nu_t'\in \partial I'' \cap \partial I'''$ and therefore $\nu_{\bm{x}} \in (\calL(S''))\setminus \calL(S''')$. But $S'', S'''\in \calT(t,\calP)$ for $t\in \mathring{I''}\cap \mathring{I'''}$, thus the conformity in space yields $\nu_{\bm{x}}\in  S'''\cap \calL(S'')= S''\cap\calL(S''')\subset \calL(S''')$ due to \cref{lem:Conformity_Lagrange_nodes}. But this is a contradiction to $\nu_t\in \partial I'''\subset\calL(I''')$ and $I'''\times S'''\in \calP_\calH(\nu')$.
\end{proof}

In contrast to the edges on the boundary of the face in \cref{lem:Hanging_in_space_part_2} and \cref{lem:Hanging_in_space_part_3}, we can show that the edges of the line segment in \cref{lem:Hanging_in_time_part_2} are indeed also free.

\begin{lem}\label{lem:Hanging_in_time_part_3}
    Let $\nu \in \calH(\calP)\cap \calL(I\times S)$ and $I'\times S'\in \calP_\calH(\nu)$ be the element from \cref{lem:Hanging_in_time_part_2} that is there called $I''\times S''$. Additionally, let a node $\nu'\in\partial I'\times \{\nu_{\bm{x}}\}$. Then, $\nu'\in \calF(\calP)$.
\end{lem}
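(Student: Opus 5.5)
We argue by contradiction. Write $\nu'=(\nu'_t,\nu_{\bm{x}})$ with $\nu'_t\in\partial I'$; then $\nu'\in\calL(I'\times S')$ because $\partial I'\subset\calL(I')$ and $\nu_{\bm{x}}\in\calL(S')$ by \cref{lem:Hanging_in_time_part_1}. Suppose $\nu'\in\calH(\calP)$, and pick $I'''\times S'''\in\calP_\calH(\nu')$. By \cref{Cor:Characterization_hanging_nodes}, applied with $I'\times S'$ in place of $I\times S$, the node $\nu'$ hangs either purely in space or purely in time. We treat the two cases separately.

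\emph{Case 1: $\nu'$ hangs in time,} i.e.\ $\nu'\in\mathring{I'''}\times\partial S'''$. Then $|I'\cap I'''|>0$, so by the bisection structure $I'\subset I'''$ or $I'''\subset I'$. The inclusion $I'''\subset I'$ is impossible, since $\nu'_t\in\partial I'$ would then lie in $\partial I'''$. Hence $I'\subsetneq I'''$. Conformity of $\calT(t,\calP)$ for $t\in\mathring{I'}$, together with \cref{lem:Conformity_Lagrange_nodes}, gives $\nu_{\bm{x}}\in\calL(S''')$, so $S'''$ touches $S'$ at $\nu_{\bm{x}}$. Since $\nu_t\in\mathring{I'}\subset\mathring{I'''}$, the prism $I'''\times S'''$ also contains $\nu$ in its closure. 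Moreover $\nu_t\notin\calL(I')$ and $I'\subset I'''$ is a dyadic refinement, so $\nu_t\notin\calL(I''')$. Thus $I'''\times S'''\in\calP_\calH(\nu)$ with $\ell(I'''\times S''')<\ell(I'\times S')$, contradicting the minimal-level choice of $I'\times S'$ in the proof of \cref{lem:Hanging_in_time_part_2}. The step I expect to need the most care is showing $\nu_t\notin\calL(I''')$. It follows because the Lagrange nodes of the larger interval are nodes of the nested smaller one: the equispaced nodes of $I'''$ that lie in $I'$ form a subset of $\calL(I')$ once $I'$ is a dyadic sub-interval, provided the nodes of $I'''$ inside $I'$ coincide with nodes of $I'$. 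This inclusion can fail for general $r_1$. If it does, I would instead argue through the $1$-irregular rule and the level bound.

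\emph{Case 2: $\nu'$ hangs in space,} i.e.\ $\nu'\in\partial I'''\times S'''$. Applying \cref{lem:Hanging_in_space_part_1} and \cref{lem:Hanging_in_space_part_2} to $\nu'\in\calL(I'\times S')$ yields an element $I_4\times S_4\in\calP_\calH(\nu')$ with $\overline{I'}\cap\overline{I_4}=\{\nu'_t\}$, $S'\in\textup{BISECT}(S_4,d)$, and $\nu_{\bm{x}}\notin\calL(S_4)$. For $d\ge2$ we now use the ordering of levels. We have $\ell(I_4\times S_4)=\ell(I'\times S')-1$, while $\ell(I'\times S')<\ell(I\times S)$ since $I\subsetneq I'$. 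The prism $I\times S$ lies within distance $|I'|$ of $\nu'_t$ in time, so I would chain time-neighbors along the segment $\overline{I'}\times\{\nu_{\bm{x}}\}$. Take the element touching $I_4\times S_4$ across $\{\nu'_t\}$ that contains $\nu_{\bm{x}}$ and is adjacent to $I\times S$ in the active triangulation. Conformity of the active triangulation at times in $I'$ forces its spatial simplex to be $S'$ or a refinement of it containing $\nu_{\bm{x}}$ as a vertex-type Lagrange node. On the other side, $S_4$ does not have $\nu_{\bm{x}}$ in $\calL(S_4)$. Since $S'$ is a child of $S_4$ and $\nu_{\bm{x}}\in\calL(S')\setminus\calL(S_4)$, $\nu_{\bm{x}}$ is a new node on the bisected edge.

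The required contradiction should then come from $\nu$ itself. The node $\nu$ lies in $\calL(I\times S)$ with $\nu_{\bm{x}}$ on $\partial S\cap\partial S'$, and the relation between $S$ and $S'$ along the segment must be compatible with the refinement-edge structure of \cref{Lemma_Properties_of_Neighboring_Space_Time_Elements}. That structure forbids two successive levels of spatial refinement in one time column. This conflicts with the $1$-irregular rule between $I_4\times S_4$ and the elements of level $\ge\ell(I'\times S')+1$ around $I\times S$. This final incompatibility argument is the main obstacle, and I would first attempt it with the \cref{Remark_Temporal_Bisection_Active_Triangulation}-type non-overlap argument used in \cref{lem:Hanging_in_space_part_3}. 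For $d=1$, the temporal and spatial roles are symmetric, and the claim follows as in the last paragraph of \cref{lem:Hanging_in_space_part_2}, via the $1$-irregular rule in space.
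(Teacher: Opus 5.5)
Your Case~1 is correct and is the paper's first step: $\nu'$ cannot hang in time, because of the minimal-level choice of $I'\times S'$. The nodal inclusion you doubt in fact always holds. If $I'\in\textup{BISECT}(1,I''')^{\otimes k}$, then $\calL(I''')$ and $\calL(I')$ both lie on the lattice $\min\overline{I'''}+\frac{|I'''|}{2^k(r_1-1)}\mathbb{Z}$, and $\calL(I')$ consists of all lattice points in $\overline{I'}$. Hence $\calL(I''')\cap\overline{I'}\subset\calL(I')$ for every $r_1$, which gives $\nu_t\notin\calL(I''')$. For $d=1$ the statement is already the second assertion of \cref{lem:Hanging_in_time_part_2}.

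The genuine gap is Case~2. It carries the whole content of the lemma, and you leave it open as ``the main obstacle''. The ideas about chaining time-neighbours and ``two successive levels of spatial refinement in one time column'' are not made precise. Moreover, the fine elements around $I\times S$ at time $\nu_t$ need not touch $I_4\times S_4$ at all, so the $1$-irregular rule cannot be applied to them directly.

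What is missing is a comparison of two prisms on the $S$-side of $\nu_{\bm{x}}$, directly across the time $\nu'_t$; assume $I_4$ lies above $I'$.

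\emph{Below $\nu'_t$:} take $I'''\times S'''\in\calP$ active slightly below $\nu'_t$, with $\nu_{\bm{x}}\in S'''$ and $|S\cap S'''|>0$. If $\ell(I'''\times S''')\le\ell(I'\times S')$, then $I'\subseteq I'''$. Then $I'''\times S'''$ overlaps $I\times S$, so the two prisms coincide, contradicting $I\subsetneq I'$. Hence $\ell(I'''\times S''')\ge\ell(I'\times S')+1=\ell(I_4\times S_4)+2$.

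\emph{Above $\nu'_t$:} take $S''''\in\calT(\nu'_t,\calP)$ with $\nu_{\bm{x}}\in S''''$ and $|S'''\cap S''''|>0$. If $S''''=S_4$, the $1$-irregular rule in time is violated at once. Otherwise, use that $\nu_{\bm{x}}\in\partial S_4\cap\calL(S')\setminus\calL(S_4)$ with $S'\in\textup{BISECT}(d,S_4)$. Then the smallest face of $S_4$ containing $\nu_{\bm{x}}$ contains $\textup{RE}(S_4)$. Indeed, if it missed an endpoint of $\textup{RE}(S_4)$, then, since $\nu_{\bm{x}}\in S'$, it would miss the endpoint outside $S'$. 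It would then be a common face of $S'$ and $S_4$, forcing $\nu_{\bm{x}}\in\calL(S_4)$. By conformity of $\calT(\nu'_t,\calP)$, $S''''$ contains this face, hence $\textup{RE}(S_4)$. So its prism lies in $\calN_{\bm{x}}(I_4\times S_4,\calP)$ and, by \cref{Lemma_Properties_of_Neighboring_Space_Time_Elements}, has level at most $\ell(I_4\times S_4)$.

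The two prisms touch in time at $\nu'_t$, have overlapping spatial parts, and differ in level by at least $2$, contradicting the $1$-irregular rule in time. Your Case~2 cannot be closed without these two arguments: the non-overlap argument that forces refinement at the top of $I'$ on the side of $S$, and the conformity argument that forces the element above to contain $\textup{RE}(S_4)$.
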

\begin{proof}
    As before, we know that $I'\supsetneq I$, i.e., $\ell(I'\times S')<\ell(I\times S)$, i.e., $\ell(I'\times S')\le\ell(I\times S)-1$. Now assume $\nu'=(\nu_t', \nu_{\bm{x}})\in \calH(\calP)$. As in \cref{lem:Hanging_in_time_part_2} above, one can show that $\nu'$ does not \enquote{hang in time}. Therefore, there is $I''\times S''\in \calP_\calH(\nu')$ such that $\nu'\in\partial I''\times S''$, i.e., $\nu_t'\in \calL(I'')$ but $\nu_{\bm{x}}\notin \calL(S'')$ according to \cref{lem:Hanging_in_space_part_1}. Without loss of generality, we may further assume $S'\in\textup{BISECT}(S'',d)$ due to \cref{lem:Hanging_in_space_part_2}. The latter lemma also tells us that $\nu_{\bm{x}}\notin \mathring{S''}$, otherwise $\nu'$ would be a free node. Since, $\nu_{\bm{x}}\in \partial S''\cap \calL(S')$, $\nu_{\bm{x}}\notin\calL(S'')$, and $S'\in\textup{BISECT}(S'',d)$, we can conclude that $\nu_{\bm{x}}\in \textup{RE}(S'')$, the refinement edge of $S''$.

    Furthermore, without loss of generality we may assume $t''>t'$ for $(t'', t')\in I''\times I'$. Similarly to the proof of \cref{lem:Hanging_in_space_part_2}, we then define $\calT_{up}:=\calT(v_t', \calP)$ and $\calT_{down}:=\calT\left(v_t'-\frac{\varepsilon_0}{2}, \calP\right)$, where $\varepsilon_0$ is chosen as in the aforementioned proof. Additionally, we choose a $S'''\in \calT_{down}$ with $|S\cap S'''|>0$ and $\nu_{\bm{x}}\in S'''$ as well as a corresponding interval $I'''$ such that $\nu'\in \overline{I'''\times S'''}$ and $I'''\times S'''\in \calP$. It holds that $\ell(I'''\times S''')>\ell(I'\times S')$, since otherwise, if $\ell(I'''\times S''')\le\ell(I'\times S')$, then $I'\subset I'''$ must hold, and therefore $I'''\times S''' = I\times S$. But this would be a contradiction to $I'\subset I''' = I \subsetneq I'$. Thus,
    \begin{align}\label{lem:Hanging_in_time_part_3 - equation}
        \ell(I'''\times S''')\ge \ell(I'\times S') + 1 = \ell(I''\times S'') + 2,
    \end{align}
    where we have applied $S'\in\textup{BISECT}(S'',d)$ in the second step.

    \begin{figure}[H]
	    \begin{center}
		    \includegraphics[height=11cm]{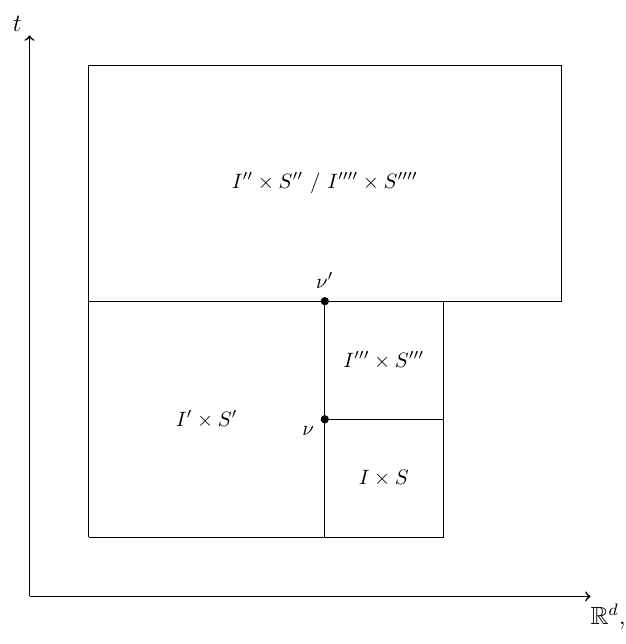}
	    \end{center}
    \captionof*{figure}{Illustration of the prisms in the proof of \cref{lem:Hanging_in_time_part_3}.}
    \end{figure}

Now if $|S'''\cap S''|>0$ was true, this would also yield a contradiction to the above equation for the levels due to the $1$-irregular rule in time. Thus, $\dim(S''\cap S''')\in \{0,\dots, d-1\}$. Now let us consider $S''''\in \calT_{up}$ with $|S''\cap S''''|>0$ and $\nu_{\bm{x}}\in S''''$ together with an interval $I''''$ such that $I''''\times S''''\in \calP$ and $\nu'\in \overline{I''''\times S''''}$. Then, since $\nu_{\bm{x}}\in \textup{RE}(S'')\cap S''''$, we have $\textup{RE}(S'')\subset S''''$ ($\nu_{\bm{x}}$ cannot be a vertex of $\textup{RE}(S'')$, since this would imply $\nu_{\bm{x}}\in \calL(S'')$, which does not hold true), which implies $\ell(I''''\times S'''')\le \ell(I''\times S'') $ according to \cref{Lemma_Properties_of_Neighboring_Space_Time_Elements}. But again, together with \eqref{lem:Hanging_in_time_part_3 - equation} this yields another contradiction. Therefore, this finally yields the assertion.
\end{proof}

We are now in a position to prove \cref{thm:support_in_power_omega_domain}.

\begin{proof}[Proof of \cref{thm:support_in_power_omega_domain}]
    We first consider the case $d=1$. Let $\nu'\in \calL(I'\times S')$ for $I'\times S'\in \calP\setminus\omega^{2}_\calP(I\times S)$. If $\nu'\in \calF(\calP)$, then $\nu\neq \nu'$, otherwise $I'\times S'\in \omega_\calP(I\times S)\subset \omega^{2}_\calP(I\times S)$ due to $\nu'\in (\overline{I\times S})\cap (\overline{I'\times S'})$, which would be a contradiction. Then $\phi_\nu(\nu') = 0$ according to \cref{thm:basis free nodes}. Now consider the case of $\nu'\in \calH(\calP)$. According to \cref{lem:Hanging_in_space_part_2} and \cref{lem:Hanging_in_time_part_2}, respectively, there is a $1$-dimensional face $F$ of some $I''\times S''\in \calP_\calH(\nu')$ with $\nu'\in F$ and $F\cap \calL(I''\times S'')\subset \calF(\calP)$. Let $\nu''\in F\cap \calL(I''\times S'')$. Then, $\nu''\neq \nu$, since the opposite would imply $I''\times S''\in\omega_\calP(I\times S)$ as before, and therefore $I'\times S'\in \omega_\calP^{2}$, which is a contradiction. Thus, $\phi_\nu(\nu'')=0$. Since $\phi_\nu\phantom{}_{|F}\in \Pi^{r_1}(I')$ with $\#(F\cap \calL(I''\times S''))=r_1 = \dim \Pi^{r_1}(I')$ or $\phi_\nu\phantom{}_{|F}\in \Pi^{r_2}(S')$ with $\#(F\cap \calL(I''\times S''))=r_2= \dim \Pi^{r_2}(S')$, correspondingly, this yields $\phi_\nu\phantom{}_{|F}= 0$, in particular, $\phi_\nu(\nu')=0$. Thus, $\phi_\nu(\nu')=0$ for all $\nu'\in \calL(I'\times S')$ and therefore $\phi_\nu\phantom{}_{|\overline{I'\times S'}}=0$, because $\#\calL(I'\times S')= \dim \Pi^{r_1,r_2}_{t, \bm{x}}(I'\times S') $. Since $I'\times S' \in \calP\setminus \omega^{2}_\calP(I\times S)$ has been arbitrarily chosen, this yields the assertion.

    Now we turn to the situation of $d\in \N_{\ge 2}$. The case of $\nu'\in \calF(\calP)$ is identical to the case $d=1$. So assume $\nu'\in \calH(\calP)$. Similarly, if $\nu'$ \enquote{hangs in time}, the proof works identically due to \cref{lem:Hanging_in_time_part_2} and \cref{lem:Hanging_in_time_part_3}. If $\nu'=(\nu_t', \nu_{\bm{x}}')$ \enquote{hangs in space}, it is a little more complicated. Then, there is $I''\times S''\in \calP_\calH(\nu')$ as in \cref{lem:Hanging_in_space_part_2} with $\{\nu_t'\}\times \mathring{S''}\subset \calF(\calP)$. Therefore, $\phi_\nu(\nu'')= 0$ for $\nu'' \in \{\nu_t'\}\times \mathring{S''}$ can be shown as above. Now consider $\nu''\in \{\nu_t'\}\times \partial S''$, i.e., $\nu''=(\nu_t', \nu_{\bm{x}}'')$. If $\nu''\in \calF(\calP)$ nothing changes, thus assume the case $\nu''\in \calH(\calP)$. Then, $\nu''$ \enquote{hangs in time} according to \cref{lem:Hanging_in_space_part_3}, i.e., there is $I'''\times S'''\in \calP_\calH(\nu'')$ such that $L(\nu''):=\overline{I'''}\times \{\nu_{\bm{x}}''\}\subset \calF(\calP)$ due to \cref{lem:Hanging_in_time_part_2} and \cref{lem:Hanging_in_time_part_3}. We now know that $I'''\times S'''\in \omega^{2}_\calP(I'\times S')$. Thus, if $\nu'''\in L(\nu'')\cap \calL(I'''\times S''')$ fulfilled $\nu''' = \nu\in \overline{I\times S}$, then $I'\times S' \in \omega^{3}_\calP(I\times S)$ would hold in contradiction to the assumption. Thus, $\phi_\nu(\nu''')= 0$, again due to \cref{thm:basis free nodes}. Since $\phi_\nu\phantom{}_{|L(\nu''')}\in \Pi^{r_1}(I''')$ and $ \calL(I'''\times S''') \cap L(\nu'')= \dim \Pi^{r_1}(I''')$, this implies $\phi_\nu\phantom{}_{|L(\nu'')}= 0$ and therefore $\phi_\nu(\nu'') = 0$. Since we have therefore shown that $\phi_\nu(\nu'') = 0$ for all $\nu''\in \calL(I''\times S'')\cap (\{\nu_t\}\times S'')$, and $\#(\calL(I''\times S'')\cap (\{\nu_t\}\times S''))=\dim \Pi^{r_2}(S'')$, this yields $\phi_\nu\phantom{}_{|\{\nu_t\}\times S''}=0$ and thus $\phi_\nu(\nu')=0$. Thus, the first assertion follows as in the case $d=1$.
    The second assertion is now an immediate consequence of \cref{Enumeration further properties of the mesh}\ref{Enumeration further properties of the mesh - enum 2}.
\end{proof}

\subsection{The (quasi-)interpolation operator}

The goal of this section is to construct an appropriate (quasi-)interpolation operator from $L_p(\Omega_T)$, $p\in (0,\infty]$, to our anisotropic finite element space $\mathbb{V}^{r_1, r_2}_\calP$. We will proceed similar to \cite[Sect.~3]{GM14}. Also, in the following, we will use $j(d)$ as in \cref{thm:support_in_power_omega_domain}.

First, we will consider the Lagrange basis $(b^{I\times S}_{\nu})_{\nu\in \calL(I\times S)}$ of $\Pi^{r_1, r_2}_{t, \bm{x}}(I\times S)$ with respect to the nodes in $\calL(I\times S)$ for every $I\times S\in \calP$. Further, we will use the biorthogonal dual functions $(\zeta^{I\times S}_{\nu})_{\nu\in \calL(I\times S)}\subset \Pi^{r_1, r_2}_{t, \bm{x}}(I\times S)$ with respect to the $L_2(I\times S)$-inner product, i.e., 
\begin{align}
    \int\limits_{I\times S}b^{I\times S}_\nu \zeta^{I\times S}_{\nu'} \,d(t,\bm{x}) = \delta_{\nu, \nu'}\quad \text{for all}\quad \nu, \nu'\in \calL(I\times S).
\end{align}

\begin{rem}\label{rem:estimate_dual_elements}
    For every $I\times S\in \calP$, $\nu\in \calL(I\times S)$, and $p\in (0,\infty]$, we have $\|\zeta^{I\times S}_{\nu}\|_{L_p(I\times S)}\sim_{\,d,p,r_1, r_2} |I\times S|^{\frac{1}{p} - 1}$.
\end{rem}
\thesis{\begin{proof}
    Let $\hat{I}=[0,1]$ and $\hat{S}$ be the $d$-dimensional standard simplex. Then, we know that $b^{\hat{I}\times \hat{S}}_{\hat{\nu}}=b^{I\times S}_{\nu} \circ \Phi_{I\times S}$ for $\Phi_{I\times S}(t,\bm{x}):=(\Phi_I(t), \Phi_{S}(\bm{x})) $ and $\hat{\nu}:=\Phi_{I\times S}^{-1}(\nu)$. Further, 
    \begin{align*}
        \int\limits_{I\times S} b^{I\times S}_\nu \left(\frac{|\hat{I}\times \hat{S}|}{|I\times S|}\cdot \zeta^{\hat{I}\times \hat{S}}_{\hat{\nu}}\circ \Phi^{-1}_{I\times S}\right)\,d(t,\bm{x})= \int\limits_{\hat{I}\times \hat{S}} b^{\hat{I}\times \hat{S}}_{\hat{\nu}} \zeta^{\hat{I}\times \hat{S}}_{\hat{\nu}} \,d(\hat{t}, \hat{\bm{x}})= \delta_{\hat{\nu}, \hat{\nu}'} = \delta_{\nu, \nu'}
    \end{align*}
    holds for any $\nu, \nu'\in \calL(I\times S)$ due to $|\det D\Phi_{I\times S}|=\frac{|I\times S|}{|\hat{I}\times \hat{S}|}$. This yields $\zeta^{I\times S}_\nu:= \frac{|\hat{I}\times \hat{S}|}{|I\times S|}\cdot\zeta^{\hat{I}\times \hat{S}}_{\hat{\nu}}\circ \Phi^{-1}_{I\times S}$. Additionally, $\|\zeta^{\hat{I}\times \hat{S}}_{\hat{\nu}}\|_{L_p(\hat{I}\times \hat{S})}\sim_{\, d, p, r_1, r_2}1$ for any such $\hat{\nu}\in \calL(\hat{I}\times \hat{S})$. Together, we obtain
    \begin{align*}
        \|\zeta^{I\times S}_\nu\|_{L_p(I\times S)}^p \sim_{\,d,p}\int\limits_{\hat{I}\times \hat{S}} \left(\frac{1}{|I\times S|^p} \left|\zeta^{\hat{I}\times \hat{S}}_{\hat{\nu}} \right|^p\right) |I\times S| \, d(\hat{t}, \hat{\bm{x}}) = |I\times S|^{1-p}\left\|\zeta^{\hat{I}\times \hat{S}}_{\hat{\nu}} \right\|^p_{L_p(\hat{I}\times \hat{S})}\sim_{\,d,p,r_1, r_2} |I\times S|^{1-p}.
    \end{align*}
    for $p<\infty$, due to $|\hat{I}\times \hat{S}|=|\hat{S}|\sim_{\,d}1$. The case $p=\infty$ works analogously with the usual modifications.
\end{proof}}

\begin{defi}
    For given $\nu\in \calF(\calP)$, we choose an arbitrary, but fixed $J\times R:=(J\times R)(\nu)\in \calP$ with $\nu\in \overline{J\times R}$. Next, we set $\zeta_\nu:=\zeta^{(J\times R)(\nu)}_\nu$ in order to define $\mathcal{Q}_\calP:L_1(\Omega_T)\rightarrow \mathbb{V}^{r_1, r_2}_{\calP}$  via
    \begin{align*}
        \mathcal{Q}_\calP(f):=\sum\limits_{\nu\in \calF(\calP)}\left(\int\limits_{(J\times R)(\nu)}f\,\zeta_\nu\,d(t,\bm{x})\right)\phi_\nu = \sum\limits_{\nu\in \calF(\calP)}(f,\zeta_\nu)_{L_2((J\times R)(\nu))}\phi_\nu
    \end{align*}
\end{defi}

\begin{rem}\label{rem:linearity_and_projection_property_of_Q}
    It is easy to see that $\mathcal{Q}_\calP$ is linear and $\mathcal{Q}_\calP(f) = f$ if and only if $f\in \mathbb{V}^{r_1, r_2}_\calP$.
\end{rem}
\thesis{
    \begin{proof}
        The linearity is clear. Let $f\in \V^{r_1, r_2}_\calP$. Then, \cref{thm:basis free nodes} yields the expansion $f=\sum\limits_{\nu\in \calF(\calP)} f_\nu \phi_\nu$, $f_\nu \in \R$, with respect to the basis $(\phi_\nu)_{\nu\in \calF(\calP)} $ of $\V^{r_1, r_2}_\calP$. Due to the basis property, it is sufficient to show $(f,\zeta_\nu)_{L_2((J\times R)(\nu))}=f_\nu$ for every $\nu\in \calF(\calP)$ in order to obtain $\mathcal{Q}_\calP(f)=f$. Therefore, let $\nu\in \calF(\calP)$ and observe that $f_{|J\times R}$ is an anisotropic polynomial with a representation with respect to $(b^{J\times R}_{\nu'})_{\nu'\in \calL(J\times R)}$, which forms a basis of $\Pi^{r_1, r_2}_\calP(J\times R)$, i.e., $f_{|J\times R}=\sum\limits_{\nu'\in \calL(J\times R)} c_{\nu'}b_{\nu'}^{J\times R}$. First, we now obtain
        \begin{align*}
            f_\nu =\sum\limits_{\nu'\in \calF(\calP)}f_\nu \delta_{\nu,\nu'}(\nu)=\sum\limits_{\nu'\in \calF(\calP)} f_{\nu'}\phi_{\nu'}(\nu)= f(\nu) = \sum\limits_{\nu'\in \calL(J\times R)} c_{\nu'}b_{\nu'}^{J\times R}(\nu)= \sum\limits_{\nu'\in \calL(J\times R)} c_{\nu'}\delta_{\nu, \nu'}= c_\nu,
        \end{align*} 
        due to $\nu\in \overline{J\times R}$, which in turn allows us to derive
        \begin{align*}
            (f,\zeta_\nu)_{L_2(J\times R)} = \int\limits_{J\times R} f\,\zeta_\nu \,d(t,\bm{x})= \sum\limits_{\nu'\in \calL(J\times R)} c_{\nu'}\int\limits_{J\times R}b_{\nu'}^{J\times R}\zeta_\nu \, d(t,\bm{x})=\sum\limits_{\nu'\in \calL(J\times R)} c_{\nu'}\delta_{\nu, \nu'}=c_\nu = f_\nu.
        \end{align*}
        This concludes the proof.
    \end{proof}
}
\begin{rem}\label{rem:estimate_primal_elements}
For given $\nu \in \calF(\calP)\cap \calL(I\times S)$ and $p\in (0,\infty]$, we can estimate $\|\phi_\nu\|_{L_p(I\times S)}\sim_{\,d,p,r_1, r_2} |\supp \phi_\nu|^\frac{1}{p}$. 
\end{rem}
\thesis{
\begin{proof}

Let $M_\nu$ be the set of all elements of $\calP$ in the support of $\phi_\nu$, i.e., $M_\nu := \{I'\times S'\in \calP\mid I'\times S'\subset \supp \phi_\nu \}$. As above, we now use the affine mapping $\Phi_{I'\times S'}$ that transforms the reference prism $\hat{I}\times \hat{S}$ to $I'\times S'$ as well as $|\hat{I}\times \hat{S}|\sim_{\,d}1$ in order to obtain
\begin{align*}
    \|\phi_\nu\|_{L_p(\Omega_T)}^p &= \sum\limits_{I'\times S'\in M_\nu} \int\limits_{I'\times S'}|\phi_\nu|^p\, d(t, \bm{x}) = \sum\limits_{I'\times S'\in M_\nu} \int\limits_{\hat{I}\times \hat{S}} |\phi_\nu \circ \Phi_{I'\times S'}|^p\,\frac{|I'\times S'|}{|\hat{I}\times \hat{S}|}\, d(\hat{t}, \hat{\bm{x}}) 
    \\&\sim_{\,d} \sum\limits_{I'\times S'\in M_\nu} |I'\times S'| \int\limits_{\hat{I}\times \hat{S}} |\phi_\nu \circ \Phi_{I'\times S'}|^p\, d(\hat{t}, \hat{\bm{x}}) \sim_{\,d,p,r_1, r_2} \sum\limits_{I'\times S'\in M_\nu} |I'\times S'| = |\supp \phi_\nu|,
\end{align*}
if $p<\infty$. The case $p=\infty$ can be treated similarly.
\end{proof}
}Together, this allows us to show the following result concerning the local boundedness of $\mathcal{Q}(f)$.

\begin{lem}\label{lem:local_boundedness_of_Q_projection}
    Let $p\in (0,\infty]$ and $I\times S\in \calP$. Then
    \begin{enumerate}[label=(\roman*)]
        \item $\|\mathcal{Q}_\calP(f)\|_{L_p(I\times S)} \lesssim_{\, d,p,s_1, s_2,r_1, r_2, \kappa_{\calP_0}, \mu(\calP_0)}\|f\|_{L_p(\omega^{j(d)}_\calP(I\times S))}\footnotemark \quad \text{for all}\quad f\in L_p(\Omega_T),\quad \text{if}\quad p\ge 1$. \label{lem:local_boundedness_of_Q_projection - item 1}
        \item $\|\mathcal{Q}_\calP(f)\|_{L_p(I\times S)} \lesssim_{\, d,p,s_1, s_2,r_1, r_2, \kappa_{\calP_0}, \mu(\calP_0)}\|f\|_{L_p(\omega^{j(d)}_\calP(I\times S))} \quad \text{for all}\quad f\in \V^{r_1, r_2}_{\calP,\textup{DC}}.$ \label{lem:local_boundedness_of_Q_projection - item 2}
    \end{enumerate}
    \footnotetext{With a little abuse of notation, we denote $\|f\|_{L_p(\omega^{j(d)}_\calP(I\times S))}:=\left(\sum\limits_{I'\times S'\in \omega^{j(d)}_\calP(I\times S)} \|f\|_{L_p(I'\times S')}^p\right)^\frac{1}{p}$ with the usual modification for $p=\infty$.}
\end{lem}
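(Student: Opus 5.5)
The plan is the standard Scott--Zhang/Gaspoz--Morin argument. On a fixed $I\times S\in\calP$ only the basis functions $\phi_\nu$ with $\supp\phi_\nu\cap(I\times S)$ of positive measure contribute, so
\begin{align*}
\mathcal{Q}_\calP(f)_{|I\times S}=\sum_{\nu\in\calF(\calP),\,|\supp\phi_\nu\cap (I\times S)|>0}(f,\zeta_\nu)_{L_2((J\times R)(\nu))}\,\phi_{\nu|I\times S}.
\end{align*}
First, I will show that every such $\nu$ satisfies $(J\times R)(\nu)\in\omega^{j(d)}_\calP(I\times S)$, and that the number of such $\nu$ is bounded. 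Take $I''\times S''\in\calP$ with $\nu\in\calL(I''\times S'')$. By \cref{thm:support_in_power_omega_domain}, $I\times S\subset\supp\phi_\nu\subset\omega^{j(d)}_\calP(I''\times S'')$. Since the neighbourhood relation is symmetric, $I''\times S''\in\omega^{j(d)}_\calP(I\times S)$. Moreover $(J\times R)(\nu)$ touches $\nu\in\overline{I''\times S''}$, so it lies in $\omega^{j(d)+1}_\calP(I\times S)$. The extra degree costs nothing: either I read the lemma's $\omega^{j(d)}$ generously, or I choose $(J\times R)(\nu)$ as the element containing $\nu$ among its own nodes, which is possible since $\nu$ is free. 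Either way, \cref{Enumeration further properties of the mesh}\ref{Enumeration further properties of the mesh - enum 2},\ref{Enumeration further properties of the mesh - enum 3} give $\#\omega^{j}_\calP(I\times S)\lesssim1$, hence a bounded number of contributing $\nu$, each with $|(J\times R)(\nu)|\sim|I\times S|\sim|\supp\phi_\nu|$ (the last comparison by \cref{thm:support_in_power_omega_domain}).

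Next I estimate each term. \cref{rem:estimate_primal_elements}, restricted to one element, gives $\|\phi_\nu\|_{L_p(I\times S)}\lesssim|\supp\phi_\nu|^{1/p}$. For (i), with $p\ge1$ and $p'$ the conjugate exponent, Hölder and \cref{rem:estimate_dual_elements} give
\begin{align*}
|(f,\zeta_\nu)|\le\|f\|_{L_p(J\times R)}\|\zeta_\nu\|_{L_{p'}(J\times R)}\lesssim|J\times R|^{-1/p}\|f\|_{L_p(J\times R)}.
\end{align*}
Multiplying the two bounds, the powers of the comparable volumes cancel, so each term is $\lesssim\|f\|_{L_p(J\times R)}$. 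Summing the boundedly many terms (with the $p$-triangle inequality) yields the claim.

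For (ii), where $p<1$ is allowed, Hölder fails. Instead, for $f\in\V^{r_1,r_2}_{\calP,\textup{DC}}$ the restriction $f_{|J\times R}$ is a polynomial in $\Pi^{r_1,r_2}_{t,\bm{x}}(J\times R)$. By the equivalence of quasi-norms on this finite-dimensional space, transported to $J\times R$ via the affine map $\Phi_{J\times R}$ as in the proof of \cref{rem:estimate_dual_elements}, we get $\|f\|_{L_\infty(J\times R)}\lesssim|J\times R|^{-1/p}\|f\|_{L_p(J\times R)}$. Then
\begin{align*}
|(f,\zeta_\nu)|\le\|f\|_{L_\infty}\|\zeta_\nu\|_{L_1}\lesssim|J\times R|^{-1/p}\|f\|_{L_p(J\times R)},
\end{align*}
and the argument concludes exactly as in (i).

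The main obstacle is the bookkeeping in the first step. I need to match the neighbourhood degree of $(J\times R)(\nu)$ to exactly $j(d)$, using that \cref{thm:support_in_power_omega_domain} is stated for any element having $\nu$ among its Lagrange nodes. I also need to check that the volume comparisons hold with constants depending only on the listed parameters.
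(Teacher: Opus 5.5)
Your proof is correct and follows the paper's argument. You restrict the sum to the boundedly many $\nu$ whose basis functions live on $I\times S$. For (i) you use H\"older together with \cref{rem:estimate_dual_elements} and \cref{rem:estimate_primal_elements}. For (ii) you use $|(f,\zeta_\nu)|\lesssim\|f\|_{L_\infty((J\times R)(\nu))}$ and the polynomial norm equivalence $\|f\|_{L_\infty(J\times R)}\lesssim|J\times R|^{-1/p}\|f\|_{L_p(J\times R)}$. In both cases the volumes cancel by \cref{Enumeration further properties of the mesh} and \cref{thm:support_in_power_omega_domain}.

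Your bookkeeping worry resolves itself without any choice and without an extra degree. Since $\nu$ is free, every element whose closure contains $\nu$ has $\nu$ among its Lagrange nodes; this is also what makes $\zeta_\nu^{(J\times R)(\nu)}$ well defined. So \cref{thm:support_in_power_omega_domain} applies directly to $(J\times R)(\nu)$, and symmetry gives $(J\times R)(\nu)\in\omega^{j(d)}_\calP(I\times S)$.
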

\begin{proof}
    We will only provide the details of the proof for the case $p<\infty$, since $p=\infty$ is very similar. First, we assume $f\in L_p(\Omega_T)$ and notice that
    \begin{align*}
        \mathcal{Q}_\calP(f)_{|I\times S}= \sum\limits_{\nu\in \calF(\calP)} (f, \zeta_\nu)_{L_2((J\times R)(\nu))}\phi_\nu\phantom{}_{|I\times S} = \sum\limits_{\substack{\nu\in \calF(\calP),\\ I\times S\subset \supp \phi_\nu}} (f, \zeta_\nu)_{L_2((J\times R)(\nu))}\phi_\nu.
    \end{align*}
    By \cref{thm:support_in_power_omega_domain} as well as the Minkowski and Hölder inequalities, this allows us to estimate
    \begin{align}\label{lem:local_boundedness_of_Q_projection - proof - equation 1}
        \|\mathcal{Q}_\calP(f)\|_{L_p(I\times S)} \le \sum\limits_{
        \substack{\nu\in \calF(\calP)\cap \calL(I'\times S'),\\ I'\times S'\in \omega^{j(d)-1}_\calP(I\times S)}
        } \|f\|_{L_{p}((J\times R)(\nu))} \|\zeta_\nu\|_{L_{p'}((J\times R)(\nu))}\|\phi_\nu\|_{L_p(I\times S)},
    \end{align}
    where $p'\in [1,\infty]$ is chosen such that $\frac{1}{p}+\frac{1}{p'}=1$. Additionally, we have 
    \begin{align}\label{lem:local_boundedness_of_Q_projection - proof - equation 2}
        \|\zeta_\nu\|_{L_{p'}((J\times R)(\nu))}\sim_{\,d,p,r_1, r_2} |(J\times R)(\nu)|^{\frac{1}{p'}-1}\sim_{\,d,s_1,s_2, \kappa_{\calP_0}, \mu(\calP_0)} |\omega^{j(d)}_\calP(I\times S)|^{\frac{1}{p'}-1}
    \end{align}
    due to \cref{rem:estimate_dual_elements} and \cref{Enumeration further properties of the mesh}\ref{Enumeration further properties of the mesh - enum 2}, since $(J\times R)(\nu)\in \omega^{j(d)}_\calP(I\times S)$ for such $\nu$. Furthermore, 
    \begin{align}\label{lem:local_boundedness_of_Q_projection - proof - equation 3}
        \|\phi_\nu\|_{L_p(I\times S)} \sim_{\,d,p, r_1, r_2} |\supp \phi_\nu|^\frac{1}{p} \le |\omega^{j(d)}_\calP(I\times S)|^{\frac{1}{p}}
    \end{align}
    holds true because of \cref{rem:estimate_primal_elements} and \cref{thm:support_in_power_omega_domain}. Inserting \eqref{lem:local_boundedness_of_Q_projection - proof - equation 2} and \eqref{lem:local_boundedness_of_Q_projection - proof - equation 3} into \eqref{lem:local_boundedness_of_Q_projection - proof - equation 1} now yields
    \begin{align*}
        \|\mathcal{Q}_\calP(f)\|_{L_p(I\times S)}&\sim_{\,d,p,s_1,s_2,r_1, r_2, \kappa_{\calP_0}, \mu(\calP_0)} \sum\limits_{\substack{\nu\in \calF(\calP)\cap \calL(I'\times S'),\\ I'\times S'\in \omega^{j(d)-1}_\calP(I\times S)}} \|f\|_{L_{p}((J\times R)(\nu))} 
        \\&\le \sum\limits_{\substack{\nu\in \calF(\calP)\cap \calL(I'\times S'),\\ I'\times S'\in \omega^{j(d)-1}_\calP(I\times S)}} \|f\|_{L_p(\omega^{j(d)}_\calP(I\times S))} \lesssim_{\,d,s_1,s_2, \kappa_{\calP_0}, \mu(\calP_0) } \|f\|_{L_p(\omega^{j(d)}_\calP(I\times S))},
    \end{align*}
    where we have used that $(J\times R)(\nu)\in \omega^{j(d)}_\calP(I\times S)$ for such $\nu$ in the penultimate step and the boundedness of $\omega^{j(d)-1}_\calP(I\times S)$ due to \cref{Enumeration further properties of the mesh}\ref{Enumeration further properties of the mesh - enum 2} in the last one. This concludes the proof for the case \ref{lem:local_boundedness_of_Q_projection - item 1}. Now, let us consider $f\in \V^{r_1, r_2}_{\calP, \textup{DC}}$, i.e., $f=\sum\limits_{I'\times S'\in \calP}\mathds{1}_{I'\times S'}f_{I'\times S'}$ with $f_{I'\times S'}\in \Pi^{r_1, r_2}_{t,\bm{x}}(I'\times S')$ for $I'\times S'\in \calP$. Since $\Omega_T$ is bounded, \ref{lem:local_boundedness_of_Q_projection - item 2} is a particular case of \ref{lem:local_boundedness_of_Q_projection - item 1}. So it is sufficient to show the assertion for $p\in (0,1)$ only. Similar to before, using the subadditivity of $\|\cdot\|_{L_p(\Omega_T)}^{p}$, we obtain
    \begin{align*}
        \|\mathcal{Q}_\calP(f)\|^p_{L_p(I\times S)}&\le \sum\limits_{
        \substack{\nu\in \calF(\calP)\cap \calL(I'\times S'),\\ I'\times S'\in \omega^{j(d)-1}_\calP(I\times S)}}|(f,\zeta_\nu)_{L_2((J\times R)(\nu))}|^p\, \|\phi_\nu\|_{L_p(I\times S)}^p 
        \\ &\le \sum\limits_{\substack{\nu\in \calF(\calP)\cap \calL(I'\times S'),\\ I'\times S'\in \omega^{j(d)-1}_\calP(I\times S)}}\|f\|_{L_\infty((J\times R)(\nu))}^p \|\phi_\nu\|_{L_p(I\times S)}^p
        \\&\lesssim_{\, d,p, s_1, s_2, r_1, r_2,\kappa_{\calP_0}, \mu(\calP_0)} \sum\limits_{\substack{\nu\in \calF(\calP)\cap \calL(I'\times S'),\\ I'\times S'\in \omega^{j(d)-1}_\calP(I\times S)}} |\omega^{j(d)}_\calP(I\times S)|^{-1} \|f\|_{L_p((J\times R)(\nu))}^p |\omega^{j(d)}_\calP(I\times S)| 
        \\&\le  \|f\|_{L_p(\omega_\calP^{j(d)}(I\times S))}^p
        ,
    \end{align*}
    where we have additionally used the (quasi-)norm equivalency from \cite[Lem.~3.5]{MSS26} together with \cref{Enumeration further properties of the mesh}\ref{Enumeration further properties of the mesh - enum 2} and \cref{rem:estimate_primal_elements} in the penultimate step. This shows \ref{lem:local_boundedness_of_Q_projection - item 2} and concludes the proof.
\end{proof}

Since $\mathcal{Q}_\calP$ cannot be applied to arbitrary functions in $L_p(\Omega)$, if $p\in (0,1)$, we will have to combine the projection $\mathcal{Q}_\calP$ appropriately with another operator. Therefore, for $D\subset \Omega_T$, consider the operator $B_{p, D}:L_p(D)\rightarrow \Pi^{r_1, r_2}_{t,\bm{x}}(D)$ that maps any function to one of its \textbf{best approximations}, i.e.,
\begin{align*}
    \|f-B_{p,D}(f)\|_{L_p(D)} = E\left(f, \Pi^{r_1, r_2}_{t,\bm{x}}(D), D\right)_p=:E\left(f, \Pi^{r_1, r_2}_{t,\bm{x}}, D\right)_p .
\end{align*}

\begin{rem}\label{rem:best_approximation_operator}
    For any $p\in (0,\infty]$, the space $L_p(\Omega_T)$ is a (quasi-)Banach space. Therefore, a best approximation of any $f\in L_p(\Omega_T)$ in the finite dimensional space $\Pi^{r_1, r_2}_{t,\bm{x}}$ exists, i.e., $\calB_{p,D}$ is well-defined. In the Banach space case, i.e., $p\in [1,\infty]$, this is a direct consequence of Theorem 1.1 of \cite[Ch.~3.1]{DL93}. For $p\in (0,1)$, this follows by minor modifications of the proof of the aforementioned theorem, adjusting it to the case of quasi-Banach spaces. 
    
    In particular, for $p\in (0,1]\cup \{\infty\}$, $\calB_{p,D}$ might not be continuous, since there can be multiple best approximations, because $\|\cdot\|_{L_p(D)}$ is not (strictly) convex.
\end{rem}

We will need the following property, that tells us that best-approximations with respect to some small parameter are also quasi-best approximations for the larger parameters. The corresponding result for the stationary case can be found in \cite[Lem.~3.5]{GM14}.

\begin{lem}\label{lem:smaller_parameter_bestapprox_larger_parameter_quasi-bestapprox}
    Let $p\in (0,\infty]$, $\rho\in (0,p]$, and $f\in L_\rho(\Omega_T)$. Further, assume $D=I\times S$ or $D=\omega^{j(d)}_\calP(I\times S)$ for some $I\times S\in \calP$. Then,
    \begin{align*}
        \|f-\calB_{\rho,D}(f)\|_{L_p(D)}\lesssim E\left(f, \Pi^{r_1, r_2}_{t,\bm{x}}, D\right)_p,
    \end{align*}
    where the constant only depends on $d, \rho, p, r_1, r_2$ in the first case and on $d, \rho, p, s_1, s_2, r_1, r_2,\kappa_{\calP_0}$, and $\mu(\calP_0)$ in the latter.
\end{lem}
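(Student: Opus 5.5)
We follow the route of \cite[Lem.~3.5]{GM14}. Let $P\in \Pi^{r_1,r_2}_{t,\bm{x}}(D)$ be a best $L_p(D)$-approximation of $f$, so $\|f-P\|_{L_p(D)}=E(f,\Pi^{r_1,r_2}_{t,\bm{x}},D)_p$; it exists by \cref{rem:best_approximation_operator}. Using the quasi-triangle inequality in $L_p(D)$,
\begin{align*}
    \|f-\calB_{\rho,D}(f)\|_{L_p(D)}\lesssim_{\,p} \|f-P\|_{L_p(D)} + \|P-\calB_{\rho,D}(f)\|_{L_p(D)},
\end{align*}
it suffices to bound the second term, which is the norm of a function that is polynomial on each prism of $D$. (If $D=\omega^{j(d)}_\calP(I\times S)$ is not a prism, $\Pi^{r_1,r_2}_{t,\bm{x}}(D)$ still means global anisotropic polynomials on $D$, so $Q:=P-\calB_{\rho,D}(f)$ is a single polynomial.)

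\textbf{Step 1: inverse/norm equivalence.} We need $\|Q\|_{L_p(D)}\lesssim |D|^{\frac1p-\frac1\rho}\|Q\|_{L_\rho(D)}$ for $Q\in\Pi^{r_1,r_2}_{t,\bm{x}}$.
\begin{itemize}
    \item For $D=I\times S$, this is the quasi-norm equivalence on finite-dimensional spaces transported by the affine map $\Phi_{I\times S}$, as in \cite[Lem.~3.5]{MSS26}. The constants depend only on $d,\rho,p,r_1,r_2$.
    \item For $D=\omega^{j(d)}_\calP(I\times S)$, apply the prism estimate on one fixed element $I\times S\subset D$ and use the standard fact that a polynomial's norm on a set is controlled by its norm on a comparable subset. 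Concretely, $D$ lies in the cylinder $\widetilde{\omega^{j(d)}_\calP}(I\times S)$, whose measure and shape are comparable to $I\times S$ after an anisotropic affine rescaling. This follows from \cref{Enumeration further properties of the mesh}\ref{Enumeration further properties of the mesh - enum 2} together with the level bounds from its proof, and it lets us pass to a fixed reference configuration where all norms of $\Pi^{r_1,r_2}$ on nested bounded sets of comparable size are equivalent.
\end{itemize}
This yields the dependence on $s_1,s_2,\kappa_{\calP_0},\mu(\calP_0)$.

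\textbf{Step 2: back to $\rho$-errors.} Write
\begin{align*}
    \|Q\|_{L_\rho(D)}\lesssim \|f-P\|_{L_\rho(D)}+\|f-\calB_{\rho,D}(f)\|_{L_\rho(D)}\le 2\|f-P\|_{L_\rho(D)},
\end{align*}
where the last step uses the minimality of $\calB_{\rho,D}(f)$ in $L_\rho$. Then Hölder on $D$, since $\rho\le p$, gives $\|f-P\|_{L_\rho(D)}\le |D|^{\frac1\rho-\frac1p}\|f-P\|_{L_p(D)}$. Combining this with Step 1, the powers of $|D|$ cancel, and so $\|Q\|_{L_p(D)}\lesssim E(f,\Pi^{r_1,r_2}_{t,\bm{x}},D)_p$. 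Here $f\in L_p(D)$ may be assumed; otherwise the right-hand side is infinite. The case $p=\infty$ needs only the usual modifications.

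\textbf{Main obstacle.} The delicate point is Step 1 for the non-prismatic patch $D=\omega^{j(d)}_\calP(I\times S)$. There one has to argue carefully that the uniformly bounded number of neighbors, the level differences of at most $l$, and the anisotropic scaling $|I|\sim|S|^{s_2/(s_1d)}$ (controlled by $a(\calP)\lesssim a(\calP_0)$) map $D$ and $I\times S$ to sets in a compact family of reference configurations. Only then is the equivalence constant uniform. If a compactness argument is awkward, I would instead bound $\|Q\|_{L_\infty(\widetilde{\omega^{j(d)}_\calP}(I\times S))}$ directly by $\|Q\|_{L_\infty(I\times S)}$, using the tensor-product structure of $Q$ and one-dimensional Markov/Remez-type extension bounds in $t$ and along spatial directions over distances comparable to $\diam(I)$ and $\diam(S)$.
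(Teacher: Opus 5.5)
Your proof is correct. Its skeleton is exactly the argument of \cite[Lem.~3.2]{DP88}, to which the paper reduces the lemma: the quasi-triangle inequality against an $L_p$-best approximation $P$, an inverse estimate $\|Q\|_{L_p(D)}\lesssim |D|^{\frac1p-\frac1\rho}\|Q\|_{L_\rho(D)}$, minimality of $\calB_{\rho,D}(f)$ in $L_\rho(D)$, and H\"older. The paper's own contribution is only the inverse estimate, and that is where your route differs.

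For $D=\omega^{j(d)}_\calP(I\times S)$, the paper does not extend from a single prism. It applies the prism-wise estimate on every $I'\times S'\in\omega^{j(d)}_\calP(I\times S)$. It then replaces $|I'\times S'|^{1-\frac p\rho}$ by $|\omega^{j(d)}_\calP(I\times S)|^{1-\frac p\rho}$; since $1-\frac p\rho\le 0$, only the lower bound $|I'\times S'|\gtrsim|\omega^{j(d)}_\calP(I\times S)|$ from \cref{Enumeration further properties of the mesh}\ref{Enumeration further properties of the mesh - enum 2} is needed. It concludes with $\sum_i c_i^p\le(\sum_i c_i^\rho)^{p/\rho}$. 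This needs no Markov/Remez extension, no compactness, and no information on the shape of the enclosing cylinder. It also holds for all piecewise polynomials in $\V^{r_1,r_2}_\calP$, so your ``main obstacle'' disappears.

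Your fallback argument is also sound and gives the same constant dependence. It bounds $\|Q\|_{L_\infty(J\times R)}$ on the cylinder $J\times R\supset D$ by $\|Q\|_{L_\infty(I\times S)}$, first by a one-dimensional extension in $t$ for fixed $\bm{x}$, then by a spatial extension for fixed $t\in I$. This uses the bounded ratios $|J|/|I|$ and $\diam(R)/\diam(S)$, shape regularity of $S$, the reference-prism norm equivalence, and $|I\times S|\sim|D|$. It is heavier than the paper's route and covers only global polynomials. That suffices here, since $P-\calB_{\rho,D}(f)$ is a single polynomial on $D$.

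The compactness formulation is the vaguer of your two options, so if you keep your route, write out the fallback. Neither route needs the anisotropy control $a(\calP)\lesssim a(\calP_0)$ or the relation $|I|\sim|S|^{s_2/(s_1d)}$, because the temporal and spatial factors are bounded separately.
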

\begin{proof}
    This can be shown analogously to the corresponding proof in \cite[Lem.~3.2]{DP88}. Following the lines of the proof, one can see that the involved constant depends on $\rho$, $p$, and the constant $C$ corresponding to the (quasi-)norm equivalency estimate 
    \begin{align*}
        \|Q\|_{L_p(D)}\le C\, |D|^{\frac{1}{p}-\frac{1}{\rho}} \|Q\|_{L_\rho(D)}\quad \text{for all}\quad Q\in \V^{r_1, r_2}_\calP.
    \end{align*}
    The latter has to hold on any bounded $D$, since $\V^{r_1, r_2}_\calP$ is finite dimensional and thus all (quasi-)norms must be equivalent. Note that the estimate is invariant under affine-linear transformations. Therefore, for $D=I\times S$, the assertion follows directly by transforming $I\times S$ to the standard prism $\hat{I}\times \hat{S}$, where the estimate has to hold with a constant depending only on $d$, $r_1$, and $r_2$. Now, consider the case of $D=\omega^{j(d)}_\calP(I\times S)$ and $p<\infty$. Then,
    \begin{align*}
        \|Q\|^p_{L_p(\omega^{j(d)}_\calP(I\times S))} &= \sum\limits_{I'\times S'\in \omega^{j(d)}_\calP(I\times S)} \|Q\|^p_{L_p(I'\times S')} \lesssim_{\,d, \rho, p, r_1, r_2} \sum\limits_{I'\times S'\in \omega^{j(d)}_\calP(I\times S)} |I'\times S'|^{1-\frac{p}{\rho}} \|Q\|_{L_\rho(I'\times S')}^p
        \\&\lesssim_{\,d,\rho,p,s_1,s_2,\kappa_{\calP_0}, \mu(\calP_0)} |\omega^{j(d)}_\calP(I\times S)|^{1-\frac{p}{\rho}} \sum\limits_{I'\times S'\in \omega^{j(d)}_\calP(I\times S)} \|Q\|_{L_\rho(I'\times S')}^p,
        \\&\le |\omega^{j(d)}_\calP(I\times S)|^{1-\frac{p}{\rho}} \left(\sum\limits_{I'\times S'\in \omega^{j(d)}_\calP(I\times S)} \|Q\|_{L_\rho(I'\times S')}^\rho\right)^\frac{p}{\rho} = |\omega^{j(d)}_\calP(I\times S)|^{1-\frac{p}{\rho}} \|Q\|^p_{L_\rho(\omega^{j(d)}_\calP(I\times S))},
    \end{align*}
    where we have used the result for $D=I'\times S'$ in the second step, \cref{Enumeration further properties of the mesh}\ref{Enumeration further properties of the mesh - enum 2} in the third, and the embedding for the sequence spaces $\ell^\rho(\N_0)\rightarrow\ell^p(\N_0)$ in the penultimate one. This shows the assertion, since the case of $p=\infty$ works similar with the usual modifications.
\end{proof}

Consequently, for any $p\in (0,\infty]$, we can define $\calB_{p,\calP}:=\sum\limits_{I\times S\in \calP}\mathds{1}_{I\times S}\calB_{p,I\times S}\in \V^{r_1, r_2}_{\calP, \textup{DC}}$ and show a corresponding localized best approximation estimate.
\begin{lem}\label{lem:composition_of_Bestapproximations_operator}
    Let $p\in (0,\infty]$, $\rho\in (0,p]$, $f\in L_p(\Omega_T)$, and $I\times S\in \calP$. Then
    \begin{align*}
        \|f- \calB_{\rho, \calP}(f)\|_{L_p(\omega^{j(d)}_\calP(I\times S))} \lesssim_{\,d, \rho, p, s_1, s_2, r_1, r_2,\kappa_{\calP_0}, \mu(\calP_0)} E\left(f, \Pi^{r_1, r_2}_{t,\bm{x}}, \omega^{j(d)}_\calP(I\times S)\right)_p
    \end{align*}
\end{lem}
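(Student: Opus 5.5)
The plan is to reduce the estimate to the elementwise quasi-best approximation property of \cref{lem:smaller_parameter_bestapprox_larger_parameter_quasi-bestapprox} for $D=I'\times S'$, and then to compare local best approximations on single prisms with the best approximation on the whole extended neighbourhood. Fix $I\times S\in\calP$. Let $P^*\in\Pi^{r_1,r_2}_{t,\bm{x}}$ be a best $L_p$-approximation of $f$ on $D^*:=\omega^{j(d)}_\calP(I\times S)$, understood as a single polynomial on the union of the prisms. Its existence follows from \cref{rem:best_approximation_operator}. We treat $p<\infty$; the case $p=\infty$ works the same with maxima in place of sums.

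Using the footnoted definition of the norm over $\omega^{j(d)}_\calP(I\times S)$, write
\begin{align*}
\|f-\calB_{\rho,\calP}(f)\|^p_{L_p(D^*)}=\sum_{I'\times S'\in D^*}\|f-\calB_{\rho,I'\times S'}(f)\|^p_{L_p(I'\times S')}.
\end{align*}
For each prism $I'\times S'$ in this sum, apply \cref{lem:smaller_parameter_bestapprox_larger_parameter_quasi-bestapprox} with $D=I'\times S'$. This gives
\begin{align*}
\|f-\calB_{\rho,I'\times S'}(f)\|_{L_p(I'\times S')}\lesssim_{\,d,\rho,p,r_1,r_2} E\left(f,\Pi^{r_1,r_2}_{t,\bm{x}},I'\times S'\right)_p\le \|f-P^*\|_{L_p(I'\times S')}.
\end{align*}
The last inequality holds because the restriction of $P^*$ to $I'\times S'$ is admissible in the local best approximation problem. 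Summing the $p$-th powers over $I'\times S'\in D^*$ yields $\|f-\calB_{\rho,\calP}(f)\|^p_{L_p(D^*)}\lesssim\|f-P^*\|^p_{L_p(D^*)}=E(f,\Pi^{r_1,r_2}_{t,\bm{x}},D^*)_p^p$. Taking $p$-th roots gives the claim.

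The only point needing care is the hypothesis $f\in L_\rho$ used in \cref{lem:smaller_parameter_bestapprox_larger_parameter_quasi-bestapprox}. Since $\rho\le p$ and $\Omega_T$ is bounded, $f\in L_p$ implies $f\in L_\rho$ locally, so $\calB_{\rho,I'\times S'}(f)$ is well-defined. I also need the constant in the local lemma to be uniform over all prisms. This holds because that lemma is obtained by affine transformation to the reference prism, which gives a constant depending only on $d,\rho,p,r_1,r_2$.

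On this argument no mesh-dependent constant even enters, since the number of prisms in $\omega^{j(d)}_\calP(I\times S)$ never appears. Its boundedness by \cref{Enumeration further properties of the mesh}\ref{Enumeration further properties of the mesh - enum 3} would only be needed in a variant that bounds each prism separately by the neighbourhood error and then sums. The dependence on $s_1,s_2,\kappa_{\calP_0},\mu(\calP_0)$ in the statement is therefore harmless. I expect no real obstacle; the main thing to check is that $\calB_{\rho,\calP}$ is a genuine elementwise selection, so that the sum splits as above.
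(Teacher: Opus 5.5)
Your proposal is correct and is essentially the paper's proof. The paper likewise splits the $p$-th power of the norm over the prisms $I'\times S'\in\omega^{j(d)}_\calP(I\times S)$, applies \cref{lem:smaller_parameter_bestapprox_larger_parameter_quasi-bestapprox} with $D=I'\times S'$, bounds each local error by $\|f-\calB_{p,\omega^{j(d)}_\calP(I\times S)}(f)\|_{L_p(I'\times S')}$ (your $P^*$), and resums; your side remark is also accurate, since only the single-prism constant depending on $d,\rho,p,r_1,r_2$ actually enters and the extra mesh parameters in the statement are not used by this argument.
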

\begin{proof}
    For $p<\infty$, by using the best-approximation property of $\calB_{p,\cdot}$, we obtain
    \begin{align*}
        \|f- \calB_{\rho, \calP}(f)\|_{L_p(\omega^{j(d)}_\calP(I\times S))}^p &= \sum\limits_{I'\times S'\in \omega^{j(d)}_\calP(I\times S)} \|f-\calB_{\rho, I'\times S'}(f)\|_{L_p(I'\times S')}^p 
        \\&\lesssim_{\,d, \rho, p, s_1, s_2,r_1, r_2, \kappa_{\calP_0}, \mu(\calP_0)} \sum\limits_{I'\times S'\in \omega^{j(d)}_\calP(I\times S)} \|f-\calB_{p, \omega^{j(d)}_\calP(I\times S)}(f)\|_{L_p(I'\times S')}^p 
        \\&= \|f-\calB_{{p}, \omega^{j(d)}_\calP(I\times S)}(f)\|_{L_p(\omega_\calP^{j(d)}(I\times S))}^p = E\left(f, \Pi^{r_1, r_2}_{t,\bm{x}}, \omega^{j(d)}_\calP(I\times S)\right)^p_p,
    \end{align*}
    by using the result of \cref{lem:smaller_parameter_bestapprox_larger_parameter_quasi-bestapprox} in the second step, where $\calB_{\rho, \omega^{j(d)}_\calP(I\times S)}:=\calB_{\rho, \big(\bigcup\limits_{I'\times S'\in \omega^{j(d)}_\calP(I\times S)}I'\times S'\big)}$. The case $p=\infty$ works similarly, replacing the sums by suprema.
\end{proof}

This leads to the following definition of the \textbf{quasi-interpolation operator} on $\V^{r_1, r_2}_\calP$.
\begin{defi}\label{defi:quasi-interpolation operator}
    For $p\in (0,\infty]$, we define $\pi_{p,\calP}:L_p(\Omega)\rightarrow \V^{r_1, r_2}_\calP$ via $\pi_{p,\calP}:=\mathcal{Q}_\calP\circ \calB_{p,\calP}$.
\end{defi}

For this operator, we can show that $\pi_{p,\calP}$ also satisfies a local boundedness estimate, analogous to the stationary estimate in \cite[Lem.~3.13]{GM14}, which will be a key ingredient in the proof of the direct estimates.

\begin{thm}\label{thm:local_boundedness_of_pi_projection}
    Let $p\in (0,\infty]$, $\rho\in (0,p]$, $f\in L_\rho(\Omega_T)$, and $I\times S\in \calP$. Then
    \begin{align*}
        \|f-\pi_{\rho,\calP}(f)\|_{L_p(I\times S)}\lesssim_{\,d,\rho,p,s_1, s_2, r_1, r_2,\kappa_{\calP_0}, \mu(\calP_0)} E\left(f, \Pi^{r_1, r_2}_{t,\bm{x}},\omega^{j(d)}_\calP(I\times S)\right)_p
    \end{align*}
\end{thm}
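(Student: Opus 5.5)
The argument is the standard one from the stationary case \cite[Lem.~3.13]{GM14}, adapted to the present mesh: insert a local polynomial best approximation on the patch and then combine the reproduction property of $\mathcal{Q}_\calP$ with its local stability. Let $D:=\omega^{j(d)}_\calP(I\times S)$ and $P:=\calB_{p,D}(f)\in\Pi^{r_1,r_2}_{t,\bm{x}}(D)$, so that $\|f-P\|_{L_p(D)}=E(f,\Pi^{r_1,r_2}_{t,\bm{x}},D)_p$. If $f\notin L_p(D)$ the right-hand side is infinite and nothing needs to be shown, so we may assume $f\in L_p(D)$. Using the quasi-triangle inequality in $L_p$ (the constant depends only on $p$), we split
\begin{align*}
\|f-\pi_{\rho,\calP}(f)\|_{L_p(I\times S)}\lesssim_{\,p} \|f-\calB_{\rho,\calP}(f)\|_{L_p(I\times S)}+\|\calB_{\rho,\calP}(f)-\mathcal{Q}_\calP(\calB_{\rho,\calP}(f))\|_{L_p(I\times S)}.
\end{align*}
Since $I\times S\in D$, the first term is bounded by $\|f-\calB_{\rho,\calP}(f)\|_{L_p(D)}$. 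By \cref{lem:composition_of_Bestapproximations_operator} this is $\lesssim E(f,\Pi^{r_1,r_2}_{t,\bm{x}},D)_p$.

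For the second term, set $g:=\calB_{\rho,\calP}(f)\in\V^{r_1,r_2}_{\calP,\textup{DC}}$. The key observation is locality. $\mathcal{Q}_\calP(g)_{|I\times S}$ only involves those $\nu$ with $I\times S\subset\supp\phi_\nu$. By \cref{thm:support_in_power_omega_domain} these satisfy $(J\times R)(\nu)\in D$, exactly as in the proof of \cref{lem:local_boundedness_of_Q_projection}. Now replace $g$ by the modified function $\tilde g:=g-\mathds{1}_{D}P$, where $\mathds 1_D P$ is understood elementwise on $D$; it lies in $\V^{r_1,r_2}_{\calP,\textup{DC}}$. Restricted to $D$, the function $P$ agrees with a globally continuous element of $\V^{r_1,r_2}_\calP$: take any $F\in\V^{r_1,r_2}_\calP$ whose free-node values near $D$ come from $P$. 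To avoid constructing such an $F$, it is simpler to argue directly with the coefficients. For $\nu$ with $(J\times R)(\nu)\in D$ we have $(P,\zeta_\nu)_{L_2((J\times R)(\nu))}=P(\nu)$, because $P_{|J\times R}$ is a polynomial and the dual basis is biorthogonal. Moreover $\sum_\nu P(\nu)\phi_\nu=P$ on $I\times S$: the basis functions $\phi_\nu$ with $I\times S\subset \supp\phi_\nu$ reproduce the continuous function $P$ on $I\times S$. This reproduction is the local version of \cref{rem:linearity_and_projection_property_of_Q}, and one checks it via the hanging-node constraint structure of \cref{lem:Hanging_in_space_part_2}--\cref{lem:Hanging_in_time_part_3}, since $P$ is a single polynomial on the whole patch $D$. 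Hence
\begin{align*}
(g-\mathcal{Q}_\calP g)_{|I\times S}=(\tilde g-\mathcal{Q}_\calP\tilde g)_{|I\times S}.
\end{align*}

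It remains to bound the right-hand side. By \cref{lem:local_boundedness_of_Q_projection}\ref{lem:local_boundedness_of_Q_projection - item 2}, which holds for all $p\in(0,\infty]$ on discontinuous finite element functions, we get $\|\mathcal{Q}_\calP\tilde g\|_{L_p(I\times S)}\lesssim\|\tilde g\|_{L_p(D)}$. Trivially also $\|\tilde g\|_{L_p(I\times S)}\le\|\tilde g\|_{L_p(D)}$. Finally,
\begin{align*}
\|\tilde g\|_{L_p(D)}\lesssim\|g-f\|_{L_p(D)}+\|f-P\|_{L_p(D)}\lesssim E(f,\Pi^{r_1,r_2}_{t,\bm{x}},D)_p,
\end{align*}
again by \cref{lem:composition_of_Bestapproximations_operator} and the choice of $P$. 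All constants depend only on $d,\rho,p,s_1,s_2,r_1,r_2,\kappa_{\calP_0},\mu(\calP_0)$, which gives the claimed dependence.

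The main obstacle is the local reproduction step: showing that $\mathcal{Q}_\calP$ reproduces a polynomial on $I\times S$ when it is only known to be polynomial on the patch $D$. Hanging nodes make $\phi_\nu$ non-standard, so I would verify that every free node $\nu$ whose $\phi_\nu$ is nonzero on $I\times S$ satisfies $\nu\in\overline{D}$ with $(J\times R)(\nu)\subset D$. Then the expansion $\sum_\nu P(\nu)\phi_\nu$ coincides with $P$ on $I\times S$. To show this, I would compare it with the interpolant of the restriction of $P$ built on the free nodes, using the support result \cref{thm:support_in_power_omega_domain} and its proof. If this step causes trouble, a fallback is to enlarge $D$ slightly, for instance to $\omega^{2j(d)}_\calP$, and absorb the enlargement via \cref{Enumeration further properties of the mesh}\ref{Enumeration further properties of the mesh - enum 2}.
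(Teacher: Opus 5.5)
Your argument is correct and is essentially the paper's proof: insert a local polynomial best approximation on $\omega^{j(d)}_\calP(I\times S)$, use that $\mathcal{Q}_\calP$ reproduces it, apply \cref{lem:local_boundedness_of_Q_projection}\ref{lem:local_boundedness_of_Q_projection - item 2} to the discontinuous difference, and finish with \cref{lem:composition_of_Bestapproximations_operator}. The paper uses $\calB_{\rho,D}(f)$ together with \cref{lem:smaller_parameter_bestapprox_larger_parameter_quasi-bestapprox} instead of your $\calB_{p,D}(f)$, which makes no difference.

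The step you flag as the main obstacle is immediate and needs neither a hanging-node analysis nor an enlarged patch. $P$ is the restriction of a polynomial defined on all of $\Omega_T$, and that polynomial is a continuous element of $\V^{r_1,r_2}_\calP$. Hence \cref{rem:linearity_and_projection_property_of_Q} gives $\mathcal{Q}_\calP P=P$ globally. This is exactly what the paper uses, and it lets you work with $g-P\in\V^{r_1,r_2}_{\calP,\textup{DC}}$ directly instead of $g-\mathds{1}_D P$.
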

\begin{proof}
    Again, we will only present a detailed proof for the case $p\in (0,\infty)$, the case $p=\infty$ is similar. We estimate
    \begin{align*}
        \|f-&\pi_{\rho,\calP}(f)\|_{L_p(I\times S)}
        \\&\lesssim_{\, p} \|f-\calB_{\rho, \omega^{j(d)}_\calP(I\times S)}(f)\|_{L_p(I\times S)} + \|\calB_{\rho, \omega^{j(d)}_\calP(I\times S)}(f)-\pi_{\rho,\calP}(f)\|_{L_p(I\times S)}
        \\&= \|f-\calB_{\rho, \omega^{j(d)}_\calP(I\times S)}(f)\|_{L_p(I\times S)} + \|\mathcal{Q}_\calP(\calB_{\rho, \omega^{j(d)}_\calP(I\times S)}(f)-\calB_{\rho,\calP}(f))\|_{L_p(I\times S)}
        \\&\lesssim_{\, d,p,s_1, s_2,r_1, r_2, \kappa_{\calP_0}, \mu(\calP_0)} \|f-\calB_{\rho, \omega^{j(d)}_\calP(I\times S)}(f)\|_{L_p(I\times S)} + \|\calB_{\rho, \omega^{j(d)}_\calP(I\times S)}(f)-\calB_{\rho,\calP}(f)\|_{L_p(\omega^{j(d)}_\calP(I\times S))}
        \\&\lesssim_{\,p} \|f-\calB_{\rho, \omega^{j(d)}_\calP(I\times S)}(f)\|_{L_p(\omega^{j(d)}_\calP(I\times S))} + \|f-\calB_{\rho,\calP}(f)\|_{L_p(\omega^{j(d)}_\calP(I\times S))} 
        \\&\lesssim_{\, d, \rho, p, s_1, s_2,r_1, r_2, \kappa_{\calP_0}, \mu(\calP_0)} E\left(f,\Pi^{r_1, r_2}_{t,\bm{x}},  \omega^{j(d)}_\calP(I\times S)\right)_p,
    \end{align*}
    where we have applied the result of \cref{rem:linearity_and_projection_property_of_Q} in the second step, \cref{lem:local_boundedness_of_Q_projection}\ref{lem:local_boundedness_of_Q_projection - item 2} in the third, and \cref{lem:smaller_parameter_bestapprox_larger_parameter_quasi-bestapprox} as well as \cref{lem:composition_of_Bestapproximations_operator} in the last one. 
\end{proof}

Also, in the next theorem, we show that the operator $\pi_{\rho, \calP}(f)$ is a \textbf{global quasi-best approximation} of $f$ with respect to $\|\cdot\|_{L_p(\Omega_T)}$, $0<\rho\le p$, in the finite element space $\V^{r_1, r_2}_{\calP}$. This will be crucial for obtaining inverse estimates.

\begin{thm}\label{thm:operater_pi_quasi-best_approx}
    Consider $p\in (0,\infty]$, $\rho\in (0,p]$, and $f\in L_\rho(\Omega_T)$. Then,
    \begin{align*}
        \|f-\pi_{\rho,\calP}(f)\|_{L_p(\Omega_T)}\lesssim_{\,d,\rho,p,s_1, s_2, r_1, r_2,\kappa_{\calP_0}, \mu(\calP_0)} E\left(f, \V^{r_1, r_2}_{\calP},\Omega_T\right)_p
    \end{align*}
\end{thm}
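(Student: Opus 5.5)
The plan follows the stationary argument of \cite[Thm.~3.14]{GM14}: insert an arbitrary competitor $F\in\V^{r_1, r_2}_\calP$, use that $\pi_{\rho,\calP}$ reproduces $F$, and bound the remaining discrete difference with the local stability of $\mathcal{Q}_\calP$ and the localized best-approximation estimates. Fix $F\in\V^{r_1,r_2}_\calP$ and treat $p<\infty$ (the case $p=\infty$ only replaces sums by suprema).

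First, since $F_{|I\times S}\in\Pi^{r_1,r_2}_{t,\bm{x}}(I\times S)$ on every prism, we have $\calB_{\rho,\calP}(F)=F$. Together with \cref{rem:linearity_and_projection_property_of_Q} this gives $\pi_{\rho,\calP}(F)=F$. The operator $\calB_{\rho,\calP}$ is not linear, so I avoid writing $\pi_{\rho,\calP}(f)-\pi_{\rho,\calP}(F)=\pi_{\rho,\calP}(f-F)$. Instead I use the linearity of $\mathcal{Q}_\calP$ alone:
\begin{align*}
f-\pi_{\rho,\calP}(f) = (f-F) + \mathcal{Q}_\calP\big(F-\calB_{\rho,\calP}(f)\big),
\end{align*}
where $F-\calB_{\rho,\calP}(f)\in\V^{r_1,r_2}_{\calP,\textup{DC}}$. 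By the $p$-triangle inequality,
\begin{align*}
\|f-\pi_{\rho,\calP}(f)\|_{L_p(\Omega_T)}^{\min(p,1)}\le\|f-F\|_{L_p(\Omega_T)}^{\min(p,1)}+\|\mathcal{Q}_\calP(F-\calB_{\rho,\calP}(f))\|_{L_p(\Omega_T)}^{\min(p,1)}.
\end{align*}

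Next, I split the second term into the prisms $I\times S\in\calP$ and apply \cref{lem:local_boundedness_of_Q_projection}\ref{lem:local_boundedness_of_Q_projection - item 2}:
\begin{align*}
\|\mathcal{Q}_\calP(F-\calB_{\rho,\calP}(f))\|^p_{L_p(\Omega_T)}\lesssim\sum_{I\times S\in\calP}\|F-\calB_{\rho,\calP}(f)\|^p_{L_p(\omega^{j(d)}_\calP(I\times S))}.
\end{align*}
I then exchange the order of summation. Each $I'\times S'$ lies in $\omega^{j(d)}_\calP(I\times S)$ for only boundedly many $I\times S$: the relation is symmetric, so this is $\#\omega^{j(d)}_\calP(I'\times S')\lesssim 1$ by \cref{Enumeration further properties of the mesh}\ref{Enumeration further properties of the mesh - enum 3}. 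Hence the sum is $\lesssim\|F-\calB_{\rho,\calP}(f)\|^p_{L_p(\Omega_T)}$.

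Finally, on each prism $\calB_{\rho,I'\times S'}$ is the $L_\rho$ best approximation, so \cref{lem:smaller_parameter_bestapprox_larger_parameter_quasi-bestapprox} with $D=I'\times S'$ gives
\begin{align*}
\|f-\calB_{\rho,\calP}(f)\|_{L_p(I'\times S')}\lesssim E(f,\Pi^{r_1,r_2}_{t,\bm{x}},I'\times S')_p\le\|f-F\|_{L_p(I'\times S')}.
\end{align*}
Summing over the prisms and using the quasi-triangle inequality, $\|F-\calB_{\rho,\calP}(f)\|_{L_p(\Omega_T)}\lesssim\|f-F\|_{L_p(\Omega_T)}$. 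Collecting the estimates yields $\|f-\pi_{\rho,\calP}(f)\|_{L_p(\Omega_T)}\lesssim\|f-F\|_{L_p(\Omega_T)}$, and taking the infimum over $F$ proves the theorem.

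The main point requiring care is the nonlinearity of $\calB_{\rho,\calP}$, which rules out the naive linearity argument; routing everything through $\mathcal{Q}_\calP$ applied to discontinuous piecewise polynomials handles it. There is a mild technicality when $f\notin L_p(\Omega_T)$: then I may restrict to $F$ with $\|f-F\|_{L_p(\Omega_T)}<\infty$ (otherwise the bound is trivial), which forces $f\in L_p(\Omega_T)$. The second technicality is the finite-overlap count of the extended neighborhoods, which \cref{Enumeration further properties of the mesh}\ref{Enumeration further properties of the mesh - enum 3} supplies. All constants depend only on $d,\rho,p,s_1,s_2,r_1,r_2,\kappa_{\calP_0}$ and $\mu(\calP_0)$.
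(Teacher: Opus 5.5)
Your proposal is correct and is essentially the paper's proof. Both insert a competitor from $\V^{r_1,r_2}_\calP$ and use only the linearity and reproduction property of $\mathcal{Q}_\calP$ to reduce to $\mathcal{Q}_\calP(F-\calB_{\rho,\calP}(f))$ with $F-\calB_{\rho,\calP}(f)\in\V^{r_1,r_2}_{\calP,\textup{DC}}$; then \cref{lem:local_boundedness_of_Q_projection}\ref{lem:local_boundedness_of_Q_projection - item 2} with finite overlap and \cref{lem:smaller_parameter_bestapprox_larger_parameter_quasi-bestapprox} finish the argument. The differences are cosmetic: you take an arbitrary $F$ and pass to the infimum instead of fixing a best approximation $G$, and you bound $\|f-\calB_{\rho,\calP}(f)\|_{L_p}$ prismwise by $\|f-F\|_{L_p}$ instead of via $E(f,\V^{r_1,r_2}_{\calP,\textup{DC}},\Omega_T)_p\le E(f,\V^{r_1,r_2}_{\calP},\Omega_T)_p$.
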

\begin{proof}
        First, we observe that by \cref{lem:smaller_parameter_bestapprox_larger_parameter_quasi-bestapprox}, $\|f-\calB_{p,\calP}(f)\|_{L_p(\Omega_T)}\lesssim_{\,d,\rho, p, r_1, r_2}E\left(f, \V^{r_1, r_2}_{\calP, \textup{DC}},\Omega_T\right)_p$ holds true and additionally consider $G\in \V^{r_1, r_2}_{\calP}$ such that $\|f-G\|_{L_p(\Omega_T)}=E\left(f, \V^{r_1, r_2}_{\calP},\Omega_T\right)_p $, which has to exist, since $\V^{r_1, r_2}_{\calP}$ is finite dimensional. We proceed similar to the proof of \cref{thm:local_boundedness_of_pi_projection} and derive
        \begin{align*}
            \|f-\pi_{\rho, \calP}(f)\|_{L_p(\Omega_T)}&\lesssim_{\,p}\|f-G\|_{L_p(\Omega_T)} + \|G-\pi_{\rho, \calP}(f)\|_{L_p(\Omega_T)}
            \\&= E\left(f, \V^{r_1, r_2}_{\calP},\Omega_T\right)_p + \|\mathcal{Q}_\calP(G-\calB_{\rho, \calP}(f))\|_{L_p(\Omega_T)}
            \\&\lesssim_{\,d,p,s_1, s_2,r_1, r_2, \kappa_{\calP_0},\mu(\calP_0)} E\left(f, \V^{r_1, r_2}_{\calP},\Omega_T\right)_p + \|G-\calB_{\rho, \calP}(f)\|_{L_p(\Omega_T)}
            \\&\lesssim_{\,} E\left(f, \V^{r_1, r_2}_{\calP},\Omega_T\right)_p + \|f-G\|_{L_p(\Omega_T)} + \|f-\calB_{\rho, \calP}(f)\|_{L_p(\Omega_T)}
            \\&\lesssim_{\,d,\rho, p, r_1, r_2} E\left(f, \V^{r_1, r_2}_{\calP},\Omega_T\right)_p,
        \end{align*}
    where we have applied \cref{rem:linearity_and_projection_property_of_Q} in the second step, \cref{lem:local_boundedness_of_Q_projection}\ref{lem:local_boundedness_of_Q_projection - item 2} together with the finite overlap of the neighborhood domains due to \cref{Enumeration further properties of the mesh}\ref{Enumeration further properties of the mesh - enum 3} in the third, and $E\left(f, \V^{r_1, r_2}_{\calP, \textup{DC}},\Omega_T\right)_p\le E\left(f, \V^{r_1, r_2}_{\calP},\Omega_T\right)_p$ in the last one.
\end{proof}

Further, we will need an extension of the \textbf{Whitney estimate} from \cite[Thm.~1.2]{MSS26} to the extended neighborhoods of cylindrical domains in the next section. Therefore, we proceed similar to \cite[Sect.~4.4]{GM14}.

\begin{lem}\label{lem:Extended_Whitney_Estimate}
    Let $p,q\in (0,\infty]$ as well as $s_1, s_2\in (0,\infty)$ such that $\frac{1}{\frac{1}{s_1}+\frac{d}{s_2}}-\frac{1}{q}+\frac{1}{p}>0$ and $s_i<r_i$ for some $r_i\in \N_{\ge 2}$, $i=1,2$. Additionally, let $I\times S\in \calP$ and $f\in B^{s_1, s_2}_{q,q}(\Omega_T)$. Then, 
    \begin{align*}
        E\Big(f, \Pi^{r_1, r_2}_{t,\bm{x}},\widetilde{\omega^{j(d)}_\calP}(I\times S)\Big)_p \lesssim_{\, d,p,q,s_1,s_2, \kappa_{\calP_0}, \mu(\calP_0), a(\calP_0)} |I\times S|^{\frac{1}{\frac{1}{s_1}+\frac{d}{s_2}}-\frac{1}{q}+\frac{1}{p}}|f|_{B^{s_1, s_2}_{q,q}(\widetilde{\omega_\calP^{j(d)}}(I\times S))}
    \end{align*}
    The assertion still holds true if we replace $|f|_{B^{s_1, s_2}_{q,q}(\widetilde{\omega_\calP^{j(d)}}(I\times S))}$ by $|f|_{B^{s_1, s_2}_{q,q}(\widetilde{\omega_\calP^{j(d)}}(I\times S))}^{(r_1,r_2),*}$.
\end{lem}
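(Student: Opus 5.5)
The plan is to reduce the statement to the Whitney estimate of \cite[Thm.~1.2]{MSS26}, applied on the cylindrical domain $D:=\widetilde{\omega^{j(d)}_\calP}(I\times S)=J\times R$. Here $J$ is the union of the intervals and $R$ the union of the simplices of the prisms in $\omega^{j(d)}_\calP(I\times S)$. That theorem gives, for a cylinder $J\times R$ with Lipschitz spatial part,
\begin{align*}
E\big(f,\Pi^{r_1,r_2}_{t,\bm{x}},J\times R\big)_p\lesssim |J\times R|^{\frac{1}{\frac{1}{s_1}+\frac{d}{s_2}}-\frac1q+\frac1p}\,|f|_{B^{s_1,s_2}_{q,q}(J\times R)},
\end{align*}
where the constant depends on the parameters, on the anisotropy of $J\times R$ (that is, $\max(|J|/|R|^{\frac{s_2}{s_1d}},|R|^{\frac{s_2}{s_1d}}/|J|)$), and on $\LipProp(R)$. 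Once the theorem applies, the claim follows by replacing $|J\times R|$ with $|I\times S|$. This replacement is allowed because \cref{Enumeration further properties of the mesh}\ref{Enumeration further properties of the mesh - enum 2} gives $|\widetilde{\omega^{j(d)}_\calP}(I\times S)|\sim|I\times S|$ and the exponent is positive. The statement for $|\cdot|^{(r_1,r_2),*}$ then comes from the equivalence of seminorms recalled after \eqref{Eq:Equivalency quasi-seminorms}. Its constant depends only on $|J|$, $\diam(R)$ and $\LipProp(R)$, which are controlled below.

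The first key step is to verify that $J$ is an interval and that $|J|\sim|I|$, $|R|\sim|S|$, so that the anisotropy of $D$ is bounded by a constant times $a(\calP_0)$.
\begin{itemize}
\item $J$ is an interval because $\omega^{j(d)}_\calP(I\times S)$ is built from touching chains of prisms, so the union of their intervals is connected.
\item For the sizes, the proof of \cref{Enumeration further properties of the mesh}\ref{Enumeration further properties of the mesh - enum 2} shows that all prisms in $\omega^{j(d)}_\calP(I\times S)$ have levels within a bounded distance $l$ of $\ell(I\times S)$. Combined with \cref{lemma relationship diameter level - BISECT} and the boundedness of $\#\omega^{j(d)}_\calP(I\times S)$, this gives $|J|\sim 2^{-\ell(I)}\sim|I|$ and $|R|\sim|S|$.
\item The bound on the anisotropy of $D$ then follows from $a(\calP)\lesssim a(\calP_0)$ (\cref{Remark_Temporal_Bisection_Active_Triangulation}\ref{Remark_Temporal_Bisection_Active_Triangulation_1}).
\end{itemize}

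The second step, which I expect to be the main obstacle, is controlling $\LipProp(R)$ uniformly. The set $R$ is a union of a bounded number of shape-regular simplices of comparable size that are connected through faces. However, such a union need not be a Lipschitz domain, since simplices may touch only at vertices or edges.

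To handle this I would follow \cite[Sect.~4.4]{GM14}. There the Whitney estimate on a patch is obtained by chaining: one proves the estimate on each simplex, or on pairs of face-adjacent simplices, which are uniformly Lipschitz after affine scaling to finitely many reference configurations. The local polynomials are then glued along a chain of face-neighbours, using the norm equivalence on polynomial spaces from the proof of \cref{lem:smaller_parameter_bestapprox_larger_parameter_quasi-bestapprox}. Concretely, the steps would be:
\begin{enumerate}
\item Apply \cite[Thm.~1.2]{MSS26} on each cylinder $J\times(S_1\cup S_2)$, where $S_1,S_2$ share a facet. Up to scaling, these form a compact family depending only on $d$ and $\kappa_{\calP_0}$.
\item Compare the best polynomials $P_1,P_2$ on overlapping cylinders through $\|P_1-P_2\|_{L_p(J\times S_1)}$. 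Then extend via equivalence of quasi-norms on the finite-dimensional space $\Pi^{r_1,r_2}_{t,\bm{x}}$ of polynomials, with constants depending only on comparable sizes.
\item Sum over the bounded number of links, using the $p$-triangle inequality and the subadditivity of the $q$-th power of the Besov seminorm in the domain. This subadditivity follows from \cref{Rem:Moduli of smoothness}\ref{Rem:Moduli of smoothness - additivity} together with the equivalence of averaged and supremum moduli.
\end{enumerate}

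Connectivity of the chain through facets holds because the spatial triangulations are conforming, and $\Omega$ itself is Lipschitz.
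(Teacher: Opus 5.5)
\textbf{Gap in the starred version.} Your derivation of the second assertion, with $|f|^{(r_1,r_2),*}_{B^{s_1,s_2}_{q,q}}$ on the right, does not go through. The equivalence recalled after \eqref{Eq:Equivalency quasi-seminorms} compares $|\cdot|^*$ and $|\cdot|$ only for the \emph{same} orders $\lfloor s_i\rfloor+1$. The statement there for higher orders $\tilde r_i$ is an equivalence of full (quasi-)norms, with the $L_q$-part included, and it is not scale invariant. Your deduction needs $|f|_{B^{s_1,s_2}_{q,q}(D)}\lesssim |f|^{(r_1,r_2),*}_{B^{s_1,s_2}_{q,q}(D)}$. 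This is false as soon as $r_i>\lfloor s_i\rfloor+1$ for some $i$, which is automatic when $s_i<1$ because $r_i\ge 2$. For instance, with $s_1<1$ and $f(t,\bm{x})=t$, the right-hand side vanishes while the left-hand side does not. This is the case that matters later, since \cref{lem:Besovnorm behaves a little like L_p} uses the starred version with $r_i>\alpha_i+s_i$.

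The missing idea is to work with $f-P$ instead of $f$. On the rescaled cylinder $[0,1]\times R^{ref}$, take $\hat P\in\Pi^{r_1,r_2}_{t,\bm{x}}$ from the Jackson estimate \cite[Thm.~1.1]{MSS26}. It bounds $\|\hat f-\hat P\|_{L_q}$ by moduli of orders $(r_1,r_2)$, hence by $|\hat f|^{(r_1,r_2),*}$. Then apply to $\hat f-\hat P$ the embedding $B^{s_1,s_2}_{q,q}\hookrightarrow L_p$ from \cite[Thm.~1.3]{MSS26} and the full-norm equivalence, using $|\hat f-\hat P|^{(r_1,r_2)}=|\hat f|^{(r_1,r_2)}$. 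This is how the paper proves both versions at once, after reducing to the reference cylinder with the same size and anisotropy comparisons you give.

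\textbf{Comparison for the first assertion.} Here your route differs from the paper's in how the geometry of $R$ is treated. The paper does not cite \cite[Thm.~1.2]{MSS26} as a black box. It redoes that argument on $[0,1]\times R^{ref}$ and simply asserts that $R^{ref}$ is a Lipschitz domain whose Lipschitz character is determined by $d$, $\kappa_{\calP_0}$ and $\mu(\calP_0)$. You are right that this assertion needs justification, since a union of simplices can touch itself along lower-dimensional faces. Your chaining over pairs is a legitimate way to avoid it. It costs you a Whitney estimate on cylinders $J\times(S_1\cup S_2)$ with non-simplicial spatial part, which is again the paper's rescaling, Jackson and embedding argument applied to a uniformly Lipschitz pair. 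So that argument is needed in either approach.

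Two details in your chain need adjusting. First, $R$ collects simplices from active triangulations at different times, so two of its simplices may be nested or meet along non-matching facets. Connectivity should therefore be argued from this bisection structure rather than from conformity of a single triangulation; nested pairs are harmless since they overlap. Second, summing over the boundedly many links only needs monotonicity of the moduli with respect to the domain, not subadditivity.
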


\begin{proof}
    It is clearly sufficient to show this for the case of $r_i:=\max(\lfloor s_i\rfloor +1,2)$. Since $\widetilde{\omega_\calP^{j(d)}}(I\times S)$ is a cylindrical domain, we put $\widetilde{\omega_\calP^{j(d)}}(I\times S):=J\times R$ with an interval $J$ and a (spatial) Lipschitz domain $R\subset \R^d$.\footnote{Of course, technically this means $\bigcup\limits_{I'\times S' \in \widetilde{\omega_\calP^{j(d)}}(I\times S)}I'\times S'=J\times R$.} Further, consider the \textit{reference domain} $R^{ref}:=\frac{1}{|R|^\frac1d}R$. Using the bijective, affine transformation $\Phi_{J\times R}:[0,1]\times R^{ref} \rightarrow J\times R$ we have
    \begin{align}\label{lem:Extended_Whitney_Estimate - equation which is needed later}
        |J\times R|^{-\frac1q}\,\min\left(|J|^{s_1},|R|^\frac{s_2}{d}\right)|f|_{B^{s_1, s_2}_{q,q}(J\times R)}&\le|\hat{f}|_{B^{s_1, s_2}_{q,q}([0,1]\times R^{ref})}\notag
        \\&\le |J\times R|^{-\frac1q}\,\max\left(|J|^{s_1},|R|^\frac{s_2}{d}\right)|f|_{B^{s_1, s_2}_{q,q}(J\times R)}
    \end{align}
    for $\hat{f}:=f\circ \Phi_{J\times R}$, as in the proof of \cite[Lem.~3.7]{MSS26}. Since 
    \begin{align}\label{lem:Extended_Whitney_Estimate - equation 1}
        |J|\sim_{\, d, s_1, s_2, \kappa_{\calP_0}, \mu(\calP_0)}|I'|\quad \text{and}\quad |R|\sim_{\, d, \kappa_{\calP_0}, \mu(\calP_0)}|S'| \quad \text{for any }I'\times S'\in \calP\text{ with }I'\subset J\text{ and }S'\subset R,
    \end{align} due to \cref{Enumeration further properties of the mesh}\ref{Enumeration further properties of the mesh - enum 2}, as well as $a(\calP)\lesssim a(\calP_0)$ according to the properties of the atomic refinement method, see \cref{Remark_Temporal_Bisection_Active_Triangulation}\ref{Remark_Temporal_Bisection_Active_Triangulation_1}, we can estimate
    \begin{align}\label{lem:Extended_Whitney_Estimate - equation 2}
        \min\left(|J|^{s_1}, |R|^\frac{s_2}{d}\right)\sim_{\,d, s_1, s_2, \kappa_{\calP_0}, \mu(\calP_0), a(\calP_0)} |J\times R|^{\frac{1}{\frac{1}{s_1}+\frac{d}{s_2}}}\sim_{\,d, s_1, s_2, \kappa_{\calP_0}, \mu(\calP_0), a(\calP_0)} \max\left(|J|^{s_1}, |R|^\frac{s_2}{d}\right)
    \end{align}
    as in the beginning of the proof of \cite[Thm.~1.2]{MSS26}. Also, by the Jacobi transformation theorem,
    \begin{align}\label{lem:Extended_Whitney_Estimate - equation 3}
        E\left(f, \Pi^{r_1, r_2}_{t,\bm{x}},J\times R\right)_p = |J\times R|^{\frac{1}{p}} \, E\left(\hat{f}, \Pi^{r_1, r_2}_{t,\bm{x}},[0,1]\times R^{ref}\right)_p.
    \end{align}
    Together, \eqref{lem:Extended_Whitney_Estimate - equation 1}, \eqref{lem:Extended_Whitney_Estimate - equation 2}, \eqref{lem:Extended_Whitney_Estimate - equation 3}, and $|I\times S|\sim_{\,d, s_1, s_2, \kappa_{\calP_0}, \mu(\calP_0)} |J\times R|$, due to \cref{Enumeration further properties of the mesh}\ref{Enumeration further properties of the mesh - enum 2}, show that the assertion follows if 
    \begin{align}\label{lem:Extended_Whitney_Estimate - equation 4}
        E\left(\hat{f}, \Pi^{r_1, r_2}_{t,\bm{x}}, [0,1]\times R^{ref}\right)_p\lesssim_{\,\,d, p,q, s_1, s_2, \kappa_{\calP_0}, \mu(\calP_0)} |\hat{f}|_{B^{s_1, s_2}_{q,q}([0,1]\times R^{ref})}
    \end{align}
    can be shown for any reference domain $[0,1]\times R^{ref}$. 

    First, the Lipschitz properties of $R^{ref}$ and its diameter are determined by $d$, $\kappa_{\calP_0}$, and $\mu(\calP_0)$. Therefore, according to the anisotropic Jackson's estimate from \cite[Thm.~1.1]{MSS26}, \cref{Rem:Moduli of smoothness}\ref{Rem:Moduli of smoothness - monotonicity} and \ref{Rem:Moduli of smoothness - scaling item}, it exists $\hat{P}\in \Pi^{r_1, r_2}_{t,\bm{x}}(J\times R^{ref})$ with
    \begin{align}\label{lem:Extended_Whitney_Estimate - equation 5}
        \|\hat{f}-\hat{P}\|_{L_q([0,1]\times R^{ref})} &\lesssim_{\, d,q,s_1,s_2, \kappa_{\calP_0}, \mu(\calP_0)} \omega_{r_1, t}(\hat{f}, [0,1]\times R^{ref}, 1)_q + \omega_{r_2, \bm{x}}(\hat{f}, [0,1]\times R^{ref}, \diam(R^{ref}))_q \notag
        \\& \lesssim_{\, q} |\hat{f}|^*_{B^{s_1,s_2}_{q,q}([0,1]\times R^{ref})}  \lesssim_{\,d,q, s_1, s_2, d,\kappa_{\calP_0}, \mu(\calP_0)} |\hat{f}|_{B^{s_1,s_2}_{q,q}([0,1]\times R^{ref})},
    \end{align}
    where we have used the equivalency of different anisotropic Besov (quasi-)seminorms from \eqref{Eq:Equivalency quasi-seminorms} in the last step (together with \cref{Rem:Moduli of smoothness}\ref{Rem:Moduli of smoothness - higher order lesser order} in the penultimate step, if $s_i\in(0,1)$ for one $i=1,2$). Further, the continuous embedding $B^{s_1, s_2}_{q,q}(\Omega_T)\hookrightarrow L_p(\Omega_T)$ from \cite[Thm.~1.3]{MSS26} has an embedding constant depending only on $d,p,q,s_1, s_2, \kappa_{\calP_0}$, and $\mu(\calP_0)$, since $\diam(Q)\sim_{d,\kappa_{\calP_0},\mu(\calP_0)}|[0,1]\times R^{ref}|=1$  for every subprism $Q$ of $R^{ref}$. Therefore, additionally using the (quasi-)norm equivalency mentioned at the end of \cref{subsubsect:Besov spaces}, if $s_i\in (0,1)$ for any $i=1,2$, while keeping in mind that $\LipProp(R^{ref})$ is determined by $d, \kappa_{\calP_0}$, and $\mu(\calP_0)$, we obtain
    \begin{align*}
        E\left(\hat{f}, \Pi^{r_1, r_2}_{t,\bm{x}},[0,1]\times R^{ref}\right)_p &\le \|\hat{f}-\hat{P}\|_{L_p([0,1]\times R^{ref})}
        \\&\lesssim_{\, d,p,q,s_1, s_2, \kappa_{\calP_0},\mu(\calP_0)} \|\hat{f}-\hat{P}\|_{B^{s_1, s_2}_{q,q}([0,1]\times R^{ref})} 
        \\&\lesssim_{\, d,p,q,s_1, s_2, \kappa_{\calP_0},\mu(\calP_0)} \|\hat{f}-\hat{P}\|_{B^{s_1, s_2}_{q,q}([0,1]\times R^{ref})}^{(r_1, r_2)} 
        \\&= \|\hat{f}-\hat{P}\|_{L_q([0,1]\times R^{ref})} + |\hat{f}-\hat{P}|_{B^{s_1, s_2}_{q,q}([0,1]\times R^{ref})}^{(r_1, r_2)}
        \\&\lesssim_{\, d,q,s_1,s_2, \kappa_{\calP_0}, \mu(\calP_0)} |\hat{f}|_{B^{s_1, s_2}_{q,q}([0,1]\times R^{ref})},
    \end{align*}
    where we have used \eqref{lem:Extended_Whitney_Estimate - equation 5} and the fact that $|\hat{P}|_{B^{s_1, s_2}_{q,q}([0,1]\times R^{ref})}^{(r_1, r_2)}=0$ because of $\hat{P}\in \Pi^{r_1, r_2}_{t,\bm{x}}([0,1]\times R^{ref})$ (this is a consequence of \cite[Lem.~2.16]{MSS26}) together with \cref{Rem:Moduli of smoothness}\ref{Rem:Moduli of smoothness - higher order lesser order} if any $s_i\in (0,1)$, $i=1,2$, in the last step. A careful read of the above proof shows, that the version of the assertion with $|f|_{B^{s_1, s_2}_{q,q}(\widetilde{\omega_\calP^{j(d)}}(I\times S))}^{(r_1,r_2), *}$ instead of $|f|_{B^{s_1, s_2}_{q,q}(\widetilde{\omega_\calP^{j(d)}}(I\times S))}$ has also been shown.
\end{proof}

\section{Almost characterization of approximation classes with respect to space-time finite elements}\label{sect:Almost_characterization}

We are now in a position to apply the results of the previous sections to prove the main results of this article, i.e., an almost characterization of approximation classes for approximation with anisotropic finite elements. Throughout this section, we assume $\calP_0$ to be a tensor product structure mesh covering $\Omega_T$ as described in the beginning of \cref{sect:space_time_partition_and_refinement} and $j(d):=2$ for $d=1$, but $j(d):=3$ if $d\ge 2$ as in \cref{thm:support_in_power_omega_domain}.

\subsection{Multiscale decomposition}\label{subsect:Multiscale_decomp}

Let us assume that $\calP$ has been derived from $\calP_0$ by finitely many applications of $\textup{PATCH}\_\textup{REFINE}(\cdot, \cdot, d, s_1, s_2)$ for any given $s_1, s_2\in (0,\infty)$. Since we want to study how well functions can be approximated with respect to $\|\cdot\|_{B^{\alpha_1, \alpha_2}_{p,p}(\Omega_T)}$ for $p\in(0,\infty]$ and $\alpha_1, \alpha_2\in (0,\infty)$ by anisotropic polynomials, we first have to determine in which cases the approximants are in an anisotropic Besov space. This is done in the following lemma, similarly to \cite[Prop.~4.7]{GM14}.

\begin{lem}\label{lem:FEM_elements_of_Aniso_Besov}
    Let $r_1, r_2 \in \N_{\ge 2}$  be the temporal and spatial polynomial degrees with $\alpha_i < r_i$, $i=1,2$. Then $\V^{r_1, r_2}_\calP \subset B^{\alpha_1, \alpha_2}_{p,p}$ if and only if $\max(\alpha_1,\alpha_2) < 1+\frac1p$.
\end{lem}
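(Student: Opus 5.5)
The proof will treat the two directions of the equivalence separately. Both rest on one observation: for fixed $\bm{x}$, the map $t\mapsto F(t,\bm{x})$ with $F\in\V^{r_1,r_2}_\calP$ is continuous and piecewise polynomial of order $r_1$. Its breakpoints lie only at the finitely many temporal interface points of elements containing $\bm{x}$. The same holds for $\bm{x}\mapsto F(t,\bm{x})$ along any line segment in $\Omega$, with breakpoints where the segment crosses element faces. Since $\calP$ is a fixed finite partition, all constants may depend on $\calP$ and $F$. Only the behaviour of the moduli as $\delta\to0$ matters. For $\delta\gtrsim 1$ we use the trivial bound $\omega_{r_i,\cdot}(F,\Omega_T,\delta)_p\lesssim\|F\|_{L_p(\Omega_T)}$ from \cref{Rem:Moduli of smoothness}\ref{Rem:Moduli of smoothness - higher order lesser order}, which makes the tail of \eqref{Eq:def-quasi-seminorm} finite because $\alpha_i>0$.

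\emph{Sufficiency.} Let $r_i=\lfloor\alpha_i\rfloor+1\le r_i^{\textup{FE}}$ be the orders used in the seminorm. First consider the temporal modulus, with $|h|\le\delta$. If the stencil $t,t+h,\dots,t+r_1h$ stays inside one element interval of the active element column at $\bm{x}$, then $\Delta^{r_1}_{h,t}F(t,\bm{x})$ need not vanish when $r_1<r_1^{\textup{FE}}$. However, it is bounded by $|h|^{r_1}\|\partial_t^{r_1}F\|_\infty\lesssim |h|^{\min(r_1,2)}$, which is harmless since $\alpha_1<r_1$ and $\alpha_1<2$ when $p\ge1$. For $p<1$ we use $\Delta^{r_1}=\Delta^{r_1-2}\Delta^2$ and $\|\Delta^{2}_h F\|_\infty\lesssim h^2$ locally. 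On the remaining set, the stencil crosses one of the finitely many interface levels $\{\tau\}\times S$. This set has $(d+1)$-measure $\lesssim\delta$, and there $|\Delta^{r_1}_{h,t}F|\lesssim|h|\,\|\nabla F\|_{L_\infty}$ because $F$ is Lipschitz. Hence
\begin{align*}
\omega_{r_1,t}(F,\Omega_T,\delta)_p\lesssim_{F,\calP}\delta^{2}+\delta^{1+\frac1p}.
\end{align*}
The spatial modulus is handled identically. For $|\bm{h}|\le\delta$, the set of $\bm{x}$ whose segment $[\bm{x},\bm{x}+r_2\bm{h}]$ meets a face of some simplex of the (finitely many) active triangulations has measure $\lesssim\delta$. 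This gives $\omega_{r_2,\bm{x}}(F,\Omega_T,\delta)_p\lesssim\delta^{\min(2,1+1/p)}$. Inserting this into \eqref{Eq:def-quasi-seminorm} with $q=p$, the integrals $\int_0^1\delta^{(1+\frac1p-\alpha_i)p}\frac{d\delta}{\delta}$ converge exactly when $\alpha_i<1+\frac1p$, and the quadratic part converges as long as $\alpha_i<2$.

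\emph{Necessity.} Suppose $\alpha_1\ge1+\frac1p$; the spatial case is symmetric. Pick a free node $\nu$ lying on a temporal interface $\{\tau\}\times S$ with elements above and below, so that $\phi_\nu$ from \cref{thm:basis free nodes} has a genuine jump of $\partial_t\phi_\nu$ across that interface on a subset $E\subset S$ of positive measure. Such a node exists as soon as $\calP$ has at least two time slabs. If it has only one, one can argue with a refined mesh or use the boundary, which I would check separately. Near $\tau$, write $\phi_\nu=P_-+(t-\tau)_+\,g(\bm{x})+O((t-\tau)_+^2)$ with $|g|\ge c>0$ on $E$. The $r_1$-th difference of $(t-\tau)_+$ is $\gtrsim h$ on an interval of length $\sim h$ next to $\tau$, while the smooth parts contribute $O(h^2)$. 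Therefore $\omega_{r_1,t}(\phi_\nu,\Omega_T,\delta)_p\gtrsim\delta^{1+\frac1p}$ for small $\delta$, and the seminorm integral diverges at $\delta=0$.

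The main obstacle is this lower bound, in particular making sure that the $r_1$-th difference of the kink is not cancelled by the polynomial part on a set of measure $\sim h$. A second delicate point is the endpoint $p=\infty$, $\alpha=1$. There $B^{1}_{\infty,\infty}$ with second differences is a Zygmund class that contains kinks, so the "only if" direction needs $\alpha<1$ to be read with $q=p=\infty$ and the modulus of order $r_i=\lfloor\alpha_i\rfloor+1=2$. I would check carefully whether the claimed equivalence survives there or whether that endpoint must be excluded.
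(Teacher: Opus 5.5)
Your approach is sound and differs from the paper's. The paper uses the seminorm equivalence of \cref{subsubsect:Besov spaces} (Marchaud inequalities) to pass to moduli of the finite element orders $r_i$. These annihilate the elementwise polynomials, so only interface layers of width $\sim\delta$ contribute. Both directions are then read off asserted two-sided equivalences for $|\Delta^{r_1}_{h,t}\phi_\nu|+|\Delta^{r_2}_{\bm{h},\bm{x}}\phi_\nu|$ and the measures of their supports. You keep the defining orders $\lfloor\alpha_i\rfloor+1$ and pay for the element interiors with a Taylor bound, which needs no Marchaud machinery. For necessity you isolate the offending direction and actually prove the lower bound via the kink of $(t-\tau)_+$. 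The paper only asserts this, and only for the sum of the temporal and spatial moduli.

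There is one genuine slip in the sufficiency part: you must not weaken $|h|^{r_1}\|\partial_t^{r_1}F\|_{L_\infty}$ to $|h|^{\min(r_1,2)}$. For $p<1$ the range $2\le\alpha_1<1+\frac1p$ is admissible. There your bound $\omega_{r_1,t}(F,\Omega_T,\delta)_p\lesssim\delta^{2}+\delta^{1+\frac1p}$ does not make $\int_0^1[\delta^{-\alpha_1}\omega_{r_1,t}(F,\Omega_T,\delta)_p]^p\frac{d\delta}{\delta}$ finite, and factoring $\Delta^{r_1}=\Delta^{r_1-2}\Delta^2$ only reproduces $h^2$. Since $r_1=\lfloor\alpha_1\rfloor+1\le r_1^{\textup{FE}}$, the derivative $\partial_t^{r_1}F$ is bounded elementwise. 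Hence $\omega_{r_1,t}(F,\Omega_T,\delta)_p\lesssim\delta^{r_1}+\delta^{1+\frac1p}$ with $r_1>\alpha_1$, which is exactly what is needed. The same correction applies in space.

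The two points you leave open are defects of the statement, not of your method.
\begin{itemize}
\item \textbf{The endpoint $p=\infty$.} Here the \enquote{only if} part is false. Every $F\in\V^{r_1,r_2}_\calP$ is Lipschitz, so $\omega_{2,t}(F,\Omega_T,\delta)_\infty+\omega_{2,\bm{x}}(F,\Omega_T,\delta)_\infty\lesssim_F\delta$. Hence $\V^{r_1,r_2}_\calP\subset B^{1,1}_{\infty,\infty}(\Omega_T)$, although $\max(\alpha_1,\alpha_2)=1=1+\frac1p$. Your argument establishes the correct version: the inclusion fails iff $\max(\alpha_1,\alpha_2)\ge 1+\frac1p$ for $p<\infty$, and iff $\max(\alpha_1,\alpha_2)>1$ for $p=\infty$. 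The paper's step from $\delta^{(1+\frac1p)q}$ to finiteness has the same defect when $q=\infty$.
\item \textbf{A single time slab.} Your remedies do not work. The inclusion is about the fixed $\calP$, and stencils are confined to $[0,T]_{r_1,h}$, so $t=0,T$ create no kink. If $\calP=\calP_0$ with $\#\calI_0=1$, every $F$ is a polynomial of order $r_1^{\textup{FE}}>\alpha_1$ in $t$. Its temporal modulus is then $O(\delta^{\lfloor\alpha_1\rfloor+1})$, and the inclusion holds for $\alpha_2<1+\frac1p\le\alpha_1$. The same happens if $\#\calT_0=1$. A non-degeneracy assumption is therefore needed; it holds once one $\textup{PATCH}\_\textup{REFINE}$ step has been performed.
\end{itemize}

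Finally, you need not analyse hanging-node constraints of some $\phi_\nu$ to get a kink. Use nestedness instead: $\V^{r_1,r_2}_{\calP'}\subset\V^{r_1,r_2}_\calP$ for any coarser $\calP'$ in the refinement history. A product $\ell(t)\psi(\bm{x})$ of hat functions on such a $\calP'$ has an explicit jump of $\partial_t$ on a set of positive measure. For example, take $\calP'=\calP_0$ if $\#\calI_0\ge2$, or the first patch refinement with $\psi$ the hat function of the new vertex.
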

\begin{proof}
    Let $I\times S\in \calP$, $\nu\in \calN(I\times S)\cap\calF(\calP)$, and $\phi_\nu\in \V^{r_1, r_2}_\calP$ the basis function corresponding to $\nu$. Further, we set $\Theta_\nu:= \supp \phi_\nu\subset\omega^{j(d)}_\calP(I\times S)$, which is bounded due to \cref{thm:support_in_power_omega_domain}. Additionally, we consider the temporal and spatial diameter of $\Theta_\nu$, i.e.,
    \begin{align}\label{lem:FEM_elements_of_Aniso_Besov - Proof Eq 0}
        \diam_t\left(\Theta_\nu\right):=\sup_{(t,\bm{x}), (t', \bm{x}')\in \Theta_\nu} |t-t'|\quad \text{and}\quad \diam_{\bm{x}}\left(\Theta_\nu\right):=\sup_{(t,\bm{x}), (t', \bm{x}')\in \Theta_\nu} |\bm{x}-\bm{x}'|.
    \end{align}
    Since $\phi_\nu(\nu)=1$ and $\phi_\nu \equiv 0$ outside of $\Theta_\nu$, $\phi_\nu(t,\cdot)$ and $\phi_\nu(\cdot, \bm{x})$ are Lipschitz continuous with respect to the constants $\Lip(\phi_\nu(t,\cdot))$ and $\Lip(\phi_\nu(\cdot, \bm{x}))$, where 
    \begin{align*}
        \Lip(\phi_\nu(t,\cdot))\sim_{d, s_1, s_2, r_1, \kappa_{\calP_0}, \mu(\calP_0)} \diam_t\left(\Theta_\nu\right)^{-1}\quad \text{and} \quad \Lip(\phi_\nu(\cdot,\bm{x}))\sim_{d, s_1, s_2, r_2, \kappa_{\calP_0}, \mu(\calP_0)} \diam_{\bm{x}}\left(\Theta_\nu\right)^{-1}
    \end{align*}
    for any $t\in [0,T]$ and $\bm{x}\in \overline{\Omega}$. Therefore, using the algebraic expressions for the difference operators from \cite[Rem.~2.2]{MSS26}, we obtain
    \begin{align}\label{lem:FEM_elements_of_Aniso_Besov - Proof Eq 1}
        |\Delta^{r_1}_{h,t}\phi_\nu(t,\bm{x})| + |\Delta^{r_2}_{\bm{h}, \bm{x}}\phi_\nu(t,\bm{x})| \sim_{d, s_1, s_2, r_1, r_2, \kappa_{\calP_0}, \mu(\calP_0)} \min\left(1, |h|\diam_t(\Theta_\nu)^{-1}\right) + \min\left(1, |\bm{h}|\diam_{\bm{x}}(\Theta_\nu)^{-1}\right)
    \end{align}
    for $h\in \R$, $\bm{h}\in \R^d$, $t\in [0,T]_{r_1,h}$, and $\bm{x}\in \overline{\Omega}_{r_2, \bm{h}}$. Further, this equivalence implies
    \begin{align}\label{lem:FEM_elements_of_Aniso_Besov - Proof Eq 2}
        |\supp(\Delta^{r_1}_{h,t}\phi_\nu)|\sim_{\, r_1} |\supp(\Delta^{r_1}_{h,t}\phi_\nu)\cap \supp(\phi_\nu)| \quad \text{and} \quad |\supp(\Delta^{r_2}_{\bm{h},\bm{x}}\phi_\nu)|\sim_{\, r_2} |\supp(\Delta^{r_2}_{\bm{h},\bm{x}}\phi_\nu)\cap \supp(\phi_\nu)|.
    \end{align}
    Additionally, we know that $\Delta^{r_1}_{h,t}P = \Delta^{r_2}_{\bm{h},\bm{x}}P = 0$ for $P\in \Pi^{r_1, r_2}_{t,\bm{x}}$ due to \cite[Lem.~2.16]{MSS26}. Therefore, $\Delta^{r_1}_{h,t}\phi_\nu = 0$ on $\bigcup\limits_{I'\times S'\in \Theta_\nu} \{t\in I'\mid d(t,\partial I')\ge r_1h\} \times S'$ and $\Delta^{r_2}_{\bm{h},\bm{x}}\phi_\nu = 0$ on $\bigcup\limits_{I'\times S'\in \Theta_\nu} I'\times \{\bm{x}\in {S'}\mid d(\bm{x},\partial S')\ge r_2\bm{h}\}$. Thus,
    \begin{align}\label{lem:FEM_elements_of_Aniso_Besov - Proof Eq 3}
        |\supp(\Delta^{r_1}_{h,t}\phi_\nu)\cap \supp(\phi_\nu)| \sim_{d, r_1, \kappa_{\calP_0}, \mu(\calP_0)} \diam_{\bm{x}}(\Theta_\nu)^d \min(|h|, \diam_{t}(\Theta_\nu))
    \end{align}
    and
    \begin{align}\label{lem:FEM_elements_of_Aniso_Besov - Proof Eq 4}
        |\supp(\Delta^{r_2}_{\bm{h},\bm{x}}\phi_\nu)\cap \supp(\phi_\nu)| \sim_{d, r_2, \kappa_{\calP_0}, \mu(\calP_0)} \diam_{t}(\Theta_\nu) \diam_{\bm{x}}(\Theta_\nu)^{d-1} \min(|\bm{h}|, \diam_{\bm{x}}(\Theta_\nu)).
    \end{align}
    For $\delta\in \min(\diam_t(\Theta_\nu), \diam_{\bm{x}}(\Theta_\nu))$, \eqref{lem:FEM_elements_of_Aniso_Besov - Proof Eq 1}, \eqref{lem:FEM_elements_of_Aniso_Besov - Proof Eq 2}, \eqref{lem:FEM_elements_of_Aniso_Besov - Proof Eq 3}, and \eqref{lem:FEM_elements_of_Aniso_Besov - Proof Eq 4} together yield
    \begin{align*}
        \omega_{r_1, t}(\phi_\nu, \Omega_T, \delta)_p^q + \omega_{r_2, \bm{x}}(\phi_\nu, \Omega_T, \delta)_p^q \sim_{d, s_1, s_2, r_1, r_2, \kappa_{\calP_0}, \mu(\calP_0)}\left(\delta^{1+\frac{1}{p}}\right)^q = \left(\delta^{q+\frac{q}{p}}\right),
    \end{align*}
    which is equivalent to $|\phi_\nu|^{(r_1, r_2)}_{B^{\alpha_1, \alpha_2}_{p,q}(\Omega_T)}<\infty$ and thus $\phi_\nu\in B^{\alpha_1, \alpha_2}_{p,q}(\Omega_T)$ (due to the (quasi-)norm equivalency for Besov (quasi-)norms from \cref{subsubsect:Besov spaces} and $r_i>\alpha_i$, $i=1,2$). Therefore,  $\phi_\nu\in B^{\alpha_1, \alpha_2}_{p,q}(\Omega_T)$ if and only if \mbox{$\alpha_1, \alpha_2<1+\frac1p$}.
\end{proof}

Now since $F\in \V^{r_1, r_2}_\calP$ can be approximated up to an error zero with respect to the adaptive anisotropic finite element method and any error (quasi-)norm, but $F\notin B^{\alpha_1, \alpha_2}_{p,q}(\Omega_T)$ if $\max(\alpha_1, \alpha_2)\ge 1+\frac{1}{p}$, we additionally need a generalized notion of anisotropic Besov spaces for the derivation of inverse estimates. Namely, we want to show that if a function can be well approximated with the adaptive technique, it necessarily possesses some (generalized) Besov regularity of higher order. 

This generalized notion of Besov regularity, can also be interpreted as a different characterization of the original anisotropic Besov spaces (defined in \cref{subsubsect:Besov spaces}) for $0<\max(\alpha_1, \alpha_2)< 1+\frac{1}{p}$, which  is also of value when proving Sobolev-type embeddings in our anisotropic Besov scale and corresponding direct estimates. This is why we want to introduce it before the specific sections on direct and inverse estimates, i.e., \cref{subsect:direct estimates} and \cref{subsect:inverse estimates}, respectively. Its relevance is motivated by the result on multiscale decompositions stated in \cref{lem:Multiscale_decomposition}, which corresponds to \cite[Sect.~4.3 and Sect.~7]{GM14}.

\begin{assumptions}\label{assumptions:multiscale-decomposition}
Choose $s_1, s_2\in (0,\infty)$. For $n\in \N$, let $\calP_n$ be derived from $\calP_{n-1}$ via uniform refinement of all elements by the method $\textup{ATOMIC}\_\textup{SPLIT}(\cdot, d,s_1, s_2)$.\footnote{$\calP_n$ could also be interpreted as a result of $\textup{MARKED}\_\textup{REFINE}(\calP_0, d, s_1, s_2)$ with respect to a \textup{MARK}-routine that marks prisms for refinement if and only if they have a level strictly less than $n$.} Further, assume $(\alpha_1, \alpha_2)\in \R(s_1, s_2)$ and $r_i\in \N_{\ge 2}$ for $i=1,2$, and set $\pi_n:=\pi_{\rho,\calP_n}$ (with respect to $r_1$ and $r_2$) for given $\rho\in (0,p]$ and $p,q\in (0,\infty]$. Now put $\Delta_n:=\pi_n-\pi_{n-1} $ for $n\in \N_0$ with $\pi_{-1}:=0$, and define 
\begin{align*}
    	\|f\|_{\widehat{B}^{\alpha_1,\alpha_2}_{p,q}(\Omega_T)}^\Delta:=	\left\{\begin{array}{ll} \left(\sum\limits_{n=0}^{\infty} 2^{\frac{\alpha_2}{d}nq}\|\Delta_n(f)\|_{L_p(\Omega_T)}^q\right)^\frac{1}{q},& q < \infty 
		\\ \ \ \sup\limits_{n\in \N_0} 2^{\frac{\alpha_2}{d}n}\|\Delta_n(f)\|_{L_p(\Omega_T)} \ \ \ \, ,& q = \infty \end{array}\right\} 	
\end{align*}
for any $f\in L_q(\Omega)$. \textbf{Note} that there is a hidden dependency of $\|\cdot\|_{\widehat{B}^{\alpha_1,\alpha_2}_{p,q}(\Omega_T)}^\Delta$ on the choice of $\rho$ and $(r_1, r_2)$. We will not indicate this in the sequel in order to improve readability.
\end{assumptions}

\begin{rem}
    The case of $\alpha_i\le 0$, $i=1,2$, will first be of relevance in \cref{subsect:inverse estimates}. Before, we will mainly consider positive values for $\alpha_i$, $i=1,2$.
\end{rem}

\begin{rem}\label{rem:conformity_of_the_calP_n}
    Notice that for any $n\in \N_0$,
    \begin{align*}
        \calP_n = \bigcup\limits_{I\times S\in \calP_0} \textup{ATOMIC}\_\textup{SPLIT}(\cdot, d,s_1, s_2 )^{\otimes n} = \left(\bigcup\limits_{I\in \calI_0}  \textup{BISECT}(1,I)^{\otimes \left\lceil \frac{ns_2}{s_1d}\right\rceil}\right)\otimes \left(\bigcup_{S\in \calT_0} \textup{BISECT}(d,S)^{\otimes n} \right),
    \end{align*}
    i.e., $\calP_n$ is itself a tensor product mesh. Therefore, it is uniform, since its spatial triangulation is uniform due to \cite[Thm.~4.3]{Ste08}.
\end{rem}

\begin{lem}\label{lem:Multiscale_decomposition}
    Let \cref{assumptions:multiscale-decomposition} hold with $0<\alpha_i<r_i$, $i=1,2$, and $f\in B^{\alpha_1, \alpha_2}_{p,q}(\Omega_T)$. Then,
    \begin{align*}
		\|f\|_{\widehat{B}^{\alpha_1,\alpha_2}_{p,q}(\Omega_T)}^\Delta	
		\lesssim_{\, d, \rho, p,q, \alpha_1, \alpha_2, r_1, r_2,\kappa_{\calP_0}, \mu(\calP_0), \LipProp(\Omega) } \|f\|_{B^{\alpha_1,\alpha_2}_{p,q}(\Omega_T)}.
	\end{align*}
\end{lem}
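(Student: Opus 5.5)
The plan follows the route of \cite[Sect.~4.3 and Sect.~7]{GM14}: bound each $\|\Delta_n(f)\|_{L_p(\Omega_T)}$ by the $L_p$ approximation errors on two consecutive uniform meshes, and then relate those errors to the discrete moduli in \eqref{Eq:Equivalency quasi-seminorms}.

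\textbf{Step 1.} For $n\ge 1$ I use the quasi-triangle inequality
\[
\|\Delta_n(f)\|_{L_p(\Omega_T)}\lesssim_p \|f-\pi_n(f)\|_{L_p(\Omega_T)}+\|f-\pi_{n-1}(f)\|_{L_p(\Omega_T)}.
\]
For $n=0$ I simply use $\|\pi_0(f)\|_{L_p(\Omega_T)}\lesssim \|f\|_{L_p(\Omega_T)}$. This bound follows from \cref{thm:local_boundedness_of_pi_projection}, since $E(f,\Pi^{r_1,r_2}_{t,\bm{x}},\cdot)_p\le\|f\|_{L_p}$. Summing over the finitely many elements of $\calP_0$ gives the claim.

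\textbf{Step 2.} I bound the global error on the tensor-product mesh $\calP_n$ (\cref{rem:conformity_of_the_calP_n}) by local polynomial errors. Summing \cref{thm:local_boundedness_of_pi_projection} over $I\times S\in\calP_n$ in the $p$-th power, with finite overlap from \cref{Enumeration further properties of the mesh}\ref{Enumeration further properties of the mesh - enum 3}, gives
\[
\|f-\pi_n(f)\|_{L_p(\Omega_T)}^p\lesssim\sum_{I\times S\in\calP_n}E\big(f,\Pi^{r_1,r_2}_{t,\bm{x}},\widetilde{\omega^{j(d)}_{\calP_n}}(I\times S)\big)_p^p .
\]
On each cylindrical domain $J\times R=\widetilde{\omega^{j(d)}_{\calP_n}}(I\times S)$ I apply the anisotropic Jackson estimate \cite[Thm.~1.1]{MSS26}. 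In its averaged-moduli form this reads
\[
E\lesssim \mathrm{w}_{r_1,t}(f,J\times R,|J|)_p+\mathrm{w}_{r_2,\bm{x}}(f,J\times R,\diam R)_p.
\]
By \cref{lemma relationship diameter level - BISECT} and the uniformity of $\calP_n$, we have $|J|\sim 2^{-ns_2/(s_1d)}$ and $\diam R\sim 2^{-n/d}$. The Jackson constant is uniform because all $R$ have controlled Lipschitz properties, depending only on $d,\kappa_{\calP_0},\mu(\calP_0)$. The averaged moduli are subadditive (\cref{Rem:Moduli of smoothness}\ref{Rem:Moduli of smoothness - additivity}), and the patches overlap only finitely often. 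Summing over $\calP_n$ and then passing from averaged to supremum moduli on $\Omega_T$ (equivalence from \cite[Sect.~2]{MSS26}, which is where $\LipProp(\Omega)$ enters) should give
\[
\|f-\pi_n(f)\|_{L_p(\Omega_T)}\lesssim \omega_{r_1,t}\big(f,\Omega_T,c2^{-\frac{ns_2}{s_1d}}\big)_p+\omega_{r_2,\bm{x}}\big(f,\Omega_T,c2^{-\frac nd}\big)_p .
\]
I then remove the constant $c$ with \cref{Rem:Moduli of smoothness}\ref{Rem:Moduli of smoothness - scaling item}.

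\textbf{Step 3.} The weight $2^{\frac{\alpha_2}{d}nq}$ matches the discrete seminorm \eqref{Eq:Equivalency quasi-seminorms}, because $(\alpha_1,\alpha_2)\in\R(s_1,s_2)$ gives $\frac{\alpha_2}{\alpha_1 d}=\frac{s_2}{s_1d}$. The index shift between $n$ and $n-1$ costs only a constant factor $2^{\alpha_2 q/d}$. Moduli of order $r_i>\alpha_i$ instead of $\lfloor\alpha_i\rfloor+1$ yield an equivalent quasi-norm, as noted at the end of \cref{subsubsect:Besov spaces}. Summing over $n$ then gives $\|f\|^\Delta\lesssim\|f\|_{L_p}+|f|^{*,(r_1,r_2)}_{B^{\alpha_1,\alpha_2}_{p,q}}\lesssim\|f\|_{B^{\alpha_1,\alpha_2}_{p,q}(\Omega_T)}$. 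The case $q=\infty$ is handled in the same way with suprema.

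\textbf{Main obstacle.} The main obstacle is the localization in Step 2. The supremum moduli are not subadditive over the domain. The patches $\widetilde{\omega}$ overlap, and for $\calP_n$ uniform their number is bounded, but this has to be checked. The local Jackson constant must also be uniform over all patch shapes that arise. My first attempt uses averaged moduli locally, sums via subadditivity, and converts to supremum moduli only globally. For $p<1$, I would additionally use $\rho\le p$ so that \cref{thm:local_boundedness_of_pi_projection} applies.
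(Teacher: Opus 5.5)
Your proposal is correct and follows essentially the same route as the paper. The paper also uses the quasi-triangle inequality for $\Delta_n$, the local bound of \cref{thm:local_boundedness_of_pi_projection} on the cylindrical patches of the tensor-product mesh $\calP_n$, and Jackson's estimate. It then passes to averaged moduli so that subadditivity and finite overlap can be used, and finishes with the discrete seminorm via collinearity of $(\alpha_1,\alpha_2)$ and $(s_1,s_2)$.

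One small correction concerns where $\LipProp(\Omega)$ enters. The nontrivial conversion is the local one, from supremum to averaged moduli on $J\times R$ with reduced arguments such as $|J|/(4r_1)$, via \cite[Lem.~2.7 and Lem.~2.8]{MSS26}. That step relies on the uniform Lipschitz character of the patches. The global return to supremum moduli on $\Omega_T$ is the trivial bound $\mathrm{w}\le\omega$. Hence $\LipProp(\Omega)$ enters only through the final (quasi-)norm equivalence $\|\cdot\|^{*,(r_1,r_2)}_{B^{\alpha_1,\alpha_2}_{p,q}}\sim\|\cdot\|_{B^{\alpha_1,\alpha_2}_{p,q}}$, which you already invoke in Step 3.
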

\begin{proof}
    We assume $q<\infty$, the proof for $q=\infty$ is very similar. Further, for any $n\in \N_0$ and $I\times S\in \calP_n$, we have
	\begin{align}
		\|\Delta_n(f)\|_{L_p(I\times S)}\lesssim_{\,p}\|f-\pi_{n}(f)\|_{L_p(I\times S)} + \|f-\pi_{n-1}(f)\|_{L_p(I\times S)} .
	\end{align}
    Additionally, $\calP_n$ is conforming due to the uniform refinement, which implies $\omega_{\calP_n}^{j(d)}(I\times S) = \widetilde{\omega_{\calP_n}^{j(d)}}(I\times S)$. Therefore, we can denote any $j(d)$-th neighborhood as a cylindrical domain $J\times R=\omega^{j(d)}_{\calP_{\max(n-1, 0)}}(I\times S)$, $J\subset [0,T]$ an interval, $R\subset \Omega$, with the Lipschitz properties of $R$ determined by $d, \kappa_{\calP_0}$, and $\mu(\calP_0)$. Further, as in \cref{Enumeration further properties of the mesh}\ref{Enumeration further properties of the mesh - enum 2}, we can show $|J|\sim_{d,\alpha_1,\alpha_2, \kappa_{\calP_0},\mu(\calP_0)}|I|\sim_{\,\mu(\calP_0)}2^{-\frac{\max(n-1, 0)\alpha_2}{\alpha_1d}}$ and \mbox{$|R|\sim_{d,\alpha_1,\alpha_2, \kappa_{\calP_0},\mu(\calP_0)}|S|\sim_{\,\mu(\calP_0)}2^{-\max(n-1, 0)}$}. This enables us to estimate 
	\begin{align}\label{lem:Multiscale_decomposition - proof eq 1}
		\|\Delta_n(f)\|_{L_p(I\times S)}&\lesssim_{\,p}\|f-\pi_{n}(f)\|_{L_p(I\times S)} + \|f-\pi_{n-1}(f)\|_{L_p(I\times S)}\mathds{1}_{\N}(n) + \|f\|_{L_p(I\times S)}\,\mathds{1}_{\{0\}}(n)
        \\&\lesssim_{\, d, \rho, p, \alpha_1, \alpha_2, r_1, r_2, \kappa_{\calP_0}, \mu(\calP_0)} E\left(f, \Pi^{r_1, r_2}_{t,\bm{x}}, J\times R\right)_p + \|f\|_{L_p(I\times S)}\,\mathds{1}_{\{0\}}(n) \notag
		\\&\lesssim_{\,d,p, \alpha_1, \alpha_2, r_1, r_2, \kappa_{\calP_0}, \mu(\calP_0)} \omega_{r_1, t}\left(f,J\times R, |J|\right)_p+ \omega_{r_2, \bm{x}}\left(f, J\times R, \diam(R)\right)_p + \|f\|_{L_p(I\times S)}\,\mathds{1}_{\{0\}}(n)\notag
        \\&\lesssim_{\,d,p, r_1, r_2, \kappa_{\calP_0}, \mu(\calP_0)} \mathrm{w}_{r_1, t}\left(f,J\times R, \frac{|J|}{4r_1}\right)_p+ \mathrm{w}_{r_2, \bm{x}}\left(f, J\times R, \delta_n\right)_p + \|f\|_{L_p(I\times S)}\,\mathds{1}_{\{0\}}(n)	\notag	
	\end{align}
	where we have subsequently applied \cref{thm:local_boundedness_of_pi_projection}, Jackson's estimate from \cite[Thm.~1.1]{MSS26}, and \cref{Rem:Moduli of smoothness}\ref{Rem:Moduli of smoothness - scaling item} together with the equivalency between suprema and averaged moduli of smoothness from \cite[Lem.~2.7 and Lem.~2.8]{MSS26}. Here, $0<\delta_n=\delta_n(r_2, d, \kappa_{\calP_0}, \mu(\calP_0) )\le 2^{-\frac{n}{d}}$ is a parameter that is uniformly bounded from below for every $n\in \N_0$ due to the fact that the Lipschitz properties are determined by the parameters $d, \kappa_{\calP_0}$, and $\mu(\calP_0)$. The observations from \cref{Rem:Moduli of smoothness}\ref{Rem:Moduli of smoothness - monotonicity}, \ref{Rem:Moduli of smoothness - scaling item}, and \ref{Rem:Moduli of smoothness - additivity} together with the finite overlap of the neighborhood domains, due to \cref{Enumeration further properties of the mesh}\ref{Enumeration further properties of the mesh - enum 2}, now yields
    \begin{align}\label{lem:Multiscale_decomposition - proof eq 2}
        \|\Delta_n(f)\|_{L_p(\Omega_T)}&\lesssim_{\, d, \rho, p, \alpha_1, \alpha_2,r_1, r_2, \kappa_{\calP_0}, \mu(\calP_0)}\mathrm{w}_{r_1, t}\left(f, \Omega_T, \frac{|J|}{4r_1}\right)_p+ \mathrm{w}_{r_2, \bm{x}}\left(f, \Omega_T, \delta_n\right)_p + \|f\|_{L_p(\Omega_T)}\,\mathds{1}_{\{0\}}(n)	\notag	
        \\& \lesssim_{\,d,p, \alpha_1, \alpha_2,r_1, r_2} \omega_{r_1, t}\left(f, \Omega_T, 2^{-\frac{n\alpha_2}{\alpha_1d}}\right)_p+ \omega_{r_2, \bm{x}}\left(f, \Omega_T, 2^{-\frac{n}{d}}\right)_p + \|f\|_{L_p(\Omega_T)}\,\mathds{1}_{\{0\}}	(n)
    \end{align}
    for $n\in \N_0$, by summing over all $I\times S\in \calP_n$. Thus, the assertion
    \begin{align*}
        \|f\|^\Delta_{\widehat{B}^{\alpha_1, \alpha_2}_{p,q}(\Omega_T)}\lesssim_{\, d, \rho, p,q, \alpha_1, \alpha_2, r_1, r_2,\kappa_{\calP_0}, \mu(\calP_0)}
        \|f\|_{B^{\alpha_1, \alpha_2}_{p,q}(\Omega_T)}^{*, (r_1, r_2)}
        \sim_{\,d,p,q,\alpha_1, \alpha_2, r_1, r_2,\LipProp(\Omega) }\|f\|_{B^{\alpha_1, \alpha_2}_{p,q}(\Omega_T)}.
    \end{align*}
    follows, where we have applied the (quasi-)norm equivalency result mentioned below \eqref{Eq:Equivalency quasi-seminorms} in the last step. 
\end{proof}

This lemma leads to the following definition.

\begin{defi}\label{def:Generalized_Besov_Spaces}
	Given \cref{assumptions:multiscale-decomposition}, we define \textbf{generalized anisotropic Besov spaces} via multiscale decomposition, i.e.,
	\begin{align*}
		\widehat{B}^{\alpha_1,\alpha_2}_{p,q}(\Omega_T):=\left\{f\in L_{p}(\Omega_T) \ \bigg| \ 
        \|f\|^\Delta_{\widehat{B}^{\alpha_1,\alpha_2}_{p,q}(\Omega_T)}<\infty\right\}.
	\end{align*}
\end{defi}

\begin{rem}\label{rem:Generalized_Besov_Spaces}
    \begin{enumerate}[label=(\roman*)]
        \item \label{rem:Generalized_Besov_Spaces - item 2} These spaces indeed generalize the notion of the anisotropic Besov spaces, since \cref{lem:Multiscale_decomposition} implies $B^{\alpha_1,\alpha_2}_{p,q}(\Omega_T)\hookrightarrow\widehat{B}^{\alpha_1,\alpha_2}_{p,q}(\Omega_T)$ for $0<\alpha_i<r_i$, $i=1,2$. Further, they are suited for our analysis, since for every $\calP$ derived from $\calP_0$ through finitely man applications of $\textup{PATCH}\_\textup{REFINE}(\cdot, \cdot, d, s_1, s_2)$, there exists some $n\in \N$ such that $\calP_n$ is a sub-partition of $\calP$ and thus, $\V^{r_1, r_2}_\calP\subset \V^{r_1, r_2}_{\calP_m}$ for any $m\ge n$, $m\in \N$. The latter, in particular, yields $\Delta_m(F)=0$ for every $F\in \V^{r_1, r_2}_\calP$ and $m>n$, i.e., $F\in \widehat{B}^{\alpha_1,\alpha_2}_{p,q}(\Omega_T)$. Together with the result of \cref{lem:FEM_elements_of_Aniso_Besov}, this shows, that $B^{\alpha_1,\alpha_2}_{p,q}(\Omega_T)\subsetneq\widehat{B}^{\alpha_1,\alpha_2}_{p,q}(\Omega_T)$, if $\max(\alpha_1, \alpha_2)\ge 1+\frac{1}{p}$ in the aforementioned case of $0<\alpha_i<r_i$, $i=1,2$. Lastly, further below in \cref{cor:to_thm:Embedding_generalized_into_classical_aniso_spaces}, we will see that in this case, i.e., $0<\alpha_i<r_i$, $i=1,2$, the classical and generalized Besov spaces indeed coincide in the sense of equivalent (quasi-)norms, if the smoothness parameters are small, i.e., $\max(\alpha_1, \alpha_2)<1+\frac{1}{p}$.
        
        \item Let $ f\in\widehat{B}^{\alpha_1, \alpha_2}_{p,q}(\Omega_T)\subset L_{p}(\Omega_T)$, then $\Delta_n(f)\in V^{r_1,r_2}_{\calP_n} $ for any $n\in\N_0$. Further, consider the basis $\left(\phi_\nu^{(n)}\right)_{\nu\in \calF(\calP_n)}$ of $V^{r_1,r_2}_{\calP_n}$, according to \cref{thm:basis free nodes}. Additionally, recall that $\calL(\calP_n)=\calF(\calP_n)$ because of the conformity of $\calP_n$. Then, there are real coefficients $b_\nu^{(n)}$, such that 
            \begin{align*}
                f = \sum\limits_{n=0}^{\infty} \Delta_n(f)= \sum\limits_{n=0}^\infty \sum\limits_{\nu\in\calL(\calP_n)}\left(b_\nu^{(n)}(f)\phi_\nu^{(n)}\right)\quad\text{in}\quad L_q(\Omega_T),
            \end{align*}
        which is a \textbf{multiscale decomposition} of $f$. Be attentive to the hidden dependency of $b_\nu^{(n)}(f)$ and $\phi_\nu^{(n)}$ on the polynomial orders $(r_1, r_2)$.\label{rem:Generalized_Besov_Spaces - item 3}       
        \item Furthermore, due to the conformity of $\calP_n$ according to \cref{rem:conformity_of_the_calP_n}, $\omega_{\calP_n}^{j}(I\times S)=\widetilde{\omega_{\calP_n}^{j}}(I\times S)$ for any $I\times S\in \calP_n$ and $j\in \N$, and 
        \begin{align*}
            \supp \phi_\nu^{(n)} = \{I\times S \in \calP_n\mid \nu\in \overline{I\times S}\}=:\omega_{\calP_n}(\nu)\subset \omega_{\calP_n}(I'\times S')
         \end{align*}
         for any $I'\times S'\in \calP$ with $\nu\in \overline{I'\times S'}$. Since  the proof from \cref{Enumeration further properties of the mesh}\ref{Enumeration further properties of the mesh - enum 3}  in this situation even yields $\#\omega_{\calP_n}(I\times S)\lesssim_{\, d, \kappa_{\calP_0}}1$ for any $I\times S\in \calP_n$, this implies 
         $\#\omega_{\calP_n}(\nu)\lesssim_{\, d, \kappa_{\calP_0}}1$. \label{rem:Generalized_Besov_Spaces - item 4} 
         Furthermore, this also shows that $|I'\times S'|\sim_{\, d,\kappa_{\calP_0},\mu(\calP_0)}\left|\supp\phi_\nu^{(n)}\right|\sim_{\, d,\kappa_{\calP_0},\mu(\calP_0)} 2^{-n\left(1+\frac{s_2}{s_1d}\right)}$ for any $I'\times S'\in \calP_n$ with $I'\times S'\subset \supp\phi_\nu^{(n)}$.
         \item \label{rem:Generalized_Besov_Spaces - item 5} Additionally, we can estimate, very similarly to \cref{Enumeration further properties of the mesh}\ref{Enumeration further properties of the mesh - enum 2}, that
         \begin{align*}
            \diam_t\left(\supp \phi_\nu^{(n)}\right)\sim_{\,d, \kappa_{\calP_0}, \mu(\calP_0)}2^{-\frac{ns_2}{s_1d}}\quad\text{and}\quad\diam_{\bm{x}}\left(\supp \phi_\nu^{(n)}\right)\sim_{\,d, \kappa_{\calP_0}, \mu(\calP_0)}2^{-\frac{n}{d}},
         \end{align*}
         where the notation has been borrowed from the proof of \cref{lem:FEM_elements_of_Aniso_Besov}, in particular, \eqref{lem:FEM_elements_of_Aniso_Besov - Proof Eq 0}.
    \end{enumerate}
\end{rem}

From now on, we will study and apply multiscale decompositions to investigate their relation to (generalized) anisotropic Besov spaces. In order to do so, we will need two lemmata, similar to \cite[Lem.~4.1 and Lem.~4.2]{DP88}.

\begin{lem}\label{lem:First_lem_for_gen=classical Besov spaces}
    Let \cref{assumptions:multiscale-decomposition} hold true, $n\in \N_0$, $F\in \V^{r_1, r_2}_{\calP_n}$, and $F = \sum\limits_{\nu\in\calL(\calP_n)} c_\nu \phi_\nu^{(n)}$ be the representation of $F$ with respect to the basis $\left(\phi_\nu^{(n)}\right)_{\nu\in \calL(\calP_n)}$ of $\V^{r_1, r_2}_{\calP_n}$. Then,
    \begin{align*}
        |c_\nu|\lesssim_{\,d,p,r_1, r_2, \kappa_{\calP_0}, \mu(\calP_0)} 2^{\frac{n}{p}\left(1+\frac{s_2}{s_1 d}\right)}\|F\|_{L_p\left(\supp \phi^{(n)}_\nu\right)}. 
    \end{align*}
\end{lem}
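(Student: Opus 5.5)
Since $\calP_n$ is conforming (\cref{rem:conformity_of_the_calP_n}), every Lagrange node is free, $\calL(\calP_n)=\calF(\calP_n)$, and by \cref{thm:basis free nodes} the coefficient is a point value: $c_\nu = F(\nu)$. Fix $\nu$ and pick any prism $I\times S\in\calP_n$ with $\nu\in\overline{I\times S}$. By \cref{rem:Generalized_Besov_Spaces}\ref{rem:Generalized_Besov_Spaces - item 4}, such a prism lies in $\supp\phi^{(n)}_\nu$. Since $F_{|I\times S}=:P\in\Pi^{r_1,r_2}_{t,\bm{x}}(I\times S)$ is continuous up to the boundary, we get
\begin{align*}
|c_\nu| = |P(\nu)| \le \|P\|_{L_\infty(I\times S)}.
\end{align*}
This reduces the claim to a local inverse (norm-equivalence) estimate on a single prism.

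Next we need $\|P\|_{L_\infty(I\times S)}\lesssim |I\times S|^{-1/p}\|P\|_{L_p(I\times S)}$ for $P\in\Pi^{r_1,r_2}_{t,\bm{x}}(I\times S)$. We prove it exactly as in \cref{rem:estimate_dual_elements} and \cref{lem:smaller_parameter_bestapprox_larger_parameter_quasi-bestapprox}. Using the affine map $\Phi_{I\times S}$ we pull back to the reference prism $\hat I\times\hat S$. There all (quasi-)norms on the finite dimensional space $\Pi^{r_1,r_2}_{t,\bm{x}}(\hat I\times\hat S)$ are equivalent, with constants depending only on $d,p,r_1,r_2$; for $p<1$, homogeneity and compactness of the unit sphere still give the equivalence. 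The sup norm is invariant under the affine map, while the $L_p$ quasi-norm scales by $(|I\times S|/|\hat I\times\hat S|)^{1/p}$, which yields the factor $|I\times S|^{-1/p}$. Alternatively, one can quote the quasi-norm equivalence \cite[Lem.~3.5]{MSS26} already used in \cref{lem:local_boundedness_of_Q_projection}.

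Finally, we insert the size of the prism. By \cref{rem:Generalized_Besov_Spaces}\ref{rem:Generalized_Besov_Spaces - item 4}, $|I\times S|\sim_{\,d,\kappa_{\calP_0},\mu(\calP_0)}2^{-n(1+\frac{s_2}{s_1d})}$. This also follows directly from \cref{lemma relationship diameter level - BISECT}, since $|I|\sim 2^{-\lceil ns_2/(s_1d)\rceil}$ and $|S|\sim 2^{-n}$, and the ceiling only costs a constant. Hence $|I\times S|^{-1/p}\lesssim 2^{\frac np(1+\frac{s_2}{s_1d})}$. Combining the three steps and using $\|F\|_{L_p(I\times S)}\le\|F\|_{L_p(\supp\phi^{(n)}_\nu)}$ gives the assertion; the case $p=\infty$ is immediate.

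The only delicate point is making sure the constants do not depend on the anisotropy or on $n$. For the inverse estimate this holds because it is affine invariant, so the degenerate aspect ratio of $I\times S$ plays no role. For the volume, only the comparability from \cref{rem:Generalized_Besov_Spaces} enters, and its constant depends only on $d,\kappa_{\calP_0},\mu(\calP_0)$. I do not expect a real obstacle beyond this bookkeeping.
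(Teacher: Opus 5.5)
Your proof is correct and follows the paper's route: identify $c_\nu$ with the nodal value $F(\nu)$, bound it by a local $L_\infty$ norm, apply the affine-invariant $L_\infty$--$L_p$ norm equivalence on a prism (the paper cites \cite[Lem.~3.5]{MSS26}), and insert $|I\times S|\sim 2^{-n(1+\frac{s_2}{s_1 d})}$. The only difference is that you work on a single prism whose closure contains $\nu$. The paper instead sums $\|F\|_{L_\infty(I\times S)}$ over all prisms in $\supp\phi^{(n)}_\nu$ and then uses the cardinality bound $\#\omega_{\calP_n}(\nu)\lesssim 1$ to return to $\|F\|_{L_p(\supp\phi^{(n)}_\nu)}$, a step your version does not need.
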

\begin{proof}
    First, we estimate $|c_\nu|\le \|F\|_{L_\infty\left(\supp \phi^{(n)}_{\nu}\right)}\le \sum\limits_{I\times S\in \supp \phi_\nu^{(n)}}\|F\|_{L_\infty(I\times S)}$. Second, we apply \cite[Lem.~3.5]{MSS26}, in order to obtain
    \begin{align*}
        \|F\|_{L_\infty(I\times S)}\sim_{\,d,p,r_1,r_2} |I \times S|^{-\frac1p}\|F\|_{L_p(I\times S)}\sim_{\,d,\mu(\calP_0)} 2^{\frac{n}{p}\left(1+\frac{s_2}{s_1 d}\right)}\|F\|_{L_p(I\times S)}.
    \end{align*}
    Now the assertion follows using the (quasi-)norm equivalence in $\R^{\#\omega_{\calP_n}}$, which is finite dimensional with $\#\omega_{\calP_n}\lesssim_{\,d,\kappa_{\calP_0}} 1$ according to \cref{rem:Generalized_Besov_Spaces}\ref{rem:Generalized_Besov_Spaces - item 4}.
\end{proof}
\begin{lem}\label{lem:Second_lem_for_gen=classical Besov spaces}
    Under the assumptions from \cref{lem:First_lem_for_gen=classical Besov spaces}, it further holds
    \begin{align*}
        \left(\sum\limits_{\nu\in \calL(\calP_n)} |c_\nu|^p \, 2^{-n\left(1+\frac{s_2}{s_1 d}\right)}\right)^\frac1p \sim_{\,d,p,r_1, r_2, \kappa_{\calP_0}, \mu(\calP_0)} \|F\|_{L_p(\Omega_T)}
    \end{align*}
\end{lem}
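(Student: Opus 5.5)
The plan is to prove both inequalities of the equivalence elementwise on the uniform tensor mesh $\calP_n$ and then sum, using that $|I\times S|\sim_{\,d,\mu(\calP_0)} 2^{-n(1+\frac{s_2}{s_1 d})}$ for every $I\times S\in\calP_n$ (\cref{rem:Generalized_Besov_Spaces}\ref{rem:Generalized_Besov_Spaces - item 4}). Throughout I treat $p<\infty$; the case $p=\infty$ is identical with sums replaced by suprema.

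\emph{Upper bound for the coefficients (``$\lesssim$'').} By \cref{lem:First_lem_for_gen=classical Besov spaces}, $|c_\nu|^p\, 2^{-n(1+\frac{s_2}{s_1 d})}\lesssim \|F\|^p_{L_p(\supp\phi^{(n)}_\nu)} = \sum_{I\times S\in\omega_{\calP_n}(\nu)}\|F\|^p_{L_p(I\times S)}$. Summing over $\nu\in\calL(\calP_n)$ and interchanging the sums gives $\sum_{I\times S\in\calP_n}\#\{\nu\in\calL(\calP_n)\mid \nu\in\overline{I\times S}\}\,\|F\|^p_{L_p(I\times S)}$. Since $\calP_n$ is conforming, the nodes in $\overline{I\times S}$ are exactly $\calL(I\times S)$, whose cardinality $r_1\binom{r_2-1+d}{d}$ depends only on $d,r_1,r_2$. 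Hence the left-hand side is $\lesssim\|F\|_{L_p(\Omega_T)}$.

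\emph{Lower bound (``$\gtrsim$'').} On a fixed $I\times S\in\calP_n$ we have $F_{|I\times S}=\sum_{\nu\in\calL(I\times S)}c_\nu\phi^{(n)}_\nu{}_{|I\times S}$, and by conformity the restrictions $\phi^{(n)}_\nu{}_{|I\times S}$ are exactly the local Lagrange basis functions $b^{I\times S}_\nu$. I will use the quasi-triangle inequality in the form $\|\cdot\|^{\min(p,1)}_{L_p}$ subadditive, combined with a finite-sum norm equivalence (the number of terms is bounded in terms of $d,r_1,r_2$), together with the scaling $\|b^{I\times S}_\nu\|_{L_p(I\times S)}\sim_{\,d,p,r_1,r_2}|I\times S|^{1/p}$, which follows by pulling back to the reference prism $\hat I\times\hat S$ exactly as in the proof of \cref{rem:estimate_primal_elements}. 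This yields
\[
\|F\|^p_{L_p(I\times S)}\lesssim \sum_{\nu\in\calL(I\times S)}|c_\nu|^p\,|I\times S| \sim \sum_{\nu\in\calL(I\times S)}|c_\nu|^p\,2^{-n(1+\frac{s_2}{s_1d})}.
\]
Summing over $I\times S\in\calP_n$, each $\nu$ is counted $\#\omega_{\calP_n}(\nu)\lesssim_{\,d,\kappa_{\calP_0}}1$ times by \cref{rem:Generalized_Besov_Spaces}\ref{rem:Generalized_Besov_Spaces - item 4}, which gives the reverse inequality.

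No serious obstacle is expected here. The only point requiring care is the identification of global basis restrictions with the local Lagrange bases, which relies on $\calL(\calP_n)=\calF(\calP_n)$ and the conformity of $\calP_n$ (\cref{rem:conformity_of_the_calP_n}). Beyond that, all constants must be tracked to depend only on $d,p,r_1,r_2,\kappa_{\calP_0},\mu(\calP_0)$, and uniformity in $n$ follows from the uniform size of the elements of $\calP_n$.
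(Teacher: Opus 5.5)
Your proof is correct and follows essentially the same route as the paper. The upper bound comes from the preceding coefficient estimate plus bounded overlap of the supports. The lower bound comes from a finite-sum quasi-norm equivalence with bounded overlap, together with the scaling $\|\phi^{(n)}_\nu\|_{L_p(\Omega_T)}^p\sim 2^{-n(1+\frac{s_2}{s_1 d})}$. The only difference is that you carry out the lower bound element by element via the local Lagrange bases, which makes explicit what the paper compresses into a single line.
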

\begin{proof}
    Rearranging the result from \cref{lem:First_lem_for_gen=classical Besov spaces} and summing up over all $\nu\in \calL(\calP_n)$ directly yields the asserted upper bound when we keep in mind that the supports of the basis functions have finite overlap depending only on $d$ and $\kappa_{\calP_0}$. Conversely,
    \begin{align*}
        \|F\|_{L_p(\Omega_T)}\sim_{\,d,p, \kappa_{\calP_0}, \mu(\calP_0)} \left(\sum\limits_{\nu\in \calL(\calP_n)} |c_\nu|^p \|\phi_\nu^{(n)}\|_{L_p(\Omega_T)}^p\right)^\frac{1}{p} \sim_{\,d,p, r_1, r_2, \kappa_{\calP_0}, \mu(\calP_0)}  \left(\sum\limits_{\nu\in \calL(\calP_n)} |c_\nu|^p \, 2^{-n\left(1+\frac{s_2}{s_1 d}\right)}\right)^\frac1p
    \end{align*}
    where we have used the (quasi-)norm equivalency for finite dimensional spaces together with the aforementioned finite overlap in the first step as well as \cref{rem:estimate_primal_elements} and \cref{rem:Generalized_Besov_Spaces}\ref{rem:Generalized_Besov_Spaces - item 4} in the second one.
\end{proof}

In the sequel, we employ the following (quasi-)norms on $\widehat{B}^{\alpha_1,\alpha_2}_{p,q}(\Omega_T)$.

\begin{lem}\label{lem:norm equivalency_gen_besov}
    Let \cref{assumptions:multiscale-decomposition} hold true. For $f\in \widehat{B}^{\alpha_1,\alpha_2}_{p,q}(\Omega_T)$, we define
    \begin{align*}
    \|f\|^E_{\widehat{B}^{\alpha_1,\alpha_2}_{p,q}(\Omega_T)}&:= |f|^E_{\widehat{B}^{\alpha_1,\alpha_2}_{p,q}(\Omega_T)}+ \|f\|_{L_{p}(\Omega_T)}:=\left(\sum\limits_{n=0}^{\infty} 2^{n\frac{\alpha_2}{d}q} E\left(f, \V^{r_1, r_2}_{\calP_n}, \Omega_T\right)_p^q \right)^\frac{1}{q} + \|f\|_{L_{p}(\Omega_T)},
    \\\|f\|^\pi_{\widehat{B}^{\alpha_1,\alpha_2}_{p,q}(\Omega_T)}&:=|f|^\pi_{\widehat{B}^{\alpha_1,\alpha_2}_{p,q}(\Omega_T)}+ \|f\|_{L_{p}(\Omega_T)}:=\left(\sum\limits_{n=0}^{\infty} 2^{n\frac{\alpha_2}{d}q} \|f-\pi_n(f)\|_{L_p(\Omega_T)}^q \right)^\frac{1}{q} + \|f\|_{L_p(\Omega_T)}, \text{ and}
    \\\|f\|_{\widehat{B}^{\alpha_1,\alpha_2}_{p,q} (\Omega_T)}&:=\left(\sum\limits_{n=0}^{\infty}\left(\sum\limits_{\nu\in\calL(\calP_n)} |\supp \phi_\nu^{(n)}|^{-\frac{p}{\frac{1}{\alpha_1}+\frac{d}{\alpha_2}}} \left\|b_{\nu}^{(n)}(f)\phi_{\nu}^{(n)}\right\|_{L_p(\Omega_T)}^p\right)^{\frac{q}{p}}\right)^\frac{1}{q}.
    \end{align*}
    with the usual modifications for $q=\infty$, where the components in the last line stem from the multiscale decomposition given in \cref{rem:Generalized_Besov_Spaces}\ref{rem:Generalized_Besov_Spaces - item 3}. Then, the above expressions are all equivalent to $\|\cdot\|^\Delta_{\widehat{B}^{\alpha_1,\alpha_2}_{q,q}(\Omega_T)}$ on $\widehat{B}^{\alpha_1,\alpha_2}_{p,q}(\Omega_T)$ with constants that depend (at most) on $d,\rho, p, q, s_1, s_2, \alpha_2, r_1, r_2, \kappa_{\calP_0}$, and $\mu(\calP_0)$, if $\alpha_i> 0$, $i=1,2$. Further, if $\alpha_i\le 0$, $i=1,2$, then at least $\|f\|^\Delta_{\widehat{B}^{\alpha_1,\alpha_2}_{p,q}(\Omega_T)} \sim \|f\|_{\widehat{B}^{\alpha_1,\alpha_2}_{p,q}(\Omega_T)} \lesssim\|f\|^E_{\widehat{B}^{\alpha_1,\alpha_2}_{p,q}(\Omega_T)}\sim \|f\|^\pi_{\widehat{B}^{\alpha_1,\alpha_2}_{p,q}(\Omega_T)}  $  with the same dependency of the constants.

\end{lem}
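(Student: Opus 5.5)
The plan is to compare all four quantities with $\|\cdot\|^\Delta_{\widehat{B}^{\alpha_1,\alpha_2}_{p,q}(\Omega_T)}$ through three separate equivalences: coefficient norm vs.\ $\Delta$, then $E$ vs.\ $\pi$, then $\pi$ vs.\ $\Delta$. Only the last one needs $\alpha_2>0$; the others hold for all $\alpha_i$. Throughout, $\mu:=\min(p,1)$, and I write only the case $q<\infty$; $q=\infty$ is the same argument with suprema.

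\textbf{Coefficient norm vs.\ $\Delta$.} By \cref{rem:Generalized_Besov_Spaces}\ref{rem:Generalized_Besov_Spaces - item 3}, $\Delta_n(f)=\sum_{\nu\in\calL(\calP_n)} b^{(n)}_\nu(f)\phi^{(n)}_\nu$. By \cref{rem:estimate_primal_elements} and \cref{rem:Generalized_Besov_Spaces}\ref{rem:Generalized_Besov_Spaces - item 4},
\[
\|b_\nu^{(n)}\phi_\nu^{(n)}\|_{L_p}^p\sim |b_\nu^{(n)}|^p\, 2^{-n(1+\frac{s_2}{s_1d})}
\quad\text{and}\quad
|\supp\phi_\nu^{(n)}|\sim 2^{-n(1+\frac{s_2}{s_1d})}.
\]
Since $(\alpha_1,\alpha_2)=\lambda(s_1,s_2)$ and $1+\frac{s_2}{s_1d}=\frac{s_2}{d}\big(\frac1{s_1}+\frac d{s_2}\big)$, we get
\[
|\supp\phi_\nu^{(n)}|^{-\frac{1}{\frac1{\alpha_1}+\frac d{\alpha_2}}}\sim 2^{n\lambda\frac{s_2}{d}}=2^{n\frac{\alpha_2}{d}}.
\]
So the inner sum at level $n$ equals $2^{n\frac{\alpha_2}{d}p}$ times $\sum_\nu |b^{(n)}_\nu|^p2^{-n(1+\frac{s_2}{s_1d})}$. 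By \cref{lem:Second_lem_for_gen=classical Besov spaces} this is $\sim 2^{n\frac{\alpha_2}{d}p}\|\Delta_n(f)\|_{L_p}^p$. Summing in $\ell^{q/p}$ gives $\|f\|_{\widehat B}\sim\|f\|^\Delta_{\widehat B}$ for every sign of $\alpha_i$.

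\textbf{$E$ vs.\ $\pi$.} We have $E(f,\V_{\calP_n}^{r_1,r_2},\Omega_T)_p\le\|f-\pi_n(f)\|_{L_p}$ trivially, and the reverse bound is \cref{thm:operater_pi_quasi-best_approx}. This again holds for all $\alpha_i$.

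\textbf{$\Delta$ bounded by $\pi$.} For $n\ge1$, $\|\Delta_n f\|_{L_p}\lesssim_p\|f-\pi_nf\|_{L_p}+\|f-\pi_{n-1}f\|_{L_p}$. For $n=0$, $\|\pi_0f\|_{L_p}\lesssim\|f\|_{L_p}+\|f-\pi_0 f\|_{L_p}$. Shifting the index costs only a factor $2^{\alpha_2/d}$, so $\|f\|^\Delta\lesssim\|f\|^\pi$ for any $\alpha_2$. Together with the first two steps this gives the chain $\|f\|^\Delta\sim\|f\|_{\widehat B}\lesssim\|f\|^E\sim\|f\|^\pi$ claimed for $\alpha_i\le0$.

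\textbf{$\pi$ bounded by $\Delta$ (needs $\alpha_2>0$).} First I must justify $f-\pi_n(f)=\sum_{m>n}\Delta_m(f)$ in $L_p$. This is the main obstacle, especially for $p=\infty$.
\begin{itemize}
\item Finiteness of the $\Delta$-norm with $\alpha_2>0$ makes $\sum_m\|\Delta_m f\|_{L_p}^\mu$ finite. Hence $\pi_N f=\sum_{m\le N}\Delta_m f$ converges in $L_p$ to some $g$.
\item To identify $g=f$, I would use \cref{thm:operater_pi_quasi-best_approx} in a finite exponent $\tilde p=\rho$ (or $\tilde p=1$): $\|f-\pi_N f\|_{L_{\tilde p}}\lesssim E(f,\V_{\calP_N}^{r_1,r_2},\Omega_T)_{\tilde p}$.
\item This tends to $0$ because uniformly refined continuous finite elements are dense in $L_{\tilde p}(\Omega_T)$: the mesh size goes to $0$ by \cref{corollary relationship diameter level - ATOMIC_SPLIT}, and continuous functions are dense in $L_{\tilde p}$. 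Hence $g=f$ a.e.
\end{itemize}
Then $\|f-\pi_nf\|_{L_p}^\mu\le\sum_{m>n}\|\Delta_mf\|_{L_p}^\mu$. A discrete Hardy inequality with weights $2^{n\alpha_2/d}$, valid because $\alpha_2>0$, gives
\[
\Big(\sum_n 2^{n\frac{\alpha_2}{d}q}\|f-\pi_nf\|^q_{L_p}\Big)^{1/q}\lesssim\|f\|^\Delta.
\]
Likewise $\|f\|_{L_p}^\mu\le\sum_{m\ge0}\|\Delta_m f\|_{L_p}^\mu\lesssim(\|f\|^\Delta)^\mu$ by Hölder in the geometric weights. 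This yields $\|f\|^\pi\lesssim\|f\|^\Delta$. The constants depend only on $d,\rho,p,q,s_1,s_2,\alpha_2,r_1,r_2,\kappa_{\calP_0},\mu(\calP_0)$, inherited from the cited lemmas.
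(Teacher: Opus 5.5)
Your proof is essentially the paper's proof. It makes the same four comparisons: the coefficient norm against $\Delta$ via \cref{lem:Second_lem_for_gen=classical Besov spaces}, $E$ against $\pi$ via \cref{thm:operater_pi_quasi-best_approx}, $\Delta\lesssim\pi$ by the quasi-triangle inequality, and $\pi\lesssim\Delta$ by $\min(p,1)$-subadditivity plus the discrete Hardy inequality. Your one addition is a justification of $f=\sum_m\Delta_m(f)$ in $L_p$, which the paper simply takes from \cref{rem:Generalized_Besov_Spaces}\ref{rem:Generalized_Besov_Spaces - item 3}. That argument is correct, but only when $\rho<\infty$: \cref{thm:operater_pi_quasi-best_approx} permits only exponents $\tilde p\ge\rho$, so your alternative $\tilde p=1$ also needs $\rho\le 1$.

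The remaining case $\rho=p=\infty$, which \cref{assumptions:multiscale-decomposition} allows, cannot be repaired, because there both the identity and the lemma itself fail. Take $f=\mathds{1}_A$, where $A\subset\Omega_T$ satisfies $0<|A\cap V|<|V|$ for every nonempty open $V\subset\Omega_T$. For instance, let $A=(A_1\times\R^d)\cap\Omega_T$ with $A_1\subset\R$ the classical set meeting every interval in positive but not full measure.
\begin{itemize}
\item Any continuous $G$ with $\|f-G\|_{L_\infty(V)}\le\eta$ must be within $\eta+\varepsilon$ of both $0$ and $1$ near each interior point. Hence $\eta\ge\frac12$, and equality forces $G\equiv\frac12$.
\item So on every prism the unique best $L_\infty$-approximation in $\Pi^{r_1,r_2}_{t,\bm{x}}$ is the constant $\frac12$. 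This gives $\pi_n(f)\equiv\frac12$ for all $n$.
\item Consequently $\|f\|^\Delta_{\widehat{B}^{\alpha_1,\alpha_2}_{\infty,q}(\Omega_T)}=\|\Delta_0(f)\|_{L_\infty(\Omega_T)}=\frac12$, so $f\in\widehat{B}^{\alpha_1,\alpha_2}_{\infty,q}(\Omega_T)$.
\item On the other hand, $\|f-\pi_n(f)\|_{L_\infty(\Omega_T)}=E(f,\V^{r_1,r_2}_{\calP_n},\Omega_T)_\infty=\frac12$ for every $n$. Hence $|f|^\pi=|f|^E=\infty$ whenever $\alpha_2>0$.
\end{itemize}
So your proof establishes the lemma exactly in the range $\rho<\infty$, which is the range where it is true. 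The statement, and the Remark it relies on, should exclude $\rho=\infty$.
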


\begin{rem}
    In particular, we stress that $\|\cdot\|^E_{\widehat{B}^{\alpha_1,\alpha_2}_{p,q}(\Omega_T)}$ and due to the above lemma also $\widehat{B}^{\alpha_1,\alpha_2}_{p,q}(\Omega_T)$  do  depend on $(r_1, r_2)$ but not on the choice of $\rho$, at least if $\alpha_i>0$, $i=1,2$.
\end{rem}

\begin{proof}
    We will only show the assertion for $q<\infty$. The case $q=\infty$ works with the usual changes. First, we will show that $\|f\|^\Delta_{\widehat{B}^{\alpha_1,\alpha_2}_{p,q} (\Omega_T)}\sim_{\,d,p,r_1, r_2, \kappa_{\calP_0}, \mu(\calP_0)}\|f\|_{\widehat{B}^{\alpha_1,\alpha_2}_{p,q} (\Omega_T)}$. Therefore, using the representation from \cref{rem:Generalized_Besov_Spaces}\ref{rem:Generalized_Besov_Spaces - item 3} and \cref{lem:Second_lem_for_gen=classical Besov spaces}, we obtain
    \begin{align*}
        2^{n\frac{s_2}{d}q}\|\Delta_n(f)\|^q_{L_p(\Omega_T)} \sim_{\,d,p,q,r_1, r_2, \kappa_{\calP_0}, \mu(\calP_0)} \left(\sum\limits_{\nu\in \calL(\calP_n)} 2^{n\frac{s_2}{d}q} \, |b^{(n)}_\nu|^p \, 2^{-n\left(1+\frac{s_2}{s_1 d}\right)}\right)^\frac{q}{p}.
    \end{align*}
    The assertion now follows due to the fact that $2^{n\frac{s_2}{d}q}\sim_{\, d,\kappa_{\calP_0},\mu(\calP_0)}|\supp\phi_\nu^{(n)}|^{-\frac{q}{\frac{1}{s_1}+\frac{d}{s_2}}}$ for $\nu\in \calL(\calP_n)$, which is a consequence of the last result in \cref{rem:Generalized_Besov_Spaces}\ref{rem:Generalized_Besov_Spaces - item 4} together with $2^{-n\left(1+\frac{s_2}{s_1 d}\right)}\sim_{\,d,p, r_1, r_2, \kappa_{\calP_0}, \mu(\calP_0)}\|\phi_\nu^{(n)}\|_{L_p(\Omega_T)}^p$ due to \cref{rem:estimate_primal_elements} and \cref{rem:Generalized_Besov_Spaces}\ref{rem:Generalized_Besov_Spaces - item 4}.

    Now we will estimate $\|f\|^E_{\widehat{B}^{\alpha_1,\alpha_2}_{p,q} (\Omega_T)} $, $\|f\|^\pi_{\widehat{B}^{\alpha_1,\alpha_2}_{p,q} (\Omega_T)} $, and $\|f\|^\Delta_{\widehat{B}^{\alpha_1,\alpha_2}_{p,q} (\Omega_T)} $ against each other, where we proceed similar to the proof of \cite[Thm.~3.3.3]{Lei00}. First, $\|f\|^E_{\widehat{B}^{\alpha_1,\alpha_2}_{p,q} (\Omega_T)}\le \|f\|^\pi_{\widehat{B}^{\alpha_1,\alpha_2}_{p,q} (\Omega_T)}$ is a direct consequence of \mbox{$\pi_n(f)\in \V^{r_1, r_2}_{\calP_n}$}. Second, $\|f\|^\pi_{\widehat{B}^{\alpha_1,\alpha_2}_{p,q} (\Omega_T)} \lesssim_{\,d,\rho, p, s_1, s_2, \alpha_2, r_1, r_2, \kappa_{\calP_0}, \mu(\calP_0)}\|f\|^E_{\widehat{B}^{\alpha_1,\alpha_2}_{p,q} (\Omega_T)}$, due to \cref{thm:operater_pi_quasi-best_approx}. Furthermore, following the calculations in \cref{lem:Multiscale_decomposition}, in particular, \eqref{lem:Multiscale_decomposition - proof eq 1}, we obtain the estimate \mbox{$\|f\|^\Delta_{\widehat{B}^{\alpha_1,\alpha_2}_{p,q} (\Omega_T)} \lesssim_{\,d,p,q,\alpha_2}\|f\|^\pi_{\widehat{B}^{\alpha_1,\alpha_2}_{p,q} (\Omega_T)}$}. Lastly, for $n\in \N_0\cup\{-1\}$, we obtain
    \begin{align*}
        \|f-\pi_n(f)\|_{L_p(\Omega_T)}^{\min(p,1)}\le \sum\limits_{j=n+1}^{\infty}\|\Delta_j(f)\|_{L_p(\Omega_T)}^{\min(p,1)},
    \end{align*}
    using the multiscale decomposition of $f$ and the subadditivity of $\|\cdot\|_{L_p(\Omega_T)}^{\min(p,1)}$. Now, $\|f\|^\pi_{\widehat{B}^{\alpha_1,\alpha_2}_{p,q} (\Omega_T)} \lesssim_{\,d, p, q, \alpha_2}\|f\|^\Delta_{\widehat{B}^{\alpha_1,\alpha_2}_{p,q} (\Omega_T)}$, for $\alpha_i> 0$, $i=1,2$, is the consequence of an application of the discrete Hardy's inequality (cf. Lemma 3.4 of \cite[Ch.~2]{DL93}).
\end{proof}

Furthermore, we want to show that an embedding from the generalized anisotropic Besov spaces into the classical ones is also possible for small smoothness parameters. This can be proven by making good use of an appropriate multiscale decomposition of Besov functions, following the approach from \cite[Thm.~4.8]{DP88}.  

\begin{thm}\label{thm:Embedding_generalized_into_classical_aniso_spaces}
    Let \cref{assumptions:multiscale-decomposition} hold, $0<\max(\alpha_1, \alpha_2)<1+\frac{1}{p}$, and $r_i>\alpha_i$, $i=1,2$. Then the embedding $\widehat{B}^{\alpha_1,\alpha_2}_{p,q} (\Omega_T)\hookrightarrow B^{\alpha_1,\alpha_2}_{p,q} (\Omega_T)$ holds true continuously with an embedding constant only depending on $d,p,q,\alpha_1, \alpha_2, r_1, r_2, \kappa_{\calP_0}, \mu(\calP_0)$, and $\LipProp(\Omega)$.
\end{thm}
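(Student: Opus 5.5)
We prove the embedding via a Bernstein-type inequality for the finite element components $\Delta_n(f)\in\V^{r_1,r_2}_{\calP_n}$, combined with a discrete Hardy-type summation, following \cite[Thm.~4.8]{DP88}.

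\textbf{Decomposition of the moduli.} Take $f\in\widehat{B}^{\alpha_1,\alpha_2}_{p,q}(\Omega_T)$ and write $f=\sum_{n\ge0}\Delta_n(f)$ in $L_p(\Omega_T)$, as in \cref{rem:Generalized_Besov_Spaces}\ref{rem:Generalized_Besov_Spaces - item 3}. We work with the discrete seminorm $|\cdot|^{*,(r_1,r_2)}$ from \eqref{Eq:Equivalency quasi-seminorms}, which is admissible by the norm equivalence stated below \eqref{Eq:Equivalency quasi-seminorms}. Put $\mu:=\min(p,1)$. For each level $k$, subadditivity of $\omega^{\mu}$ gives
\begin{align*}
\omega_{r_1,t}\Big(f,\Omega_T,2^{-\frac{k\alpha_2}{\alpha_1 d}}\Big)_p^{\mu}\le\sum_{n\ge0}\omega_{r_1,t}\Big(\Delta_n(f),\Omega_T,2^{-\frac{k\alpha_2}{\alpha_1 d}}\Big)_p^{\mu},
\end{align*}
and the spatial modulus at scale $2^{-k/d}$ decomposes in the same way.

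\textbf{Bounding each term.} We split the sum at $n=k$.
\begin{itemize}
\item For $n>k$ we use the trivial bound $\omega(\Delta_n(f),\cdot)_p\lesssim\|\Delta_n(f)\|_{L_p(\Omega_T)}$.
\item For $n\le k$ we use a Bernstein estimate for $F\in\V^{r_1,r_2}_{\calP_n}$. With $\gamma$ chosen such that $\max(\alpha_1,\alpha_2)<\gamma<1+\frac1p$ (and $\gamma\le r_i$), we aim for
\begin{align*}
\omega_{r_1,t}(F,\Omega_T,\delta)_p\lesssim \big(\delta\, 2^{\frac{n\alpha_2}{\alpha_1 d}}\big)^{\gamma\frac{\alpha_1}{\alpha_1}}\|F\|_{L_p},\qquad \omega_{r_2,\bm{x}}(F,\Omega_T,\delta)_p\lesssim (\delta 2^{n/d})^{\gamma}\|F\|_{L_p},
\end{align*}
for $\delta$ below the level-$n$ scale. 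More precisely, the temporal bound should carry exponent $1+\frac1p$ relative to the scale $2^{-n\alpha_2/(\alpha_1 d)}$.
\end{itemize}

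\textbf{Proving the Bernstein estimate.} Expand $F=\sum_\nu c_\nu\phi^{(n)}_\nu$. The computation in the proof of \cref{lem:FEM_elements_of_Aniso_Besov} gives, for each basis function,
\begin{align*}
\omega_{r_1,t}(\phi^{(n)}_\nu,\Omega_T,\delta)_p\lesssim (\delta/\diam_t)^{1+\frac1p}\,|\supp\phi_\nu^{(n)}|^{1/p},
\end{align*}
and likewise in space, using the diameters from \cref{rem:Generalized_Besov_Spaces}\ref{rem:Generalized_Besov_Spaces - item 5}. The differences are nonzero only on a strip of width $\sim\delta$ around the element interfaces, where $F$ is piecewise polynomial and $|\Delta^{r}F|\lesssim\min(1,\delta/\diam)\max|c_\nu|$. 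Summing these contributions with finite overlap and then applying \cref{lem:First_lem_for_gen=classical Besov spaces} and \cref{lem:Second_lem_for_gen=classical Besov spaces} yields
\begin{align*}
\omega_{r_1,t}(F,\Omega_T,\delta)_p\lesssim \min\big(1,(\delta 2^{\frac{n\alpha_2}{\alpha_1 d}})^{1+\frac1p}\big)\|F\|_{L_p},
\end{align*}
and analogously in space. Since $\alpha_2/(\alpha_1 d)=s_2/(s_1 d)$ by the assumption $(\alpha_1,\alpha_2)\in\R(s_1,s_2)$, the temporal and spatial scales both match level $n$. All constants depend only on $d,p,r_i,\kappa_{\calP_0},\mu(\calP_0)$. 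I expect this Bernstein step to be the main obstacle, especially keeping the $p<1$ case and the strip-measure estimates uniform in $n$.

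\textbf{Summation.} Inserting both bounds at $\delta=2^{-k\alpha_2/(\alpha_1d)}$ (resp. $2^{-k/d}$), the rescaled quantity $2^{k\alpha_2/d}\,\omega$ is dominated by
\begin{align*}
\Big(\sum_{n\le k}2^{-(k-n)\frac{\alpha_2}{d}(\beta-1)\mu}\big(2^{n\frac{\alpha_2}{d}}\|\Delta_n(f)\|_{L_p}\big)^{\mu}+\sum_{n>k}2^{-(n-k)\frac{\alpha_2}{d}\mu}\big(2^{n\frac{\alpha_2}{d}}\|\Delta_n(f)\|_{L_p}\big)^{\mu}\Big)^{1/\mu}.
\end{align*}
Here $\beta>1$ encodes the gap $1+\frac1p>\max(\alpha_1,\alpha_2)$: the temporal exponent is $(1+\frac1p)\frac{\alpha_2}{\alpha_1 d}$ versus $\frac{\alpha_2}{d}$, whose ratio is $(1+\frac1p)/\alpha_1>1$, and the spatial ratio is $(1+\frac1p)/\alpha_2>1$. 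Both sequences are therefore geometrically decaying convolution kernels. The discrete Hardy inequality (Lemma 3.4 of \cite[Ch.~2]{DL93}) then bounds the $\ell^q$ norm of the result by $\|f\|^\Delta_{\widehat{B}^{\alpha_1,\alpha_2}_{p,q}(\Omega_T)}$.

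\textbf{Conclusion.} Finally, $\|f\|_{L_p}\lesssim\big(\sum_n\|\Delta_n(f)\|_{L_p}^\mu\big)^{1/\mu}\lesssim\|f\|^\Delta$, since $\alpha_2>0$. Combined with the equivalence $|\cdot|^*\sim|\cdot|$, which brings in the dependence on $\LipProp(\Omega)$, this gives the embedding $\widehat{B}^{\alpha_1,\alpha_2}_{p,q}(\Omega_T)\hookrightarrow B^{\alpha_1,\alpha_2}_{p,q}(\Omega_T)$.
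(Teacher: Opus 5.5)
Your proposal is correct and follows essentially the paper's route: a Bernstein estimate for elements of $\V^{r_1,r_2}_{\calP_n}$, obtained from the basis-function computations in \cref{lem:FEM_elements_of_Aniso_Besov} together with \cref{lem:Second_lem_for_gen=classical Besov spaces}, followed by a discrete Hardy summation in the spirit of \cite[Thm.~4.8]{DP88} that uses the gap $\max(\alpha_1,\alpha_2)<1+\frac1p$. The only difference is minor. The paper telescopes best approximations $F_n\in\V^{r_1,r_2}_{\calP_n}$, with tail $E(f,\V^{r_1,r_2}_{\calP_n},\Omega_T)_p$, and bounds by $\|f\|^E_{\widehat{B}^{\alpha_1,\alpha_2}_{p,q}(\Omega_T)}$, whereas you telescope the $\Delta_n(f)$ directly and land on $\|f\|^\Delta_{\widehat{B}^{\alpha_1,\alpha_2}_{p,q}(\Omega_T)}$, which spares you the detour through \cref{lem:norm equivalency_gen_besov}; the tentative exponent $\gamma$ you introduce is superfluous, since the exponent $1+\frac1p$ is what you actually use.
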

\begin{proof}
    Let us initially fix the notation $p^*:=\min(p,1)$ and choose $f\in \widehat{B}_{p,q}^{\alpha_1, \alpha_2}(\Omega_T)$. Further, for $n\in \N_0$, let $f_n, F_n\in \V^{r_1, r_2}_{\calP_n}$ such that 
    \begin{align*}
        \|f-F_n\|_{L_p(\Omega_T)} = E(f, \V^{r_1, r_2}_{\calP_n}, \Omega_T)_p\quad\text{and}\quad f_n:=F_{n}-F_{n-1},
    \end{align*}
    where $F_{-1}:=0$. We expand $f_n = \sum\limits_{\nu\in \calL(\calP_n)} c_\nu^{(n)}\phi_\nu^{(n)}$ with respect to the basis $\left(\phi_\nu^{(n)}\right)_{\nu\in \calL(\calP_n)}$ of $\V^{r_1, r_2}_{\calP_n}$. Now, choose $h\in \R$ and $\bm{h}\in \R^d$ with 
    \begin{align*}
        |h|\le r_1^{-1}\,2^{-\frac{n\alpha_2}{\alpha_1d}}\left(\min\limits_{I\in \calI_0} |I|\right)\lesssim_{\, r_1, \mu(\calP_0)} 2^{-\frac{n\alpha_2}{\alpha_1d}} \quad \text{and}\quad |\bm{h}|\le r_2^{-1}\,2^{-\frac{n}{d}}\left(\min\limits_{S\in \calT_0} |S|\right)\lesssim_{\,d, r_2, \kappa_{\calP_0}, \mu(\calP_0)}2^{-\frac{n}{d}},
    \end{align*}
    respectively. Now we estimate
    \begin{align}\label{thm:Embedding_generalized_into_classical_aniso_spaces - proof equation 1}
        \|\Delta^{r_1}_{h,t}f\|_{L_p([0,T]_{r_1,h}\times \Omega)}^{p^*}&\lesssim_{\,r_1} \|f-F_n\|_{L_p([0,T\times \Omega))}^{p^*} + \sum\limits_{k=0}^n \|\Delta^{r_1}_{h,t}f_k\|_{L_p([0,T]_{r_1,h}\times \Omega)}^{p^*}\notag
        \\&= E(f, \V^{r_1, r_2}_{\calP_n}, \Omega_T)_p^{p^*} + \sum\limits_{k=0}^n \|\Delta^{r_1}_{h,t}f_k\|_{L_p([0,T]_{r_1,h}\times \Omega)}^{p^*}, \quad n\in \N_0,
    \end{align}
    using the identity $F_n=\sum\limits_{k=0}^{n}f_k$, the subadditivity of $\|\cdot\|_{L_p([0,T]_{r_1,h}\times \Omega)}^{p^*}$, and the analogue of \cref{Rem:Moduli of smoothness}\ref{Rem:Moduli of smoothness - higher order lesser order} for differences. With the subadditivity, we also obtain
    \begin{align*}
        |\Delta_{h,t}^{r_1}f_k(t,\bm{x})|^{p^*} \le \sum\limits_{\nu\in \calL(\calP_k)} |c_\nu^{(k)}|^{p^*}|\Delta_{h,t}^{r_1}\phi_\nu^{(k)}(t,\bm{x})|^{p^*}\lesssim_{\, d,p,r_1, r_2, \kappa_{\calP_0}} \left(\sum\limits_{\nu\in \calL(\calP_k)} |c_\nu^{(k)}|^{p}|\Delta_{h,t}^{r_1}\phi_\nu^{(k)}(t,\bm{x})|^{p}\right)^\frac{p^*}{p}
    \end{align*}
    for every $k\in \{0,\dots,n\}$ and $(t,\bm{x})\in [0,T]_{r_1,h}\times \Omega$, by using the expansion of $f_k$ in the basis of $\V^{r_1, r_2}_{\calP_n}$. Additionally, in the second step, the (quasi-)norm equivalency in finite dimensional spaces has been used together with the fact that the basis functions have finite overlap depending only on $d$ and $\kappa_{\calP_0}$. Furthermore, we can estimate 
    \begin{align}\label{thm:Embedding_generalized_into_classical_aniso_spaces - proof equation 1.5}
        |\Delta_{h,t}^{r_1}f_k(t,\bm{x})|^{p} \lesssim_{\, d,p,\alpha_1, \alpha_2,r_1,r_2, \kappa_{\calP_0}, \mu(\calP_0)} |h|^p\sum\limits_{\nu\in \calL(\calP_k)} |c_\nu^{(k)}|^{p}\,\diam_t\left(\supp \phi^{(k)}_\nu\right)^{-p}\cdot \mathds{1}_{\supp \left(\Delta^{r_1}_{h,t}\phi_\nu^{(k)}\right)}(t,\bm{x})
    \end{align}
    using the temporal part of \eqref{lem:FEM_elements_of_Aniso_Besov - Proof Eq 1} as well as the fact that we have chosen $h$ to fulfill $|h|\le \diam_t\left(\supp\phi_\nu^{(n)}\right)\le \diam_t\left(\supp\phi_\nu^{(k)}\right)$. Employing the estimate for $\diam_t\left(\supp \phi^{(k)}_\nu\right)$ from \cref{rem:Generalized_Besov_Spaces}\ref{rem:Generalized_Besov_Spaces - item 5}, we now integrate over $[0,T]_{r_1, h}\times \Omega$ with respect to $(t,\bm{x})$ in order to derive
    \begin{align*}
        \|\Delta_{h,t}^{r_1}f_k\|^{p}_{L_p([0,T]_{r_1, h}\times \Omega)}&\lesssim_{\, d,p,\alpha_1, \alpha_2,r_1, r_2, \kappa_{\calP_0}, \mu(\calP_0)} |h|^p\,2^{\frac{k\alpha_2p}{\alpha_1d}}\sum\limits_{\nu\in \calL(\calP_k)}|c_\nu^{(k)}|^{p}\, \left|\supp\left(\Delta_{h,t}^{r_1}\phi_\nu^{(k)}\right)\right|
        \\&\lesssim_{\,d,p,r_1, \kappa_{\calP_0},\mu(\calP_0)} |h|^{1+p}\,2^{\frac{k\alpha_2p}{\alpha_1d}}\sum\limits_{\nu\in \calL(\calP_k)}|c_\nu^{(k)}|^{p}\,\diam_{\bm{x}}\left(\supp \phi_\nu^{(k)}\right)^d
        \\&\lesssim_{\,d, \kappa_{\calP_0},\mu(\calP_0)} |h|^{1+p}\,2^{\frac{k\alpha_2p}{\alpha_1d}(1+p)}\sum\limits_{\nu\in \calL(\calP_k)}|c_\nu^{(k)}|^{p}\,2^{-k\left(1+\frac{\alpha_2}{\alpha_1d}\right)}.
    \end{align*}
    Here, we have additionally used the left part of \eqref{lem:FEM_elements_of_Aniso_Besov - Proof Eq 2} together with \eqref{lem:FEM_elements_of_Aniso_Besov - Proof Eq 3} in the second step as well as the estimate for $\diam_{\bm{x}}\left(\supp \phi^{(k)}_\nu\right)$ from \cref{rem:Generalized_Besov_Spaces}\ref{rem:Generalized_Besov_Spaces - item 5} in the last one. Using, the result from \cref{lem:Second_lem_for_gen=classical Besov spaces}, this becomes
    \begin{align}\label{thm:Embedding_generalized_into_classical_aniso_spaces - proof equation 2}
        \|\Delta_{h,t}^{r_1}f_k\|^{p}_{L_p([0,T]_{r_1, h}\times \Omega)}&\lesssim_{\,d,p,\alpha_1, \alpha_2, r_1, r_2, \kappa_{\calP_0},\mu(\calP_0)} |h|^{1+p}\,2^{\frac{k\alpha_2p}{\alpha_1d}(1+p)} \|f_k\|_{L_p(\Omega_T)}^p
        \\&= |h|^{1+p}\,2^{\frac{k\alpha_2p}{\alpha_1d}(1+p)}\left(E(f, \V^{r_1, r_2}_{\calP_k}, \Omega_T)_p^p + E(f, \V^{r_1, r_2}_{\calP_{k-1}}, \Omega_T)_p^p\right),\notag
    \end{align}
    where we use $E(f, \V^{r_1, r_2}_{\calP_{-1}}, \Omega_T)_p:=\|f\|_{L_p(\Omega_T)}$. Inserting \eqref{thm:Embedding_generalized_into_classical_aniso_spaces - proof equation 2} into \eqref{thm:Embedding_generalized_into_classical_aniso_spaces - proof equation 1}, taking the supremum over all considered $h$ and applying the estimate from \cref{Rem:Moduli of smoothness}\ref{Rem:Moduli of smoothness - scaling item}, yields
    \begin{align*}
        \omega_{r_1, t}\left(f,\Omega_T, 2^{-\frac{n\alpha_2}{\alpha_1d}}\right)_p^{p^*}\lesssim_{\,d,p,\alpha_1, \alpha_2, r_1, r_2, \kappa_{\calP_0},\mu(\calP_0)} 2^{-\frac{n\alpha_2}{\alpha_1d}\left(1+\frac1p\right)p^*}\sum\limits_{k=-1}^{n}2^{\frac{k\alpha_2}{\alpha_1d}\left(1+\frac1p\right)p^*}E(f, \V^{r_1, r_2}_{\calP_k}, \Omega_T)_p^{p^*}.
    \end{align*}
    Since $0<\alpha_1<1+\frac1p$, $\frac{\alpha_2}{d}<\frac{\alpha_2}{\alpha_1d}\left(1+\frac1p\right)$, the discrete Hardy inequality yields 
    \begin{align}\label{thm:Embedding_generalized_into_classical_aniso_spaces - proof equation 3}
        \left(\sum\limits_{n=0}^\infty 2^{n\frac{\alpha_2}{d}q}\omega_{r_1, t}\left(f,\Omega_T, 2^{-\frac{n\alpha_2}{\alpha_1d}}\right)_p^q\right)^\frac1q\lesssim_{\,d,p,q,\alpha_1, \alpha_2, r_1, r_2, \kappa_{\calP_0},\mu(\calP_0)} \|f\|^E_{\widehat{B}_{\alpha_1, \alpha_2}^{p,q}(\Omega_T)}.
    \end{align}
    Here, we have used the version of the inequality, which one can derive from \cite[Lem.~3.4 of Ch.~2]{DL93}, considering what is pointed out in the corresponding remark, in particular, Eq. (3.13). The spatial part works similar. We obtain 
    \begin{align*}
        |\Delta_{\bm{h},\bm{x}}^{r_2}f_k(t,\bm{x})|^{p} \lesssim_{\, d,p,\alpha_1, \alpha_2,r_1,r_2, \kappa_{\calP_0}, \mu(\calP_0)} |\bm{h}|^p\sum\limits_{\nu\in \calL(\calP_k)} |c_\nu^{(k)}|^{p}\,\diam_{\bm{x}}\left(\supp \phi^{(k)}_\nu\right)^{-p}\cdot \mathds{1}_{\supp \left(\Delta^{r_2}_{\bm{h},\bm{x}}\phi_\nu^{(k)}\right)}(t,\bm{x})
    \end{align*}
    on $[0,T]\times \Omega_{r_2, h}$, correspondingly to the derivation of \eqref{thm:Embedding_generalized_into_classical_aniso_spaces - proof equation 1.5}. This then leads to
    \begin{align*}
        &\|\Delta_{\bm{h},\bm{x}}^{r_2}f_k\|^{p}_{L_p([0,T]\times \Omega_{r_2, h})}
        \lesssim_{\, \substack{d,p,\alpha_1, \alpha_2,r_1,r_2, \\\kappa_{\calP_0}, \mu(\calP_0)}} |\bm{h}|^{1+p} \sum\limits_{\nu\in \calL(\calP_k)} |c_\nu^{(k)}|^p \,\diam_t\left(\supp \phi^{(k)}_\nu\right) \diam_{\bm{x}}\left(\supp \phi^{(k)}_\nu\right)^{d-1-p}
        \\&\lesssim_{\,d,p,\kappa_{\calP_0}, \mu(\calP_0)}|\bm{h}|^{1+p} \,2^{\frac{k}{d}(1+p)} \sum\limits_{\nu\in \calL(\calP_k)} |c_\nu^{(k)}|^p  \,2^{-k\left(1+\frac{\alpha_2}{\alpha_1d}\right)}
    \end{align*}
    using the right part of \eqref{lem:FEM_elements_of_Aniso_Besov - Proof Eq 2} together with \eqref{lem:FEM_elements_of_Aniso_Besov - Proof Eq 4} and $|\bm{h}|\le \diam_{\bm{x}}\left(\supp \phi_\nu^{(k)}\right)$ in the first step as well as \cref{rem:Generalized_Besov_Spaces}\ref{rem:Generalized_Besov_Spaces - item 5} in the second one. Further, combining the temporal analogues of \eqref{thm:Embedding_generalized_into_classical_aniso_spaces - proof equation 1} and \eqref{thm:Embedding_generalized_into_classical_aniso_spaces - proof equation 2} as well as going over to the supremum and using the scaling property of the moduli of smoothness, yields 
    \begin{align*}
        \omega_{r_2, \bm{x}}\left(f,\Omega_T, 2^{-\frac{n}{d}}\right)_p^{p^*}\lesssim_{\,d,p,\alpha_1, \alpha_2, r_1, r_2, \kappa_{\calP_0},\mu(\calP_0)} 2^{-\frac{n}{d}\left(1+\frac1p\right)p^*}\sum\limits_{k=-1}^{n}2^{\frac{k}{d}\left(1+\frac1p\right)p^*}E(f, \V^{r_1, r_2}_{\calP_k}, \Omega_T)_p^{p^*},
    \end{align*}
    which implies 
    \begin{align}\label{thm:Embedding_generalized_into_classical_aniso_spaces - proof equation 4}
        \left(\sum\limits_{n=0}^\infty 2^{n\frac{\alpha_2}{d}q}\omega_{r_2, \bm{x}}\left(f,\Omega_T, 2^{-\frac{n}{d}}\right)_p^q\right)^\frac1q\lesssim_{\,d,p,q,\alpha_1, \alpha_2, r_1, r_2, \kappa_{\calP_0},\mu(\calP_0)} \|f\|^E_{\widehat{B}^{\alpha_1, \alpha_2}_{p,q}(\Omega_T)}
    \end{align}
    by Hardy's inequality, where we have exploited that $\frac{\alpha_2}{d}<\frac{1}{d}\left(1+\frac1p\right)$, since $\alpha_2<1+\frac1p$. Adding \eqref{thm:Embedding_generalized_into_classical_aniso_spaces - proof equation 3}, \eqref{thm:Embedding_generalized_into_classical_aniso_spaces - proof equation 4} and $\|f\|_{L_p(\Omega)}$ now yields $\|f\|^{*,(r_1, r_2)}_{B^{\alpha_1, \alpha_2}_{p,q}(\Omega_T)} \lesssim_{\,d,p,q,\alpha_1, \alpha_2, r_1, r_2, \kappa_{\calP_0},\mu(\calP_0)} \|f\|^E_{\widehat{B}^{\alpha_1, \alpha_2}_{p,q}(\Omega_T)}$. With the (quasi-)norm equivalency result from the end of \cref{subsubsect:Besov spaces}, the assertion finally follows, since $r_i>\alpha_i$, $i=1,2$.
\end{proof}

\begin{cor}\label{cor:to_thm:Embedding_generalized_into_classical_aniso_spaces}
    Under \cref{assumptions:multiscale-decomposition}, $B^{\alpha_1, \alpha_2}_{p,q}(\Omega_T)= \widehat{B}^{\alpha_1, \alpha_2}_{p,q}(\Omega_T)$ in the sense of equivalent (quasi-)norms for $r_i>\alpha_i>0$, $i=1,2$, if and only if $\alpha_i< 1+\frac1p$, $i=1,2$, where the embedding constant only depends on $d,\rho, p, q, \alpha_1, \alpha_2, r_1, r_2, \kappa_{\calP_0}, \mu(\calP_0)$, and $\LipProp(\Omega)$. In particular, this implies that all the spaces $\widehat{B}^{\alpha_1, \alpha_2}_{p,q}(\Omega_T)$ coincide for different choices of the polynomial degrees as long as $r_i>\alpha_i$, $i=1,2$.
\end{cor}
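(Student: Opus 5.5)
The corollary combines the two embeddings already proven with the strictness observation from \cref{rem:Generalized_Besov_Spaces}\ref{rem:Generalized_Besov_Spaces - item 2}. I would treat the two directions of the equivalence separately and then deduce the statement about polynomial degrees.

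\emph{Sufficiency.} Assume $r_i>\alpha_i>0$ and $\alpha_i<1+\frac1p$ for $i=1,2$. Then $0<\max(\alpha_1,\alpha_2)<1+\frac1p$.
\begin{itemize}
\item \cref{lem:Multiscale_decomposition} gives $\|f\|^\Delta_{\widehat{B}^{\alpha_1,\alpha_2}_{p,q}(\Omega_T)}\lesssim\|f\|_{B^{\alpha_1,\alpha_2}_{p,q}(\Omega_T)}$. Its constant depends on $d,\rho,p,q,\alpha_1,\alpha_2,r_1,r_2,\kappa_{\calP_0},\mu(\calP_0)$ and $\LipProp(\Omega)$.
\item \cref{thm:Embedding_generalized_into_classical_aniso_spaces} gives the reverse inequality, with the same kind of constants.
\end{itemize}
Both statements are phrased via different but equivalent quasi-norms on $\widehat{B}^{\alpha_1,\alpha_2}_{p,q}(\Omega_T)$. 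The theorem's proof actually bounds the classical norm by $\|\cdot\|^E$. I would invoke \cref{lem:norm equivalency_gen_besov}, available since $\alpha_i>0$, to pass between $\|\cdot\|^\Delta$, $\|\cdot\|^E$, $\|\cdot\|^\pi$ and $\|\cdot\|_{\widehat{B}}$. Collecting the constants yields equality of the spaces with equivalent quasi-norms and the stated dependencies.

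\emph{Necessity.} Suppose $\max(\alpha_1,\alpha_2)\ge 1+\frac1p$ while $0<\alpha_i<r_i$. Take any prism $I\times S\in\calP_0$ and a free node $\nu$ of it, and consider the basis function $\phi_\nu\in\V^{r_1,r_2}_{\calP_0}$.
\begin{itemize}
\item By \cref{lem:FEM_elements_of_Aniso_Besov}, $\phi_\nu\notin B^{\alpha_1,\alpha_2}_{p,q}(\Omega_T)$. The lower bound $\omega(\phi_\nu,\delta)\gtrsim\delta^{1+1/p}$ for small $\delta$ makes the Besov seminorm diverge for every $q$.
\item On the other hand, $\phi_\nu\in\V^{r_1,r_2}_{\calP_n}$ for all $n$, so $\Delta_n(\phi_\nu)=0$ for $n\ge1$ by \cref{rem:linearity_and_projection_property_of_Q}. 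Hence $\phi_\nu\in\widehat{B}^{\alpha_1,\alpha_2}_{p,q}(\Omega_T)$, exactly as in \cref{rem:Generalized_Besov_Spaces}\ref{rem:Generalized_Besov_Spaces - item 2}.
\end{itemize}
So the spaces differ as sets, and no norm equivalence is possible.

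\emph{Independence of the degrees.} Let $(r_1,r_2)$ and $(\tilde r_1,\tilde r_2)$ both satisfy $r_i,\tilde r_i>\alpha_i$. If $\alpha_i<1+\frac1p$, both generalized spaces coincide with the same classical space $B^{\alpha_1,\alpha_2}_{p,q}(\Omega_T)$. The classical space does not depend on the degree, by the equivalence of seminorms with higher-order moduli stated at the end of \cref{subsubsect:Besov spaces}. Hence the two generalized spaces coincide.

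\emph{Main obstacle.} I expect the delicate point to be this last claim in the regime $\max(\alpha_1,\alpha_2)\ge1+\frac1p$, where no classical space is available as an intermediary. There I would read the statement as restricted to the range where the equivalence holds. If a full claim is needed, I would try a direct comparison of the $E$-quasi-norms for two degree pairs via Jackson/Whitney-type estimates on $\calP_n$ (\cref{thm:local_boundedness_of_pi_projection}, \cref{lem:Extended_Whitney_Estimate}). However, this seems to need regularity that one only has in the small-smoothness regime, so I would not pursue it beyond the stated range.
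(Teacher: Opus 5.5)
You follow the paper's proof essentially step for step. Sufficiency comes from \cref{lem:Multiscale_decomposition} and \cref{thm:Embedding_generalized_into_classical_aniso_spaces}; your routing through \cref{lem:norm equivalency_gen_besov} to pass from $\|\cdot\|^E$ to $\|\cdot\|^\Delta$ is more careful than the paper. Necessity uses a finite element function lying in $\widehat{B}^{\alpha_1,\alpha_2}_{p,q}(\Omega_T)$ but not in $B^{\alpha_1,\alpha_2}_{p,q}(\Omega_T)$. Degree independence goes only through the classical space, which is all the paper's argument yields.

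The necessity step, however, fails at one endpoint, and the paper's proof fails there too. Your claim that $\omega(\phi_\nu,\delta)_p\gtrsim\delta^{1+1/p}$ forces divergence ``for every $q$'' is wrong for $q=\infty$ and $\max(\alpha_1,\alpha_2)=1+\frac1p$. A continuous piecewise polynomial also satisfies the matching upper bound, so $\delta^{-\alpha_i}\omega(\phi_\nu,\delta)_p$ stays bounded. Consequently the whole space $\V^{r_1,r_2}_{\calP}$ lies in $B^{\alpha_1,\alpha_2}_{p,\infty}(\Omega_T)$. For $p=q=\infty$ this is just the Lipschitz bound $\omega(F,\delta)_\infty\lesssim\delta$, so \cref{lem:FEM_elements_of_Aniso_Besov} as stated already fails there. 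Hence no finite element function separates the two spaces at this endpoint.

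The spaces still differ there, but the witness must be an infinite lacunary sum. Suppose $\alpha_1=1+\frac1p$. Let $t_0$ be the midpoint of some $I_0\in\calI_0$, and let $\eta_j$ be the temporal hat function of half-width $2^{-j}|I_0|$ centred at $t_0$, constant in $\bm{x}$. Put $g:=\sum_{j\ge1}2^{-j}\eta_j$.
\begin{itemize}
\item The partial sums lie in $\V^{r_1,r_2}_{\calP_n}$. The tail gives $E(g,\V^{r_1,r_2}_{\calP_n},\Omega_T)_p\lesssim 2^{-\lceil ns_2/(s_1d)\rceil(1+\frac1p)}\le 2^{-n\alpha_2/d}$ by collinearity. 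Hence $g\in\widehat{B}^{\alpha_1,\alpha_2}_{p,\infty}(\Omega_T)$ by \cref{lem:norm equivalency_gen_besov}.
\item All kinks sit at $t_0$ with the same weighted slope jump $-2$. The $\sim\log(1/\delta)$ terms of half-width $\gg\delta$ therefore add up, giving $\omega_{\lfloor\alpha_1\rfloor+1,t}(g,\Omega_T,\delta)_p\gtrsim\delta^{1+\frac1p}\log(1/\delta)$, i.e.\ $g\notin B^{\alpha_1,\alpha_2}_{p,\infty}(\Omega_T)$.
\end{itemize}
If the endpoint is attained by $\alpha_2$, use the analogous sum of level-$n$ ridge functions equal to $1$ on a fixed interior facet of $\calT_0$, weighted by the spatial mesh width.

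Two smaller remarks.
\begin{itemize}
\item Taking $\phi_\nu$ on $\calP_0$ for an arbitrary node is too loose. If $\calI_0=\{[0,T]\}$ and $\alpha_2<1+\frac1p\le\alpha_1$, every element of $\V^{r_1,r_2}_{\calP_0}$ is polynomial in $t$ and lies in $B^{\alpha_1,\alpha_2}_{p,q}(\Omega_T)$. You must choose the witness on some $\calP_n$ that has an interior kink in the relevant direction; the paper's ``for any $\calP$'' glosses over the same point.
\item Your hesitation about degree independence beyond the small-smoothness range is unnecessary in principle. $\Delta_n(f)$ is polynomial on each element of $\calP_m$, $m\ge n$, so Lagrange interpolation of order $(\tilde r_1,\tilde r_2)$ on $\calP_m$ approximates it at the full rate, with no $1+\frac1p$ barrier. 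The $\|\cdot\|^E$-characterization plus Hardy's inequality would then cover $\alpha_i<\min(r_i,\tilde r_i)$. The paper does not do this, so your restricted reading matches what it actually proves.
\end{itemize}
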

\begin{proof}
    That the result holds true for $\max(\alpha_1, \alpha_2)<1+\frac1p$ is a consequence of \cref{lem:Multiscale_decomposition} together with \cref{thm:Embedding_generalized_into_classical_aniso_spaces}. In turn, for any space-time partition $\calP$ created from $\calP_0$ by finitely many applications of $\textup{PATCH}\_\textup{REFINE}(\cdot, \cdot, d, s_1, s_2)$, $\V^{r_1, r_2}_\calP\subsetneq B^{\alpha_1, \alpha_2}_{p,q}(\Omega_T)$ and $\V^{r_1, r_2}_\calP\subset \widehat{B}^{\alpha_1, \alpha_2}_{p,q}(\Omega_T)$ due to \cref{lem:FEM_elements_of_Aniso_Besov} and \cref{rem:Generalized_Besov_Spaces}\ref{rem:Generalized_Besov_Spaces - item 2}, correspondingly,  yielding it for $\max(\alpha_1, \alpha_2)\ge1+\frac1p$.
\end{proof}

This corollary allows us to show a Sobolev-type embedding between anisotropic Besov spaces of different smoothness, which can be seen as an extension of \cite[Thm.~1.3]{MSS26}.

\begin{thm}\label{thm:Sobolev-type-embedding}
    Let \cref{assumptions:multiscale-decomposition} hold, $p,q\in (0,\infty]$, $s_1, s_2\in \R^+$ with \mbox{$\frac{1}{\frac{1}{s_1}+\frac{d}{s_2}}-\frac1q+\frac1p>0 $}, and $0<\alpha_i<1+\frac{1}{p}$, $i=1,2$. Then, the embedding $B^{\alpha_1+s_1, \alpha_2 + s_2}_{q,q}(\Omega_T)\hookrightarrow B^{\alpha_1, \alpha_2}_{p,p}(\Omega_T)$ is continuous with a constant depending only on $d,p,q,\alpha_1, \alpha_2, s_1, s_2, \kappa_{\calP_0}, \mu(\calP_0), a(\calP_0)$, and $\LipProp(\Omega)$.
\end{thm}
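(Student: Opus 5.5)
Since $0<\alpha_i<1+\frac1p$ and we may pick $r_i>\alpha_i+s_i$, \cref{cor:to_thm:Embedding_generalized_into_classical_aniso_spaces} gives $B^{\alpha_1,\alpha_2}_{p,p}(\Omega_T)=\widehat{B}^{\alpha_1,\alpha_2}_{p,p}(\Omega_T)$ with equivalent (quasi-)norms. Via \cref{lem:norm equivalency_gen_besov} it therefore suffices to bound
\[
\|f\|^\pi_{\widehat{B}^{\alpha_1,\alpha_2}_{p,p}(\Omega_T)}=\Big(\sum_{n\ge0}2^{n\frac{\alpha_2}{d}p}\|f-\pi_n(f)\|^p_{L_p(\Omega_T)}\Big)^{1/p}+\|f\|_{L_p(\Omega_T)}
\]
by $\|f\|_{B^{\alpha_1+s_1,\alpha_2+s_2}_{q,q}(\Omega_T)}$. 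Throughout I use the uniform tensor meshes $\calP_n$ of \cref{assumptions:multiscale-decomposition}, built with the anisotropy ratio $(\alpha_1+s_1,\alpha_2+s_2)$. That ratio equals the ratio of $(s_1,s_2)$ and of $(\alpha_1,\alpha_2)$ because $(\alpha_1,\alpha_2)\in\R^+_0(s_1,s_2)$, and I take $\rho=\min(p,q)$. The term $\|f\|_{L_p}$ is handled directly by the embedding $B^{\alpha_1+s_1,\alpha_2+s_2}_{q,q}\hookrightarrow L_p$ from \cite[Thm.~1.3]{MSS26}, which applies because the smoothness only increases the exponent condition.

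For each $n$ and each $I\times S\in\calP_n$, I apply \cref{thm:local_boundedness_of_pi_projection} followed by \cref{lem:Extended_Whitney_Estimate} with smoothness $(\alpha_1+s_1,\alpha_2+s_2)$. This gives
\[
\|f-\pi_n f\|_{L_p(I\times S)}\lesssim |I\times S|^{\gamma}|f|_{B^{\alpha_1+s_1,\alpha_2+s_2}_{q,q}(\omega^{j(d)}_{\calP_n}(I\times S))},\qquad \gamma:=\tfrac{1}{\frac{1}{\alpha_1+s_1}+\frac{d}{\alpha_2+s_2}}-\tfrac1q+\tfrac1p>0.
\]
Since $\calP_n$ is a tensor mesh, its neighbourhoods are cylindrical. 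By \cref{rem:Generalized_Besov_Spaces} we have $|I\times S|\sim 2^{-n(1+\frac{s_2}{s_1 d})}$, and a direct computation shows $|I\times S|^{1/(\frac{1}{\alpha_1+s_1}+\frac{d}{\alpha_2+s_2})}\sim 2^{-n\frac{\alpha_2+s_2}{d}}$.

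Raising to the power $p$ and summing over $I\times S$, I separate two cases. If $p\ge q$, I use $\ell^q\hookrightarrow\ell^p$. If $p<q$, I use Hölder's inequality with the count $\#\calP_n\sim 2^{n(1+\frac{s_2}{s_1d})}$, which exactly absorbs the $\frac1p-\frac1q$ part of $\gamma$. In both cases the finite overlap from \cref{Enumeration further properties of the mesh} yields
\[
2^{n\frac{\alpha_2}{d}}\|f-\pi_nf\|_{L_p(\Omega_T)}\lesssim 2^{-n\varepsilon}|f|_{B^{\alpha_1+s_1,\alpha_2+s_2}_{q,q}(\Omega_T)}.
\]
Here $\varepsilon=\frac{s_2}{d}$ when $p<q$, and $\varepsilon=\frac{s_2}{d}+(1+\frac{s_2}{s_1d})(\frac1p-\frac1q)$ when $p\ge q$. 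This $\varepsilon$ is positive by the hypothesis $\frac{1}{1/s_1+d/s_2}-\frac1q+\frac1p>0$, since $\frac{s_2}{d}(1+\frac{s_2}{s_1d})^{-1}\cdot(1+\frac{s_2}{s_1d})=\frac{s_2}{d}$. Summing the resulting geometric series in $n$ gives the claim.

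The main obstacle is that this crude bound uses the full global seminorm at every level. Summability then depends entirely on $\varepsilon>0$, so the exponent bookkeeping must come out exactly. Particular care is needed in the $p<q$ Hölder step, and in checking that the constants in \cref{lem:Extended_Whitney_Estimate} are uniform in $n$; they are, because $a(\calP_n)\lesssim a(\calP_0)$. If the bookkeeping fails, I would instead keep the localized seminorms and use the discrete representation \eqref{Eq:Equivalency quasi-seminorms} together with a Hardy-type summation.
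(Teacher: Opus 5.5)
Your proposal is correct. For $q\le p$ (your case $p\ge q$) it is exactly the paper's argument, step for step:
\begin{itemize}
\item reduce to $\|f\|^\pi_{\widehat{B}^{\alpha_1,\alpha_2}_{p,p}(\Omega_T)}$ via \cref{cor:to_thm:Embedding_generalized_into_classical_aniso_spaces}, and bound $\|f\|_{L_p(\Omega_T)}$ by \cite[Thm.~1.3]{MSS26};
\item apply \cref{thm:local_boundedness_of_pi_projection} and \cref{lem:Extended_Whitney_Estimate} elementwise on the tensor meshes $\calP_n$;
\item pass from $\ell^p$ to $\ell^q$, use the global seminorm at every level, and sum a geometric series.
\end{itemize}
Your exponent $\varepsilon=\frac{s_2}{d}+(1+\frac{s_2}{s_1d})(\frac1p-\frac1q)=(1+\frac{s_2}{s_1d})(\frac{1}{\frac{1}{s_1}+\frac{d}{s_2}}-\frac1q+\frac1p)$ is precisely what the paper's appendix computes. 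The bookkeeping does come out, so the Hardy-type fallback you mention is not needed.

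The genuine difference is the case $q>p$. There the paper uses no multiscale machinery at all. It simply chains $B^{\alpha_1+s_1,\alpha_2+s_2}_{q,q}(\Omega_T)\hookrightarrow B^{\alpha_1,\alpha_2}_{q,p}(\Omega_T)\hookrightarrow B^{\alpha_1,\alpha_2}_{p,p}(\Omega_T)$ via \cite[Rem.~3.2(ii),(iii)]{MSS26}: lowering the smoothness frees the fine index, and $L_q\hookrightarrow L_p$ holds on the bounded domain.

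Your H\"older step with $\#\calP_n\sim 2^{n(1+\frac{s_2}{s_1d})}$ is also valid and yields $\varepsilon=\frac{s_2}{d}$. It buys a uniform treatment of both cases. The price is that in this easy regime you still invoke the identification $\widehat{B}^{\alpha_1,\alpha_2}_{p,p}=B^{\alpha_1,\alpha_2}_{p,p}$, and hence the restriction $\alpha_i<1+\frac1p$, which the paper's route for $q\ge p$ does not need. Your constant also carries a dependence on $\#\calP_0$ (equivalently $|\Omega_T|$) through $\#\calP_n\,|I\times S|\sim|\Omega_T|$. This matches the factor $|\Omega_T|^{\frac1p-\frac1q}$ implicit in the paper's $L_q\hookrightarrow L_p$ step.

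One small point applies to both cases. The passage from $\sum_{I\times S\in\calP_n}|f|^q_{B^{\alpha_1+s_1,\alpha_2+s_2}_{q,q}(\omega^{j(d)}_{\calP_n}(I\times S))}$ to $|f|^q_{B^{\alpha_1+s_1,\alpha_2+s_2}_{q,q}(\Omega_T)}$ is not a consequence of finite overlap alone. The seminorm is built from suprema over the step size, so the sum of local suprema need not be controlled by the global one. What is needed is the superadditivity result \cite[Cor.~3.8]{MSS26}, which goes through averaged moduli; this is what the paper cites at that point.
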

\begin{rem}
    We point out that the restriction on $\alpha_1$ and $\alpha_2$ is purely due to the fact that the anisotropic Besov regularity of our approximants does not become larger for increasing polynomial degrees. If smoother approximants are available, where the latter is the case  for example for anisotropic splines on simpler domains  like rectangular ones, it should be possible to show the above theorem without the restriction on the smoothness parameters $\alpha_1$ and $\alpha_2$.
\end{rem}
\begin{proof}
    First, we will consider the case $q\ge p$. Then, the embeddings $B^{\alpha_1+s_1, \alpha_2 + s_2}_{q,q}(\Omega_T) \hookrightarrow B^{\alpha_1, \alpha_2}_{q,p}(\Omega_T)\hookrightarrow B^{\alpha_1, \alpha_2}_{p,p}(\Omega_T)$ are continuous due to \cite[Rem.~3.2(ii) and (iii)]{MSS26}. Now we can consider the opposite case, i.e., $q<p$. We will only present the proof for $p<\infty$, the other case works similarly with the usual changes. Now let $f\in B^{\alpha_1+s_1, \alpha_2 + s_2}_{q,q}(\Omega_T)$ and put $r_i:=\max(\lfloor \alpha_i + s_i \rfloor,2) $, $i=1,2$. Now setting $\rho:=q$  and employing \cref{cor:to_thm:Embedding_generalized_into_classical_aniso_spaces} yields  $\|f\|_{B^{\alpha_1, \alpha_2}_{p,p}(\Omega_T)}\lesssim_{\,d,p,q,\alpha_1, \alpha_2, s_1, s_2, \kappa_{\calP_0}, \mu(\calP_0), \LipProp(\Omega)}\|f\|_{\widehat{B}^{\alpha_1, \alpha_2}_{p,p}(\Omega_T)}^\pi$, so it will suffice to bound $\|f\|_{L_p(\Omega_T)}$ and $|f|_{\widehat{B}^{\alpha_1, \alpha_2}_{p,p}(\Omega_T)}^\pi$. Since $\frac{1}{\frac{1}{s_1}+\frac{d}{s_2}}-\frac1q+\frac1p>0 $, also $\frac{1}{\frac{1}{\alpha_1+s_1}+\frac{d}{\alpha_2+s_2}}-\frac1q+\frac1p>0 $, and therefore $B^{\alpha_1+s_1, \alpha_2 + s_2}_{q,q}(\Omega_T)\hookrightarrow L_{p}(\Omega_T)$, i.e., $\|f\|_{L_p(\Omega_T)}\lesssim_{\,d,p,q,\alpha_1, \alpha_2, s_1, s_2, \kappa_{\calP_0}, \mu(\calP_0),a(\calP_0)}\|f\|_{B^{\alpha_1+s_1, \alpha_2 + s_2}_{q,q}(\Omega_T)}$ according to \cite[Thm.~1.3]{MSS26}. Furthermore,
    \begin{align*}
        &|f|_{\widehat{B}^{\alpha_1, \alpha_2}_{p,p}(\Omega_T)}^\pi = \left(\sum\limits_{n=0}^{\infty} 2^{n\frac{\alpha_2}{d}p} \sum\limits_{I\times S\in \calP_n} \|f-\pi_n(f)\|^p_{L_p(I\times S)}\right)^\frac1p
        \\&\lesssim_{\substack{\, d,p,q,\alpha_1, \alpha_2, s_1, s_2,\\ \kappa_{\calP_0},\mu(\calP_0), a(\calP_0)}} \left(\sum\limits_{n=0}^{\infty} 2^{n\frac{\alpha_2}{d}p} \sum\limits_{I\times S\in \calP_n} |I\times S|^{\left(\frac{1}{\frac{1}{\alpha_1+s_1}+\frac{d}{\alpha_2+s_2}}-\frac1q+\frac1p\right)p } |f|^p_{B^{\alpha_1+s_1, \alpha_2 + s_2}_{q,q}\left(\omega_{\calP_n}^{j(d)}(I\times S)\right)}\right)^\frac1p
        \\&\le \left(\sum\limits_{n=0}^{\infty} 2^{n\frac{\alpha_2}{d}p} \left(\sum\limits_{I\times S\in \calP_n} |I\times S|^{\left(\frac{1}{\frac{1}{\alpha_1+s_1}+\frac{d}{\alpha_2+s_2}}-\frac1q+\frac1p\right)q} |f|^q_{B^{\alpha_1+s_1, \alpha_2 + s_2}_{q,q}\left(\omega_{\calP_n}^{j(d)}(I\times S)\right)}\right)^\frac{p}{q}\right)^\frac1p
        \\&\lesssim_{\, d, \alpha_1, \alpha_2, \kappa_{\calP_0},\mu(\calP_0)} \left(\sum\limits_{n=0}^{\infty} 2^{n\frac{\alpha_2}{d}p}\,2^{-n\left(1+\frac{\alpha_2}{\alpha_1d}\right)\left(\frac{1}{\frac{1}{\alpha_1+s_1}+\frac{d}{\alpha_2+s_2}}-\frac1q+\frac1p\right)p} \right)^\frac1p |f|_{B^{\alpha_1+s_1, \alpha_2+s_2}_{q,q}(\Omega_T)}
        \\&= \left(\sum\limits_{n=0}^{\infty} \left(2^{-\left(1+\frac{s_2}{s_1d}\right)\left(\frac{1}{\frac{1}{s_1}+\frac{d}{s_2}}-\frac{1}{q}+\frac{1}{p}\right)p}\right)^n \right)^\frac1p |f|_{B^{\alpha_1+s_1, \alpha_2+s_2}_{q,q}(\Omega_T)}\sim_{\,d,p,q, s_1, s_2} |f|_{B^{\alpha_1+s_1, \alpha_2+s_2}_{q,q}(\Omega_T)}
    \end{align*}
    by \cref{thm:local_boundedness_of_pi_projection} and \cref{lem:Extended_Whitney_Estimate} in the second step\footnote{Note, that $a(\calP_0)=a(\calP_0, \alpha_1, \alpha_2)=a(\calP_0, \alpha_1+s_1, \alpha_2+s_2)$ due to the collinearity of $(\alpha_1, \alpha_2)$ and $(s_1, s_2)$.}, the embedding $\ell^q(\N_0)\hookrightarrow \ell^p(\N_0)$, due to $q<p$,
    in the third one as well as  the finite overlap due to \cref{rem:Generalized_Besov_Spaces}\ref{rem:Generalized_Besov_Spaces - item 4} together with \cite[Cor.~3.8]{MSS26} in the fourth. The penultimate step is the result of a lengthy algebraic calculation exploiting the linear dependency of $(\alpha_1, \alpha_2)$ and $(s_1, s_2)$, provided in \cref{Appendix:Tedious_Calculation}, whereas the last one is due to the fact that the geometric sum converges, since $\frac{1}{\frac{1}{s_1}+\frac{d}{s_2}}-\frac{1}{q}+\frac{1}{p}>0$. Thus, the assertion is shown.
\end{proof}

\subsection{Direct estimates}\label{subsect:direct estimates}

This section will now be devoted to the proof of \cref{Main_res:direct}. In order to do so, will will first have to localize the (quasi-)norm in $B^{\alpha_1, \alpha_2}_{p,p}(\Omega_T)$. This corresponds to the result from \cite[Prop.~2.1, Eq.~(3) and (4)]{GM14}, whereas our proof is a little shorter due to the machinery shown in the previous section.

\begin{lem}\label{lem:Besovnorm behaves a little like L_p}
    We assume \cref{assumptions:multiscale-decomposition} to hold true with $0<\alpha_i<\min\left(1+\frac1p, r_i\right)$, $i=1,2$, and $\calP$ to be obtained from $\calP_0$ by finitely many applications of $\textup{PATCH}\_\textup{REFINE}(\cdot, \cdot, d, s_1, s_2)$. Then, 
    \begin{align*}
        \|f-\pi_{\rho, \calP}(f)\|_{B_{p,p}^{\alpha_1, \alpha_2}(\Omega_T)}^p\lesssim_{\, d, \rho,p, \alpha_1, \alpha_2, r_1, r_2, \kappa_{\calP_0}, \mu(\calP_0), a(\calP_0), \LipProp(\Omega)} \sum\limits_{I\times S\in \calP} \left(|f|^{(r_1,r_2), *}_{B_{p,p}^{\alpha_1, \alpha_2}\left(\widetilde{\omega^{j(d)}_\calP}(I\times S)\right)}\right)^p
    \end{align*}
    for $p<\infty$ with the classical modification for $p=\infty$.
\end{lem}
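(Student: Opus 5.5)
Set $g:=f-\pi_{\rho,\calP}(f)$. By \cref{cor:to_thm:Embedding_generalized_into_classical_aniso_spaces} (valid because $0<\alpha_i<\min(1+\frac1p,r_i)$) and \cref{lem:norm equivalency_gen_besov}, it suffices to bound $\|g\|^\pi_{\widehat{B}^{\alpha_1,\alpha_2}_{p,p}(\Omega_T)}$, i.e. $\|g\|_{L_p(\Omega_T)}^p+\sum_{n\ge0}2^{n\frac{\alpha_2}{d}p}\|g-\pi_n(g)\|^p_{L_p(\Omega_T)}$, where $\pi_n=\pi_{\rho,\calP_n}$ belongs to the uniform tensor meshes $\calP_n$ of \cref{assumptions:multiscale-decomposition}. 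All terms are then localized to elements $I\times S\in\calP$. For each such element let $n(I\times S)$ be the uniform level with $\ell(I\times S)\approx n$, so that $I\times S$ is comparable to elements of $\calP_n$, which are nested into $\calP$ for $n\le n(I\times S)$ up to bounded offsets.

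For a fixed $n$ I split $\Omega_T$ into two parts. The first consists of elements $I'\times S'\in\calP_n$ that are contained in (or comparable to a union of) elements of $\calP$ of level at least $n$. The second consists of elements of $\calP_n$ lying strictly inside a coarser $I\times S\in\calP$ (level $<n$).

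On the first part I use \cref{thm:local_boundedness_of_pi_projection} for $\pi_n$ applied to $g$. This gives $\|g-\pi_n g\|_{L_p(I'\times S')}\lesssim E(g,\Pi^{r_1,r_2}_{t,\bm x},\omega^{j(d)}_{\calP_n}(I'\times S'))_p\le\|g\|_{L_p(\omega^{j(d)}_{\calP_n}(I'\times S'))}$. Next I apply \cref{thm:local_boundedness_of_pi_projection} for $\pi_{\rho,\calP}$, which gives $\|g\|_{L_p(I\times S)}\lesssim E(f,\Pi,\omega^{j(d)}_\calP(I\times S))_p\le E(f,\Pi,\widetilde{\omega^{j(d)}_\calP}(I\times S))_p$. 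This is bounded via \cref{lem:Extended_Whitney_Estimate} with $s_i=\alpha_i$, $q=p$, giving $|I\times S|^{\frac{1}{\frac1{\alpha_1}+\frac d{\alpha_2}}}|f|^{(r_1,r_2),*}_{B^{\alpha_1,\alpha_2}_{p,p}(\widetilde{\omega^{j(d)}_\calP}(I\times S))}$. Since $|I\times S|^{\frac{p}{\frac1{\alpha_1}+\frac d{\alpha_2}}}\sim 2^{-n(I\times S)\frac{\alpha_2}{d}p}$ (the computation from the appendix), summing $2^{n\frac{\alpha_2}{d}p}$ over $n\le n(I\times S)$ is a geometric sum dominated by its last term. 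This controls the first part and the $L_p$-term, using the finite overlap from \cref{Enumeration further properties of the mesh}\ref{Enumeration further properties of the mesh - enum 3}.

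On the second part, $g$ restricted to a finer element $I'\times S'\subset I\times S$ equals $f-P$ with $P$ polynomial on $I\times S$, as long as $\omega^{j(d)}_{\calP_n}(I'\times S')$ stays inside $I\times S$. There I use \cref{thm:local_boundedness_of_pi_projection} and Jackson/Whitney estimates on $\calP_n$-neighborhoods, which give $E(f,\Pi,\omega^{j(d)}_{\calP_n}(I'\times S'))_p\lesssim \omega_{r_1,t}(f,\cdot,2^{-\frac{n\alpha_2}{\alpha_1 d}})_p+\omega_{r_2,\bm x}(f,\cdot,2^{-\frac nd})_p$, localized to that neighborhood. Summing over $I'\times S'$ with the subadditivity in \cref{Rem:Moduli of smoothness}\ref{Rem:Moduli of smoothness - additivity} (averaged moduli), and then over $n\ge n(I\times S)$, reproduces exactly the discrete seminorm $|f|^{(r_1,r_2),*}_{B^{\alpha_1,\alpha_2}_{p,p}}$ on $\widetilde{\omega^{j(d)}_\calP}(I\times S)$, as in the proof of \cref{lem:Multiscale_decomposition}.

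The main obstacle is the boundary layer in the second part: fine $\calP_n$-neighborhoods that straddle the interface of $I\times S$ with its $\calP$-neighbors, where $g$ is only piecewise of the form $f-P$. There I would bound the error by $\|g-Q\|$ for $Q$ the polynomial from the Whitney estimate on $\widetilde{\omega^{j(d)}_\calP}(I\times S)$. The layer has measure $\lesssim 2^{-n}$-relative volume, of order $(2^{-(n-n(I\times S))/d}+2^{-(n-n(I\times S))s_2/(s_1d)})|I\times S|$. Combining this with the $L_\infty$ bound $\|\cdot\|_{L_\infty}\lesssim|I\times S|^{-1/p}\|\cdot\|_{L_p}$ on polynomials should give a factor decaying like $2^{-(n-n(I\times S))\frac1d(1+\ldots)}$ per unit $p$-th power. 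Absorbing it against $2^{n\frac{\alpha_2}{d}p}$ is exactly where $\alpha_i<1+\frac1p$ must enter, mirroring \cref{lem:FEM_elements_of_Aniso_Besov}. I expect this to be the delicate computation.
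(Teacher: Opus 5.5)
Your reduction to $\|g\|^\pi_{\widehat{B}^{\alpha_1,\alpha_2}_{p,p}(\Omega_T)}$ is fine. So are the two regimes you actually carry out. The first is where $\calP$ is locally at least as fine as $\calP_n$: local boundedness of $\pi_{\rho,\calP}$, \cref{lem:Extended_Whitney_Estimate} with $s_i=\alpha_i$, $q=p$, the relation $|K|^{\frac{p}{\frac{1}{\alpha_1}+\frac{d}{\alpha_2}}}\sim 2^{-\ell(K)\frac{\alpha_2}{d}p}$, and a geometric sum over $n\le \ell(K)+C$. The second is where the $\calP_n$-neighborhood lies inside a single coarse $K\in\calP$, which is the paper's case $\calP_n^{(1)}$.

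The gap is the remaining case: $\calP_n$-neighborhoods $\omega$ that straddle an interface of a coarse $K$. These occur at every level $n>\ell(K)$, and your sketch for them does not close. Write $\ell=\ell(K)$ and let $\lambda_n:=2^{-\frac{n-\ell}{d}}+2^{-(n-\ell)\frac{s_2}{s_1 d}}$ be the relative thickness of the layer. You propose to bound the straddling terms by $L_p$-norms over the layer, either of $f-Q$ with the coarse Whitney polynomial $Q$, or of $\pi_{\rho,\calP}(f)-Q$ via the inverse estimate $\|\cdot\|_{L_\infty}\lesssim |K|^{-\frac1p}\|\cdot\|_{L_p}$. Such bounds gain in general no more than the factor $\lambda_n$ in the $p$-th power. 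But $\sum_{n>\ell}2^{(n-\ell)\frac{\alpha_2}{d}p}\lambda_n<\infty$ only if $\alpha_i p<1$, $i=1,2$. So the mechanism producing the full range $\alpha_i<1+\frac1p$ is missing.

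The missing idea is to separate the two components on each straddling $\omega$:
\[
E\big(g,\Pi^{r_1,r_2}_{t,\bm x},\omega\big)_p\lesssim E\big(f,\Pi^{r_1,r_2}_{t,\bm x},\omega\big)_p+E\big(F,\Pi^{r_1,r_2}_{t,\bm x},\omega\big)_p,\qquad F:=\pi_{\rho,\calP}(f).
\]
Treat the first term at scale $n$ exactly as in the interior; for $n\ge \ell+C$ one has $\omega\subset\widetilde{\omega^{j(d)}_\calP}(K)$. For the second term, use that $F$ is continuous. Subtract one polynomial piece $P_0$ of $F$. Every difference of two facet-adjacent pieces vanishes on a relatively open part of their common hyperplane ($\{t=t_0\}$ or $\R\times H$), so it is divisible by the corresponding affine form. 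Markov's inequality then yields
\[
E\big(F,\Pi^{r_1,r_2}_{t,\bm x},\omega\big)_p\lesssim \lambda_n\Big(\frac{|\omega|}{|K|}\Big)^{\frac1p}\sum_{K'}E\Big(f,\Pi^{r_1,r_2}_{t,\bm x},\widetilde{\omega^{j(d)}_\calP}(K')\Big)_p,
\]
where $K'$ runs over the boundedly many elements of $\calP$ meeting $\omega$. To control $\|P_i-Q\|_{L_\infty}$ here, use \cref{thm:local_boundedness_of_pi_projection} and a best approximation $Q$ of $f$ on $\widetilde{\omega^{j(d)}_\calP}(K)$.

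Now sum over the roughly $\lambda_n|K|/|\omega|$ straddling neighborhoods. This gives $\lambda_n^{1+p}\,2^{-\ell\frac{\alpha_2}{d}p}$ times local seminorms, by \cref{lem:Extended_Whitney_Estimate}. The series $\sum_{n>\ell}2^{(n-\ell)\frac{\alpha_2}{d}p}\lambda_n^{1+p}$ converges exactly when $\alpha_2<1+\frac1p$ and, since $\frac{\alpha_2}{\alpha_1}=\frac{s_2}{s_1}$, $\alpha_1<1+\frac1p$. This is the same $\min(1,|h|/\diam)\cdot\min(|h|,\diam)$ mechanism as in \cref{lem:FEM_elements_of_Aniso_Besov}.

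The paper's written proof does not supply this step either. Its case $\calP_n^{(2)}$ asserts that all elements of $\calP$ meeting such an $\omega$ have $\calP_n$-size. That is false for neighborhoods straddling an interface of coarse elements, already for $\calP=\calP_0$ and $n\to\infty$. Without that assertion, its crude bound by $\|f-\pi_{\rho,\calP}(f)\|_{L_p}$ over whole elements of $\calP$ diverges after the weighted summation in $n$. You located the real difficulty, but as it stands the proposal does not prove the lemma.
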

\begin{proof}
    We will only show the case of $p<\infty$ and employ the result of \cref{thm:Embedding_generalized_into_classical_aniso_spaces},
    \begin{align}\label{lem:Besovnorm behaves a little like L_p - proof 1}
        \|f-\pi_{\rho, \calP}(f)\|_{B_{p,p}^{\alpha_1, \alpha_2}(\Omega_T)}^p \lesssim_{\,\substack{ d,p, \alpha_1, \alpha_2, r_1,\\ r_2, \kappa_{\calP_0}, \mu(\calP_0), \\ \LipProp(\Omega)}} \left(\sum\limits_{n=0}^\infty 2^{n\frac{\alpha_2}{d}p}E(f-\pi_{\rho, \calP}(f), \V^{r_1, r_2}_{\calP_{n}}, \Omega_T)_p^p\right) + \|f-\pi_{\rho, \calP}(f)\|_{L_p(\Omega_T)}^p.
    \end{align}
    We already know from \cref{thm:local_boundedness_of_pi_projection} together with \cref{lem:Extended_Whitney_Estimate} that 
    \begin{align}\label{lem:Besovnorm behaves a little like L_p - proof 2}
        \|f-\pi_{\rho, \calP}(f)\|_{L_p(\Omega_T)}^p &= \sum\limits_{I\times S\in \calP} \|f-\pi_{\rho, \calP}(f)\|_{L_p(I\times S)}^p\notag
        \\&\lesssim_{\,\substack{ d,p, \alpha_1, \alpha_2, r_1, r_2,\\ \kappa_{\calP_0}, \mu(\calP_0), a(\calP_0)}} \sum\limits_{I\times S\in \calP} |I\times S|^{\frac{p}{\frac{1}{\alpha_1}+\frac{d}{\alpha_2}}} \left(|f|^{(r_1,r_2), *}_{B_{p,p}^{\alpha_1, \alpha_2}\left(\widetilde{\omega^{j(d)}_\calP}(I\times S)\right)}\right)^p \notag
        \\&\lesssim_{\,\substack{d, p, \alpha_1, \alpha_2, \mu(\calP_0)}} \sum\limits_{I\times S\in \calP} \left(|f|^{(r_1,r_2), *}_{B_{p,p}^{\alpha_1, \alpha_2}\left(\widetilde{\omega^{j(d)}_\calP}(I\times S)\right)}\right)^p.
    \end{align}
    Now we have to estimate the sum in \eqref{lem:Besovnorm behaves a little like L_p - proof 1}. Therefore,
    \begin{align*}
        E(f-\pi_{\rho, \calP}(f), \V^{r_1, r_2}_{\calP_{n}}, \Omega_T)_p^p &\le \sum\limits_{I_n\times S_n\in \calP_n}\|f-\pi_{\rho, \calP}(f)-\pi_n(f-\pi_{\rho, \calP}(f))\|_{L_p(I_n\times S_n)}^p
        \\&\lesssim_{\, d,\rho,p,\alpha_1, \alpha_2, r_1, r_2, \kappa_{\calP_0}, \mu(\calP_0)}\sum\limits_{I_n\times S_n\in \calP_n} E\left(f-\pi_{\rho, \calP}(f), \Pi^{r_1, r_2}_{t,\bm{x}}, \omega_{\calP_n}^{j(d)}(I_n\times S_n)\right)_p^p
    \end{align*}
    according to \cref{thm:local_boundedness_of_pi_projection}. Now we have to distinguish two cases, $\calP_n=\calP_n^{(1)}\overset{\circ}{\cup} \calP_n^{(2)}$, where $\calP_n^{(1)}:=\{I_n\times S_n\in \calP_n \mid \omega_{\calP_n}^{j(d)}(I_n\times S_n)\subset I\times S, \: \: I\times S\in \calP\} $ and $\calP_n^{(2)}:=\calP_n\setminus\calP_n^{(1)}$. The first case means, that $\calP_n$ is locally so much finer than $\calP$ that $\pi_{\rho, \calP}(f)_{|\omega_{\calP_n}^{j(d)}(I_n\times S_n)}$ is a polynomial. Then, we can estimate
    \begin{align}\label{lem:Besovnorm behaves a little like L_p - proof 3}
        &\sum\limits_{I_n\times S_n\in \calP_n^{(1)}} E\left(f-\pi_{\rho, \calP}(f), \Pi^{r_1, r_2}_{t,\bm{x}}, \omega_{\calP_n}^{j(d)}(I_n\times S_n)\right)_p^p = \sum\limits_{I_n\times S_n\in \calP_n^{(1)}} E\left(f, \Pi^{r_1, r_2}_{t,\bm{x}}, \omega_{\calP_n}^{j(d)}(I_n\times S_n)\right)_p^p\nonumber
        \\&\lesssim_{\,d,\rho, p, \alpha_1, \alpha_2, r_1, r_2, \kappa_{\calP_0},\mu(\calP_0)} \sum\limits_{I_n\times S_n\in \calP_n^{(1)}} \omega_{r_1, t}\left(f, \omega_{\calP_n}^{j(d)}(I_n\times S_n), 2^{-\frac{n\alpha_2}{\alpha_1d}}\right)_p^p + \omega_{r_2, \bm{x}}\left(f, \omega_{\calP_n}^{j(d)}(I_n\times S_n), 2^{-\frac{n}{d}}\right)_p^p \nonumber
        \\&\lesssim_{\,d, p, \alpha_1, \alpha_2, r_1, r_2, \kappa_{\calP_0},\mu(\calP_0)} \sum\limits_{I\times S\in \calP} \omega_{r_1, t}\left(f, I\times S, 2^{-\frac{n\alpha_2}{\alpha_1d}}\right)_p^p + \omega_{r_2, \bm{x}}\left(f, I\times S, 2^{-\frac{n}{d}}\right)_p^p,\notag
        \\&\le\sum\limits_{I\times S\in \calP} \omega_{r_1, t}\left(f, \widetilde{\omega_{\calP}^{j(d)}}(I\times S), 2^{-\frac{n\alpha_2}{\alpha_1d}}\right)_p^p + \omega_{r_2, \bm{x}}\left(f, \widetilde{\omega_{\calP}^{j(d)}}(I\times S), 2^{-\frac{n}{d}}\right)_p^p,
    \end{align}
    using Jackson's estimate from \cite[Thm.~1.1]{MSS26} and then proceeding similar to before in the proof of \cref{lem:Multiscale_decomposition}, in particular, equations \eqref{lem:Multiscale_decomposition - proof eq 1} and\ \eqref{lem:Multiscale_decomposition - proof eq 2}. Otherwise, if $I_n\times S_n\in \calP_n^{(2)}$, then all the elements of $\calP$ which have non-empty intersection with $\omega_{\calP_n}^{j(d)}(I_n\times S_n)$ have to have comparable size, i.e., for $I\times S\in \calP(I_n\times S_n):=\{I\times S\in \calP \mid (I\times S)\cap \omega_{\calP_n}^{j(d)}(I_n\times S_n)\neq \emptyset\}$, $\diam_t(I\times S)\lesssim_{\,d,\alpha_1, \alpha_2, \kappa_{\calP_0},\mu(\calP_0)}2^{-\frac{n\alpha_2}{\alpha_1}}$ and $\diam_{\bm{x}}(I\times S)\lesssim_{\,d, \kappa_{\calP_0},\mu(\calP_0)}2^{-\frac{n}{d}}$. Let $\calP':=\bigcup\limits_{I_n\times S_n\in \calP_n}\calP(I_n\times S_n)$, then, due to the finite overlap of neighborhood domains from \cref{rem:Generalized_Besov_Spaces}\ref{rem:Generalized_Besov_Spaces - item 4}, we can estimate
    \begin{align}\label{lem:Besovnorm behaves a little like L_p - proof 4}
        &\sum\limits_{I_n\times S_n\in \calP_n^{(2)}} E\left(f-\pi_{\rho, \calP}(f), \Pi^{r_1, r_2}_{t,\bm{x}}, \omega_{\calP_n}^{j(d)}(I_n\times S_n)\right)_p^p \le \sum\limits_{I_n\times S_n\in \calP_n^{(2)}} \|f-\pi_{\rho, \calP}(f)\|_{L_p\left(\omega_{\calP_n}^{j(d)}(I_n\times S_n)\right)}^p\notag
        \\&\le \sum\limits_{I\times S\in \calP'} \|f-\pi_{\rho, \calP}(f)\|_{L_p\left(I\times S\right)}^p\lesssim_{\,d,\rho, p, \alpha_1, \alpha_2, r_1, r_2, \kappa_{\calP_0},\mu(\calP_0)} \sum\limits_{I\times S\in \calP'} E\left(f , \Pi^{r_1, r_2}_{t,\bm{x}}, \widetilde{\omega_{\calP}^{j(d)}}(I\times S) \right)_p^p\notag
        \\&\lesssim_{\,d, p, \alpha_1, \alpha_2, r_1, r_2, \kappa_{\calP_0},\mu(\calP_0)} \sum\limits_{I\times S\in \calP} \omega_{r_1, t}\left(f, \widetilde{\omega_{\calP}^{j(d)}}(I\times S), 2^{-\frac{n\alpha_2}{\alpha_1d}}\right)_p^p + \omega_{r_2, \bm{x}}\left(f, \widetilde{\omega_{\calP}^{j(d)}}(I\times S), 2^{-\frac{n}{d}}\right)_p^p,
    \end{align}
    where we have additionally employed \cref{thm:local_boundedness_of_pi_projection} in the third step and proceeded like before in \eqref{lem:Besovnorm behaves a little like L_p - proof 3}, while using the scaling properties of the moduli of smoothness together with their monotonicity and the aforementioned bound on the temporal and spacial diameter in the last one. Lastly, adding \eqref{lem:Besovnorm behaves a little like L_p - proof 3} to \eqref{lem:Besovnorm behaves a little like L_p - proof 4} yields the necessary bound on the sum from \eqref{lem:Besovnorm behaves a little like L_p - proof 2}.
\end{proof}

We can further estimate the local Besov (quasi-)seminorm of lower order with one of higher order. This result corresponds to \cite[Lem.~4.17]{GM14} in the stationary setting.

\begin{lem}\label{lem:Whitney for besov-seminorms}
    Under the assumptions of \cref{lem:Besovnorm behaves a little like L_p}, while additionally requiring \mbox{$\frac{1}{\frac{1}{s_1}+\frac{d}{s_2}}-\frac1q+\frac1p>0$} as well as $r_i>\alpha_i+s_i$, $i=1,2$,
        \begin{align*}
            |f|^{(r_1,r_2), *}_{B_{p,p}^{\alpha_1, \alpha_2}\left(\widetilde{\omega^{j(d)}_\calP}(I\times S)\right)} \lesssim_{\,\substack{d,p,q,\alpha_1, \alpha_2, s_1, s_2, r_1,\\r_2, \kappa_{\calP_0},\mu(\calP_0), a(\calP_0)}} |I\times S|^{\frac{1}{\frac{1}{s_1}+\frac{d}{s_2}}-\frac1q+\frac1p}|f|_{B_{q,q}^{\alpha_1+s_1, \alpha_2+s_2}\left(\widetilde{\omega^{j(d)}_\calP}(I\times S)\right)}
        \end{align*}
    holds true for any $I\times S\in \calP$.
\end{lem}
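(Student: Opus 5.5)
The plan is to mimic the proof of \cref{lem:Extended_Whitney_Estimate}: rescale the cylindrical domain $J\times R:=\widetilde{\omega^{j(d)}_\calP}(I\times S)$ to a reference domain $[0,1]\times R^{ref}$. There we use a polynomial $\hat P\in\Pi^{r_1,r_2}_{t,\bm{x}}$ that annihilates the lower-order seminorm, and then apply the Sobolev-type embedding \cref{thm:Sobolev-type-embedding} on the reference domain. We may assume $|f|_{B^{\alpha_1+s_1,\alpha_2+s_2}_{q,q}(J\times R)}<\infty$.

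\textbf{Scaling.} With $\hat f:=f\circ\Phi_{J\times R}$, we first establish the analogue of \eqref{lem:Extended_Whitney_Estimate - equation which is needed later} for both seminorms. Substituting $h=|J|\hat h$ and $\bm{h}=|R|^{1/d}\hat{\bm h}$ in the discrete representation \eqref{Eq:Equivalency quasi-seminorms}, and using \eqref{lem:Extended_Whitney_Estimate - equation 2} (which compares $|J|^{s_1}$ and $|R|^{s_2/d}$ thanks to $a(\calP)\lesssim a(\calP_0)$), we get
\begin{align*}
|f|^{(r_1,r_2),*}_{B^{\alpha_1,\alpha_2}_{p,p}(J\times R)}\lesssim |J\times R|^{\frac1p-\frac{1}{\frac{1}{\alpha_1}+\frac{d}{\alpha_2}}}|\hat f|^{(r_1,r_2),*}_{B^{\alpha_1,\alpha_2}_{p,p}([0,1]\times R^{ref})},
\end{align*}
and, in the reverse direction,
\begin{align*}
|\hat f|_{B^{\alpha_1+s_1,\alpha_2+s_2}_{q,q}([0,1]\times R^{ref})}\lesssim |J\times R|^{\frac{1}{\frac{1}{\alpha_1+s_1}+\frac{d}{\alpha_2+s_2}}-\frac1q}|f|_{B^{\alpha_1+s_1,\alpha_2+s_2}_{q,q}(J\times R)}.
\end{align*}
Because $(\alpha_1,\alpha_2)\in\R^+_0(s_1,s_2)$, the anisotropy is the same for all three parameter pairs. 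The same collinearity computation as in \cref{Appendix:Tedious_Calculation} gives
\[
\frac{1}{\frac{1}{\alpha_1+s_1}+\frac{d}{\alpha_2+s_2}}-\frac{1}{\frac{1}{\alpha_1}+\frac{d}{\alpha_2}}=\frac{1}{\frac{1}{s_1}+\frac{d}{s_2}}.
\]
Combining the exponents therefore yields exactly the factor $|J\times R|^{\frac{1}{\frac{1}{s_1}+\frac{d}{s_2}}-\frac1q+\frac1p}$. By \cref{Enumeration further properties of the mesh}\ref{Enumeration further properties of the mesh - enum 2}, $|J\times R|\sim|I\times S|$, so it remains to prove
\[
|\hat f|^{(r_1,r_2),*}_{B^{\alpha_1,\alpha_2}_{p,p}([0,1]\times R^{ref})}\lesssim|\hat f|_{B^{\alpha_1+s_1,\alpha_2+s_2}_{q,q}([0,1]\times R^{ref})}.
\]

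\textbf{Reference estimate.} By Jackson's estimate \cite[Thm.~1.1]{MSS26}, as in \eqref{lem:Extended_Whitney_Estimate - equation 5}, we choose $\hat P$ with $\|\hat f-\hat P\|_{L_q}\lesssim|\hat f|_{B^{\alpha_1+s_1,\alpha_2+s_2}_{q,q}}$. Since $r_i>\alpha_i$, \cite[Lem.~2.16]{MSS26} gives that $\hat P$ contributes nothing to either seminorm. Hence
\begin{align*}
|\hat f|^{(r_1,r_2),*}_{B^{\alpha_1,\alpha_2}_{p,p}}=|\hat f-\hat P|^{(r_1,r_2),*}_{B^{\alpha_1,\alpha_2}_{p,p}}\lesssim\|\hat f-\hat P\|_{B^{\alpha_1,\alpha_2}_{p,p}}\lesssim\|\hat f-\hat P\|_{B^{\alpha_1+s_1,\alpha_2+s_2}_{q,q}},
\end{align*}
where the last step is \cref{thm:Sobolev-type-embedding}, valid since $0<\alpha_i<1+\frac1p$. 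The right-hand side is at most $\|\hat f-\hat P\|_{L_q}+|\hat f|_{B^{\alpha_1+s_1,\alpha_2+s_2}_{q,q}}\lesssim|\hat f|_{B^{\alpha_1+s_1,\alpha_2+s_2}_{q,q}}$, using the norm equivalences from the end of \cref{subsubsect:Besov spaces} with orders $r_i>\alpha_i+s_i$.

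\textbf{Main obstacle.} The delicate point is that \cref{thm:Sobolev-type-embedding} is stated on $\Omega_T$ with constants depending on a tensor-product initial mesh. We must apply it on $[0,1]\times R^{ref}$ with uniform constants. The plan is to note that $R^{ref}$ is a union of boundedly many shape-regular simplices of comparable size, by \cref{Enumeration further properties of the mesh}\ref{Enumeration further properties of the mesh - enum 3}. After rescaling, these simplices together with $[0,1]$ form an admissible initial tensor mesh whose $\kappa$, $\mu$, $a$ and Lipschitz properties are controlled by $d,s_1,s_2,\kappa_{\calP_0},\mu(\calP_0),a(\calP_0)$. The theorem then applies with $\Omega_T$ replaced by $[0,1]\times R^{ref}$.

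A residual gap is that the rescaled simplices would also need to be well-labeled. If the well-labeling or the bounded number of possible configurations cannot be secured, the fallback is to rerun the multiscale argument of \cref{thm:Sobolev-type-embedding} directly on the uniform refinements of $\widetilde{\omega^{j(d)}_\calP}(I\times S)$.
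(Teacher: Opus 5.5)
Your proposal is correct and follows the paper's proof essentially step by step. You rescale $\widetilde{\omega^{j(d)}_\calP}(I\times S)$ to $[0,1]\times R_{ref}$ and use the collinearity of $(\alpha_1,\alpha_2)$ and $(s_1,s_2)$ to produce the exponent, subtract a polynomial annihilated by the order-$(r_1,r_2)$ seminorms, apply \cref{thm:Sobolev-type-embedding} on the reference domain, and control the remaining $L_q$-term by a Whitney/Jackson bound.

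The differences are minor. The paper takes $\tilde{P}=\calB_{q,[0,1]\times R_{ref}}(\tilde{f})$ and uses \cref{lem:Extended_Whitney_Estimate} instead of a Jackson polynomial. It also applies \cref{thm:Sobolev-type-embedding} on $[0,1]\times R_{ref}$ directly, without discussing the admissible-initial-mesh (well-labeling) issue you flag as the main obstacle.
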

\begin{proof}
    We employ the notation from \cref{lem:Extended_Whitney_Estimate}, i.e., $J\times R:=\widetilde{\omega^{j(d)}_\calP}(I\times S)$, $R_{ref}$ as the corresponding spacial reference domain, and $\tilde{f}$ as the rescaled version of $f$ on $[0,1]\times R_{ref}$. Due to \eqref{lem:Extended_Whitney_Estimate - equation which is needed later} together with \eqref{lem:Extended_Whitney_Estimate - equation 2} (and their analogue for the discrete (quasi-)seminorm), we have 
    \begin{align*}
        &|\tilde{f}|_{B^{\alpha_1, \alpha_2}_{p,p}([0,1]\times R_{ref})} \sim_{\,d, \alpha_1, \alpha_2, \kappa_{\calP_0}, \mu(\calP_0), a(\calP_0)} |J\times R|^{\frac{1}{\frac{1}{\alpha_1}+\frac{d}{\alpha_2}}-\frac1p} |f|_{B^{\alpha_1, \alpha_2}_{p,p}(J\times R)} 
        \\\text{as well as }&|\tilde{f}|_{B^{\alpha_1+s_1, \alpha_2+s_2}_{q,q}([0,1]\times R_{ref})} \sim_{\,d, \alpha_1, \alpha_2, s_1, s_2, \kappa_{\calP_0}, \mu(\calP_0), a(\calP_0)} |J\times R|^{\frac{1}{\frac{1}{\alpha_1+s_1}+\frac{d}{\alpha_2+s_2}}-\frac1q} |f|_{B^{\alpha_1+s_1, \alpha_2+s_2}_{q,q}(J\times R)}.
    \end{align*}
    Here, we keep in mind that $a(\calP_0)=a(\calP_0, \alpha_1, \alpha_2)= a(\calP_0, \alpha_1+s_1, \alpha_2+s_2)$ due to the linear dependency of $(s_1, s_2)$ and $(\alpha_1, \alpha_2)$. The latter now additionally implies that it suffices to show the result on $[0,1]\times R_{ref}$ with respect to $\tilde{f}$. So, we set $\tilde{P}:=\calB_{q, [0,1]\times R_{ref}}(\tilde{f})$ with respect to $(r_1, r_2)$. Then,
    \begin{align*}
        |\tilde{f}&|^{(r_1, r_2),*}_{B^{\alpha_1, \alpha_2}_{p,p}([0,1]\times R_{ref})} = |\tilde{f} - \tilde{P}|^{(r_1, r_2),*}_{B^{\alpha_1, \alpha_2}_{p,p}([0,1]\times R_{ref})} \lesssim_{\, \substack{d,p,\alpha_1, \alpha_2, r_1, r_2, \\ \kappa_{\calP_0},\mu(\calP_0)}} \|\tilde{f} - \tilde{P}\|_{B^{\alpha_1, \alpha_2}_{p,p}([0,1]\times R_{ref})} 
        \\ &\lesssim_{\, \substack{d,p,q,\alpha_1, \alpha_2, s_1, s_2, \\ \kappa_{\calP_0},\mu(\calP_0), a(\calP_0)}} \|\tilde{f} - \tilde{P}\|_{B^{\alpha_1+s_1, \alpha_2+s_2}_{q,q}([0,1]\times R_{ref})} 
        \lesssim_{\, \substack{d,q,\alpha_1, \alpha_2, s_1, s_2\\r_1, r_2, \kappa_{\calP_0},\mu(\calP_0)}} \|\tilde{f} - \tilde{P}\|_{B^{\alpha_1+s_1, \alpha_2+s_2}_{q,q}([0,1]\times R_{ref})}^{(r_1, r_2)}
        \\&=\|\tilde{f} - \tilde{P}\|_{L_{q}([0,1]\times R_{ref})}+|\tilde{f}|_{B^{\alpha_1+s_1, \alpha_2+s_2}_{q,q}([0,1]\times R_{ref})}^{(r_1, r_2)}
        \lesssim_{\,\substack{d,p,q,\alpha_1, \alpha_2, s_1, s_2, r_1,\\r_2, \kappa_{\calP_0},\mu(\calP_0), a(\calP_0)}} |\tilde{f}|_{B^{\alpha_1+s_1, \alpha_2+s_2}_{q,q}([0,1]\times R_{ref})}.
    \end{align*}
    We have applied the result of \cite[Lem.~2.16]{MSS26} in the first and penultimate step as well as the (quasi-)norm equivalency for anisotropic Besov spaces mentioned at the end of \cref{subsubsect:Besov spaces} with $\LipProp(R_{ref})\sim_{\, d, \kappa_{\calP_0},\mu(\calP_0)}1$ in the second and fourth step. Additionally, in the third one, we have used the embedding result from \cref{thm:Sobolev-type-embedding} as well as \cref{lem:Extended_Whitney_Estimate} and \cref{Rem:Moduli of smoothness}\ref{Rem:Moduli of smoothness - higher order lesser order} in the last one. This shows the assertion.
\end{proof}

Let us now consider the algorithm $\textup{MARKED}\_\textup{REFINE}(\calP_0, d, s_1, s_2)$ from \cref{subsect:Algorithmic_complexity} with respect to the marking routine 
\begin{align*}
    \textup{MARK}(\calP):=\left\{I\times S\in \calP \mid |I\times S|^{\frac{1}{\frac{1}{s_1}+\frac{d}{s_2}}-\frac{1}{q}+\frac{1}{p}}|f|_{B^{\alpha_1+s_1, \alpha_2+s_2}_{q,q}\left(\widetilde{\omega_\calP^{j(d)}}(I\times S)\right)}>\delta\right\}
\end{align*}    
for any corresponding space-time partition $\calP$ of $\Omega_T$. The resulting algorithm is a type of greedy algorithm and is very similar to corresponding ones in the literature, for example in \cite{BDDP02, GM14, AMS23, MSS26}.

As in \cref{subsect:Algorithmic_complexity}, we will denote the marked elements in the space-time partition at the end of the $k$-th loop of the \textbf{while}-clause of the routine by $\calM_k$, for $k\in \N_0$.

\begin{lem}\label{lem:algorithm_terminates}
    Let \cref{assumptions:multiscale-decomposition} hold with $\frac{1}{\frac{1}{s_1}+\frac{d}{s_2}}-\frac{1}{q}+\frac{1}{p}>0$ as well as $0<\alpha_i <1+ \frac1p$, $i=1,2$. The algorithm $\textup{MARKED}\_\textup{REFINE}(\calP_0, d, s_1, s_2)$ with respect to the above marking method terminates for every $\delta\in (0,\infty)$ and $f\in B^{\alpha_1+s_1, \alpha_2+s_2}_{q,q}(\Omega_T)$.
\end{lem}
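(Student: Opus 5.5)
The plan is to argue by contradiction, following the termination arguments for greedy algorithms in \cite{BDDP02,GM14,MSS26}. Set $\gamma:=\frac{1}{\frac{1}{s_1}+\frac{d}{s_2}}-\frac1q+\frac1p>0$ and assume the algorithm does not terminate. Each loop of the \textbf{while}-clause with $\calM_k\neq\emptyset$ refines at least one marked element via $\textup{PATCH}\_\textup{REFINE}$, and by \cref{Theorem - conformity 1-reg Patch refine} this strictly increases the partition. An infinite run therefore yields infinitely many prisms. Since the number of prisms of a fixed level in any admissible partition is bounded (by \cref{corollary relationship diameter level - ATOMIC_SPLIT} and the bounded measure of $\Omega_T$), the levels of the marked elements must be unbounded. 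Hence there is a sequence of marked prisms $I_k\times S_k\in\calM_k$ with $\ell(I_k\times S_k)\to\infty$, and consequently $|I_k\times S_k|\to 0$.

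For each such prism the marking criterion gives
\begin{align*}
    \delta<|I_k\times S_k|^{\gamma}\,|f|_{B^{\alpha_1+s_1,\alpha_2+s_2}_{q,q}\left(\widetilde{\omega^{j(d)}_{\calP_k}}(I_k\times S_k)\right)}\le |I_k\times S_k|^{\gamma}\,|f|_{B^{\alpha_1+s_1,\alpha_2+s_2}_{q,q}(\Omega_T)}.
\end{align*}
The second inequality uses the monotonicity of the moduli of smoothness with respect to the domain: $\omega_{r_i,\textup{dir}}(f,D,\cdot)_q\le\omega_{r_i,\textup{dir}}(f,\Omega_T,\cdot)_q$ for $D\subset\Omega_T$, which holds because the shifted sets $J_{r_1,h}$ and $R_{r_2,\bm h}$ for a subdomain are contained in those of $\Omega_T$ (for cylindrical $D$ this is immediate). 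Since $f\in B^{\alpha_1+s_1,\alpha_2+s_2}_{q,q}(\Omega_T)$, the right-hand side is finite, and with $\gamma>0$ it tends to $0$ as $|I_k\times S_k|\to0$. This contradicts $\delta>0$, so the algorithm terminates.

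The main obstacle is making the step ``nontermination implies arbitrarily small marked prisms'' rigorous, since $\textup{MARK}$ is re-evaluated on each new partition and the marked elements need not be nested. I would handle this through level counting. Let $L$ be the maximal level such that $|I\times S|^{\gamma}|f|_{B^{\alpha_1+s_1,\alpha_2+s_2}_{q,q}(\Omega_T)}>\delta$ is possible; $L$ is finite by \cref{corollary relationship diameter level - ATOMIC_SPLIT}. Then every marked element has level at most $L$. By \cref{Theorem - conformity 1-reg Patch refine}, each call of $\textup{PATCH}\_\textup{REFINE}$ creates only elements of level at most $L+1$. All partitions produced therefore lie within the finite set of prisms of level at most $L+1$ descending from $\calP_0$. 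Each loop strictly refines the partition and thus strictly increases its finite cardinality, which is bounded. Hence only finitely many loops can occur.
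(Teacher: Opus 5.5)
Your proposal is correct and follows the paper's argument. Assuming non-termination, the marking inequality combined with the bound of the local seminorm by $|f|_{B^{\alpha_1+s_1,\alpha_2+s_2}_{q,q}(\Omega_T)}$ forces every marked prism to have volume bounded below, which contradicts the fact that marked prisms must eventually become arbitrarily small. Your level-counting paragraph (marked levels at most $L$, newly created levels at most $L+1$ by \cref{Theorem - conformity 1-reg Patch refine}, finitely many candidate prisms, strictly increasing $\#\calP$) is a more careful justification of the step the paper only sketches by saying that each marked prism is atomically refined and hence shrinks.
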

\begin{proof}
    The proof is similar to the one of \cite[Lem.~4.3]{MSS26}. If the algorithm did not terminate, there would be $I_k\times S_k\in \calM_k$ for every $k\in \N$. For those prisms, we know that $|I_k\times S_k|\xrightarrow{k\rightarrow \infty}0$, since the atomic refinement method gets called to $I_k\times S_k$ during the procedure, which at least halves the volume of prisms to which it is called. Since $I_k\times S_k\in \calM_k$, we can therefore estimate
    \begin{align*}
        \delta < |I_k\times S_k|^{\frac{1}{\frac{1}{s_1}+\frac{d}{s_2}}-\frac{1}{q}+\frac{1}{p}}|f|_{B^{\alpha_1+s_1, \alpha_2+s_2}_{q,q}(\Omega_T)}\xrightarrow{k\rightarrow\infty}0,
    \end{align*}
    which is a contradiction to $\delta >0$.
\end{proof}

By carefully choosing an appropriate $\delta$, we can proceed to the following theorem, which will be the first part of the almost characterization. This results together with the embedding from \cite[Thm.~1.3]{MSS26} and \cref{thm:Sobolev-type-embedding}, respectively, shows \cref{Main_res:direct}. In it, we will exploit the notation $B^{0,0}_{p,p}:=L_p$ for simplicity.

A version of this theorem for the adaptive approximation with discontinuous, anisotropic polynomials was already proven in \cite[Thm.~1.5]{MSS26}. 

\begin{thm}\label{thm:direct_estimates}
    Let $p,q ,s_1,s_2, f$ as in \cref{lem:algorithm_terminates}, $(\alpha_1, \alpha_2)\in \R^+_0(s_1, s_2)$ with $\alpha_i<1+\frac1p$, and \mbox{$\rho\in (0,\min(p,q)]$}. Further, consider $r_i\in\N_{\ge 2}$ with $r_i>\alpha_i+s_i$, $i=1,2$, and $\pi_{\rho, \calP}$ with respect to $(r_1, r_2)$. Additionally, choose an arbitrary $\varepsilon>0$. Then, there is $\delta = \delta (\varepsilon, d, \alpha_1, \alpha_2, s_1, s_2,p,f)>0$ such that $\textup{MARKED}\_\textup{REFINE}(\calP_0, d, s_1, s_2)$ with the above marking technique terminates with a space-time partition $\calP$ which satisfies
    \begin{align}\label{thm:direct_estimates - equation 1}
        \#\calP - \#\calP_0 \lesssim_{\, d,p,q, \alpha_1, \alpha_2, s_1,s_2, \kappa_{\calP_0}, \mu(\calP_0), |\Omega_T|} \varepsilon^{-\left(\frac{1}{s_1}+\frac{d}{s_2}\right)} \quad
    \end{align}
        and
    \begin{align}\label{thm:direct_estimates - equation 2}
        \quad \|f-\pi_{\rho,\calP}(f)\|_{B_{p,p}^{\alpha_1, \alpha_2}(\Omega_T)}\lesssim_{\,d,\rho,p,q,\alpha_1, \alpha_2, s_1,s_2, \kappa_{\calP_0}, \mu(\calP_0), a(\calP_0), \LipProp(\Omega), \#\calP_0} \varepsilon |f|_{B^{\alpha_1+s_1, \alpha_2+s_2}_{q,q}(\Omega_T)}.
    \end{align}
\end{thm}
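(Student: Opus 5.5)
Set $\gamma:=\frac{1}{\frac{1}{s_1}+\frac{d}{s_2}}-\frac1q+\frac1p>0$ and
$e(I\times S):=|I\times S|^{\gamma}|f|_{B^{\alpha_1+s_1,\alpha_2+s_2}_{q,q}(\widetilde{\omega^{j(d)}_\calP}(I\times S))}$.
The proof is the usual greedy argument, as in \cite[Thm.~1.5]{MSS26}. We choose $\delta:=\varepsilon^{\beta}|f|_{B^{\alpha_1+s_1,\alpha_2+s_2}_{q,q}(\Omega_T)}$ with an exponent $\beta$ fixed at the end. Termination of $\textup{MARKED}\_\textup{REFINE}$ follows from \cref{lem:algorithm_terminates}. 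At termination every $I\times S\in\calP$ satisfies $e(I\times S)\le\delta$.

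\textbf{Error bound.} For $(\alpha_1,\alpha_2)\neq(0,0)$, \cref{lem:Besovnorm behaves a little like L_p} followed by \cref{lem:Whitney for besov-seminorms} gives
\[
\|f-\pi_{\rho,\calP}(f)\|^p_{B^{\alpha_1,\alpha_2}_{p,p}}\lesssim\sum_{I\times S\in\calP}e(I\times S)^p .
\]
For $(\alpha_1,\alpha_2)=(0,0)$ the same bound follows directly from \cref{thm:local_boundedness_of_pi_projection} and \cref{lem:Extended_Whitney_Estimate}. We split the sum as $\sum_{I\times S\in\calP} e^{p}\le \delta^{p-q}\sum e^q$ when $p\ge q$. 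Using the finite overlap of the cylindric neighborhoods (\cref{Enumeration further properties of the mesh}), together with $|I\times S|\lesssim1$ and the superadditivity of $|\cdot|^q_{B_{q,q}}$ (\cite[Cor.~3.8]{MSS26}), we get $\sum e^q\lesssim|f|^q$. For $p<q$ we use Hölder instead: $\sum e^p\le(\#\calP)^{1-p/q}(\sum e^q)^{p/q}$. In both cases the error is bounded by a power of $\delta$ times powers of $\#\calP$ and $|f|$.

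\textbf{Cardinality.} By \cref{Thm:complexity results}, $\#\calP-\#\calP_0\lesssim\sum_k\#\calM_k$. We sort the marked elements by level $\ell$. Marked elements of a fixed level have comparable volume $|I\times S|\sim2^{-\ell(1+\frac{s_2}{s_1d})}$ by \cref{corollary relationship diameter level - ATOMIC_SPLIT}, and are disjoint up to finite overlap of their neighborhoods. Their number is therefore bounded in two ways: by $\min(C2^{\ell(1+\frac{s_2}{s_1 d})},\ \delta^{-q}|I\times S|^{\gamma q}|f|^q)$, where the first term is the total count at level $\ell$ and the second uses $e>\delta$ with summability of $e^q$. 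The first bound grows geometrically and the second decays geometrically in $\ell$, because $\gamma>0$. Summing over $\ell$, the minimum is dominated by the crossover level $\ell^*$, where $2^{-\ell^*(1+\frac{s_2}{s_1d})}\sim(\delta/|f|)^{1/(\gamma+1/q)}$. This gives $\#\calM\lesssim(\delta/|f|)^{-1/(\gamma+1/q)}$.

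Note that $1/(\gamma+1/q)$ is not $\frac1{s_1}+\frac d{s_2}$ when $p\neq\infty$. The exponent $\beta$ must therefore be tuned so that both the count and the error come out with the right powers of $\varepsilon$: choosing $\beta=\frac{(\frac1{s_1}+\frac d{s_2})}{1/(\gamma+1/q)}$ should give count $\lesssim\varepsilon^{-(\frac1{s_1}+\frac d{s_2})}$. Inserting this into the error estimate, with $\#\calP\le\#\calP_0+C\varepsilon^{-(\frac1{s_1}+\frac{d}{s_2})}$ (this is where the dependence on $\#\calP_0$ enters), should yield error $\lesssim\varepsilon|f|$.

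\textbf{Expected obstacle.} The delicate step is the exponent bookkeeping. The counting has to be done not per marked element but level by level across all refinement iterations. Elements of the same level marked in different loops can also overlap, so one must bound the number of marked elements with volume in a dyadic range via disjointness up to nested refinement. If the level-by-level exponents do not match exactly, I would fall back to the \cite{BDDP02}-style split into large and small elements, counting the small ones by total volume $|\Omega_T|$ and the large ones via $\sum e^q\lesssim|f|^q$.
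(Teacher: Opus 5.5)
Your cardinality argument is the paper's. It sorts the marked elements into level sets and bounds each by $\min\bigl(2^{j(1+\frac{s_2}{s_1d})},\,\delta^{-q}2^{-jq(1+\frac{s_2}{s_1d})\gamma}|f|^q\bigr)$, then sums over levels. Your exponent $\beta=(\frac1{s_1}+\frac d{s_2})(\gamma+\frac1q)$ is exactly the paper's $1+\frac1p(\frac1{s_1}+\frac d{s_2})$, since $\gamma+\frac1q=\sigma+\frac1p$ with $\sigma:=(\frac1{s_1}+\frac d{s_2})^{-1}$.

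The gap is in the error bound: neither of your two inequalities yields $\varepsilon|f|$ with this $\beta$.
\begin{itemize}
\item For $p\ge q$, the bound $\sum e^p\le\delta^{p-q}\sum e^q\lesssim\delta^{p-q}|f|^q$ gives $\|f-\pi_{\rho,\calP}(f)\|^p_{B^{\alpha_1,\alpha_2}_{p,p}}\lesssim\varepsilon^{\beta(p-q)}|f|^p$. But $\beta(p-q)\ge p$ is equivalent to $\frac1q-\frac1p\ge\sigma$, i.e.\ to $\gamma\le 0$. Under the hypothesis $\gamma>0$ the power of $\varepsilon$ is therefore always too small; for $p=q$ there is no decay at all.
\item For $p<q$, H\"older gives $(\#\calP)^{1-p/q}|f|^p$. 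This contains no $\delta$ and grows with $\#\calP$.
\end{itemize}
In both cases you use $\sum e^q\lesssim|f|^q$, which carries no decay in $\varepsilon$, instead of the cardinality bound you have just proved.

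What is needed is the trivial bound $\sum_{I\times S\in\calP}e(I\times S)^p\le\#\calP\,\delta^p$, valid for every $p$, which is what $\beta$ is tuned for. Let $C\ge1$ be the constant of the cardinality estimate. If $\varepsilon^{1/\sigma}\le C$, then $\#\calP\,\delta^p\le(\#\calP_0+C\varepsilon^{-1/\sigma})\,\varepsilon^{p+1/\sigma}|f|^p\lesssim_{\#\calP_0}\varepsilon^p|f|^p$.

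The case $\varepsilon^{1/\sigma}>C$ must be handled separately, because otherwise the term $\#\calP_0\,\delta^p$ is not controlled. In that case the cardinality bound forces $\calP=\calP_0$. The local estimates on the fixed mesh $\calP_0$ then give an error $\lesssim|f|\le\varepsilon|f|$, since $\varepsilon>1$.

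Finally, your justification of $\sum_{\calP}e^q\lesssim|f|^q$ by a global finite overlap of the cylinders $\widetilde{\omega^{j(d)}_\calP}(I\times S)$ is not valid. In the paper's own example ($d=1$, repeated refinement towards the corner $(1,1)$), the diagonal elements $[1-2^{-\ell+1},1-2^{-\ell}]^2$ of every level have cylinders containing the smallest corner square. The overlap there therefore grows with the number of refinement steps. This step can be repaired level by level: the cylinders have bounded overlap within one level, and $|I\times S|^{\gamma q}$ decays geometrically in the level. With the trivial bound above, however, it is not needed at all.
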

\begin{proof}
    Again, we assume \mbox{$r_i:=\max(\lfloor \alpha_i + s_i\rfloor + 1,2)$}, $i=1,2$, without loss of generality. If $|f|_{B^{\alpha_1+s_1, \alpha_2+s_2}_{q,q}(\Omega_T)}=0$, then $f\in \Pi^{r_1, r_2}_{t,\bm{x}}(\Omega_T)$ due to \cite[Lem.~2.16]{MSS26} and \cref{thm:local_boundedness_of_pi_projection} together with \cref{lem:Extended_Whitney_Estimate} implies $\calM_0=\emptyset$. Thus, the algorithm terminates with $\calP=\calP_0$ for arbitrary $\delta>0$. $\calP_0$ of course, fulfills the first assertion. Since $f\in \Pi^{r_1, r_2}_{t,\bm{x}}$, we have  $\calB_{p, \calP}(f)=f$ because $f$ itself is the best approximation of $f$ on any domain with respect to elements of $\Pi^{r_1, r_2}_{t,\bm{x}}(\Omega_T)$. Furthermore, \cref{rem:linearity_and_projection_property_of_Q} implies $\mathcal{Q}_\calP(f)=f$, i.e., $\pi_{p,\calP}(f)=f$ and therefore the second assertion is true.

    Now let us consider the case $|f|_{B^{\alpha_1+s_1, \alpha_2+s_2}_{q,q}(\Omega_T)}>0$ and set $\delta:=\varepsilon^{1+\frac{1}{s_1p}+\frac{d}{s_2p}}|f|_{B^{\alpha_1+s_1, \alpha_2+s_2}_{q,q}(\Omega_T)}>0$. According to \cref{lem:algorithm_terminates}, the algorithm terminates with $\calP= \calP_k$ for some $k\in \N_0$. Therefore, $\calM_j\neq \emptyset$ if and only if $j\le k-1$. We define
    \begin{align*}
        \overline{\calM}:=\bigcup\limits_{j=0}^{k-1}\calM_j\quad\text{and}\quad \Gamma_j:=\{I\times S\in \overline{\calM}\mid \ell(I\times S)=j\}\quad\text{for}\quad j\in \N.
    \end{align*}
    Notice that \cref{Theorem - conformity 1-reg Patch refine} implies that also $\Gamma_j=\emptyset$ for $j\ge k$. Our goal will be to bound the cardinality of $\Gamma_j$ appropriately. On the one hand, for every $I\times S\in \Gamma_j$ and $j\in \N$, $|I\times S|\sim_{\,\mu(\calP_0)}2^{-j\left(\frac{s_2}{s_1d}+1\right)}$ and therefore $\#\Gamma_j\lesssim_{\, \mu(\calP_0), |\Omega_T|}2^{j\left(\frac{s_2}{s_1d}+1\right)}$. On the other hand, consider the following calculation
    \begin{align*}
        \#\Gamma_j \delta^q= \sum\limits_{I\times S\in \Gamma_j}\delta^q &\le \sum\limits_{I\times S\in \Gamma_j} |I\times S|^{\frac{q}{\frac{1}{s_1}+\frac{d}{s_2}}-1 + \frac{q}{p}}|f|_{B^{\alpha_1+s_1,\alpha_2+s_2}_{q,q}(\widetilde{\omega_\calP^{j(d)}}(I\times S))}^q
        \\&\lesssim_{\,q,\mu(\calP_0), |\Omega_T|} 2^{-jq\left(1+\frac{s_2}{s_1d}\right)\left(\frac{1}{\frac{1}{s_1}+\frac{d}{s_2}}-\frac{1}{q}+ \frac{1}{p}\right)} \sum\limits_{I\times S\in \Gamma_j}|f|_{B^{\alpha_1+s_1,\alpha_2+s_2}_{q,q}(\widetilde{\omega_\calP^{j(d)}}(I\times S))}^q 
        \\&\lesssim_{\, d, q, \alpha_1, \alpha_2, s_1, s_2, \kappa_{\calP_0}, \mu(\calP_0)} 2^{-jq\left(1+\frac{s_2}{s_1d}\right)\left(\frac{1}{\frac{1}{s_1}+\frac{d}{s_2}}-\frac{1}{q}+ \frac{1}{p}\right)} |f|_{B^{\alpha_1+s_1,\alpha_2+s_2}_{q,q}(\Omega_T)}^q,
    \end{align*}
    where we have used that $\Gamma_j\subset \overline{\calM}$ in the second step and $2^{-j\left(1+\frac{s_2}{s_1d}\right)}\sim_{\,\mu(\calP_0), |\Omega_T|}|I\times S|$ for $I\times S\in \Gamma_j$ as well as $\frac{1}{\frac{1}{s_1}+\frac{d}{s_2}}-\frac1q+ \frac{1}{p}>0$ in the third one. In the last step, the result from \cite[Cor.~3.8]{MSS26} has been applied together with the finite overlap of the extended neighborhoods, due to 
    \begin{align*}
        2^{-j\left(1+\frac{s_2}{s_1d}\right)}\sim_{\,\mu(\calP_0), |\Omega_T|}|I\times S|\sim_{\,d, s_1, s_2, \kappa_{\calP_0}, \mu(\calP_0)}|\widetilde{\omega^{j(d)}_\calP}(I\times S)|
    \end{align*}
    according to \cref{Enumeration further properties of the mesh}\ref{Enumeration further properties of the mesh - enum 2}. Thus, 
    \begin{align*}
        \#\Gamma_j\lesssim_{\,d,q, \alpha_1, \alpha_2, s_1,s_2, \kappa_{\calP_0}, \mu(\calP_0), |\Omega_T|}\min\left(2^{j\left(1+\frac{s_2}{s_1d}\right)}, \frac{1}{\delta^q }2^{-jq\left(1+\frac{s_2}{s_1d}\right)\left(\frac{1}{\frac{1}{s_1}+\frac{d}{s_2}}-\frac{1}{q}+ \frac{1}{p}\right)} |f|_{B^{\alpha_1+s_1,\alpha_2+s_2}_{q,q}(\Omega_T)}^q \right).
    \end{align*}
    Now the identical calculations to those from \cite[Eq.~(57)--(60)]{MSS26} show the asserted estimate in \eqref{thm:direct_estimates - equation 1} for $\#\overline{\calM}$ instead of $\#\calP-\#\calP_0$, for $q<\infty$. The desired result itself is now a consequence of the complexity result from \cref{Thm:complexity results}. The case $q=\infty$ can be achieved similarly with the usual modifications. Now we show the second assertion for $p<\infty$, the case $p=\infty$ works, again, correspondingly. Let $C$ be constant from \eqref{thm:direct_estimates - equation 1}. First, we consider the case, where $\varepsilon^{\frac{1}{s_1}+\frac{d}{s_2}}> C$, in particular, $\varepsilon>1$ due to $C\ge 1$. Then, \eqref{thm:direct_estimates - equation 1} implies that $\#\calP - \#\calP_0=0$, i.e., $\calP=\calP_0$. Therefore, for $(\alpha_1, \alpha_2)\neq (0,0)$,
    \begin{align*}
        \|f-\pi_{\rho,\calP_{0}}(f)\|_{B^{\alpha_1, \alpha_2}_{p,p}(\Omega_T)}^p&\lesssim_{\,d,\rho, p, \alpha_1, \alpha_2, s_1, s_2, \kappa_{\calP_0} , \mu(\calP_0), a(\calP_0),\LipProp(\Omega)}\sum\limits_{I\times S\in \calP_0}\left(|f|_{B^{\alpha_1, \alpha_2}_{p,p}\left(\widetilde{\omega_{\calP_0}^{j(d)}}(I\times S)\right)}^{(r_1, r_2),*}\right)^p
        \\&\lesssim_{\,\substack{d,p,q,\alpha_1, \alpha_2, s_1, s_2, \kappa_{\calP_0},\mu(\calP_0), a(\calP_0)}} \sum\limits_{I\times S\in \calP_0} |I\times S|^{p\left(\frac{1}{\frac{1}{s_1}+\frac{d}{s_2}}-\frac{1}{q}+\frac{1}{p}\right)}|f|_{B^{\alpha_1+s_1, \alpha_2+s_2}_{q,q}(\widetilde{\omega_{\calP_{0}}^{j(d)}}(I\times S))}^p
        \\&\lesssim_{\,d,q,\alpha_1, \alpha_2, s_1, s_2, \kappa_{\calP_0},\mu(\calP_0)} |f|_{B^{\alpha_1+s_1, \alpha_2+s_2}_{q,q}(\Omega_T)}^p\le \varepsilon^p |f|_{B^{\alpha_1+s_1, \alpha_2+s_2}_{q,q}(\Omega_T)}^p,
    \end{align*}
   due to \cref{lem:Besovnorm behaves a little like L_p} in the first step, \cref{lem:Whitney for besov-seminorms} in the second, \cite[Cor.~3.8]{MSS26} as before above in the third one, and $\varepsilon>1$ in the last one. Now we can turn to the case, where $\varepsilon^{\frac{1}{s_1}+\frac{d}{s_2}}\le C$. Then, similarly
    \begin{align*}
        \|f-\pi_{\rho,\calP}(f)\|_{L_p(\Omega_T)}^p&\lesssim_{\,\substack{d,\rho, p,q,\alpha_1, \alpha_2, s_1, s_2, \kappa_{\calP_0},\\ \mu(\calP_0), a(\calP_0), \LipProp(\Omega)}} \sum\limits_{I\times S\in \calP_0} |I\times S|^{p\left(\frac{1}{\frac{1}{s_1}+\frac{d}{s_2}}-\frac{1}{q}+\frac{1}{p}\right)}|f|_{B^{\alpha_1+s_1, \alpha_2+s_2}_{q,q}(\widetilde{\omega_{\calP}^{j(d)}}(I\times S))}^p
        \\&\le \#\calP\, \delta^p \le \left(\#\calP_0 + C\varepsilon^{-\left(\frac{1}{s_1}+\frac{d}{s_2}\right)}\right)  \delta^p
        \le C(\#\calP_0 +1)\varepsilon^{-\left(\frac{1}{s_1}+\frac{d}{s_2}\right)}\delta^p
        \\&\lesssim_{\,\#\calP_0} \varepsilon^{-\left(\frac{1}{s_1}+\frac{d}{s_2}\right)}\varepsilon^{p+\frac{1}{s_1}+\frac{d}{s_2}}|f|_{B^{\alpha_1+s_1, \alpha_2+s_2}_{q,q}(\Omega_T)}^p = \varepsilon^p |f|_{B^{\alpha_1+s_1, \alpha_2+s_2}_{q,q}(\Omega_T)}^p,
    \end{align*}
    where, the fact that the algorithm has terminated with $\calP$ has been additionally applied in the second step, as well as \eqref{thm:direct_estimates - equation 1} in the third, $\varepsilon^{\frac{1}{s_1}+\frac{d}{s_2}}\le C$ in the fourth, and the definition of $\delta$ in the penultimate one. For $(\alpha_1, \alpha_2)= (0,0)$, we can obtain corresponding results, using $\|f-\pi_{\rho,\calP}(f)\|_{L_{p}(\Omega_T)}=\sum\limits_{I\times S\in \calP} \|f-\pi_{\rho,\calP}(f)\|_{L_{p}(I\times S)}$, \cref{thm:local_boundedness_of_pi_projection} and \cref{lem:Extended_Whitney_Estimate} instead of \cref{lem:Besovnorm behaves a little like L_p} and \cref{lem:Whitney for besov-seminorms}.
\end{proof}

\begin{dis}\label{disc:comparison_continuous_discontinuous}
    It is noteworthy, that for $(\alpha_1, \alpha_2)=(0,0)$, this result for continuous finite elements yields the same qualitative dependency on the parameters as the result for the discontinuous case from \cite[Thm.~1.5]{MSS26}. Therefore, from an approximation theoretical viewpoint, there is no significant advantage in using discontinuous anisotropic space-time finite elements over continuous ones. This result  is consistent to the findings in \cite{Vee16}, where it was shown that for the approximation of stationary functions with respect to the (broken) Sobolev-norm on a conforming triangulation, there is no qualitative advantage to  using discontinuous FEM over continuous ones.
\end{dis}

\subsection{Inverse estimates}\label{subsect:inverse estimates}

The goal of this section is to show inverse estimates, i.e., \cref{Main_res:invers}. From now on we will always work under the following assumptions on the parameters. 

\begin{assumptions}\label{assumptions:inverse_estimates_2}
Let \cref{assumptions:multiscale-decomposition} hold with \mbox{$\frac{1}{p}+\frac{1}{\frac{1}{s_1}+\frac{d}{s_2}}=\frac{1}{q}$}, which, in particular, implies $q<p$. Additionally, we require $\rho\le q$  and use the notation $q^*:=\min(1,q)$. Further, $\calP$ will always represent a space-time partition created from $\calP_0$ by finitely many applications of $\textup{PATCH}\_\textup{REFINE}(\cdot, \cdot, d, s_1, s_2)$. We will denote $\calP\in \,\mathbb{P}_N:=\mathbb{P}_N(\calP_0)$, if at most $N\in \N_0$ applications of this method were used to create $\calP$ from $\calP_0$.
\end{assumptions}
\begin{rem}
    If $\calP\in \mathbb{P}_N$, then $\#\calP - \#\calP_0 \sim_{\,d, s_1, s_2, \kappa_{\calP_0}, \mu(\calP_0)} N$, due to \cref{Thm:complexity results}.
\end{rem}

For anisotropic finite elements on $\calP\in \mathbb{P}_N$, we can estimate higher order generalized (anisotropic) Besov \mbox{(quasi-)}norms appropriately by those of lesser order, as can be  seen in the following lemma. It corresponds to \cite[Thm.~7.3]{GM14} (in the stationary case) and is an estimate of Bernstein type.

\begin{lem}[Bernstein-type inequality]\label{lem:Generalized_Besov_norm_estimates}
Let \cref{assumptions:inverse_estimates_2} hold true. Then, for every $N\in \N$, $\calP\in \mathbb{P}_N$, and $F\in \V^{r_1, r_2}_{\calP}$, we can estimate
    \begin{align*}
    \|F\|_{\widehat{B}^{\alpha_1+s_1, \alpha_2+s_2}_{q,q}(\Omega_T)} \lesssim_{\,d, s_1, s_2,r_1, r_2,\kappa_{\calP_0}, \mu(\calP_0), \#\calP_0} N^\frac{1}{\frac{1}{s_1}+\frac{d}{s_2}}\|F\|_{\widehat{B}^{\alpha_1,\alpha_2}_{p,p}(\Omega_T)}.
    \end{align*}
\end{lem}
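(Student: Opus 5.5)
We prove the Bernstein inequality with the wavelet-type representation of $\|\cdot\|_{\widehat{B}}$ from \cref{lem:norm equivalency_gen_besov}, i.e. the $\Delta$-norm $\big(\sum_n 2^{n\frac{\alpha_2}{d}q}\|\Delta_n(F)\|_{L_p}^q\big)^{1/q}$, in the spirit of \cite[Thm.~7.3]{GM14}. Fix $F\in\V^{r_1,r_2}_{\calP}$ with $\calP\in\mathbb{P}_N$. Since $F\in\V^{r_1,r_2}_{\calP}$, for every prism $I_n\times S_n\in\calP_n$ whose $j(d)$-neighbourhood $\omega^{j(d)}_{\calP_n}(I_n\times S_n)$ lies inside a single element of $\calP$, $F$ restricted to that neighbourhood is a polynomial. \cref{thm:local_boundedness_of_pi_projection} then gives $\|F-\pi_n(F)\|_{L_p(I_n\times S_n)}=0$, and likewise for $\pi_{n-1}$ once $n-1$ is large enough. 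Hence $\Delta_n(F)$ is supported on the set $\Lambda_n\subset\calP_n$ of those $I_n\times S_n$ whose neighbourhood meets the boundary of some element of $\calP$ of level at most about $n$ (up to a fixed shift $l$ from \cref{Enumeration further properties of the mesh}). Because $\calP$ has at most $\#\calP_0+CN$ elements (\cref{Thm:complexity results}) and each element of $\calP$ intersects only $\lesssim 1$ neighbourhoods of its own size, we obtain $\#\Lambda_n\lesssim \min\big(2^{n(1+\frac{s_2}{s_1d})},\,N+\#\calP_0\big)$. This counting of $\Lambda_n$ is the step I expect to be the main obstacle. The reason is that the number of elements of $\calP$ at level at most $n$ is $\lesssim N$, but finer elements of $\calP$ only produce nonzero $\Delta_n$ pieces whose neighbourhoods touch their boundary. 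I would control this with the local-grading property together with the hierarchical structure of \cref{Enumeration further properties of the mesh}.

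Next, on each $I_n\times S_n\in\Lambda_n$ we compare $L_q$ and $L_p$ by Hölder's inequality ($q<p$). On a single prism, $\|\Delta_n F\|_{L_q(I_n\times S_n)}\le |I_n\times S_n|^{\frac1q-\frac1p}\|\Delta_nF\|_{L_p(I_n\times S_n)}$, and $|I_n\times S_n|\sim 2^{-n(1+\frac{s_2}{s_1d})}$. Summing over $\Lambda_n$ with the discrete Hölder inequality gives
\begin{align*}
\|\Delta_nF\|_{L_q(\Omega_T)}\lesssim (\#\Lambda_n)^{\frac1q-\frac1p}\,2^{-n(1+\frac{s_2}{s_1d})(\frac1q-\frac1p)}\|\Delta_nF\|_{L_p(\Omega_T)}.
\end{align*}
Since $\frac1q-\frac1p=\frac{1}{\frac{1}{s_1}+\frac{d}{s_2}}=:\gamma$ and $(1+\frac{s_2}{s_1d})\gamma=\frac{s_2}{d}$, we get $2^{n\frac{\alpha_2+s_2}{d}}\|\Delta_nF\|_{L_q}\lesssim (\#\Lambda_n)^{\gamma}\,2^{n\frac{\alpha_2}{d}}\|\Delta_nF\|_{L_p}$. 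The collinearity $(\alpha_1,\alpha_2)\in\R(s_1,s_2)$ ensures that the index weights of both generalized spaces match with the same mesh sequence $\calP_n$.

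Finally, we sum the $q$-th powers. Using $\#\Lambda_n\lesssim N+\#\calP_0\lesssim_{\#\calP_0} N$, we obtain
\begin{align*}
\sum_n 2^{n\frac{\alpha_2+s_2}{d}q}\|\Delta_nF\|_{L_q}^q\lesssim N^{\gamma q}\sum_n \big(2^{n\frac{\alpha_2}{d}}\|\Delta_nF\|_{L_p}\big)^q .
\end{align*}
The right-hand side has exponent $q$ rather than $p$, so one more discrete Hölder step over $n$ is needed. It uses that only $\lesssim \log$-many, or more precisely only $\lesssim N$ (counted with multiplicity via $\sum_n\#\Lambda_n\lesssim N$), prisms contribute. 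I would first carry out this Hölder step prism-wise across all levels simultaneously: $\sum_n\sum_{\Lambda_n}a_{n,K}^q\le(\sum_n\#\Lambda_n)^{1-q/p}(\sum a_{n,K}^p)^{q/p}$. This requires the sharper count $\sum_n\#\Lambda_n\lesssim N$, which again follows from the support analysis above since each element of $\calP$ is touched by boundedly many $\Lambda_n$-prisms per level and only $O(1)$ levels relative to its own. With that, the bound becomes $N^{\gamma}\big(\sum_n 2^{n\frac{\alpha_2}{d}p}\|\Delta_nF\|_{L_p}^p\big)^{1/p}$, i.e. $N^\gamma\|F\|_{\widehat B^{\alpha_1,\alpha_2}_{p,p}}$, after passing between the $\Delta$-norm and $\|\cdot\|_{\widehat B}$ via \cref{lem:norm equivalency_gen_besov}.
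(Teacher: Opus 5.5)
Your overall scheme is the paper's: a single H\"older step over all active (level, piece) pairs, whose total number is $\lesssim N$, combined with $\|g\|_{L_q(K)}\le |K|^{\frac1q-\frac1p}\|g\|_{L_p(K)}$ on pieces of measure $\sim 2^{-n(1+\frac{s_2}{s_1d})}$ and the identity $(1+\frac{s_2}{s_1d})\frac{1}{\frac{1}{s_1}+\frac{d}{s_2}}=\frac{s_2}{d}$. The paper runs it with the coefficient form of the norm from \cref{lem:norm equivalency_gen_besov} and $\Lambda=\{(n,\nu): b^{(n)}_\nu(F)\neq 0\}$ instead of prisms. Your final joint H\"older computation is fine. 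The gap is the count $\sum_n \#\Lambda_n\lesssim N$, which carries the whole proof, and neither ingredient you give for it works.

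(i) The vanishing criterion is too weak. \cref{thm:local_boundedness_of_pi_projection} makes $\Delta_n(F)$ vanish on $I_n\times S_n$ only if $F$ is a single polynomial on the whole neighbourhood, i.e.\ if the neighbourhood lies in one element of $\calP$. This does not discard the $\calP_n$-prisms straddling faces between coarse elements of $\calP$. There are $\gtrsim 2^{cn}$ of those, with $c=c(d,s_1,s_2)>0$, for every $n$ up to the maximal level of $\calP$, and that level is of order $N$ if you refine $N$ times towards one point. So neither $\#\Lambda_n\lesssim N$ nor $\sum_n\#\Lambda_n\lesssim N$ follows. Your description via boundaries of elements of level ``at most about $n$'' in fact selects exactly these interface prisms. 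It also misses the prisms lying over regions where $\calP$ is finer than level $n$, where $\Delta_n(F)$ is genuinely nonzero.

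What you need is exact local reproduction, a local version of \cref{rem:linearity_and_projection_property_of_Q}. Suppose every $\calP_m$-prism meeting $K\in\calP_m$ lies inside one element of $\calP$. Then $\calB_{\rho,\calP_m}(F)=F$ there, and $(F,\zeta_\nu)_{L_2((J\times R)(\nu))}=F(\nu)$ by biorthogonality and continuity of $F$. Since $\calP_m$ is conforming, $\pi_m(F)|_K$ is the Lagrange interpolant of the polynomial $F|_K$, hence equals $F|_K$. Consequently $\Delta_n(F)$ lives only in a bounded neighbourhood of those prisms of level $n-1$ or $n$ of the refinement tree $\mathbb{T}_\calP$ that were split during the construction of $\calP$.

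(ii) The charging argument is false. You claim each element of $\calP$ meets $\Lambda_n$-prisms for only $O(1)$ levels relative to its own. But a leaf of level $m$ lies inside its level-$n$ ancestor for every $n\le m$, and that ancestor is generically active. The active pieces must be charged to nodes of $\mathbb{T}_\calP$, not to leaves. This gives $\sum_n\#\Lambda_n\lesssim \#\mathbb{T}_\calP$. Moreover $\#\mathbb{T}_\calP\le 2\#\calP\lesssim \#\calP_0+N$, because every atomic split has at least two children, combined with \cref{Thm:complexity results}. This is exactly the paper's bound $\#\Lambda\lesssim\#\mathbb{T}_\calP\lesssim N$. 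With (i) and (ii) in place, the rest of your proof goes through.
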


\begin{proof}
    Consider the multiscale approximation of $F$ from \cref{rem:Generalized_Besov_Spaces}\ref{rem:Generalized_Besov_Spaces - item 3} and define 
    \begin{align*}
    \Lambda:=\{(n,\nu)\mid n\in \N_0, \nu \in \calL(\calP_n)\text{ with } b_{\nu}^{(n)}\neq 0\},~i.e.,~F = \sum\limits_{\lambda \in \Lambda} b_\lambda(F)\phi_\lambda,
    \end{align*}
    where $b_\lambda(F) = b_{\nu}^{(n)}(F)$ and $\phi_\lambda = \phi_{\nu}^{(n)}$ for $\lambda = (n, \nu) \in \Lambda$. Further, consider the tree $\mathbb{T}_\calP$ of all the prisms which have been created during the sequence of applications of the $\textup{PATCH}\_\textup{REFINE}(\cdot,\cdot, d, s_1, s_2)$ procedure corresponding to $\calP$. Then, surely $\Lambda\subset \bigcup\limits_{I\times S\in \mathbb{T}_\calP} \left\{(\ell(I\times S), \nu') \mid \nu'\in \calL(I\times S)\right\}$, which yields $\#\Lambda\lesssim_{\, d, r_1, r_2} \#\mathbb{T}_\calP$. Since every application of $\textup{PATCH}\_\textup{REFINE}(\cdot, \cdot, d, s_1, s_2)$ produces at most $\sim_{\,d, s_1, s_2, \kappa_{\calP_0}, \mu(\calP_0)}\,2^{\left\lceil 1+\frac{s_2}{s_1d} \right\rceil}$ new elements, we obtain
    \begin{align}\label{lem:Generalized_Besov_norm_estimates - proof 1}
        \#\Lambda\lesssim_{\, d, r_1, r_2} \#\mathbb{T}_\calP \lesssim_{\,d, s_1, s_2, \kappa_{\calP_0}, \mu(\calP_0)} \#\calP_0 + N\lesssim_{\, \#\calP_0} N.
    \end{align}
    Now, we use  Hölder's inequality with respect to sums and integrals, respectively, to obtain
    \begin{align*}
        \|F\|_{\widehat{B}^{\alpha_1+s_1, \alpha_2+s_2}_{q, q}} 
        &= \left(\sum\limits_{\lambda\in \Lambda} |\supp \phi_\lambda|^{-\frac{q}{\frac{1}{\alpha_1+s_1}+\frac{d}{\alpha_2+s_2}}} \left\|b_\lambda(f)\phi_{\lambda}\right\|_{L_q(\Omega_T)}^q\right)^\frac{1}{q}
        \\
        &\le (\#\Lambda)^\frac{1}{\frac{1}{s_1}+\frac{d}{s_2}}\left(\sum\limits_{\lambda\in \Lambda} |\supp \phi_\lambda|^{-\frac{p}{\frac{1}{\alpha_1+s_1}+\frac{d}{\alpha_2+s_2}}} \left\|b_\lambda(f)\phi_{\lambda}\right\|_{L_q(\Omega_T)}^p\right)^\frac{1}{p}
        \\&
        \le (\#\Lambda)^\frac{1}{\frac{1}{s_1}+\frac{d}{s_2}}\left(\sum\limits_{\lambda\in \Lambda} |\supp \phi_\lambda|^{-\frac{p}{\frac{1}{\alpha_1+s_1}+\frac{d}{\alpha_2+s_2}}} |\supp \phi_\lambda|^{\frac{p}{\frac{1}{s_1}+\frac{d}{s_2}}} \left\|b_\lambda(f)\phi_{\lambda}\right\|_{L_p(\Omega_T)}^p\right)^\frac{1}{p}
        \\&= (\#\Lambda)^\frac{1}{\frac{1}{s_1}+\frac{d}{s_2}}\left(\sum\limits_{\lambda\in \Lambda} |\supp \phi_\lambda|^{-\frac{p}{\frac{1}{\alpha_1}+\frac{d}{\alpha_2}}} \left\|b_\lambda(f)\phi_{\lambda}\right\|_{L_p(\Omega_T)}^p\right)^\frac{1}{p}
        \\ &\lesssim_{\,d, s_1, s_2,r_1, r_2,\kappa_{\calP_0}, \mu(\calP_0), \#\calP_0} N^\frac{1}{\frac{1}{s_1}+\frac{d}{s_2}}\left(\sum\limits_{\lambda\in \Lambda} |\supp \phi_\lambda|^{-\frac{p}{\frac{1}{\alpha_1}+\frac{d}{\alpha_2}}} \left\|b_\lambda(f)\phi_{\lambda}\right\|_{L_p(\Omega_T)}^p\right)^\frac{1}{p}
        \\&= N^\frac{1}{\frac{1}{s_1}+\frac{d}{s_2}} \|f\|_{\widehat{B}^{\alpha_1, \alpha_2}_{p,p}(\Omega_T)},
    \end{align*}
    where we have used the linear dependency of $(\alpha_1+s_1, \alpha_2+s_2)$ and $(s_1, s_2)$ in the fourth step, as well as \eqref{lem:Generalized_Besov_norm_estimates - proof 1} in the penultimate one. 
\end{proof}

Further, we need a similar result for the Lebesgue norm of the finite elements on the partitions of the multiscale decomposition $\calP_n$ that corresponds to the one given in \cite[Ep.~(7.2)]{DP88}.

\begin{lem}\label{lem:Lebesgue_norm_estimate_for_finite_elements}
    Let \cref{assumptions:inverse_estimates_2} hold and consider $F\in \V^{r_1, r_2}_{\calP_n}$ for some $n\in \N_0$. Then
    \begin{align*}
        \|F\|_{L_p(\Omega_T)}\lesssim_{\,d, p, s_1, s_2, r_1, r_2, \mu(\calP_0)} 2^{n\frac{s_2}{d}} \|F\|_{L_q(\Omega_T)}
    \end{align*}
\end{lem}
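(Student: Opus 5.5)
This is an elementwise inverse (Nikolskii-type) estimate on the uniform tensor mesh $\calP_n$, summed over elements.

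First, fix $I\times S\in\calP_n$. Since $F_{|I\times S}\in\Pi^{r_1,r_2}_{t,\bm{x}}(I\times S)$, I transform to the reference prism $\hat I\times\hat S$ via the affine map $\Phi_{I\times S}$, as in the proofs of \cref{rem:estimate_dual_elements} and \cref{lem:smaller_parameter_bestapprox_larger_parameter_quasi-bestapprox}. On the finite-dimensional space $\Pi^{r_1,r_2}_{t,\bm{x}}(\hat I\times\hat S)$ all (quasi-)norms are equivalent. Scaling back (equivalently, using \cite[Lem.~3.5]{MSS26}) then gives
\begin{align*}
\|F\|_{L_p(I\times S)}\lesssim_{\,d,p,q,r_1,r_2}|I\times S|^{\frac1p-\frac1q}\|F\|_{L_q(I\times S)}.
\end{align*}
By \cref{rem:conformity_of_the_calP_n} and \cref{rem:Generalized_Besov_Spaces}\ref{rem:Generalized_Besov_Spaces - item 4}, every element of $\calP_n$ satisfies $|I\times S|\sim_{\,\mu(\calP_0)}2^{-n\left(1+\frac{s_2}{s_1d}\right)}$. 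Using $\frac1q-\frac1p=\frac{1}{\frac1{s_1}+\frac d{s_2}}$ from \cref{assumptions:inverse_estimates_2}, the prefactor is
\begin{align*}
2^{n\left(1+\frac{s_2}{s_1d}\right)\frac{1}{\frac1{s_1}+\frac d{s_2}}}=2^{n\frac{s_2}{d}},
\end{align*}
since $\left(1+\frac{s_2}{s_1d}\right)\frac{s_1s_2}{s_2+ds_1}=\frac{s_2}{d}$.

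Next, I sum over the elements. For $p<\infty$,
\begin{align*}
\|F\|_{L_p(\Omega_T)}^p=\sum_{I\times S\in\calP_n}\|F\|_{L_p(I\times S)}^p\lesssim 2^{n\frac{s_2}{d}p}\sum_{I\times S\in\calP_n}\|F\|_{L_q(I\times S)}^p\le 2^{n\frac{s_2}{d}p}\Big(\sum_{I\times S\in\calP_n}\|F\|_{L_q(I\times S)}^q\Big)^{p/q}.
\end{align*}
The last step is the embedding $\ell^q\hookrightarrow\ell^p$, which applies because $q<p$. Taking $p$-th roots yields the claim. For $p=\infty$, the sum is replaced by the maximum over elements, and the bound follows directly.

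The only real point of care is the dependence of the constants. The reference-element equivalence depends only on $d,p,q,r_1,r_2$, and $q$ is determined by $p,s_1,s_2,d$. The measure comparison depends only on $\mu(\calP_0)$, because the levels in $\calP_n$ are uniform. The exponent identity is the step to check carefully, but it reduces to the computation above.
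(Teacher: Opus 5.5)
Your proposal is correct and follows essentially the same route as the paper. Both arguments use the elementwise equivalence $\|F\|_{L_p(I\times S)}\sim|I\times S|^{\frac1p-\frac1q}\|F\|_{L_q(I\times S)}$ from \cite[Lem.~3.5]{MSS26} and the uniform element measure $|I\times S|\sim 2^{-n(1+\frac{s_2}{s_1d})}$ on $\calP_n$. They then apply the exponent identity coming from $\frac1q-\frac1p=\frac{1}{\frac1{s_1}+\frac d{s_2}}$ and the embedding $\ell^q\hookrightarrow\ell^p$ when summing over the elements, which the paper writes as $\ell^{q/p}\hookrightarrow\ell^1$.
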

\begin{proof}
    Assume $p<\infty$, the case $p=\infty$ is very similar. Using \cite[Lem.~3.5]{MSS26}, $|I\times S|\sim_{\mu(\calP_0)} 2^{-n\left(1+\frac{s_2}{s_1d}\right)}$, the identity for $q, p, s_1$, and $s_2$, and the embedding $\ell^\frac{q}{p}(\N_0)\hookrightarrow\ell^1(\N_0) $ due to $q<p$, yields
    \begin{align*}
        \|F\|_{L_p(\Omega_T)}^p&= \sum\limits_{I\times S\in \calP_n} \|F\|_{L_p(I\times S)}^p\sim_{\,d, p, r_1, r_2} \sum\limits_{I\times S\in \calP_n} |I\times S|^{1-\frac{p}{q}} \|F\|_{L_q(I\times S)}^p
        \\&\lesssim_{\,d,p, s_1, s_2, \mu(\calP_0)}2^{-n\left(1+\frac{s_2}{s_1d}\right)\left(-\frac{1}{\frac{1}{s_1}+\frac{d}{s_2}}\right)p}\left(\sum\limits_{I\times S\in \calP_n}\|F\|_{L_q(I\times S)}^q\right)^\frac{p}{q}=2^{n\frac{s_2}{d}p}\|F\|_{L_q(\Omega_T)}^p.
        \qedhere
    \end{align*}
\end{proof}

In particular, this result allows us to show embeddings between generalized anisotropic Besov spaces of different smoothness, similar to the results from \cite[Thm.~1.3]{MSS26} and \cref{thm:Sobolev-type-embedding} for the classical anisotropic spaces.

\begin{lem}\label{lem:Embedding between anisotropic spaces}
    The embedding $\widehat{B}^{\alpha_1+s_1, \alpha_2+s_2}_{q, q}(\Omega_T) \hookrightarrow \widehat{B}^{\alpha_1,\alpha_2}_{p,p}(\Omega_T)$ is continuous with an embedding constant only depending on $d, p, s_1, s_2, r_1, r_2$, and $\mu(\calP_0)$.
\end{lem}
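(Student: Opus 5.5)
The plan is to work directly with the $\Delta$-(quasi-)norm from \cref{assumptions:multiscale-decomposition}, which by \cref{lem:norm equivalency_gen_besov} is equivalent to the coefficient norm used to define $\widehat{B}$. Let $f\in\widehat{B}^{\alpha_1+s_1,\alpha_2+s_2}_{q,q}(\Omega_T)$. Since $\Delta_n(f)\in\V^{r_1,r_2}_{\calP_n}$ for every $n\in\N_0$, \cref{lem:Lebesgue_norm_estimate_for_finite_elements} gives
\begin{align*}
\|\Delta_n(f)\|_{L_p(\Omega_T)}\lesssim 2^{n\frac{s_2}{d}}\|\Delta_n(f)\|_{L_q(\Omega_T)}.
\end{align*}
Multiplying by $2^{n\frac{\alpha_2}{d}}$ yields $2^{n\frac{\alpha_2}{d}}\|\Delta_n(f)\|_{L_p}\lesssim 2^{n\frac{\alpha_2+s_2}{d}}\|\Delta_n(f)\|_{L_q}$. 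This is exactly the scale weight of the $\widehat{B}^{\alpha_1+s_1,\alpha_2+s_2}_{q,q}$-norm at level $n$: the $\Delta$-norm only involves the spatial exponent divided by $d$, and collinearity $(\alpha_1,\alpha_2)\in\R(s_1,s_2)$ guarantees that the temporal weights match automatically.

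Next I would sum over $n$. Since $q<p$ under \cref{assumptions:inverse_estimates_2}, the embedding $\ell^q(\N_0)\hookrightarrow\ell^p(\N_0)$ gives
\begin{align*}
\Big(\sum_{n} 2^{n\frac{\alpha_2}{d}p}\|\Delta_n(f)\|_{L_p}^p\Big)^{\frac1p}\le \Big(\sum_n 2^{n\frac{\alpha_2}{d}q}\|\Delta_n(f)\|_{L_p}^q\Big)^{\frac1q}\lesssim \Big(\sum_n 2^{n\frac{\alpha_2+s_2}{d}q}\|\Delta_n(f)\|_{L_q}^q\Big)^{\frac1q},
\end{align*}
which is $\|f\|^\Delta_{\widehat{B}^{\alpha_1,\alpha_2}_{p,p}}\lesssim\|f\|^\Delta_{\widehat{B}^{\alpha_1+s_1,\alpha_2+s_2}_{q,q}}$. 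The constant only inherits the dependencies of \cref{lem:Lebesgue_norm_estimate_for_finite_elements}, namely $d,p,s_1,s_2,r_1,r_2,\mu(\calP_0)$.

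Two bookkeeping points remain. First, membership in $L_p$: the series $f=\sum_n\Delta_n(f)$ converges in $L_p$, since $\sum_n\|\Delta_n(f)\|_{L_p}^{\min(p,1)}$ is finite by the bounds above whenever $\alpha_2>0$. For $\alpha_i\le0$ one can fall back on finiteness of the $\Delta$-norm together with the definition, because $f\in L_q$ and the multiscale partial sums converge. Second, the main subtlety is that both $\widehat B$ spaces must be built from the same $\pi_n=\pi_{\rho,\calP_n}$ with the same $\rho\le q\le p$, so that the $\Delta_n(f)$ are literally the same functions on both sides. This holds by \cref{assumptions:multiscale-decomposition}, where $\rho$ is fixed, and since $f\in L_q\subset L_\rho$ the operator $\pi_{\rho,\calP_n}$ is defined for it.
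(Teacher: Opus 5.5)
Your argument is exactly the paper's proof: apply \cref{lem:Lebesgue_norm_estimate_for_finite_elements} to each $\Delta_n(f)\in\V^{r_1,r_2}_{\calP_n}$ and use $\ell^q(\N_0)\hookrightarrow\ell^p(\N_0)$ (you only perform the two steps in the opposite order), which gives $\|f\|^\Delta_{\widehat{B}^{\alpha_1,\alpha_2}_{p,p}(\Omega_T)}\lesssim\|f\|^\Delta_{\widehat{B}^{\alpha_1+s_1,\alpha_2+s_2}_{q,q}(\Omega_T)}$ with the stated dependence of the constant. Your extra membership argument works for $\alpha_2>0$, but the fallback for $\alpha_2\le 0$ is not valid: finiteness of the $\Delta$-norm and $L_q$-convergence of $\pi_N(f)$ say nothing about $p$-integrability, and for $\alpha_2<0$ (with $\alpha_i+s_i>0$) one has $\frac{1}{\frac{1}{\alpha_1+s_1}+\frac{d}{\alpha_2+s_2}}-\frac{1}{q}+\frac{1}{p}<0$, so $f\in L_p(\Omega_T)$ can genuinely fail; in that case you only get the norm inequality, which is also all the paper's proof establishes.
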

\begin{proof}
    We estimate
    \begin{align*}
        \|f\|^\Delta_{\widehat{B}^{\alpha_1,\alpha_2}_{p,p}(\Omega_T)} &=\left(\sum\limits_{n=0}^{\infty} 2^{n\frac{\alpha_2}{d}p}\|\Delta_n(f)\|^p_{L_p(\Omega_T)}\right)^\frac{1}{p}\lesssim_{\,d, p, s_1, s_2, r_1, r_2, \mu(\calP_0)} \left(\sum\limits_{n=0}^{\infty} 2^{n\frac{\alpha_2+s_2}{d}p}\|\Delta_n(f)\|^p_{L_q(\Omega_T)}\right)^\frac{1}{p}
        \\ &\le \left(\sum\limits_{n=0}^{\infty} 2^{n\frac{\alpha_2+s_2}{d}q}\|\Delta_n(f)\|^q_{L_q(\Omega_T)}\right)^\frac{1}{q} = \|f\|_{\widehat{B}^{\alpha_1+s_1, \alpha_2+s_2}_{q,q}(\Omega_T)}^\Delta
    \end{align*}
    using \cref{lem:Lebesgue_norm_estimate_for_finite_elements} and the embedding $\ell^{q}(\N_0)\hookrightarrow \ell^{p}(\N_0)$ due to $q < p$.
\end{proof}

The results that we show in the following are closely related to sequence spaces and (their) real interpolation. 
We now introduce the corresponding definitions and results for the sake of completeness.

\begin{defi}\label{Def:Generalized_Sequence_Spaces}
For $\gamma\in \R$, $\eta\in (0,\infty)$, and a (quasi-)Banach space $X$, we define the \textbf{generalized sequence spaces} via
\begin{align*}
	\ell^{\gamma, \eta}(X):=\bigg\{a:\N_0\rightarrow X \ \bigg| \ \|a\|_{\ell^{\gamma, \eta}(X)}:=\Big(\sum\limits_{k=0}^{\infty}\left(2^{\gamma k} \|a_k\|_X\right)^\eta\Big)^\frac{1}{\eta}<\infty \bigg\}.
\end{align*}
The space $\ell^{\gamma, \infty}(X)$ and its norm are defined in the same manner with the usual modification. These spaces are themselves \mbox{(quasi-)}Banach spaces with (quasi-)norm $\|\cdot\|_{\ell^{\gamma, t}(X)}$. For more information on these spaces, the reader is refered to \cite[Ch.~1.18.1]{Tri78}.
\end{defi}

\begin{rem}
    For us, it will be crucial to exploit the identity $\|f\|_{\widehat{B}^{s_1,s_2}_{q,q}(\Omega_T)}=\left\|(\Delta_n(f))_{n\in \N_0}\right\|_{\ell^{\frac{s_2}{d},q}(L_q(\Omega_T))}$.
\end{rem}

\begin{defi}\label{defi:real interpolation}
    Let $X_1$ and $X_2$ be (quasi-)Banach spaces with corresponding (quasi-)norms $\|\cdot\|_{X_i}$, $i=1,2$. Then, the \textbf{$K$-functional} of $X_1$ and $X_2$ is defined for $x\in X_1+X_2$ as
    \begin{align*}
        K(x,\delta, X_1, X_2):=\inf\limits_{\substack{x_1\in X_1,\ x_2\in X_2\\ x_1+x_2=x}} \|x_1\|_{X_1} + \delta\|x_2\|_{X_2}, \quad \delta > 0.
    \end{align*}
    Using this functional, we can define the \textbf{(real) interpolation space of $X_1$ and $X_2$} for any $\theta \in (0,1)$ and $\eta\in (0,\infty)$ via
    \begin{align*}
        (X_1, X_2)_{(\theta, \eta)}:=\bigg\{x\in X_1+X_2\ \Big| \ \|x\|_{(X_1, X_2)_{(\theta, \eta)}}:=\Big(\int_0^\infty \left[\delta^{-\theta}K(x,\delta, X_1, X_2)\Big]^{\eta}\frac{d\delta}{\delta}\right)^\frac{1}{\eta} <\infty\bigg\}.
    \end{align*}
\end{defi}

For the general theory regarding the interpolation of quasi-Banach spaces with respect to the \enquote{$K$-method}, we refer to \cite[Ch.~1.1--1.3 and Ch.~1.7]{Tri78}\footnote{The theory and the proofs in this source are generally only given for the Banach space case. Nevertheless, in several remarks the author explains that the theory can be extended easily to the quasi-Banach space setting.}. For our further investigations, we only mention some important results related to these interpolation spaces.

\begin{rem}\label{rem:on_interpolation_spaces}
    \begin{enumerate}[label=(\roman*)]
        \item For any $a\in (1,\infty)$, the discrete interpolation (quasi-)norm given by 
            \begin{align*}
                \|x\|^*_{(X_1, X_2)_{(\theta, \eta)}}:=\left(\sum\limits_{k=-\infty}^{\infty} \left[a^{k\theta} K(x, a^{-k}, X_1, X_2)\right]^\eta \right)^\frac{1}{\eta}
            \end{align*}
            is equivalent to $\|\cdot \|_{(X_1, X_2)_{(\theta, \eta)}}$ on $(X_1, X_2)_{(\theta, \eta)}$ with equivalency constants depending only on $a$, $\theta$, and $\eta$. This can be shown according to   part (a) of the theorem in \cite[Ch.~1.7]{Tri78}. \label{rem:on_interpolation_spaces - discrete norm}
        \item \label{rem:on_interpolation_spaces - bound on discrete norm if X_2 embedded in X_1}Assume that $X_2$ can be continuously embedded into $X_1$ and $a>1$ as above in (i), then
        \begin{align*}
            \|x\|^*_{(X_1, X_2)_{(\theta, \eta)}}\lesssim_{\, a, \theta, \eta} \|x\|^{**}_{(X_1, X_2)_{(\theta, \eta)}}:=\|x\|_{X_1}+\left(\sum\limits_{k=0}^{\infty} \left[a^{k\theta} K(x, a^{-k}, X_1, X_2)\right]^\eta \right)^\frac{1}{\eta} \quad \text{for} \quad x\in X_1.
        \end{align*}
        \begin{proof}
            Since, $X_2\subset X_1$, $X_1+X_2 = X_1$ holds true. Therefore, any $x\in X_1$ can be decomposed into $x + 0\in X_1+X_2$. This yields, $K(x, \delta, X_1, X_2)\le \|x_1\|_{X_1}$ for any $\delta\in [0,\infty)$. Accordingly,
            \begin{align*}
                \left(\sum\limits_{k=-\infty}^{-1} \left[a^{k\theta} K(x, a^{-k}, X_1, X_2)\right]^\eta \right)^\frac{1}{\eta} \le \|x_1\|_{X_1} \left(\sum\limits_{k=1}^{\infty} \left(a^{-\theta\eta}\right)^k \right)^\frac{1}{\eta}\sim_{\, a, \theta, \eta} \|x_1\|_{X_1},
            \end{align*}
            since the last sum is geometric with $a^{-\theta\eta}<1$.
        \end{proof}
        \item An important result that we will need below, is a result on the interpolation of generalized sequence spaces which can be found, for example, in \cite[Thm.~4 of Ch.~5]{Pee76} and \cite[Thm.~5.6.2]{BL76}\footnote{Again, technically, in the second source, the result is only stated directly in the case where $Y_1$ and $Y_2$ are Banach spaces. But the proof also works in the quasi-Banach space setting.}. Let $Y_1$, $Y_2$ be (quasi-)Banach spaces, $\gamma, \gamma_1, \gamma_2 \in \R$, $\eta, \eta_1, \eta_2\in (0,\infty]$, and $\theta \in (0,1)$ with $\gamma=(1-\theta) \gamma_1 + \theta\gamma_2$ as well as $\frac{1}{\eta}=\frac{1-\theta}{\eta_1} + \frac{\theta}{\eta_2}$, then
        \begin{align*}
            \left(\ell^{\gamma_1, \eta_1}(Y_1), \ell^{\gamma_2, \eta_2}(Y_2)\right)_{(\theta, \eta)}= \ell^{\gamma, \eta}\left((Y_1, Y_2)_{(\theta, \eta)}\right)
        \end{align*}
        in the sense of equivalent (quasi-)norms. The corresponding constants depend on $\gamma_i$, $\eta_i$, $i=1,2$, $\theta$, and the (quasi-)triangle inequality constants of $Y_i$, $i=1,2$. \label{rem:on_interpolation_spaces - sequence spaces}
        \item\label{rem:on_interpolation_spaces - Lebesgue spaces special case} A similar result holds true for Lebesgue spaces, as proven in \cite[Ch.~1.18.4]{Tri78}. Let $\eta, \eta_1, \eta_2,$ and $\theta\in (0,1)$ as above in \ref{rem:on_interpolation_spaces - sequence spaces}. Then, 
        \begin{align*}
            \left(L_{\eta_0}(\Omega_T), L_{\eta_1}(\Omega_T)\right)_{(\theta, \eta)}= L_{\eta}(\Omega_T)
        \end{align*}
        holds true in the sense of equivalent (quasi-)norms with involved constants depending on $\eta_1$, $\eta_2$, and $\theta$.
    \end{enumerate}
\end{rem}

These results lead to the following lemma that we will need at the end of the section when we prove the inverse estimate result. 

\begin{lem}\label{lem:interpolation_of_certain_gen_sequence_spaces}
    Let the parameters be chosen as in \cref{rem:on_interpolation_spaces}\ref{rem:on_interpolation_spaces - sequence spaces}. Then the following equality holds in the sense of equivalent (quasi-)norms: 
    \begin{align*}
        \left(\ell^{\gamma_1, \eta_1}\left(L_{\eta_1}(\Omega_T)\right), \ell^{\gamma_2, \eta_2}\left(L_{\eta_2}(\Omega_T)\right)\right)_{(\theta, \eta)} = \ell^{\gamma, \eta}\left(L_\eta(\Omega_T)\right).
    \end{align*}
    The equivalency constants depend on the parameters $\gamma_i$, $\eta_i$, $i=1,2$, and $\theta$.
\end{lem}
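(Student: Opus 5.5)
The plan is to obtain the identity as a direct concatenation of the two interpolation results collected in \cref{rem:on_interpolation_spaces}. First I apply \cref{rem:on_interpolation_spaces}\ref{rem:on_interpolation_spaces - sequence spaces} with $Y_1:=L_{\eta_1}(\Omega_T)$ and $Y_2:=L_{\eta_2}(\Omega_T)$. These are (quasi-)Banach spaces, and their (quasi-)triangle constants depend only on $\eta_1$ and $\eta_2$ (the constant is $2^{\max(1/\eta_i-1,0)}$). This gives
\begin{align*}
    \left(\ell^{\gamma_1, \eta_1}\left(L_{\eta_1}(\Omega_T)\right), \ell^{\gamma_2, \eta_2}\left(L_{\eta_2}(\Omega_T)\right)\right)_{(\theta, \eta)} = \ell^{\gamma, \eta}\left(\left(L_{\eta_1}(\Omega_T), L_{\eta_2}(\Omega_T)\right)_{(\theta, \eta)}\right),
\end{align*}
with equivalence constants depending only on $\gamma_i$, $\eta_i$, $i=1,2$, and $\theta$.

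Second, I identify the inner space using \cref{rem:on_interpolation_spaces}\ref{rem:on_interpolation_spaces - Lebesgue spaces special case}. Since the exponents satisfy exactly $\frac{1}{\eta}=\frac{1-\theta}{\eta_1}+\frac{\theta}{\eta_2}$, we get $(L_{\eta_1}(\Omega_T),L_{\eta_2}(\Omega_T))_{(\theta,\eta)}=L_\eta(\Omega_T)$ with equivalent (quasi-)norms and constants depending on $\eta_1,\eta_2,\theta$. The remark is stated with indices $\eta_0,\eta_1$; I will simply relabel them.

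Third, I transfer this norm equivalence through the weighted sequence space. If $c_1\|x\|_{L_\eta}\le \|x\|_{(L_{\eta_1},L_{\eta_2})_{(\theta,\eta)}}\le c_2\|x\|_{L_\eta}$ for all $x$, then applying this termwise to $\|a_k\|$ in the definition of $\|a\|_{\ell^{\gamma,\eta}(\cdot)}$ (\cref{Def:Generalized_Sequence_Spaces}) gives $\ell^{\gamma,\eta}((L_{\eta_1},L_{\eta_2})_{(\theta,\eta)})=\ell^{\gamma,\eta}(L_\eta(\Omega_T))$ with the same constants $c_1,c_2$. Composing the two equivalences proves the claim, and the dependence of the constants is as stated.

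The main point needing care is that the cited sequence-space interpolation theorem is formulated for Banach spaces in some sources, so applicability to the quasi-Banach case $\eta_i<1$ has to be checked. Here I would rely on the footnoted remark that the proof carries over, with the constant entering only through the quasi-triangle constants of $Y_i$, which are controlled by $\eta_i$. I would also check the endpoint conventions, namely $\eta_i=\infty$ and the requirement $\theta\in(0,1)$, so that both remarks are invoked under the same parameter regime.
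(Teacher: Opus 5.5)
Your proposal is correct and takes the same route as the paper, whose proof is just the iterated application of part (iii) of \cref{rem:on_interpolation_spaces} with $Y_i=L_{\eta_i}(\Omega_T)$, followed by part (iv) to identify the inner interpolation space as $L_\eta(\Omega_T)$. The extra details you supply are bookkeeping the paper leaves implicit: the quasi-triangle constants $2^{\max(1/\eta_i-1,0)}$, the termwise transfer of the norm equivalence through $\ell^{\gamma,\eta}$, and the check of endpoint cases such as $\eta_i=\infty$.
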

\begin{proof}
    This is a direct consequence of the iterative application of \cref{rem:on_interpolation_spaces}\ref{rem:on_interpolation_spaces - sequence spaces} and \ref{rem:on_interpolation_spaces - Lebesgue spaces special case}.
\end{proof}

Further, we can relate the $K$-functional of a function $f\in \widehat{B}^{\alpha_1, \alpha_2}_{p,p}(\Omega_T)$ with respect to the generalized anisotropic Besov spaces to that of its multiscale decomposition with respect to generalized sequence spaces. A corresponding result has been proven in \cite[Thm.~6.1]{DP88}.

\begin{lem}\label{lem:estimate between different K functionals}
    Let \cref{assumptions:inverse_estimates_2} hold. Choose $f\in \widehat{B}^{\alpha_1, \alpha_2}_{p,p}(\Omega_T)$ and $\delta\in [0,\infty)$. Then,
    \begin{align*}
        K\left((\Delta_n(f))_{n\in \N_0}, \delta, \ell^{\frac{\alpha_2}{d}, p}(L_p(\Omega_T)), \ell^{\frac{\alpha_2+s_2}{d}, q}(L_q(\Omega_T))\right) \lesssim K\left(f,\delta,\widehat{B}^{\alpha_1, \alpha_2}_{p,p}(\Omega_T), \widehat{B}^{\alpha_1+s_1, \alpha_2+s_2}_{q, q}(\Omega_T)\right) .
    \end{align*}
    with a constant depending only on $d, \rho, p, s_1, s_2, \alpha_2, r_1, r_2, \kappa_{\calP_0}$, and $\mu(\calP_0)$.
\end{lem}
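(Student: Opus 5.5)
The argument is the standard one: take an almost optimal splitting of $f$ for the right-hand $K$-functional and push it through the linear map $f\mapsto(\Delta_n(f))_{n\in\N_0}$. Fix $\delta>0$ ($\delta=0$ is trivial) and choose $f_1\in\widehat{B}^{\alpha_1,\alpha_2}_{p,p}(\Omega_T)$ and $f_2\in\widehat{B}^{\alpha_1+s_1,\alpha_2+s_2}_{q,q}(\Omega_T)$ with $f=f_1+f_2$ and
\begin{align*}
\|f_1\|_{\widehat{B}^{\alpha_1,\alpha_2}_{p,p}(\Omega_T)}+\delta\|f_2\|_{\widehat{B}^{\alpha_1+s_1,\alpha_2+s_2}_{q,q}(\Omega_T)}\le 2K\left(f,\delta,\widehat{B}^{\alpha_1,\alpha_2}_{p,p}(\Omega_T),\widehat{B}^{\alpha_1+s_1,\alpha_2+s_2}_{q,q}(\Omega_T)\right).
\end{align*}
The goal is to use $(\Delta_n(f_1))_n$ and $(\Delta_n(f_2))_n$ as the competing decomposition of $(\Delta_n(f))_n$ in the sequence-space $K$-functional.

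The first point is linearity. Each $\Delta_n=\pi_n-\pi_{n-1}$ must be additive, but $\pi_n=\mathcal{Q}_{\calP_n}\circ\calB_{\rho,\calP_n}$ contains the possibly nonlinear best-approximation operator, so $\Delta_n(f)\neq\Delta_n(f_1)+\Delta_n(f_2)$ in general. I expect this to be the main obstacle. My first attempt is to avoid it through the choice of $\rho$: for $\rho\ge1$ one can fix a linear selection (for instance $L_2$-projection for $\rho\le2$), which is allowed since \cref{lem:norm equivalency_gen_besov} shows the spaces do not depend on $\rho$ for the relevant positive smoothness. If that is not available, I would not work with $\Delta_n$ of the pieces directly. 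Instead I would set $a_n:=\Delta_n(f)-\Delta_n(f_2)$ and $b_n:=\Delta_n(f_2)$, so that $a+b=(\Delta_n(f))_n$ holds trivially. The second summand then satisfies $\|b\|_{\ell^{\frac{\alpha_2+s_2}{d},q}(L_q)}=\|f_2\|^\Delta_{\widehat{B}^{\alpha_1+s_1,\alpha_2+s_2}_{q,q}}$ by the identity in the remark following \cref{Def:Generalized_Sequence_Spaces}.

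The remaining task is the bound
\begin{align*}
\|a\|_{\ell^{\frac{\alpha_2}{d},p}(L_p)}\lesssim\|f_1\|_{\widehat{B}^{\alpha_1,\alpha_2}_{p,p}}+\|f_2\|_{\widehat{B}^{\alpha_1,\alpha_2}_{p,p}}.
\end{align*}
Here I would follow the proof of \cref{lem:Multiscale_decomposition}. For each $I\times S\in\calP_n$, \cref{thm:local_boundedness_of_pi_projection} gives
\begin{align*}
\|\Delta_n(g)\|_{L_p(I\times S)}\lesssim E\left(g,\Pi^{r_1,r_2}_{t,\bm x},\omega^{j(d)}_{\calP_{n-1}}(I\times S)\right)_p,
\end{align*}
and this local error is quasi-subadditive in $g$. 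Summing over the elements with finite overlap (\cref{rem:Generalized_Besov_Spaces}\ref{rem:Generalized_Besov_Spaces - item 4}) bounds $\|a_n\|_{L_p}$ by local errors of $f_1$ and $f_2$. These in turn are bounded by the corresponding global errors $E(f_i,\V^{r_1,r_2}_{\calP_{n-1}},\Omega_T)_p$, using the quasi-best approximation of \cref{thm:operater_pi_quasi-best_approx} and the fact that $\pi_{n-1}$ reproduces elements of $\V^{r_1,r_2}_{\calP_{n-1}}$ locally. Summing with weights $2^{n\alpha_2p/d}$ and applying \cref{lem:norm equivalency_gen_besov} (the direction $\|\cdot\|^E\lesssim\|\cdot\|^\Delta$ for positive smoothness) bounds $\|a\|$ by $\|f_1\|_{\widehat B^{\alpha_1,\alpha_2}_{p,p}}+\|f_2\|_{\widehat B^{\alpha_1,\alpha_2}_{p,p}}$.

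The term with $f_2$ in the space $\widehat B^{\alpha_1,\alpha_2}_{p,p}$ is the critical one, since it is not small in $\delta$. For $\delta\le1$ I would absorb it via \cref{lem:Embedding between anisotropic spaces}, which gives $\|f_2\|_{\widehat B^{\alpha_1,\alpha_2}_{p,p}}\lesssim\|f_2\|_{\widehat B^{\alpha_1+s_1,\alpha_2+s_2}_{q,q}}$, but this only yields a factor $\delta$ in front if $\delta\ge1$. For $\delta<1$ I would therefore need the linear-selection route, so I would indeed fix $\rho$ to make $\pi_n$ linear and then conclude directly:
\begin{align*}
K\left((\Delta_nf)_n,\delta,\ell^{\frac{\alpha_2}{d},p}(L_p),\ell^{\frac{\alpha_2+s_2}{d},q}(L_q)\right)\le\|(\Delta_nf_1)_n\|+\delta\|(\Delta_nf_2)_n\|\le 2K\left(f,\delta,\widehat{B}^{\alpha_1,\alpha_2}_{p,p},\widehat{B}^{\alpha_1+s_1,\alpha_2+s_2}_{q,q}\right),
\end{align*}
using the $\Delta$-norms and \cref{lem:norm equivalency_gen_besov}. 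The case $\delta\ge1$ follows directly, as already noted.
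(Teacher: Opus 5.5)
There is a genuine gap in the regime $\delta<1$. This is exactly the regime needed later: in the proof of \cref{thm:Inverse_estimates} the lemma is applied with $\delta=2^{-n/(\frac{1}{\tilde s_1}+\frac{d}{\tilde s_2})}\le 1$. Your case $\delta\ge1$ via \cref{lem:Embedding between anisotropic spaces} is fine.

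For $\delta<1$ you fall back on making $\pi_n$ linear by choosing $\rho$. But $\rho$ is not at your disposal.
\begin{itemize}
\item \cref{assumptions:inverse_estimates_2} forces $\rho\le q$, and $q<1$ whenever, e.g., $p\le1$. For $\rho<1$ there is no linear near-best approximation $L_\rho\to\Pi^{r_1,r_2}_{t,\bm{x}}$ at all. Such an operator would reproduce polynomials and be bounded, so its coefficient functionals would be nonzero continuous linear functionals on $L_\rho$. Moreover, the $L_2$-projection is not even defined on $L_q$ for $q<1$.
\item Even for $\rho\ge1$, the left-hand side of the lemma is the $K$-functional of the specific sequence $(\Delta_n(f))_n$ built from the best approximations $\calB_{\rho,\cdot}$. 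These are nonlinear unless $\rho=2$. The fact that the spaces $\widehat B$ do not depend on $\rho$ tells you nothing about replacing this particular sequence by the one produced by a different, linear operator.
\end{itemize}
Your second route, $a_n=\Delta_n(f)-\Delta_n(f_2)$, does not close the gap either. On each cell it approximates $f_1$ by $\calB_\rho(f)-\calB_\rho(f_2)$, whose error is only $\lesssim E(f_1)+E(f_2)$. The resulting term $\|f_2\|_{\widehat B^{\alpha_1,\alpha_2}_{p,p}}$ carries no factor $\delta$, as you noticed yourself.

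The missing idea is to split the fixed best approximation of $f$ between the two pieces, cell by cell; this is \cite[Lem.~6.2]{DP88}, which the paper uses. On each $I\times S\in\calP_n$, take a genuine best $L_\rho$-approximation for the piece with the \emph{smaller} local error. Define the approximation of the other piece by subtracting it from $\calB_{\rho,I\times S}(f)$. By the quasi-triangle inequality and $E(f)\lesssim E(f_1)+E(f_2)$, both are near-best, with constants depending only on $\rho$. Your fixed choice, in which $f_2$ always receives the genuine best approximation, is precisely what produces the cross term: it appears on the cells where $f_2$'s error dominates, and swapping the roles there removes it.

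Call the resulting approximations $\overline{\calB}_{\rho,\calP_n}(f_i)$ and set $\overline\pi_n(f_i):=\mathcal Q_{\calP_n}(\overline{\calB}_{\rho,\calP_n}(f_i))$. Linearity of $\mathcal Q_{\calP_n}$ then gives exactly $\Delta_n(f)=\overline\Delta_n(f_1)+\overline\Delta_n(f_2)$. The proofs of \cref{thm:operater_pi_quasi-best_approx} and \cref{lem:norm equivalency_gen_besov} carry over to $\overline\pi_n$, since they only use near-bestness via \cref{lem:smaller_parameter_bestapprox_larger_parameter_quasi-bestapprox}. This bounds $\|(\overline\Delta_n(f_1))_n\|_{\ell^{\frac{\alpha_2}{d},p}(L_p)}$ by $\|f_1\|_{\widehat B^{\alpha_1,\alpha_2}_{p,p}}$ and $\|(\overline\Delta_n(f_2))_n\|_{\ell^{\frac{\alpha_2+s_2}{d},q}(L_q)}$ by $\|f_2\|_{\widehat B^{\alpha_1+s_1,\alpha_2+s_2}_{q,q}}$, with no cross term, uniformly in $\delta$.
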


\begin{proof}
    Let $g\in \widehat{B}^{\alpha_1+s_1, \alpha_2+s_2}_{q, q}(\Omega_T)$, then $f-g\in \widehat{B}^{\alpha_1, \alpha_2}_{p,p}(\Omega_T)$ according to \cref{lem:Embedding between anisotropic spaces}, and recall that $\pi_n(f)= \mathcal{Q}_{\calP_n}\left(\sum\limits_{I\times S\in \calP_n} \mathds{1}_{I\times S} \calB_{\rho, I\times S}(f)\right)$, where $\calB_{\rho, I\times S}(f)$ is a best approximation of $f$ in $\Pi^{r_1, r_2}_{t,\bm{x}}(I\times S)$ with respect to \mbox{$L_\rho(I\times S)$}. According to \cite[Lem.~6.2]{DP88}, for every $n\in \N_0$ and $I\times S\in \calP_n$, there are (quasi-)best approximations $\overline{\calB}_{\rho, \calP_n}(g)$ of $g$ in $\Pi^{r_1, r_2}_{t,\bm{x}}(I\times S)$ with respect to \mbox{$L_\rho(I\times S)$} such that $\overline{\calB}_{\rho, \calP_n}(f-g):=\calB_{\rho, \calP_n}(f)-\overline{\calB}_{\rho, \calP_n}(g)$ is a (quasi-)best approximation of $f-g$. Further, the corresponding constants involved in the (quasi-)best approximation property only depend on the constant involved in the (quasi-)triangle inequality of the space $\widehat{B}^{\alpha_1, \alpha_2}_{p,p}(\Omega_T)$ which only depends on $p$. Now, for $h\in \{g, f-g\}$ and $n\in \N_0$, set
    \begin{align*}
        \overline{\pi}_n(h):=\mathcal{Q}_{\calP_n}\left(\sum\limits_{I\times S\in \calP_n} \mathds{1}_{I\times S} \overline{\calB}_{\rho, I\times S}(h)\right)\quad\text{and}\quad \overline{\Delta}_n(h):=\overline{\pi}_n(h)-\overline{\pi}_{n-1}(h)\quad\text{with}\quad \overline{\pi}_{-1}:=0.
    \end{align*}
    Since we have chosen $\rho\le q<p$, we can  show 
    \begin{align*}
        \left\|g-\overline{\pi}_n(g)\right\|_{L_q(\Omega_T)}&\lesssim_{\, d,\rho, p, s_1, s_2, r_1, r_2, \kappa_{\calP_0}, \mu(\calP_0)} E(g,\V^{r_1, r_2}_\calP,\Omega_T)_q\quad\text{and}\quad
        \\\left\|(f-g)-\overline{\pi}_n(f-g)\right\|_{L_p(\Omega_T)}&\lesssim_{\, d,\rho, p, s_1, s_2, r_1, r_2, \kappa_{\calP_0}, \mu(\calP_0)} E(f-g,\V^{r_1, r_2}_\calP,\Omega_T)_p
    \end{align*}
    as in \cref{thm:operater_pi_quasi-best_approx}. This allows us to establish 
    \begin{align*}
        &\left\|\left(\overline{\Delta}_n(g)\right)_{n\in \N_0}\right\|_{\ell^{\frac{\alpha_2+s_2}{d}, q}(L_q(\Omega_T))}
        \sim_{\, d, \rho, p, s_1, s_2, \alpha_2,r_1, r_2, \kappa_{\calP_0}, \mu(\calP_0)} \|g\|_{\widehat{B}^{\alpha_1+s_1, \alpha_2+s_2}_{q, q}(\Omega_T)}^\Delta\quad\text{and}\quad
        \\ &\left\|\left(\overline{\Delta}_n(f-g)\right)_{n\in \N_0}\right\|_{\ell^{\frac{\alpha_2}{d}, p}(L_p(\Omega_T))}
        \sim_{\, d, \rho, p, s_1, s_2, \alpha_2,r_1, r_2, \kappa_{\calP_0}, \mu(\calP_0)} \|f-g\|_{\widehat{B}^{\alpha_1, \alpha_2}_{p, p}(\Omega_T)}^\Delta
    \end{align*}
    analogously to \cref{lem:norm equivalency_gen_besov}. Now 
    \begin{align*}
        K\Big((\Delta_n(f))_{n\in \N_0}, \delta, \ell^{\frac{\alpha_2}{d}, p}(&L_p(\Omega_T)), \ell^{\frac{\alpha_2+s_2}{d}, q}(L_q(\Omega_T))\Big)
        \\&\le \left\|\left(\overline{\Delta}_n(f-g)\right)_{n\in \N_0}\right\|_{\ell^{\frac{\alpha_2}{d}, p}(L_p(\Omega_T))} + \delta \left\|\left(\overline{\Delta}_n(g)\right)_{n\in \N_0}\right\|_{\ell^{\frac{\alpha_2+s_2}{d}, q}(L_q(\Omega_T))}
        \\& \sim_{\, d, \rho, p, s_1, s_2, \alpha_2,r_1, r_2, \kappa_{\calP_0}, \mu(\calP_0)}\|f-g\|^\Delta_{\widehat{B}^{\alpha_1, \alpha_2}_{p, p}(\Omega_T)}+ \delta \|g\|_{\widehat{B}^{\alpha_1+s_1, \alpha_2+s_2}_{q, q}(\Omega_T)}^\Delta
    \end{align*}
    due to $\left(\Delta_n(f)\right)_{n\in \N_0}= \left(\overline{\Delta}_n(f-g)\right)_{n\in \N_0} + \left(\overline{\Delta}_n(g)\right)_{n\in \N_0}$, which is a consequence of the linearity of $\mathcal{Q}_{\calP_n}$. Taking the infimum over all $g\in \widehat{B}^{\alpha_1, \alpha_2}_{p,p}(\Omega_T)$ now yields the assertion.
\end{proof}

\begin{rem}
    Note, that we did not use the classical retraction-coretraction method as in \cite[Proof of Thm.~2.5]{GM14}. This is due to the fact that the method requires the operators to be linear, which might not be the case when $\rho\in (0,1)$. We circumvent this by the above lemma using a corresponding idea from \cite[Sect.~6]{DP88}.
\end{rem}

Additionally, we can bound the $K$-functional of a function by a suitable sum of its $N$-term approximation errors using anisotropic finite elements.

\begin{lem}\label{lem:Generalized_Besov_K_functional_estimate}
Let \cref{assumptions:inverse_estimates_2} hold, $f\in \widehat{B}^{\alpha_1, \alpha_2}_{p,p}(\Omega_T)$, and $n\in \N_0$. Then we can estimate
    \begin{align*}
    K\bigg(f, 2^{-\frac{n}{\frac{1}{s_1}+\frac{d}{s_2}}}, &\widehat{B}^{\alpha_1, \alpha_2}_{p,p}(\Omega_T),  \widehat{B}^{\alpha_1+s_1, \alpha_2+s_2}_{q,q}(\Omega_T)\bigg)
    	\\ &\lesssim_{\, \substack{d, p, s_1, s_2, r_1, r_2, \\\kappa_{\calP_0}, \mu(\calP_0),\#\calP_0}} 2^{-\frac{n}{\frac{1}{s_1}+\frac{d}{s_2}}}\left[\left(\sum\limits_{k=0}^n \left(2^{\frac{k}{\frac{1}{s_1}+\frac{d}{s_2}}}\sigma_{2^k}(f)_{\widehat{B}^{\alpha_1, \alpha_2}_{p,p}(\Omega_T)}\right)^{q^*} \right)^\frac{1}{q^*} + \|f\|_{\widehat{B}^{\alpha_1, \alpha_2}_{p,p}(\Omega_T)}\right],
    \end{align*}
	where $\sigma_N(f)_{\widehat{B}^{\alpha_1, \alpha_2}_{p,p}(\Omega_T)}$, $N\in \N_0$, is an abbreviated notation for the corresponding $N$-term approximation error defined in \cref{subsect:main_results}, i.e.,
	\begin{align*}
		\sigma_N(f)_{\widehat{B}^{\alpha_1, \alpha_2}_{p,p}(\Omega_T)}:=\sigma_N(f, \widehat{B}^{\alpha_1, \alpha_2}_{p,p}(\Omega_T), \V^{r_1, r_2}_\bullet, \textup{PATCH}\_\textup{REFINE}(\cdot, \cdot, d, \tilde{\alpha_1}, \tilde{\alpha_2})).
	\end{align*}
\end{lem}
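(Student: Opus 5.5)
We follow the standard telescoping argument of \cite[Thm.~7.3 and Sect.~7]{GM14} and \cite[Sect.~6]{DP88}, using the Bernstein-type inequality \cref{lem:Generalized_Besov_norm_estimates}. Set $\beta:=\frac{1}{\frac{1}{s_1}+\frac{d}{s_2}}$ and write $X:=\widehat{B}^{\alpha_1,\alpha_2}_{p,p}(\Omega_T)$, $Y:=\widehat{B}^{\alpha_1+s_1,\alpha_2+s_2}_{q,q}(\Omega_T)$.

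\emph{Choice of near-best approximants.} For each $k\in\{0,\dots,n\}$ we pick $\calP^{(k)}\in\mathbb{P}_{2^k}$ and $F_k\in\V^{r_1,r_2}_{\calP^{(k)}}$ with $\|f-F_k\|_X\le 2\sigma_{2^k}(f)_X$. We take $F_{-1}:=0$, so that $\|f-F_{-1}\|_X=\|f\|_X$. Here we use the remark after \cref{assumptions:inverse_estimates_2}, which identifies partitions with $\#\calP-\#\calP_0\le N$ with $\mathbb{P}_N$ up to constants. The index shift caused by those constants is absorbed by monotonicity of $\sigma_N$ in $N$.

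\emph{Splitting for the $K$-functional.} We use the decomposition $f=(f-F_n)+F_n$ in the definition of $K$. By \cref{rem:Generalized_Besov_Spaces}\ref{rem:Generalized_Besov_Spaces - item 2} we have $F_n\in Y$, and $f-F_n\in X$. This gives
\[
K\bigl(f,2^{-n\beta},X,Y\bigr)\le \|f-F_n\|_X+2^{-n\beta}\|F_n\|_Y .
\]
The first term satisfies $\|f-F_n\|_X\lesssim \sigma_{2^n}(f)_X=2^{-n\beta}\,2^{n\beta}\sigma_{2^n}(f)_X$, which is the last summand on the right-hand side.

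\emph{Bounding $\|F_n\|_Y$.} We telescope, $F_n=\sum_{k=0}^n(F_k-F_{k-1})$, and use the $q^*$-triangle inequality of $\|\cdot\|_Y$. This follows from the sequence-space representation in \cref{lem:norm equivalency_gen_besov}: the norm is an $\ell^q$ norm of coefficients, which is $q^*$-subadditive. Then $\|F_n\|_Y^{q^*}\lesssim\sum_{k}\|F_k-F_{k-1}\|_Y^{q^*}$. The difference $F_k-F_{k-1}$ lies in $\V^{r_1,r_2}_{\calP}$ for the overlay partition $\calP$ obtained by applying to $\calP^{(k)}$ the patch refinements that produced $\calP^{(k-1)}$. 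This $\calP$ belongs to $\mathbb{P}_{2^k+2^{k-1}}$. Alternatively, one can argue directly that the index set $\Lambda$ of the multiscale expansion of $F_k-F_{k-1}$ is contained in the union of the two trees $\mathbb{T}_{\calP^{(k)}}\cup\mathbb{T}_{\calP^{(k-1)}}$, so $\#\Lambda\lesssim 2^k+\#\calP_0$. With either argument, the proof of \cref{lem:Generalized_Besov_norm_estimates} applies verbatim and gives
\[
\|F_k-F_{k-1}\|_Y\lesssim 2^{k\beta}\|F_k-F_{k-1}\|_X\lesssim 2^{k\beta}\bigl(\sigma_{2^k}(f)_X+\sigma_{2^{k-1}}(f)_X\bigr)
\]
for $k\ge1$, using the quasi-triangle inequality in $X$. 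For $k=0$ we get $2^{0}\|F_0\|_X\lesssim\sigma_1(f)_X+\|f\|_X$. Reindexing $2^{k\beta}\sigma_{2^{k-1}}=2^\beta\,2^{(k-1)\beta}\sigma_{2^{k-1}}$ collects everything into $\bigl(\sum_{k=0}^n(2^{k\beta}\sigma_{2^k}(f)_X)^{q^*}\bigr)^{1/q^*}+\|f\|_X$. Multiplying by $2^{-n\beta}$ yields the claim.

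\emph{Main obstacle.} The delicate step is the Bernstein bound for the difference $F_k-F_{k-1}$ of functions living on two different adaptive partitions. One must make sure that the counting estimate $\#\Lambda\lesssim N$ from \eqref{lem:Generalized_Besov_norm_estimates - proof 1} survives. We would argue via the union of refinement trees, since only $\#\Lambda$ enters the Hölder step of that proof and not conformity of a common partition. A secondary point is the $q^*$-subadditivity of $\|\cdot\|_Y$, which we take from the coefficient norm in \cref{lem:norm equivalency_gen_besov}.
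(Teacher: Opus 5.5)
Your proposal follows the paper's proof essentially step for step: near-best $F_k$ on partitions in $\mathbb{P}_{2^k}$, the splitting $f=(f-F_n)+F_n$, telescoping with $q^*$-subadditivity, the Bernstein inequality of \cref{lem:Generalized_Besov_norm_estimates} applied to each $F_k-F_{k-1}$, and the quasi-triangle inequality. Your union-of-trees count of $\#\Lambda$ for $F_k-F_{k-1}$ makes explicit a point the paper's proof passes over silently; rely on it rather than on the overlay variant, which would additionally need $\textup{PATCH}\_\textup{REFINE}$ to produce the coarsest admissible refinement. The only slip is your justification of $q^*$-subadditivity in $\widehat{B}^{\alpha_1+s_1,\alpha_2+s_2}_{q,q}(\Omega_T)$: it does not follow from the coefficient representation, because $b^{(n)}_\nu(f)$ depends nonlinearly on $f$ through the best $L_\rho$-approximations inside $\pi_n$. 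It does hold up to a constant via the equivalent quasi-norm $\|\cdot\|^E$ of \cref{lem:norm equivalency_gen_besov}, since $E(\cdot,\V^{r_1,r_2}_{\calP_n},\Omega_T)_q^{q^*}$ is subadditive, and this equivalence applies because $\alpha_i+s_i>0$ in every application of the lemma.
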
	

\begin{proof}
	For given $N\in \N$, $\mathbb{P}_N$ is finite and $\V^{r_1, r_2}_{\calP}$ is finite dimensional, therefore $\sigma_N(f)_{\widehat{B}^{\alpha_1, \alpha_2}_{p,p}(\Omega_T)}$ is actually attained by some $\calP\in\mathbb{P}_N$ and $F\in \V^{r_1, r_2}_{\calP}$. Let now $F_k\in \V^{r_1, r_2}_{\hat{\calP}_k}$ with $\hat{\calP}_k\in \mathbb{P}_{2^k}$ such that $\|f-F_k\|_{\widehat{B}^{\alpha_1, \alpha_2}_{p, p}}=\sigma_{2^k}(f)_{\widehat{B}_{p,p}^{\alpha_1, \alpha_2}(\Omega_T)} $ for every $ k\in\{0,\dots,n\}$. Additionally define $F_{-1}:=0$ and $R_k:=F_k - F_{k-1}$ for $k\in\{0,\dots,n\}$. Thus, since $F_k\in \V^{r_1, r_2}_{\hat{\calP}_k}\subset \widehat{B}^{\alpha_1+s_1, \alpha_2+s_2}_{q, q}(\Omega_T)$ and $\widehat{B}^{\alpha_1+s_1, \alpha_2+s_2}_{q, q}(\Omega_T)\hookrightarrow \widehat{B}^{\alpha_1, \alpha_2}_{p,p}(\Omega_T)$ due to \cref{lem:Embedding between anisotropic spaces}, we can estimate 
	\begin{align*}
		2^{\frac{n}{\frac{1}{s_1}+\frac{d}{s_2}}} & K\bigg(f, 2^{-\frac{n}{\frac{1}{s_1}+\frac{d}{s_2}}}, \widehat{B}^{\alpha_1, \alpha_2}_{p, p}, \widehat{B}^{\alpha_1+s_1, \alpha_2+s_2}_{q, q}(\Omega_T)\bigg)
		\\&\le 2^{\frac{n}{\frac{1}{s_1}+\frac{d}{s_2}}}\left(2^{-\frac{n}{\frac{1}{s_1}+\frac{d}{s_2}}}\|F_n\|_{\widehat{B}^{\alpha_1+s_1, \alpha_2+s_2}_{q, q}(\Omega_T)} + \|f-F_n\|_{\widehat{B}^{\alpha_1, \alpha_2}_{p, p}(\Omega_T)}\right)
		\\&=\left\|\sum\limits_{k=0}^{n} R_k\right\|_{\widehat{B}^{\alpha_1+s_1, \alpha_2+s_2}_{q, q}(\Omega_T)} + 2^{\frac{n}{\frac{1}{s_1}+\frac{d}{s_2}}}\|f-F_n\|_{\widehat{B}^{\alpha_1, \alpha_2}_{p, p}(\Omega_T)}
		\\&\le \left(\sum\limits_{k=0}^{n} \left\|R_k\right\|^{q^*}_{\widehat{B}^{\alpha_1+s_1, \alpha_2+s_2}_{q, q}(\Omega_T)} + 2^{\frac{nq^*}{\frac{1}{s_1}+\frac{d}{s_2}}}\|f-F_n\|_{\widehat{B}^{\alpha_1, \alpha_2}_{p, p}(\Omega_T)}^{q^*}\right)^\frac{1}{q^*}
		\\&\lesssim_{\, d, s_1, s_2, r_1, r_2, \kappa_{\calP_0}, \mu(\calP_0),\#\calP_0} \left(\sum\limits_{k=0}^{n} 2^{\frac{kq^*}{\frac{1}{s_1}+\frac{d}{s_2}}} \left\|R_k\right\|^{q^*}_{\widehat{B}^{\alpha_1, \alpha_2}_{p, p}(\Omega_T)} + 2^{\frac{nq^*}{\frac{1}{s_1}+\frac{d}{s_2}}}\|f-F_n\|_{\widehat{B}^{\alpha_1, \alpha_2}_{p, p}(\Omega_T)}^{q^*}\right)^\frac{1}{q^*}
		\\&\le \left(\sum\limits_{k=0}^{n} 2^{\frac{kq^*}{\frac{1}{s_1}+\frac{d}{s_2}}} \left(\left\|f-F_k\right\|^{q^*}_{\widehat{B}^{\alpha_1, \alpha_2}_{p, p}(\Omega_T)}+\left\|f-F_{k-1}\right\|^{q^*}_{\widehat{B}^{\alpha_1, \alpha_2}_{p, p}(\Omega_T)}\right) + 2^{\frac{nq^*}{\frac{1}{s_1}+\frac{d}{s_2}}}\|f-F_n\|_{\widehat{B}^{\alpha_1, \alpha_2}_{p, p}(\Omega_T)}^{q^*}\right)^\frac{1}{q^*}
		\\&\lesssim_{\,d,p,s_1,s_2} \left(\sum\limits_{k=0}^n \left(2^{\frac{k}{\frac{1}{s_1}+\frac{d}{s_2}}}\sigma_{2^k}(f)_{\widehat{B}^{\alpha_1, \alpha_2}_{p, p}(\Omega_T)}\right)^{q^*} \right)^\frac{1}{q^*} + \|f\|_{\widehat{B}^{\alpha_1, \alpha_2}_{p, p}(\Omega_T)},
	\end{align*}
	where we have expanded $F_n$ in a telescoping sum in the second, used the embedding $\ell^{q^*}(\N_0)\hookrightarrow \ell^{1}(\N_0)$ and the subadditivity of $\|\cdot\|_{\widehat{B}^{\alpha_1+s_1, \alpha_2+s_2}_{q, q}(\Omega_T)}^{q^*}$ in the third step, \cref{lem:Generalized_Besov_norm_estimates} in the fourth, and the choice of $F_k$, $k\in\N_0\cup\{-1\}$ in the last step.
\end{proof}

Further, we need the following embedding result for generalized anisotropic Besov spaces with negative smoothness.
\begin{lem}\label{lem:Negative_Gen_Besov_norm}
    Assume $\tau\in (0,\infty]$ and let \cref{assumptions:inverse_estimates_2} hold true with $\alpha_i<0$, $i=1,2$. Then $L_p(\Omega_T)\hookrightarrow\widehat{B}_{p,\tau}^{\alpha_1, \alpha_2}(\Omega_T)$ continuously with an embedding constant only dependent on $d,\rho, p,\tau, s_1, s_2 ,\alpha_2, r_1, r_2, \kappa_{\calP_0}$, and $\mu(\calP_0)$.
\end{lem}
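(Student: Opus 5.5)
We work directly with the multiscale norm $\|\cdot\|^\Delta_{\widehat{B}^{\alpha_1,\alpha_2}_{p,\tau}(\Omega_T)}$ from \cref{assumptions:multiscale-decomposition}, which by definition is the $\ell^{\frac{\alpha_2}{d},\tau}$-norm of the sequence $(\|\Delta_n(f)\|_{L_p(\Omega_T)})_{n\in\N_0}$. Since $\alpha_2<0$, the weights $2^{n\frac{\alpha_2}{d}}$ decay geometrically. It therefore suffices to show that $\|\Delta_n(f)\|_{L_p(\Omega_T)}\lesssim\|f\|_{L_p(\Omega_T)}$ uniformly in $n$. Summing the geometric series $\sum_{n\ge0}2^{n\frac{\alpha_2}{d}\tau}$ (or taking the supremum if $\tau=\infty$) then gives $\|f\|^\Delta_{\widehat{B}^{\alpha_1,\alpha_2}_{p,\tau}}\lesssim_{\,\tau,\alpha_2,d}\sup_n\|\Delta_n(f)\|_{L_p}\lesssim\|f\|_{L_p}$. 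Membership $f\in L_p(\Omega_T)$ is the remaining requirement in \cref{def:Generalized_Besov_Spaces} and holds trivially.

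For the uniform bound, use the quasi-triangle inequality to write
\[
\|\Delta_n(f)\|_{L_p}\lesssim_{\,p}\|\pi_n(f)\|_{L_p}+\|\pi_{n-1}(f)\|_{L_p},
\]
and estimate each term via $\|\pi_n(f)\|_{L_p}\lesssim_{\,p}\|f\|_{L_p}+\|f-\pi_n(f)\|_{L_p}$. By \cref{thm:operater_pi_quasi-best_approx}, applicable since $\rho\le q<p$ and $f\in L_p(\Omega_T)\subset L_\rho(\Omega_T)$ because $\Omega_T$ is bounded, we have
\[
\|f-\pi_n(f)\|_{L_p(\Omega_T)}\lesssim E(f,\V^{r_1,r_2}_{\calP_n},\Omega_T)_p\le\|f\|_{L_p(\Omega_T)},
\]
where the last inequality uses the competitor $0$. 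The constant depends only on $d,\rho,p,s_1,s_2,r_1,r_2,\kappa_{\calP_0},\mu(\calP_0)$, and in particular not on $n$, because each $\calP_n$ is obtained by uniform atomic refinement and so inherits the geometric control of $\calP_0$ (cf. \cref{rem:conformity_of_the_calP_n}). The case $n=0$ is identical, with $\pi_{-1}=0$.

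The only point requiring care is the uniformity in $n$ of the constant in \cref{thm:operater_pi_quasi-best_approx}. This rests on the fact that the meshes $\calP_n$ satisfy the hypotheses of \cref{sect:investigation of mesh geometry and Quasi-interpolation} with constants depending only on $\calP_0$; as noted in \cref{assumptions:multiscale-decomposition}, they arise from $\textup{MARKED}\_\textup{REFINE}$, so these hypotheses hold. Beyond this, no obstacle is expected. The dependence of the constant on $\tau$ and $\alpha_2$ enters only through the geometric sum, which matches the claimed list of dependencies.
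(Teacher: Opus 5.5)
Your proof is correct and follows the paper's argument. Both get a uniform-in-$n$ bound $\|\Delta_n(f)\|_{L_p(\Omega_T)}\lesssim\|f\|_{L_p(\Omega_T)}$ from \cref{thm:operater_pi_quasi-best_approx} (using the competitor $0$), then sum the geometric series $\sum_{n}2^{n\frac{\alpha_2}{d}\tau}$, which converges because $\alpha_2<0$; you merely spell out the quasi-triangle-inequality steps and the uniformity of the constants over the uniformly refined meshes $\calP_n$, which the paper leaves implicit.
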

\begin{proof}
    Due to \cref{thm:operater_pi_quasi-best_approx}, $\|\Delta_n(f)\|_{L_p(\Omega_T)}\lesssim_{\,d,\rho, p, s_1, s_2 , r_1, r_2, \kappa_{\calP_0}, \mu(\calP_0)}\|f\|_{L_p(\Omega_T)}$ holds true with a uniform bound independent of $n\in \N_0$. Thus, for $f\in L_p(\Omega_T)$, 
    \begin{align*}
        \|f\|_{\widehat{B}_{p,\tau}^{\alpha_1, \alpha_2}(\Omega_T)}\lesssim_{\,d,\rho, p, s_1, s_2 , r_1, r_2, \kappa_{\calP_0}, \mu(\calP_0)}\|f\|_{L_p(\Omega_T)} \left(\sum\limits_{n=0}^\infty\left(2^{\frac{\alpha_2}{d}\tau}\right)^n\right)^\frac1\tau\lesssim_{\,d,\tau,\alpha_2}\|f\|_{L_p(\Omega_T)},
    \end{align*}
    where we have exploited that the last sum converges since $\alpha_2<0$. 
\end{proof}

Now we can finally prove the inverse estimates from \cref{Main_res:invers}.

\begin{thm}\label{thm:Inverse_estimates}
	Under \cref{assumptions:inverse_estimates_2}, the embedding
    \begin{align*}
        \mathbb{A}_{\frac{1}{\frac{1}{s_1}+\frac{d}{s_2}}, q}\Big(\widehat{B}^{\alpha_1, \alpha_2}_{p,p}(\Omega_T), \V^{r_1, r_2}_{\bullet}, \textup{PATCH}\_\textup{REFINE}(\cdot, \cdot, d, s_1, s_2)\Big)\hookrightarrow \widehat{B}^{\alpha_1+s_1, \alpha_2+s_2}_{q, q}(\Omega_T)
    \end{align*}
    is continuous with an embedding constant depending only on $d, \rho, p, s_1, s_2, \alpha_2, r_1, r_2, \kappa_{\calP_0}, \mu(\calP_0)$, and $\#\calP_0$. If additionally $(s_1', s_2')\in \R^+(s_1, s_2)$ with $s_i'<s_i$, $i=1,2$, then also 
    \begin{align*}
        \mathbb{A}_{\frac{1}{\frac{1}{s_1}+\frac{d}{s_2}}, q}\Big(L_{p}(\Omega_T), \V^{r_1, r_2}_{\bullet}, \textup{PATCH}\_\textup{REFINE}(\cdot, \cdot, d, s_1, s_2)\Big)\hookrightarrow \widehat{B}^{s_1', s_2'}_{q, q}(\Omega_T)
    \end{align*}
    continuously with an embedding constant depending on $d, \rho,p,s_1, s_2, s_2', r_1, r_2, \kappa_{\calP_0}$, and $\mu(\calP_0)$. 
\end{thm}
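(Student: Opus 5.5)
We prove the first embedding by the standard route of DeVore--Popov, using the interpolation machinery just set up. Set $X_1:=\widehat{B}^{\alpha_1,\alpha_2}_{p,p}(\Omega_T)$, $X_2:=\widehat{B}^{\alpha_1+s_1,\alpha_2+s_2}_{q,q}(\Omega_T)$, $\beta:=\frac{1}{\frac{1}{s_1}+\frac{d}{s_2}}$, and take $f$ in the approximation class.

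The first step controls the $K$-functional of $f$ itself. By \cref{lem:Generalized_Besov_K_functional_estimate}, $K(f,2^{-n\beta},X_1,X_2)$ is bounded by $2^{-n\beta}$ times the $\ell^{q^*}$-sum of $2^{k\beta}\sigma_{2^k}(f)_{X_1}$, plus $\|f\|_{X_1}$. Choose $\theta\in(0,1)$ and the ratio $a$ with $a^{-k}=2^{-k\beta}$. By \cref{rem:on_interpolation_spaces}\ref{rem:on_interpolation_spaces - discrete norm} and \ref{rem:on_interpolation_spaces - bound on discrete norm if X_2 embedded in X_1} (using $X_2\hookrightarrow X_1$ from \cref{lem:Embedding between anisotropic spaces}), we bound $\|f\|_{(X_1,X_2)_{(\theta,\eta)}}$ by $\|f\|_{X_1}+\big(\sum_n[2^{n\beta\theta}K(f,2^{-n\beta})]^\eta\big)^{1/\eta}$. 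Inserting the bound and applying the discrete Hardy inequality (valid since $\theta<1$) gives
\[
\|f\|_{(X_1,X_2)_{(\theta,\eta)}}\lesssim \|f\|_{X_1}+\Big(\sum_k [2^{k\beta\theta}\sigma_{2^k}(f)_{X_1}]^\eta\Big)^{1/\eta},
\]
which is bounded by the $\mathbb{A}_{\beta\theta,\eta}$ (quasi-)norm after dyadic blocking of $N$.

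The second step identifies this interpolation space. By \cref{lem:estimate between different K functionals}, the $K$-functional of $(\Delta_n(f))_n$ with respect to $\ell^{\alpha_2/d,p}(L_p)$ and $\ell^{(\alpha_2+s_2)/d,q}(L_q)$ is dominated by the one for $f$. By \cref{lem:interpolation_of_certain_gen_sequence_spaces}, the corresponding interpolation space is $\ell^{\gamma,\eta}(L_\eta)$ with $\gamma=\frac{\alpha_2+\theta s_2}{d}$ and $\frac1\eta=\frac{1-\theta}{p}+\frac{\theta}{q}$. Hence $\|f\|^\Delta_{\widehat{B}^{\alpha_1+\theta s_1,\alpha_2+\theta s_2}_{\eta,\eta}}\lesssim\|f\|_{(X_1,X_2)_{(\theta,\eta)}}$. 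To land exactly in $X_2$ we would need $\theta=1$, which is not allowed. So we instead reinterpret $f$ at a shifted scale: replace $X_1$ by $\widehat{B}^{\alpha_1',\alpha_2'}_{p',p'}$ with $\alpha'$ slightly larger, so that the target with $\theta=1/2$ is $\widehat{B}^{\alpha+s}_{q,q}$. The obstacle is that the approximation hypothesis is given in $\widehat{B}^{\alpha}_{p,p}$, not at the shifted level.

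The alternative, which I expect to be the real approach, is a direct Bernstein/Jackson telescoping. Write $f=\sum_k R_k$ with $R_k=F_k-F_{k-1}$, $F_k\in\V_{\hat\calP_k}$ and $\hat\calP_k\in\mathbb{P}_{2^k}$, as in \cref{lem:Generalized_Besov_K_functional_estimate}. Using the $q^*$-subadditivity of $\|\cdot\|_{X_2}$ and \cref{lem:Generalized_Besov_norm_estimates},
\[
\|f\|_{X_2}^{q^*}\le\sum_k\|R_k\|_{X_2}^{q^*}\lesssim\sum_k\big(2^{k\beta}\sigma_{2^{k-1}}(f)_{X_1}\big)^{q^*}.
\]
This is controlled by $\|f\|_{\mathbb{A}_{\beta,q^*}}$, but only $q^*$ rather than $q$ when $q<1$. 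For $q\ge1$ it closes immediately. For $q<1$ one needs the sharper form of the Bernstein lemma applied to sums over levels, exploiting the $\ell^q$ structure of the multiscale norm. Concretely, we estimate $\|\sum_k R_k\|_{X_2}^q$ via the coefficients $b_\lambda$ in $\|\cdot\|_{\widehat B}$ from \cref{lem:norm equivalency_gen_besov}, whose $q$-th power is additive over disjoint index sets $\Lambda_k$. I expect this $q<1$ bookkeeping, namely convergence of $\sum_k R_k$ in $X_2$ and the passage from the $q^*$ to the $q$ summation, to be the main obstacle.

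For the second claim, with $L_p$ error, take $\alpha_i:=s_i'-s_i<0$, which is collinear with $(s_1,s_2)$. \cref{lem:Negative_Gen_Besov_norm} gives $L_p\hookrightarrow\widehat{B}^{\alpha_1,\alpha_2}_{p,p}$, so $\sigma_N(f)_{\widehat B^{\alpha}_{p,p}}\lesssim\sigma_N(f)_{L_p}$ and therefore $\mathbb{A}_{\beta,q}(L_p)\hookrightarrow\mathbb{A}_{\beta,q}(\widehat{B}^{\alpha}_{p,p})$. The first claim, applied with this $\alpha$, yields the target $\widehat{B}^{s_1',s_2'}_{q,q}$. Here one must check that the first part's proof never used $\alpha_i>0$. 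It uses only the $\Delta$-norm and its equivalence with the coefficient norm, and the latter equivalence holds for $\alpha_i\le0$ by the last clause of \cref{lem:norm equivalency_gen_besov}.
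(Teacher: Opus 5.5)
Your Steps 1 and 2 are correct and are the paper's machinery. The problem is that you put the upper interpolation endpoint at the target space, and then shifted the wrong endpoint. Combined, the two steps already prove, for every admissible collinear $s=(s_1,s_2)$ and every $\theta\in(0,1)$,
\[
\mathbb{A}_{\theta\beta,\eta}\big(\widehat{B}^{\alpha_1,\alpha_2}_{p,p}(\Omega_T)\big)\hookrightarrow\widehat{B}^{\alpha_1+\theta s_1,\alpha_2+\theta s_2}_{\eta,\eta}(\Omega_T),\qquad \frac{1}{\eta}=\frac{1}{p}+\theta\beta,\quad \beta=\frac{1}{\frac{1}{s_1}+\frac{d}{s_2}}.
\]
The missing idea is to \emph{overshoot}. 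Apply this with $(s_1,s_2)$ replaced by $C(s_1,s_2)$ for some $C>1$, i.e.\ with upper endpoint $\widehat{B}^{\alpha_1+Cs_1,\alpha_2+Cs_2}_{\tilde q,\tilde q}(\Omega_T)$, $\frac{1}{\tilde q}=\frac1p+C\beta$, and take $\theta=1/C$.

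This is admissible for the following reasons:
\begin{itemize}
\item \textup{ATOMIC}\_\textup{SPLIT}, \textup{PATCH}\_\textup{REFINE} and the meshes $\calP_n$ depend on $(s_1,s_2)$ only through $s_2/s_1$. Hence the approximation class, the spaces $\widehat{B}$ and the sets $\mathbb{P}_N$ are unchanged.
\item $(\alpha_1,\alpha_2)$ stays collinear with $C(s_1,s_2)$.
\item \cref{lem:Generalized_Besov_norm_estimates} holds at rate $C\beta$ with exponent $\tilde q$.
\end{itemize}
Then $\theta C\beta=\beta$, the smoothness becomes $\alpha+s$, and $\frac1\eta=\frac{1-\theta}{p}+\frac{\theta}{\tilde q}=\frac1p+\beta=\frac1q$. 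This is exactly the claim, and it is the paper's proof (with $\theta=s_2/\tilde s_2$). Shifting $X_1$ instead, as you noticed yourself, destroys the hypothesis.

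Your telescoping fallback does not close the gap, and your case distinction is reversed. Since $q^*=\min(1,q)$, the bound $\|f\|_{X_2}^{q^*}\lesssim\sum_k\big(2^{k\beta}\sigma_{2^{k-1}}(f)_{X_1}\big)^{q^*}$ behaves as follows:
\begin{itemize}
\item For $q\le 1$ it is already, up to the index shift, the $\mathbb{A}_{\beta,q}$ seminorm, so there is nothing to repair.
\item For $q>1$ it is an $\ell^1$-sum. It only yields the embedding of the smaller class $\mathbb{A}_{\beta,1}$, because membership in $\mathbb{A}_{\beta,q}$ does not give $\ell^1$-summability of $(2^{k\beta}\sigma_{2^k}(f))_k$.
\end{itemize}
The case $q>1$ does occur, e.g.\ $p=\infty$, $\beta<1$, $q=1/\beta$.

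Coefficient bookkeeping does not help here. The level ranges of different $R_k$ in the multiscale decomposition overlap, so $\|\sum_k R_k\|_{X_2}^q$ is not additive in $k$. For $q>1$ the triangle inequality simply returns the $\ell^1$-sum again. Recovering the fine index $q$ is precisely what the interpolation/Hardy argument with an overshooting Bernstein space is for.

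Your reduction of the second embedding to the first, via $\alpha_i=s_i'-s_i<0$ and \cref{lem:Negative_Gen_Besov_norm}, is correct and matches the paper.
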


\begin{proof}
    Choose an arbitrary but fixed $C\in (1,\infty)$ and set $\tilde{s}_i= Cs_i$, $i=1,2$. Further, let $\tilde{q}\in (0,\infty)$ be such that $\frac{1}{\tilde{q}}=\frac{1}{p}+\frac{1}{\frac{1}{\tilde{s}_1}+\frac{d}{\tilde{s}_2}}$. Then, 
    \begin{align*}
        \left(1-\frac{s_2}{\tilde{s}_2}\right) \frac{\alpha_2}{d}+ \frac{s_2}{\tilde{s}_2}\, \frac{\tilde{s}_2+\alpha_2}{d} = \frac{\alpha_2+s_2}{d}\quad\text{and}\quad 
        \left(1-\frac{s_2}{\tilde{s}_2}\right) \frac{1}{p}+ \frac{s_2}{\tilde{s}_2}\, \frac{1}{\tilde{q}} = \frac{1}{q},
    \end{align*}
    which implies 
    \begin{align*}
        \left(\ell^{\frac{\alpha_2}{d},p}(L_p(\Omega_T)), \ell^{\frac{\tilde{s}_2+\alpha_2}{d}, \tilde{q}}(L_{\tilde{q}}(\Omega_T))\right)_{\frac{s_2}{\tilde{s}_2},q} = \ell^{\frac{\alpha_2+s_2}{d}, q}(L_{q}(\Omega_T))
    \end{align*}
    according to \cref{lem:interpolation_of_certain_gen_sequence_spaces}, with involved constants depending on $d, p, s_1, s_2$, and $\alpha_2$. Therefore, we can calculate
\begin{align}\label{thm:Inverse_estimates - proof equation 1}
	&\|f\|^q_{\widehat{B}^{\alpha_1+s_1, \alpha_2+s_2}_{q, q}(\Omega_T)} = \left\|\left(\Delta_n(f)\right)_{n\in \N_0}\right\|^q_{\ell^{\frac{\alpha_2+s_2}{d},q}(L_q(\Omega_T))}\lesssim_{\, d, p, s_1, s_2, \alpha_2} \|f\|^q_{\left(\ell^{\frac{\alpha_2}{d},p}(L_p(\Omega_T)), \ell^{\frac{\tilde{s}_2+\alpha_2}{d}, \tilde{q}}(L_{\tilde{q}}(\Omega_T))\right)_{\frac{s_2}{\tilde{s}_2},q}}\notag	
	\\&\lesssim_{\,d,p,s_1, s_2,\alpha_2} \left\|\left(\Delta_n(f)\right)_{n\in \N_0}\right\|^q_{\ell^{\frac{\alpha_2}{d},p}(L_p(\Omega_T))}\notag
    \\& \quad \quad \quad \quad + \sum\limits_{n=0}^\infty \left[2^{\frac{n\, \frac{s_2}{\tilde{s}_2}}{\frac{1}{\tilde{s}_1}+\frac{d}{\tilde{s}_2}}}K\left(\left(\Delta_n(f)\right)_{n\in \N_0},2^{-\frac{n}{\frac{1}{\tilde{s}_1}+\frac{d}{\tilde{s}_2}}},\ell^{\frac{\alpha_2}{d},p}(L_p(\Omega_T)), \ell^{\frac{\tilde{s}_2 + \alpha_2}{d},\tilde{q}}(L_{\tilde{q}}(\Omega_T))\right) \right]^q\notag 
    \\&\lesssim_{\, \substack{d, \rho, p, s_1, s_2, \alpha_2, \\ r_1, r_2, \kappa_{\calP_0}, \mu(\calP_0)}}  \quad \|f\|_{\widehat{B}^{\alpha_1, \alpha_2}_{p,p}(\Omega_T)}^{q} + \sum\limits_{n=0}^{\infty}\left[2^{\frac{n}{\frac{1}{s_1}+\frac{d}{s_2}}} K\left(f, 2^{-\frac{n}{\frac{1}{\tilde{s}_1}+\frac{d}{\tilde{s}_2}}}, \widehat{B}^{\alpha_1, \alpha_2}_{p,p}(\Omega_T), \widehat{B}^{\tilde{s}_1+\alpha_1, \tilde{s}_2 + \alpha_2}_{\tilde{q}, \tilde{q}}(\Omega_T)\right)\right]^q\notag
    \\&\lesssim_{\, d, p, s_1, s_2, r_1, r_2, \#\calP_0}\notag
    \\& \quad \|f\|_{\widehat{B}^{\alpha_1, \alpha_2}_{p,p}(\Omega_T)}^q + \sum\limits_{n=0}^{\infty} 2^{n\left(\frac{1}{\frac{1}{s_1}+\frac{d}{s_2}}-\frac{1}{\frac{1}{\tilde{s}_1}+\frac{d}{\tilde{s}_2}}\right)q}\left[\left(\sum\limits_{k=0}^n \left(2^{\frac{k}{\frac{1}{\tilde{s}_1}+\frac{d}{\tilde{s}_2}}}\sigma_{2^k}(f)_{\widehat{B}^{\alpha_1, \alpha_2}_{p,p}(\Omega_T)}\right)^{\tilde{q}^*} \right)^\frac{q}{\tilde{q}^*} + \|f\|_{\widehat{B}^{\alpha_1, \alpha_2}_{p,p}(\Omega_T)}^q\right]
\end{align}
where we have used the discrete estimates for the interpolation space norm from \cref{rem:on_interpolation_spaces}\ref{rem:on_interpolation_spaces - discrete norm} and \ref{rem:on_interpolation_spaces - bound on discrete norm if X_2 embedded in X_1} in the third step, \cref{lem:estimate between different K functionals} in the fourth, and \cref{lem:Generalized_Besov_K_functional_estimate} in the last step. On the one hand, 
\begin{align*}
    \left(\frac{1}{\frac{1}{s_1}+\frac{d}{s_2}}-\frac{1}{\frac{1}{\tilde{s}_1}+\frac{d}{\tilde{s}_2}}\right)q = (1-C)\frac{q}{\frac{1}{s_1}+\frac{d}{s_2}} < 0,\quad\text{since}\quad  C>1. 
\end{align*}
Therefore, the geometric sum 
\begin{align}\label{thm:Inverse_estimates - proof equation 2}
    \sum\limits_{n=0}^{\infty} 2^{n\left(\frac{1}{\frac{1}{s_1}+\frac{d}{s_2}}-\frac{1}{\frac{1}{\tilde{s}_1}+\frac{d}{\tilde{s}_2}}\right)q} \|f\|_{\widehat{B}^{\alpha_1, \alpha_2}_{p,p}(\Omega_T)}^q \sim_{\, d, p, s_1, s_2, \alpha_2} \|f\|_{\widehat{B}^{\alpha_1, \alpha_2}_{p,p}(\Omega_T)}^q
\end{align}
converges. On the other hand, 
\begin{align}\label{thm:Inverse_estimates - proof equation 3}
    \sum\limits_{n=0}^{\infty} 2^{n\left(\frac{1}{\frac{1}{s_1}+\frac{d}{s_2}}-\frac{1}{\frac{1}{\tilde{s}_1}+\frac{d}{\tilde{s}_2}}\right)q}\left(\sum\limits_{k=0}^n \left(2^{\frac{k}{\frac{1}{\tilde{s}_1}+\frac{d}{\tilde{s}_2}}}\sigma_{2^k}(f)_{\widehat{B}^{\alpha_1, \alpha_2}_{p,p}(\Omega_T)}\right)^{\tilde{q}^*} \right)^\frac{q}{\tilde{q}^*} = \sum\limits_{n\in \mathbb{Z}} 2^{(-n)\theta q} b_{-n}^q 
\end{align}
with $\displaystyle{b_m:=\mathds{1}_{\mathbb{Z}^-_0}(\{m\})\left(\sum\limits_{k=0}^{-m}\left(2^{\frac{k}{\frac{1}{\tilde{s}_1}+\frac{d}{\tilde{s}_2}}}\sigma_{2^k}(f)_{\widehat{B}^{\alpha_1, \alpha_2}_{p,p}(\Omega_T)}\right)^{\tilde{q}^*}\right)^\frac{1}{\tilde{q}^*}}$, $m\in \mathbb{Z}$, $\theta := \left(\frac{1}{\frac{1}{\tilde{s}_1}+\frac{d}{\tilde{s}_2}}-\frac{1}{\frac{1}{s_1}+\frac{d}{s_2}}\right)>0$, and $\mathbb{Z}^-_0:=\{k\in \mathbb{Z}\mid k\le 0\}$. Since, $b_m=\left(\sum\limits_{k=m}^{\infty} a_k^{\tilde{q}^*} \right)^\frac{1}{\tilde{q}^*}$, $m\in \mathbb{Z}$, for $a_k:=\mathds{1}_{\mathbb{Z}^-_0}(\{k\})2^{-\frac{k}{\frac{1}{\tilde{s}_1} + \frac{d}{\tilde{s}_2}}} \sigma_{2^{-k}}(f)_{\widehat{B}^{\alpha_1,\alpha_2}_{p,p}(\Omega_T)}$ using the discrete Hardy inequality as in  \cite[Lem.~3.4 of Ch.~2]{DL93} yields 
\begin{align}\label{thm:Inverse_estimates - proof equation 4}
    &\sum\limits_{n\in \mathbb{Z}} 2^{(-n)\theta q} b_{-n}^q \lesssim_{\,\substack{d,p,s_1,\\ s_2, \alpha_2}} \sum\limits_{n\in \mathbb{Z}} 2^{n\theta q} a_{n}^q 
    = \sum\limits_{n\in \mathbb{Z}} \mathds{1}_{\mathbb{Z}^-_0}(\{n\})\, 2^{n\left(\frac{1}{\frac{1}{\tilde{s}_1}+\frac{d}{\tilde{s}_2}}-\frac{1}{\frac{1}{s_1}+\frac{d}{s_2}}\right)q} 2^{n\left(-\frac{1}{\frac{1}{\tilde{s}_1} + \frac{d}{\tilde{s}_2}}\right) q} \sigma_{2^{-n}}(f)_{\widehat{B}^{\alpha_1,\alpha_2}_{p,p}(\Omega_T)}^q
    \\&= \sum\limits_{n\in \mathbb{Z}} \mathds{1}_{\mathbb{Z}^-_0}(\{n\})\, 2^{-n\left(\frac{1}{\frac{1}{s_1}+\frac{d}{s_2}}\right)q}\sigma_{2^{-n}}(f)_{\widehat{B}^{\alpha_1,\alpha_2}_{p,p}(\Omega_T)}^q = \sum\limits_{n=0}^\infty 2^{n\left(\frac{1}{\frac{1}{s_1}+\frac{d}{s_2}}\right)q}\sigma_{2^{n}}(f)_{\widehat{B}^{\alpha_1,\alpha_2}_{p,p}(\Omega_T)}^q \le |f|_{\mathbb{A}_{\frac{1}{\frac{1}{s_1}+\frac{d}{s_2}},q}\left(\widehat{B}^{\alpha_1, \alpha_2}_{p,p}(\Omega_T)\right)}^q.\notag
\end{align}
Now inserting \eqref{thm:Inverse_estimates - proof equation 2} and \eqref{thm:Inverse_estimates - proof equation 3} together with \eqref{thm:Inverse_estimates - proof equation 4} into \eqref{thm:Inverse_estimates - proof equation 1}, yields the first asserted embedding. Thus, for the above choice of $(s_1', s_2')$, this, in particular, means
\begin{align*}
        \mathbb{A}_{\frac{1}{\frac{1}{s_1}+\frac{d}{s_2}}, q}\Big(\widehat{B}^{s_1'-s_1, s_2'-s_2}_{p,p}(\Omega_T), \V^{r_1, r_2}_{\bullet}, \textup{PATCH}\_\textup{REFINE}(\cdot, \cdot, d, s_1, s_2)\Big)\hookrightarrow \widehat{B}^{s_1', s_2'}_{q, q}(\Omega_T).
    \end{align*}
    Now, the assertion follows from \cref{lem:Negative_Gen_Besov_norm}, since $s_i'-s_i<0$, $i=1,2$.
\end{proof}

\appendix
\section{\texorpdfstring{Algebraic calculation in \cref{thm:Sobolev-type-embedding}}{Appendix}}

\label{Appendix:Tedious_Calculation}
We observe that due to the collinearity of $(\alpha_1, \alpha_2)$ and $(s_1, s_2)$, there is a constant $C>0$ such that $(s_1, s_2)= C(\alpha_1, \alpha_2)$. This implies 
\begin{align*}
    \frac{1}{\frac{1}{\alpha_1 + s_1} + \frac{d}{\alpha_2+s_2}}= \frac{1}{\frac{1}{(1+C)\alpha_1} + \frac{d}{(1+C)\alpha_2}}= (1+C)\frac{1}{\frac{1}{\alpha_1} + \frac{d}{\alpha_2}} = (1+C)\frac{\alpha_1\alpha_2}{\alpha_2+d\alpha_1} = \frac{\alpha_1(\alpha_2+s_2)}{\alpha_2 + d\alpha_1}.
\end{align*}
Therefore,
\begin{align*}
    \left(1+\frac{\alpha_2}{\alpha_1d}\right)\frac{1}{\frac{1}{\alpha_1 + s_1} + \frac{d}{\alpha_2+s_2}}=\frac{\alpha_1(\alpha_2+s_2)}{\alpha_1d} = \frac{\alpha_2+s_2}{d},
\end{align*}
which yields
\begin{align*}
    \frac{\alpha_2}{d}-\left(1+\frac{\alpha_2}{\alpha_1d}\right)\left(\frac{1}{\frac{1}{\alpha_1 + s_1} + \frac{d}{\alpha_2+s_2}}-\frac{1}{q}+\frac1p\right) = -\frac{s_2}{d} - \left(1+\frac{\alpha_2}{\alpha_1d}\right)\left(-\frac{1}{q}+\frac1p\right).
\end{align*}
Moreover, 
\begin{align*}
    \frac{s_2}{d} = \frac{s_1d+s_2}{s_1d}\frac{s_1s_2}{s_2+s_1d} = \left(1+\frac{s_2}{s_1d}\right)\frac{1}{\frac{1}{s_1}+\frac{d}{s_2}}, 
\end{align*}
which  together with $\left(1+\frac{\alpha_2}{\alpha_1d}\right)=\left(1+\frac{s_2}{s_1d}\right)$, due to the collinearity, gives 
\begin{align*}
    \frac{\alpha_2}{d}-\left(1+\frac{\alpha_2}{\alpha_1d}\right)\left(\frac{1}{\frac{1}{\alpha_1 + s_1} + \frac{d}{\alpha_2+s_2}}-\frac{1}{q}+\frac1p\right) = -\left(1+\frac{s_2}{s_1d}\right)\left(\frac{1}{\frac{1}{s_1}+\frac{d}{s_2}}-\frac1q+\frac1p\right).
\end{align*}

\paragraph{Acknowledgements.} This work was partially developed during the third author's visit to Santa Fe in Argentina supported by a fellowship for doctoral candidates of the German Academic Exchange Service (DAAD). 
The first author was partially supported by Universidad Nacional del
Litoral through grant CAI+D-2024 85520240100018LI, and by Agencia Nacional de Promoción Científica
y Tecnológica through grant PICT-2020-SERIE A-03820.

\bibliographystyle{alpha}

\end{document}